%
%
\documentclass[11pt]{amsart}
\usepackage{amsmath,amsfonts,amssymb,mathrsfs}
\usepackage{amssymb,mathrsfs,graphicx,subcaption}
\usepackage{mathtools}
\usepackage{bbm}
\usepackage{color}
\usepackage{smartdiagram}
\usepackage{ulem}
\usepackage{cancel}
\usepackage{pgfplots}

\graphicspath{{./figure/}}

\topmargin-0.1in
\textwidth6.in
\textheight8.5in
\oddsidemargin0in
\evensidemargin0in
\title[Hydrodynamic limit of a coupled 
Cucker-Smale system]{Hydrodynamic limit of a coupled  
Cucker-Smale system with strong and weak internal variable relaxation}

\author[Kim]{Jeongho Kim}
\address[Jeongho Kim]{\newline Institute of New Media and Communications\newline Seoul National University, Seoul 08826, Korea}
\email{jhkim206@snu.ac.kr}

\author[Poyato]{David Poyato}
\address[David Poyato]{\newline Institut Camille Jordan (ICJ), UMR 5208 CNRS \& Universit\'e Claude Bernard Lyon 1, 69100 Villeurbanne, France, and Research Unit ``Modeling Nature'' (MNat), Universidad de Granada, Granada, 18071, Spain}
\email{poyato@math.univ-lyon1.fr}

\author[Soler]{Juan Soler}
\address[Juan Soler]{Departamento de Mathem\'atica Aplicada and Research Unit ``Modeling Nature"(MNat), Universidad de Granada, Granada, 18071, Spain}
\email{jsoler@ugr.es}

\newtheorem{theorem}{Theorem}[section]
\newtheorem{lemma}{Lemma}[section]
\newtheorem{corollary}{Corollary}[section]
\newtheorem{proposition}{Proposition}[section]
\newtheorem{remark}{Remark}[section]

\newtheorem{definition}{Definition}[section]

\newcommand{\bbr}{\mathbb R}

\newcommand{\e}{\varepsilon}

\DeclareMathOperator{\supp}{supp}
\DeclareMathOperator{\diam}{diam}

\begin{document}
	
	\date{\today}
	
	\subjclass[2010]{92D25,74A25,76N10} \keywords{Flocking, hydrodynamic limit, kinetic model, multiscale model, thermomechanical Cucker-Smale model, internal variable, singular weights.}
	
	\thanks{\textbf{Acknowledgment.} This project has received funding from the European Research Council (ERC) under the European Union’s Horizon 2020 research and innovation programme (grant agreement No 639638), the MECD (Spain) research grant FPU14/06304, the MINECO-Feder (Spain) research grant number RTI2018-098850-B-I00, the Junta de Andalucia (Spain) Projects PY18-RT-2422 \& A-FQM-311-UGR18 (D.P, J.S)}
	
	\begin{abstract}
	In this paper, we present the hydrodynamic limit of a multiscale system describing the dynamics of two populations of agents with alignment interactions and the effect of an internal variable. It consists of a kinetic equation coupled with an Euler-type equation inspired by the thermomechanical Cucker--Smale (TCS) model. We propose a novel drag force for the fluid-particle interaction reminiscent of Stokes' law. Whilst the macroscopic species is regarded as a self-organized background fluid that affects the kinetic species, the latter is assumed sparse and does not affect the macroscopic dynamics. We propose two hyperbolic scalings, in terms of a strong and weak relaxation regime of the internal variable towards the background population. Under each regime, we prove the rigorous hydrodynamic limit towards a coupled system composed of two Euler-type equations. Inertial effects of momentum and internal variable in the kinetic species disappear for strong relaxation, whereas a nontrivial dynamics for the internal variable appears for weak relaxation. Our analysis covers both the case of Lipschitz and weakly singular influence functions.
	\end{abstract}
	
	\maketitle

	%
	%
	\section{Introduction}\label{sec:1}
	\setcounter{equation}{0}

	Uncovering the mechanisms responsible for the cooperation in a group of agents is a topic of great interest and current relevance. These processes involve qualitatively different emergent phenomena such as reaching consensus without centralized control or the formation of cooperative clusters within a collective group. Further examples of collective dynamics are the emergence of common languages in primitive societies, the collective migration of animal populations, the biochemical interactions leading to the activation of target genes, or the way social populations reach a consensus of social opinion.

	Cooperation occurs when the interacting agents exchange information about their state. As active particles, not only mechanical state is involved, but also their microstate, defined by internal activation variables or behavioral variables \cite{BHO20,C15,HLe09,HR17,LRS20}. The final state manifests a rich appearance of consensus patterns that can lead to a process of aggregation or clustering induced by motility. 
	In this paper, we are interested in a Cucker-Smale-type flocking model that contains the effect of some internal variables.

	To include a microstate in the modeling of the Cucker-Smale system, we were inspired by how the internal variable associated with temperature acts in the recently proposed thermomechanical Cucker-Smale model \cite{HR17}. In addition, we consider a multiscale meso-macro coupled system composed of two types of populations. They respectively represent the ensemble of a gas of interacting particles at the kinetic/mesoscopic scale and a background population at the fluid/macroscopic scale, both including the effect of alignment interactions and the internal or behavioral variable. We account for two types of interactions: one is given by ``local" mean-field self-interactions within the same population, and the other determines ``global'' mean-field cross-interactions between populations. Further previous fluid-particle systems have been proposed in the literature in different settings, see e.g. \cite{BCHK12,BDGM09,CHJK19,CHJK20,GJV04-1,GJV04-2,H98}. For simplicity, we assume that the population described by the background fluid is much larger than that of the particle ensemble. This implies that whilst the macroscopic species self-organizes and affects the kinetic species, the latter can be considered sparse and does not affect the background dynamics. Of course, a non-negligible effect of particles over the fluid could be considered if both populations were comparable in size. The novelty of the newly introduced multiscale model is the systematic derivation of the interacting meso-macro model from the micro-micro interacting particle model, using the appropriate weight assumption for the mean-field scaling.

	To make these ideas more concrete, let us recall some collective behavior models of self-propelled individuals that have been studied extensively for the last decades. The main interest in those systems is the emergence of aggregation, swarming, and flocking of agents. Among them, the Cucker-Smale (C-S) model is one of the most well-known systems in flocking dynamics \cite{CS07}. It can be described as a system of ODEs governing the dynamics of the position and velocity pair $(x_i,v_i)\in \bbr^{2d}$ of the $i$-th agent using Newton's law:
	\begin{align}
	\begin{aligned}\label{A-0}
	&\frac{dx_i}{dt}=v_i,\quad i=1,2,\ldots, N,\quad t>0,\\
	&\frac{dv_i}{dt}=\frac{\kappa}{N}\sum_{j=1}^N \phi(x_i-x_j)(v_j-v_i).
	\end{aligned}
	\end{align}
	
	Here, $\kappa$ is a positive constant called coupling strength, and $\phi:\bbr^d\to\bbr_+$ is called an influence function, which is a general radially symmetric function, and non-increasing with respect to the radius of the argument, providing the spatial dependency of communication between agents. When the number of particles is large enough, the dynamics of C-S system \eqref{A-0} can be described effectively from the mesoscopic and macroscopic points of view. Let $f=f(t,x,v)$ be the one-particle density function on the state space $\bbr^{2d}$ at time $t$. Then, the evolution of $f$ is governed by the following Vlasov-type equation \cite{HT08}:
	\begin{align}
	\begin{aligned} \label{A-0-1}
	&\partial_t f+v\cdot\nabla_xf +\nabla_v\cdot(F[f]f)=0,\quad (t,x,v)\in\mathbb{R}_+\times \bbr^{2d},\\
	&F[f](t,x,v):=\kappa\int_{\bbr^{2d}}\phi(x-x_*)(v_*-v)f(t,x_*,v_*)\,dx_*\,dv_*.
	\end{aligned}
	\end{align}
	
	In the same literature, the macroscopic (or hydrodynamic) description of C-S model was also derived from the kinetic equation \eqref{A-0-1} by taking the following macroscopic variables:
	\[\rho(t,x):=\int_{\bbr^d} f(t,x,v)\,dv,\quad (\rho u)(t,x):=\int_{\bbr^d}vf(t,x,v)\,dv,\]
	and using formal mono-kinetic ansatz $f(t,x,v)=\rho(t,x)\delta_{u(t,x)}(v)$:
	\begin{align}
	\begin{aligned}\label{A-0-2}
	&\partial_t \rho +\nabla\cdot(\rho u)=0,\quad (t,x)\in\bbr_+\times\bbr^d,\\
	&\partial_t(\rho u)+ \nabla\cdot(\rho u\otimes u) =\int_{\bbr^d} \phi(x-x_*)(u(t,x_*)-u(t,x))\rho(t,x)\rho(t,x_*)\,dx_*.
	\end{aligned}
	\end{align}
	
	Due to its possible application in engineering, in particular, controlling UAVs or spaceships \cite{PEG09}, the C-S model has been extensively studied from various point of view. For example, emergent behaviors \cite{CFRT10, HT08, HL09}, mean-field limit \cite{HL09, MP18}, hydrodynamic limit \cite{FK19,KMT15,PS17} of the C-S model have been studied. For details, we refer to the survey paper \cite{CHL16} on the C-S model and the references therein.
	
	This type of interaction between agents is reminiscent of the dynamics of opinion, whose development was a pioneer in this field. Hegselmann and Krause \cite{Kra,H-K} introduced a nonlinear opinion dynamics model in which each individual's opinion is influenced by other close opinions. 
	In this context, the expected emergent phenomenon is the asymptotic formation of one or several clusters of individuals with very similar opinions. In recent years, the study of this model has once again attracted attention in the scientific community, see for example \cite{DMPW,JM,L,WCB} and the references therein. The C-S model \eqref{A-0} can be considered as an extension of the Krause model in which consensus (here named flocking) means that the spatial distances between individuals remain bounded globally in time and their velocities converge asymptotically towards a collective one. An extension of the emergence of flocking behavior to the case of nonsymmetric communication weights has been proposed by Motsch and Tadmor \cite{M-T}.

	However, the C-S model and its variants only describe the dynamics of mechanical variables, such as the position and velocity of agents, and the internal variables of agents are disregarded. Including internal or behavioral variables in the swarm dynamics, that can control or measure the degree of activation of individuals or clusters of individuals towards consensus, is of great importance. 
		In this sense, variables such as excitation levels, temperature, emotional affinity, spin, phase, or biochemical signaling in cell motility, among others, may have a role in this context. Among many possible ways to include the effect of the internal variable, the C-S model was recently generalized by Ha and Ruggeri \cite{HR17} so that it incorporates an additional internal variable, usually referred to as \textit{temperature}. They used general conservation laws for a mixture of Euler systems, and using the entropy principle and Galilean invariance to find the production terms, they derived the following thermomechanical Cucker-Smale (TCS) model:
	\begin{align}
	\begin{aligned}\label{A-1}
	&\frac{dx_i}{dt}=v_i, \quad i=1,2,\cdots, N,\quad t>0,\\
	&\frac{dv_i}{dt}=\frac{\kappa}{N}\sum_{j=1}^N\phi(x_i-x_j)\left(\frac{v_j}{\theta_j}-\frac{v_i}{\theta_i}\right),\\
	&\frac{d\theta_i}{dt}=\frac{\nu}{N}\sum_{j=1}^N\zeta(x_i-x_j)\left(\frac{1}{\theta_i}-\frac{1}{\theta_j}\right).
	\end{aligned}
	\end{align}
	
		Note that, when all the internal variables are identical, i.e., $\theta_i\equiv \theta_0$, then the TCS model \eqref{A-1} reduces to the C-S model \eqref{A-0}. Then, it is straightforward to derive the mesoscopic equation for the TCS model, analogous to \eqref{A-0-1} for the C-S model. Precisely, it is given by following Vlasov-type equation:
	
	\begin{align}
	\begin{aligned}\label{eq-kinetic-TCS}
	&\partial_t f+v\cdot \nabla_x f +\nabla_v\cdot (F[f]f)+\partial_\theta(G[f]f) = 0,\quad (t,x,v,\theta)\in\mathbb{R}_+\times \bbr^{2d}\times\bbr_+,\\
	&F[f](t,x,v,\theta):=\kappa\int_{\bbr^{2d}\times\bbr_+}\phi(x-x_*)\left(\frac{v_*}{\theta_*}-\frac{v}{\theta}\right)f(t,x_*,v_*,\theta_*)\,dx_*\,dv_*\,d\theta_*,\\
	&G[f](t,x,v,\theta):=\nu\int_{\bbr^{2d}\times\bbr_+}\zeta(x-x_*)\left(\frac{1}{\theta}-\frac{1}{\theta_*}\right)f(t,x_*,v_*,\theta_*)\,dx_*\,dv_*\,d\theta_*.
	\end{aligned}
	\end{align}
	
	The kinetic description of the TCS ensemble \eqref{eq-kinetic-TCS} has been analyzed recently in the literature. To name a few results: uniform-in-time mean-field limit \cite{HKMRZ19}, well-posedness and asymptotic behavior of the particle-fluid coupled systems \cite{CHJK19,CHJK20,KZ20}, interaction with chemotactic movement \cite{HKZ20} or propagation of mono-kinetic solutions \cite{KK20} have been studied. We also refer \cite[Section 4]{A19} for an overview of the TCS model.
	
	When the TCS ensemble interacts with the surrounding environment  or is affected by further self-interaction forces, there will be additional effects on the dynamics \eqref{eq-kinetic-TCS}. In this paper, we consider the case when this ensemble $f$ is affected by a given background fluid species ($\bar{\rho},\bar{u},\bar{e}$), which also obeys the hydrodynamic TCS equations \cite{HKMRZ18} obtained through the mono-kinetic closure of \eqref{eq-kinetic-TCS}, and self-interaction attractive/repulsive aggregation forces. Precisely, we consider the following coupled meso-macro multiscale system: 
	
	\begin{align}
	\begin{aligned}\label{eq-kinetic-TCS-couple}
	\partial_t f&+v\cdot \nabla_x f+\nabla_v\cdot (F[f]\,f+H[f]\,f+F_c[\bar \rho,\bar u,\bar e]f)+\partial_\theta(G[f]\,f+G_c[\bar \rho,\bar e]\,f)=0,\\
	&H[f](t,x,v,\theta):= -\int_{\bbr^{2d}\times\bbr_+}\nabla W(x-x_*)f(t,x_*,v_*,\theta_*)\,dx_*\,dv_*\,d\theta_*,\\
	&F_c[\bar\rho,\bar u,\bar e](t,x,v,\theta):=\int_{\mathbb{R}^d}\left(\frac{\bar u(t,x_*)}{\bar e(t,x_*)}-\frac{v}{\theta}\right)\bar \rho(t,x_*)\,dx_*,\\
		&G_c[\bar \rho,\bar e](t,x,v,\theta):=\int_{\mathbb{R}^d}\left(\frac{1}{\theta}-\frac{1}{\bar e(t,x_*)}\right)\bar\rho(t,x_*)\,dx_*.\\
		\end{aligned}
	\end{align}
	\begin{align}\label{eq-hydro-TCS-couple}
	\begin{aligned}
	\partial_t \bar \rho +\nabla\cdot (\bar \rho\, \bar u)&=0,\\
	\partial_t(\bar \rho\,\bar u)+\nabla\cdot (\bar \rho\,\bar u\otimes \bar u)&=\int_{\mathbb{R}^d}\phi(x-x_*)\left(\frac{\bar u(t,x_*)}{\bar  e(t,x_*)}-\frac{\bar u(t,x)}{\bar e(t,x)}\right)\bar \rho(t,x)\,\bar \rho(t,x_*)\,dx_*,\\
	\partial_t(\bar \rho\,\bar e)+\nabla\cdot (\bar \rho\,\bar e\,\bar u)&=\int_{\mathbb{R}^d}\zeta(x-x_*)\left(\frac{1}{\bar e(t,x)}-\frac{1}{\bar e(t,x_*)}\right)\bar \rho(t,x)\,\bar \rho(t,x_*)\,dx_*,
	\end{aligned}
	\end{align}
where $W:\mathbb{R}^d\longrightarrow \mathbb{R}$ is an aggregation potential and coupling strengths $\kappa$ and $\nu$ have been normalized to $1$ for simplicity. We note that in the coupled multiscale system \eqref{eq-kinetic-TCS-couple}-\eqref{eq-hydro-TCS-couple}, the dynamics of the ensemble $f$ is affected by the velocity and internal variable alignment self-interactions $F[f]$, $G[f]$, the aggregation self-interactions $H[f]$ and the cross-interactions $F_c[\bar{\rho},\bar{u},\bar{e}]$, $G_c[\bar{\rho},\bar{e}]$ with the background fluid species; while the background fluid $(\bar{\rho},\bar{u},\bar{e})$ only interacts with itself, and is not affected by the kinetic ensemble as a consequence of our sparseness assumption on the kinetic species. We also note that the governing equation \eqref{eq-hydro-TCS-couple} is nothing but the hydrodynamic description of the TCS model, which was introduced and studied in \cite{HKMRZ18}. We refer to Section \ref{sec:derivation} for the detailed procedure to derive the meso-macro multiscale system \eqref{eq-kinetic-TCS-couple}-\eqref{eq-hydro-TCS-couple} from the micro-micro system of interacting particles. In particular, we justify the specific Stokes-type drag forces $F_c$ and $G_c$ for the fluid-particle interaction.  Also, we mention \cite{CD11,CD14,FST16,LLE08} for similar models with coupled alignment and aggregation effects.

	The main interest of this paper is to derive some hydrodynamic approximation of the kinetic equation \eqref{eq-kinetic-TCS-couple} through an appropriate scaling limit. We consider two types of limiting processes: strong and weak relaxation of the internal variable. More precisely, we first consider the following scaling for \eqref{eq-kinetic-TCS-couple}:
	
	\begin{align}
	\begin{aligned}\label{A-4}
	\partial_t f_\e+v\cdot \nabla_x f_\e&+\frac{1}{\e}\nabla_v\cdot (F[f_\e]\,f_\e+H[f_\e]f_\e+F_c[\bar \rho,\bar u,\bar e]f_\e)\\
	&+\frac{1}{\e}\partial_\theta(G[f_\e]\,f_\e+G_c[\bar \rho,\bar e]\,f_\e)=0.
	\end{aligned}
	\end{align}
	We refer to Section \ref{SS-dimensional-analysis} for the derivation of the scaled equation \eqref{A-4} under appropriate assumptions on the scales of the parameters. In this scaling, the system undergoes both strong alignment $\frac{1}{\varepsilon}G[f_\e]$ and strong relaxation $\frac{1}{\varepsilon}G_c[\bar{\rho},\bar{e}]$ of the internal variable. Thus, the internal variable is expected to converge very fast toward a common value $\theta^\infty$, which is given by an averaged internal variable for the background fluid:
	\begin{align}
	\theta^\infty(t)&:=\left(\int_{\mathbb{R}^d}\frac{\bar\rho(t,x)}{\bar e(t,x)}\,dx\right)^{-1}.\label{E-background-mean-theta}
	\end{align}
	Then, our goal is to show that the system \eqref{A-4} asymptotically converges to the following macroscopic equation as $\e\to0$:
	\begin{align}
	\begin{aligned}\label{A-5}
	&\partial_t \rho +\nabla\cdot(\rho u)=0, & (t,x)\in \mathbb{R}_+\times \mathbb{R}^d,\\
	&u-u^\infty(t)+\theta^\infty(t)\nabla W*\rho=\phi*(\rho u)-(\phi*\rho), & (t,x)\in \mathbb{R}_+\times \mathbb{R}^d,
	\end{aligned}
	\end{align}
	where $u^\infty$ is defined as an averaged velocity for the background fluid:
	\begin{equation}
	u^\infty(t):=\theta^\infty(t)\int_{\mathbb{R}^d}\frac{\bar \rho(t,x)\bar u(t,x)}{\bar e(t,x)}\,dx.\label{E-background-mean-velocity}
	\end{equation}
	This is reminiscent of the vanishing inertia limiting system obtained in \cite{PS17} (see also \cite{FST16}) as hyperbolic hydrodynamic limit of the Cucker-Smale model. Indeed, the forcing term is now determined by the background values $\theta^\infty(t)$ and $u^\infty(t)$ of internal variable and velocity in the background fluid, and the aggregation force $\nabla W*\rho$. 
	
	On the other hand, the other scaling that we consider is the case when strong alignment is assumed for the internal variable, but the relaxation of the internal variable is not strong. Thus, in this regime, we consider the following alternative scaled equation:
	\begin{align}
	\begin{aligned}\label{A-6}
	\partial_t f_\e+v\cdot \nabla_x f_\e&+\frac{1}{\e}\nabla_v\cdot (F[f_\e]\,f_\e+H[f_\e]f_\e+F_c[\bar \rho,\bar u,\bar e]f_\e)\\
	&+\partial_\theta\left(\frac{1}{\e}G[f_\e]\,f_\e+G_c[\bar \rho,\bar e]\,f_\e\right)=0.
	\end{aligned}
	\end{align}
	Again, we refer Section to \ref{SS-dimensional-analysis} for deriving the scaled equation \eqref{A-6}. In this case, we expect that the internal variable will become homogeneous in space due to the strong alignment term $\frac{1}{\e}G[f_\e]f_\e$, but it slowly relaxes toward the average background internal variable $\theta^\infty(t)$. The precise asymptotic macroscopic equation as $\varepsilon\rightarrow 0$ will be 
	\begin{align}
	\begin{aligned}\label{A-7}
	&\partial_t \rho +\nabla\cdot(\rho u)=0, & (t,x)\in \mathbb{R}_+\times \mathbb{R}^d,\\
	&u-\frac{\theta(t)}{\theta^\infty(t)}u^\infty(t)+\theta(t)\nabla W*\rho=\phi*(\rho u)-(\phi*\rho), & (t,x)\in \mathbb{R}_+\times \mathbb{R}^d,
	\end{aligned}
	\end{align}
	where $\theta(t)$ is now given by the following relaxation ODE converging to $\theta^\infty(t)$:
	\[\dot{\theta}(t)=\frac{1}{\theta(t)}-\frac{1}{\theta^\infty(t)}.\]
	
	In this paper, we will work initially with bounded influence functions $\phi,\zeta$ for alignment interactions and bounded potential forces $-\nabla W$ for aggregation interactions. Specifically, we assume that:
	\begin{align}\label{H-hypothesis-phi-zeta-W}
	\begin{aligned}
	&\phi,\zeta\mbox{ are radially symmetric, non-increasing, }\phi(0)=\zeta(0)=1,\\
	&\phi,\zeta\in W^{1,\infty}(\mathbb{R}^d)\quad \mbox{and}\quad \nabla W\in W^{1,\infty}(\mathbb{R}^d,\mathbb{R}^d).
	\end{aligned}
	\end{align}
Under those regularity conditions \eqref{H-hypothesis-phi-zeta-W}, we can apply the results in \cite{CCR11,KMT13} to derive well-posedness of weak solutions to \eqref{eq-kinetic-TCS-couple}. 
Also, the fluid species \eqref{eq-hydro-TCS-couple} is expected to be well posed as we explain in Remark \ref{eq-hydro-TCS-couple}. However, we note that it is also possible to consider the case of non-local alignment interactions $F[f]$ and $G[f]$ whose influence functions $\phi$ and $\zeta$ are replaced by scaled versions $\phi_\e$ and $\zeta_\e$ that become weakly singular at the origin as $\e\to0$, in the spirit of \cite{PS17}. Similarly, we can also consider non-local aggregation interactions $H[f]$ whose potential $W$ is replaced by scaled version $W_\e$ that becomes weakly singular at the origin as $\e\to0$. We propose these weakly singular regimes in Section \ref{SSS-scaling-singular} and we address some of the corresponding hydrodynamic limits in Sections \ref{subsec:3.5} and \ref{subsec:4.5}.
	
	The rest of the paper is organized as follows. In Section 2, we present a formal derivation of the meso-macro multiscale system \eqref{eq-kinetic-TCS-couple}-\eqref{eq-hydro-TCS-couple} and appropriate dimensional analysis to derive the scaled systems \eqref{A-4} and \eqref{A-6}. Section 3 provides the rigorous hydrodynamic limit in the strong relaxation regime. More precisely we will derive the limiting macroscopic equation \eqref{A-5} from \eqref{A-4} after obtaining the estimate for the support of internal variable and velocity moments. In Section 4, we prove a similar result on the hydrodynamic limit for the weak relaxation regime for \eqref{A-6} toward \eqref{A-7}. Section 5 present the numerical simulations for the limiting macroscopic systems \eqref{A-5} and \eqref{A-7}. Finally, in Appendix \ref{Appendix-LB} we recall some necessary duality representation of weak Lebesgue--Bochner spaces that will be useful throughout the paper.\\
	
	\noindent {\bf Notation}. Throughout the paper, we use the following conventions: 
	
	\noindent $\bullet$ {\it Variables}: We use abbreviated variables $z=(x,v,\theta)\in \mathbb{R}^d\times \mathbb{R}^d\times \mathbb{R}_+$ and $dz=dx\,dv\,d\theta$.
	
	\noindent $\bullet$ {\it Function spaces}: For any open subset $\Omega\subset \bbr^{k}$, $\mathcal{M}(\Omega)$ denotes the Banach space of finite Radon measures on $\Omega$ endowed with the total variation norm. $\mathcal{P}(\Omega)$ is the metric space consisting of probability measures endowed with narrow topology. $C_c(\Omega)$, $C_0(\Omega)$ and $C_b(\Omega)$ respectively represent the Banach spaces of continuous functions with compact support, continuous functions vanishing at $\partial \Omega$ and bounded continuous functions, endowed with the uniform norm. Finally, for any $p\in [1,\infty]$, $L^p(\Omega)$ and $W^{1,p}(\Omega)$ denote the usual Lebesgue and Sobolev spaces, whilst $L^p(0,T;X)$ stands for the Lebesgue--Bochner space associated with any $T>0$ and any Banach space $X$.
	
	\noindent $\bullet$ {\it Partial supports}: Given any probability measure $f\in \mathcal{P}(\mathbb{R}^{2d}\times \mathbb{R}_+)$, we define the partial support with respect to $x$, $v$ and $\theta$ respectively as the projections $\supp_x f:=\pi_x(\supp f)\subset \mathbb{R}^d$, $\supp_v f:=\pi_v(\supp f)\subset \mathbb{R}^d$ and $\supp_{\theta}f:=\pi_\theta(\supp f)\subset \mathbb{R}_+$, where $\pi_x$, $\pi_v$ and $\pi_\theta$ stand for the standard projections of $z$ into the corresponding variables $x$, $v$ and $\theta$.

	\section{Formal derivation of the meso-macro system and scaling}\label{sec:derivation}
	\setcounter{equation}{0}

	Our goal in this section is to introduce a formal derivation of the multiscale meso-macro system \eqref{eq-kinetic-TCS-couple}-\eqref{eq-hydro-TCS-couple} of interest in this paper. That will be a consequence of appropriate chained scaling limits (mean-field and hydrodynamic limits) on a two-species coupled system as depicted in Figure \ref{fig:multiscale-systems}. More specifically, we will pay special attention to the choice of cross-interactions between both species in a compatible way with TCS dynamics. For simplicity, we shall justify the simpler case in the absence of noise, 
	although obvious modifications could be applied for a more general system under the effect of a thermal bath. 
	
	\begin{figure}
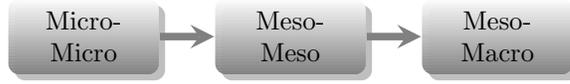

		\centering
		\smartdiagramset{border color=none,set color list={gray,gray,gray,gray},back arrow disabled=true}
		\smartdiagram[flow diagram:horizontal]{Micro-Micro,Meso-Meso,Meso-Macro}
		\caption{Chain of multiscale systems}
		\label{fig:multiscale-systems}
	\end{figure}
	
	\subsection{A micro-micro system with self and cross interactions}
	Our starting point is a micro-micro model for the interactions of agents in two distinguished microscopic species. More specifically, we will consider $N=N_1+N_2$ agents distributed into two different species with $N_1$ and $N_2$ agents for each species. We advance here that the main scaling assumption is that the first species has considerably fewer agents than the second one, i.e.,
	$$N_1\ll N_2.$$
	Then, the dynamics we will propose is based on the heuristic idea that the small species should essentially not affect the large one. Conversely, the small species should be both affected by agents in the small species itself and the agents in the large species. To describe our micro-micro system, we denote positions, velocities, and internal variables of each species by
	\begin{align*}
	z_i&=(x_i,v_i,\theta_i)\in\bbr^d\times\bbr^d\times\bbr_+, & i=1,\ldots,N_1,\\
	\bar z_k&=(\bar x_k,\bar v_k,\bar \theta_k)\in\bbr^d\times\bbr^d\times\bbr_+, & k=1,\ldots,N_2.
	\end{align*}
	We will assume that the whole group of $N_1+N_2$ agents interact weakly all-to-all through TCS-type interactions. In addition, we suppose that agents in the first species also interact weakly all-to-all via aggregation attractive/repulsive interactions. Specifically, by dividing into self-interactions for agents in the same species and cross-interactions for agents in different species, we can propose the following system of coupled ODEs:
	
	\begin{align}\label{E-micro-micro-1st}
	\begin{aligned}
	\frac{dx_i}{dt}&=v_i,\\
	m_1\frac{dv_i}{dt}&=w_1\frac{\kappa_1}{N}\sum_{j=1}^{N_1}\phi_1(x_i-x_j)\left(\frac{v_j}{\theta_j}-\frac{v_i}{\theta_i}\right)-w_1\frac{\kappa_a}{N_1}\sum_{j=1}^{N_1}\nabla W_1(x_i-x_j)\\
	&\quad +\frac{\kappa_c}{N}\sum_{l=1}^{N_2}\phi_c(x_i-\bar x_l)\left(\frac{\bar v_l}{\bar \theta_l}-\frac{v_i}{\theta_i}\right),\\
	\frac{d\theta_i}{dt}&=w_1\frac{\nu_1}{N}\sum_{j=1}^{N_1}\zeta_1(x_i-x_j)\left(\frac{1}{\theta_i}-\frac{1}{\theta_j}\right)+\frac{\nu_c}{N}\sum_{l=1}^{N_2}\zeta_c(x_i-\bar x_l)\left(\frac{1}{\theta_i}-\frac{1}{\bar \theta_l}\right),
	\end{aligned}
	\end{align}
	
	\begin{align}\label{E-micro-micro-2nd}
	\begin{aligned}
	\frac{d\bar x_k}{dt}&=\bar v_k,\\
	m_2\frac{d\bar v_k}{dt}&=w_2\frac{\kappa_2}{N}\sum_{l=1}^{N_2}\phi_2(\bar x_k-\bar x_l)\left(\frac{\bar v_l}{\bar \theta_l}-\frac{\bar v_k}{\bar \theta_k}\right)+\frac{\kappa_c}{N}\sum_{j=1}^{N_1}\phi_c(\bar x_k-x_j)\left(\frac{v_j}{\theta_j}-\frac{\bar v_k}{\bar \theta_k}\right),\\
	\frac{d\bar \theta_k}{dt}&=w_2 \frac{\nu_2}{N}\sum_{l=1}^{N_2}\zeta_m(\bar x_k-\bar x_l)\left(\frac{1}{\bar \theta_k}-\frac{1}{\bar \theta_l}\right)+\frac{\nu_c}{N}\sum_{j=1}^{N_1}\zeta_c(\bar x_k-x_j)\left(\frac{1}{\bar \theta_k}-\frac{1}{\theta_j}\right),
	\end{aligned}
	\end{align}
	for every $i=1,\ldots,N_1$ and $k=1,\ldots,N_2$. Here, $\kappa_1,\nu_1\in \mathbb{R}_+$ are coupling strength coefficients for the  self-interactions between agents in the first species. We remark that those interactions are of TCS-type and are regulated by influence functions $\phi_1,\zeta_1$. Similarly, for the second microscopic species, $\kappa_2,\nu_2\in \mathbb{R}_+$ are the coupling strength for their  self-interactions, whilst $\phi_2,\zeta_2$ stand for the associated influence functions. We couple both systems via TCS-type cross-interactions between agents in two different species, that are modulated by the coupling strength coefficients $\kappa_c,\nu_c\in \mathbb{R}_+$ and the influence functions $\phi_c$ and $\zeta_c$. Finally, $\kappa_a$ indicates the coupling strength for the aggregation self-interactions between agents in the first species. We remark that both self-interactions within the same species and cross-interactions between different ones are symmetric and compatible with Newton's laws of motion. The weights $w_1=w_1(N_1,N_2)$ and $w_2=w_2(N_1,N_2)$ in \eqref{E-micro-micro-1st}--\eqref{E-micro-micro-2nd} are dimensionless coefficients depending only on the amount of agents $N_1$ and $N_2$. They allow modifying the weak mean-field all-to-all interactions by an appropriate scaling so that stronger self-interactions might be considered. Specifically, we will assume here that weights are given by
	\begin{equation}\label{E-weights}
	w_1=\frac{N}{N_1}\quad \mbox{ and }\quad w_2=\frac{N}{N_2}.
	\end{equation}

	\begin{remark}[A weighted mean-field scaling]
		Notice that the usual mean-field scaling has been avoided in \eqref{E-micro-micro-1st}-\eqref{E-micro-micro-2nd} by introducing the scaling weights \eqref{E-weights}. The reason behind this correction relies on a typical constraint of the mean-field scale with regards to groups with considerably different sizes. Indeed, let us assume for a moment that $w_1=w_2=1$ so that the interactions are reduced to weak mean-field all-to-all interactions. In such a case, equations $\eqref{E-micro-micro-1st}_2$-$\eqref{E-micro-micro-2nd}_2$ would take the form
		\begin{align*}
		m_1\frac{dv_i}{dt}&=\frac{N_1}{N}\frac{\kappa_1}{N_1}\sum_{j=1}^{N_1}\phi_1(x_i-x_j)\left(\frac{v_j}{\theta_j}-\frac{v_i}{\theta_i}\right)-\frac{N_1}{N}\frac{\kappa_a}{N_1}\sum_{j=1}^{N_1}\nabla W_1(x_i-x_j)\\
		&\quad+\frac{N_2}{N}\frac{\kappa_c}{N_2}\sum_{l=1}^{N_2}\phi_c(x_i-\bar x_l)\left(\frac{\bar v_l}{\bar \theta_l}-\frac{v_i}{\theta_i}\right),\\
		m_2\frac{d\bar v_k}{dt}&=\frac{N_2}{N}\frac{\kappa_2}{N_2}\sum_{l=1}^{N_2}\phi_2(\bar x_k-\bar x_l)\left(\frac{\bar v_l}{\bar \theta_l}-\frac{\bar v_k}{\bar \theta_k}\right)+\frac{N_1}{N}\frac{\kappa_c}{N_1}\sum_{j=1}^{N_1}\phi_c(\bar x_k-x_j)\left(\frac{v_j}{\theta_j}-\frac{\bar v_k}{\bar \theta_k}\right),
		\end{align*}
		for $i=1,\ldots,N_1$ and $k=1,\ldots,N_2$. Since we are assuming $N_1\ll N_2$, we obtain that
		$$\frac{N_1}{N}\approx 0\quad \mbox{ and }\quad \frac{N_2}{N}\approx 1.$$
		Thus, the small species essentially does not affect the large one. However, we also observe that the presence of a large species almost halts the dynamics of the small one. Namely, TCS- and aggregation-type self-interactions within the small species become negligible. This violates the basic principle of locality of interactions, see \cite{S91}. Our correction in \eqref{E-micro-micro-1st}-\eqref{E-micro-micro-2nd} keeps mean-field cross-interactions, but self-interactions are scaled according to the number of particles in the corresponding species. We shall see that the above unrealistic behavior disappears and agents in the small species will be affected both by self and cross interactions in the same order.
	\end{remark}

	\subsection{From micro-micro to meso-meso}
	
	Here, we sketch the formal coupled mean-field limit as $N_1\rightarrow\infty$ and $N_2\rightarrow \infty$ under the constraint $N_1\ll N_2$. This will justify the meso-meso system. Notice that since $N_1\ll N_2$, we can set $N_2=\sigma(N_1)$ for an appropriate function $\sigma:\mathbb{N}\longrightarrow \mathbb{N}$ such that 
	\begin{equation}\label{E-relation-N1-N2}
	\lim_{N_1\rightarrow \infty}\frac{\sigma(N_1)}{N_1}=\infty.
	\end{equation}
	We will see that our limiting meso-meso system will not depend on the specific function $\sigma$ linking $N_2$ to $N_1$. Associated with each species, we can define the empirical measures
	\begin{align*}
	\mu^{N_1}(t,x,v,\theta)&:=\frac{1}{N_1}\sum_{i=1}^{N_1}\delta_{x_i(t)}(x)\otimes \delta_{v_i(t)}(v)\otimes \delta_{\theta_i(t)}(\theta),\\
	\bar\mu^{N_1}(t,x,v,\theta)&:=\frac{1}{\sigma(N_1)}\sum_{k=1}^{\sigma(N_1)} \delta_{\bar x_k(t)}(x)\otimes \delta_{\bar v_k(t)}(v)\otimes \delta_{\bar\theta_k(t)}(\theta),
	\end{align*}
	where we have omitted the dependence on $\sigma$ for simplicity. Using \eqref{E-micro-micro-1st}-\eqref{E-micro-micro-2nd}, standard arguments show that $ \mu^{N_1}$ and $\bar \mu^{N_1}$ satisfy the following Vlasov-type kinetic equations in distributional sense
	\begin{align}\label{E-meso-empirical-measures}
	\begin{aligned}
	\partial_t\mu^{N_1}+v\cdot \nabla_x\mu^{N_1}&+\frac{1}{m_1}\nabla_v\cdot \left(F_1[\mu^{N_1}]\,\mu^{N_1}+H_1[\mu^{N_1}]\mu^{N_1}+\frac{\sigma(N_1)}{N_1+\sigma(N_1)}F_c[\bar\mu^{N_1}]\,\mu^{N_1}\right)\\
	&+\partial_\theta\left(G_1[\mu^{N_1}]\,\mu^{N_1}+\frac{\sigma(N_1)}{N_1+\sigma(N_1)}G_c[\bar \mu^{N_1}]\,\mu^{N_1}\right)=0,\\
	\partial_t\bar \mu^{N_1}+v\cdot \nabla_x \bar \mu^{N_1}&+\frac{1}{m_2}\nabla_v\cdot \left(F_2[\bar \mu^{N_1}]\,\bar \mu^{N_1}+\frac{N_1}{N_1+\sigma(N_1)}F_c[\mu^{N_1}]\,\bar{\mu}^{N_1}\right)\\
	&+\partial_\theta\left(G_2[\bar \mu^{N_1}]\,\bar \mu^{N_1}+\frac{N_1}{N_1+\sigma(N_1)}G_c[\mu^{N_1}]\,\bar \mu^{N_1}\right)=0,
	\end{aligned}
	\end{align}
	where the operators $F_i$, $G_i$ and $H_1$ take the form
	\begin{align*}
	F_i[f](t,x,v,\theta)&:=\kappa_i\int_{\mathbb{R}^{2d}\times\bbr_+}\phi_i(x-x_*)\left(\frac{v_*}{\theta_*}-\frac{v}{\theta}\right)f(t,z_*)\,dz_*,\\
	G_i[f](t,x,v,\theta)&:=\nu_i\int_{\mathbb{R}^{2d}\times\bbr_+}\zeta_i(x-x_*)\left(\frac{1}{\theta}-\frac{1}{\theta_*}\right)f(t,z_*)\,dz_*,\\
	H_1[f](t,x,v,\theta)&:=-\kappa_a\int_{\mathbb{R}^{2d}\times \mathbb{R}_+}\nabla W_1(x-x_*)f(t,z_*)\,dz_*,
	\end{align*}
	for any curve of probability measures $t\in \mathbb{R}_+\mapsto f(t,\cdot)\in \mathcal{P}(\mathbb{R}^d\times \mathbb{R}^d\times \mathbb{R}_+)$ and each index $i=1,2,c$. The mean-field limit as $N_1\rightarrow \infty$ allows showing that, if $\mu^{N_1}(0,\cdot)\rightarrow f(0,\cdot)$ and $\bar\mu^{N_1}(0,\cdot)\rightarrow \bar f(0,\cdot)$ appropriately in the sense of probability measures, then
	$$\bar \mu^{N_1}(t,\cdot)\rightarrow \bar f(t,\cdot)\quad \mbox{ and }\quad \mu^{N_1}(t,\cdot)\rightarrow f(t,\cdot),$$
	for each $t\in \mathbb{R}_+$. Under such condition, it is clear that we can formally pass to the limit as $N_1\rightarrow\infty$ in the above equation \eqref{E-meso-empirical-measures} and find a closed equation in terms of the limiting probability distributions $f=f(t,x,v,\theta)$ and $\bar f=\bar f(t,x,v,\theta)$. Indeed, notice that the size relation \eqref{E-relation-N1-N2} implies that 
	$$\lim_{N_1\rightarrow\infty}\frac{\sigma(N_1)}{N_1+\sigma(N_1)}=1\quad \mbox{ and }\quad \lim_{N_1\rightarrow \infty} \frac{N_1}{N_1+\sigma(N_1)}=0.$$
	Then, whilst self and cross interactions persist in the first mesoscopic species, only self-interactions remain in the second mesoscopic species. To conclude, the joint final mean-field limit of system \eqref{E-micro-micro-1st}-\eqref{E-micro-micro-2nd} as $N_1\rightarrow \infty$ and $N_2\rightarrow\infty$ with $N_1\ll N_2$ takes the form of the following meso-meso system
	\begin{align}\label{E-meso-meso}
	\begin{aligned}
	&\partial_t f+v\cdot \nabla_x f+\frac{1}{m_1}\nabla_v\cdot (F_1[f]\,f+H_1[f]f+F_c[\bar f]f)+\partial_\theta(G_1[f]\,f+G_c[\bar f]\,f)=0,\\
	&\partial_t\bar f+v\cdot \nabla_x\bar f+\frac{1}{m_2}\nabla_v\cdot (F_2[\bar f]\,\bar f)+\partial_\theta(G_2[\bar f]\,\bar f)=0.
	\end{aligned}
	\end{align}

	\subsection{From meso-meso to meso-macro}
	
	Now, our goal is to find a hydrodynamic closure for the second species $\bar f$ in the meso-meso system \eqref{E-meso-meso}. Typically, one looks at moments of the distribution function, e.g.,
	\begin{align*}
	\bar \rho(t,x)&:=\int_{\mathbb{R}^d\times\bbr_+} \bar f(t,x,v,\theta)\,dv\,d\theta,\\
	(\bar \rho\,\bar u)(t,x)&:=\int_{\mathbb{R}^{d}\times\bbr_+}v \bar f(t,x,v,\theta)\,dv\,d\theta,\\
	(\bar \rho\,\bar e)(t,x)&:=\int_{\mathbb{R}^{d}\times\bbr_+}\theta \bar f(t,x,v,\theta)\,dv\,d\theta.
	\end{align*}
	Specifically, multiplying $\eqref{E-meso-meso}_2$ by $(1,v,\theta)$ and integrating with respect to $v$ and $\theta$ yields a non-closed hierarchy of PDEs for moments $\bar \rho$, $\bar u$ and $\bar e$. Although it is not closed, a typical method is to assume the mono-kinetic distribution of $\bar f$, i.e.,
	$$\bar f(t,x,v,\theta)=\bar \rho(t,x)\otimes \delta_{\bar u(t,x)}(v)\otimes \delta_{\bar e(t,x)}(\theta).$$
	By doing so, we obtain the following meso-macro multiscale system 
	\begin{align}\label{E-meso-macro-1st}
	\begin{aligned}
	\partial_t f&+v\cdot \nabla_x f+\frac{1}{m_1}\nabla_v\cdot (F_1[f]\,f+H_1[f]f+F_c[\bar \rho,\bar u,\bar e]f)+\partial_\theta(G_1[f]\,f+G_c[\bar \rho,\bar e]\,f)=0,\\
	&F_c[\bar\rho,\bar u,\bar e](t,x,v,\theta):=\kappa_c\int_{\mathbb{R}^d}\phi_c(x-x_*)\left(\frac{\bar u(t,x_*)}{\bar e(t,x_*)}-\frac{v}{\theta}\right)\bar \rho(t,x_*)\,dx_*,\\
	&G_c[\bar \rho,\bar e](t,x,v,\theta):=\nu_c\int_{\mathbb{R}^d}\zeta_c(x-x_*)\left(\frac{1}{\theta}-\frac{1}{\bar e(t,x_*)}\right)\bar\rho(t,x_*)\,dx_*,
	\end{aligned}
	\end{align}
	\begin{align}\label{E-meso-macro-2nd}
	\begin{aligned}
	\partial_t \bar \rho +\nabla\cdot (\bar \rho\, \bar u)&=0,\\
	\partial_t(\bar \rho\,\bar u)+\nabla\cdot (\bar \rho\,\bar u\otimes \bar u)&=\frac{\kappa_2}{m_2}\int_{\mathbb{R}^d}\phi_2(x-x_*)\left(\frac{\bar u(t,x_*)}{\bar  e(t,x_*)}-\frac{\bar u(t,x)}{\bar e(t,x)}\right)\bar \rho(t,x)\,\bar \rho(t,x_*)\,dx_*,\\
		\partial_t(\bar \rho\,\bar e)+\nabla\cdot (\bar \rho\,\bar e\,\bar u)&=\nu_2\int_{\mathbb{R}^d}\zeta_2(x-x_*)\left(\frac{1}{\bar e(t,x)}-\frac{1}{\bar e(t,x_*)}\right)\bar \rho(t,x)\,\bar \rho(t,x_*)\,dx_*.
	\end{aligned}
	\end{align}

	\begin{remark}[A Stokes drag force compatible with TCS interactions]
		The meso-macro system \eqref{E-meso-macro-1st}-\eqref{E-meso-macro-2nd} is a particular instance of the dynamics of particles interacting with a fluid. This is an old-standing issue that has been treated in the literature for specific problems both from the points of view of fluid mechanics and kinetic theory. For instance, we refer to \cite{BDGM09,GJV04-1,GJV04-2,H98} for the interaction of aerosols with a viscous fluid. In \cite{BCHK12,HKK14} the authors studied the interaction of Cucker-Smale particles with a viscous fluid. Also, the interaction of TCS particles with a viscous fluid has been considered recently in \cite{CHJK19,CHJK20}. In the above setting, particles interact with a viscous ideal fluid so that the cross-interaction terms are described by the so-called Stokes drag force (Stokes' law)
		$$F_d(t,x,v)=\bar u(t,x)-v.$$
		See \cite{BDGR17,BDGR18,B49,DGR08,H18,JB08}, for some results on its mathematical derivation. In our case, the macroscopic species is no longer a Newtonian viscous fluid. Instead, it is replaced by the pressureless and inviscid hydrodynamic TCS model, which describes the active dynamics of a large population of agents through TCS alignment interactions. Also, the particle species is described through a TCS-aggregation kinetic equation weakly coupled with the former one. Given the different nature of the macroscopic equation, Stokes' drag force is not necessarily admissible. In fact, our microscopic derivation of \eqref{E-meso-macro-1st}-\eqref{E-meso-macro-2nd} suggests a different coupling in terms of the nonlinear and nonlocal drag forces $F_c[\bar \rho,\bar u,\bar e]$ and $G_c[\bar \rho,\bar e]$ exerted by the macroscopic species. In particular, if we assume that $f$ is supported at $\theta=\theta_0$, $\bar e\equiv \theta_0$ for some $\theta_0\in \mathbb{R}_+$, and $\phi_c=\zeta_c\equiv 1$, then cross-interactions reduce to an averaged drag force
		$$
		\bar F_d(t,v)=\frac{1}{\theta_0}\left(\int_{\mathbb{R}^d}\bar \rho(t,x)\bar u(t,x)\,dx-v\right).
		$$
		\end{remark}
	
	\subsection{Dimensional analysis and scaling}\label{SS-dimensional-analysis}
	
	Now, we will introduce a dimensionless version of \eqref{E-meso-macro-1st}-\eqref{E-meso-macro-2nd} in terms of characteristic units of the system and we will propose the main scaling assumptions. Here, we do not assume an explicit form of the influence functions $\phi_i$ and $\zeta_i$ and the aggregation potential $W_1$. Instead, we suppose that they are given by proper scaling of general influence functions $\phi$, $\zeta$, and aggregation potential $W$ verifying the regularity conditions \eqref{H-hypothesis-phi-zeta-W}. Specifically, we assume that
	\begin{equation}\label{E-coupling-weights-2}
	\phi_i(x)=\phi\left(\frac{x}{\sigma_i}\right),\quad \zeta_i(x)=\zeta\left(\frac{x}{\sigma_i}\right),\quad W_1(x)=\sigma_a\,W\left(\frac{x}{\sigma_a}\right),\quad x\in \mathbb{R}^d,
	\end{equation}
	for $i=1,2,c$, where $\sigma_1,\sigma_2,\sigma_c\in \mathbb{R}_+$ are length units representing the effective range of TCS (self and cross) interactions. Similarly, $\sigma_a\in \mathbb{R}_+$ represents the effective range of aggregation self-interactions in the kinetic species. Consider $T,L,V,\Theta\in \mathbb{R}_+$ characteristic values for time, length, velocity, and internal variable to be described later. Notice that no further effects (e.g. thermal bath) is applied to velocity apart from the TCS and aggregation interactions. Then, it is natural to assume the relation $V:=\frac{L}{T}$. In addition, consider the following rescaled (dimensionless) variables
	$$\widehat{t}:=\frac{t}{T},\qquad \widehat{x}:=\frac{x}{L},\qquad \widehat{v}:=\frac{v}{V},\qquad \widehat{\theta}:=\frac{\theta}{\Theta}.$$
	For consistency, we also define
	\begin{align*}
	&\widehat{f}(\widehat{t},\widehat{z})=L^dV^d\Theta f(t,z),\quad \widehat{\bar \rho}(\widehat{t},\widehat{x})=L^d\bar \rho(t,x),\quad \\
	&\widehat{\bar u}(\widehat{t},\widehat{x})=\frac{\bar u(t,x)}{V},\quad \widehat{\bar e}(\widehat{t},\widehat{x})=\frac{\bar e(t,x)}{\Theta}.
	\end{align*}
	Inserting them into \eqref{E-meso-macro-1st}-\eqref{E-meso-macro-2nd} and dropping the hats for simplicity of notation, we obtain the following dimensionless system 
	\begin{align}\label{E-meso-macro-1st-dimensionless}
	\begin{aligned}
	&\partial_t f+v\cdot \nabla_x f+\nabla_v\cdot \left(\frac{T\kappa_1}{m_1\Theta} F_{1,\delta_1}[f]\,f+\frac{T\kappa_a}{m_1LV}H_{1,\delta_a}[f]f+\frac{T\kappa_c}{m_1\Theta} F_{c,\delta_c}[\bar \rho,\bar u,\bar e]f\right)\\
	&\hspace{2.5cm}+\partial_\theta\left(\frac{T\nu_1}{\Theta^2} G_{1,\delta_1}[f]\,f+\frac{T\nu_c }{\Theta^2}G_{c,\delta_c}[\bar \rho,\bar e]\,f\right)=0,\\
	&F_{1,\delta_1}[f](t,x,v,\theta):=\int_{\mathbb{R}^{2d}\times\bbr_+}\phi\left(\frac{x-x_*}{\delta_1}\right)\left(\frac{v_*}{\theta_*}-\frac{v}{\theta}\right)f(t,z_*)\,dz_*,\\
	&H_{1,\delta_a}[f](t,x,v,\theta):=-\int_{\mathbb{R}^{2d}\times\bbr_+}\nabla W\left(\frac{x-x_*}{\delta_a}\right)f(t,z_*)\,dz_*,\\
	&G_{1,\delta_1}[f](t,x,v,\theta):=\int_{\mathbb{R}^{2d}\times\bbr_+}\zeta\left(\frac{x-x_*}{\delta_1}\right)\left(\frac{1}{\theta}-\frac{1}{\theta_*}\right)f(t,z_*)\,dz_*,\\
	&F_{c,\delta_c}[\bar\rho,\bar u,\bar e](t,x,v,\theta):=\int_{\mathbb{R}^d}\phi\left(\frac{x-x_*}{\delta_c}\right)\left(\frac{\bar u(t,x_*)}{\bar e(t,x_*)}-\frac{v}{\theta}\right)\bar \rho(t,x_*)\,dx_*,\\
	&G_{c,\delta_c}[\bar \rho,\bar e](t,x,v,\theta):=\int_{\mathbb{R}^d}\zeta\left(\frac{x-x_*}{\delta_c}\right)\left(\frac{1}{\theta}-\frac{1}{\bar e(t,x_*)}\right)\bar\rho(t,x_*)\,dx_*,
	\end{aligned}
	\end{align}
	\begin{align}\label{E-meso-macro-2nd-dimensionless}
	\begin{aligned}
	\partial_t \bar \rho +\nabla_x\cdot (\bar \rho\, \bar u)&=0,\\
	\partial_t(\bar \rho\,\bar u)+\nabla_x\cdot (\bar \rho\,\bar u\otimes \bar u)&=\frac{T\kappa_2}{m_2\Theta}\int_{\mathbb{R}^d}\phi\left(\frac{x-x_*}{\delta_2}\right)\left(\frac{\bar u(t,x_*)}{\bar  e(t,x_*)}-\frac{\bar u(t,x)}{\bar e(t,x)}\right)\bar \rho(t,x)\,\bar \rho(t,x_*)\,dx_*,\\
	\partial_t(\bar \rho\,\bar e)+\nabla_x\cdot (\bar \rho\,\bar e\,\bar u)&=\frac{T\nu_2}{\Theta^2}\int_{\mathbb{R}^d}\zeta\left(\frac{x-x_*}{\delta_2}\right)\left(\frac{1}{\bar e(t,x)}-\frac{1}{\bar e(t,x_*)}\right)\bar \rho(t,x)\,\bar \rho(t,x_*)\,dx_*,
	\end{aligned}
	\end{align}
	where we have denoted the scaled effective range of interactions
	$$\delta_1:=\frac{\sigma_1}{L},\quad \delta_2:=\frac{\sigma_2}{L},\quad \delta_c:=\frac{\sigma_c}{L},\quad \delta_a:=\frac{\sigma_a}{L}.$$
	We now chose the characteristic values as follows:
	\begin{equation}\label{E-characteristic-values}
	L:=\sigma_2,\qquad T:=\frac{m_2}{\kappa_2}=\frac{1}{\nu_2},\qquad \Theta:=1. 
	\end{equation}
	Specifically, $L$ is taken as the effective range of self-interactions of the macroscopic species, $T$ is taken as the relaxation time under such TCS interactions of the second species, $\Theta$ is set to any constant dimensionless value, say $1$. On the one hand, since those are the typical units of the second species, all the parameters in \eqref{E-meso-macro-2nd-dimensionless} disappear. On the other hand, we can define the following rescaled parameters for the first species
	\begin{equation}\label{E-scaled-parameters}
	\mu:=\frac{m_2}{m_1},\quad \tau^v:=\frac{\kappa_2}{\kappa_1},\quad \tau^a:=\frac{\kappa_2LV}{\kappa_a},\quad \tau_c^v:=\frac{\kappa_2}{\kappa_c},\quad \tau^\theta:=\frac{\nu_2}{\nu_1},\quad \tau_c^\theta:=\frac{\nu_2}{\nu_c}.
	\end{equation}
	We substitute the choice \eqref{E-characteristic-values} and notation \eqref{E-scaled-parameters} into \eqref{E-meso-macro-1st-dimensionless}-\eqref{E-meso-macro-2nd-dimensionless} to obtain
	\begin{align}\label{E-meso-macro-1st-dimensionless-2}
	\begin{aligned}
	&\partial_t f+v\cdot \nabla_x f+\mu\nabla_v\cdot \left(\frac{1}{\tau^v} F_{1,\delta_1}[f]\,f+\frac{1}{\tau^a}H_{1,\delta_a}[f]\,f+\frac{1}{\tau_c^v} F_{c,\delta_c}[\bar \rho,\bar u,\bar e]f\right)\\
	&\hspace{2.3cm}+\partial_\theta\left(\frac{1}{\tau^\theta} G_{1,\delta_1}[f]\,f+\frac{1}{\tau_c^\theta} G_{c,\delta_c}[\bar \rho,\bar e]\,f\right)=0,
	\end{aligned}
	\end{align}
	\begin{align}\label{E-meso-macro-2nd-dimensionless-2}
	\begin{aligned}
	\partial_t \bar \rho +\nabla_x\cdot (\bar \rho\, \bar u)&=0,\\
	\partial_t(\bar \rho\,\bar u)+\nabla_x\cdot (\bar \rho\,\bar u\otimes \bar u)&=\int_{\mathbb{R}^d}\phi(x-x_*)\left(\frac{\bar u(t,x_*)}{\bar  e(t,x_*)}-\frac{\bar u(t,x)}{\bar e(t,x)}\right)\bar \rho(t,x)\,\bar \rho(t,x_*)\,dx_*,\\
	\partial_t(\bar \rho\,\bar e)+\nabla_x\cdot (\bar \rho\,\bar e\,\bar u)&=\int_{\mathbb{R}^d}\zeta(x-x_*)\left(\frac{1}{\bar e(t,x)}-\frac{1}{\bar e(t,x_*)}\right)\bar \rho(t,x)\,\bar \rho(t,x_*)\,dx_*.
	\end{aligned}
	\end{align}
	
	Note that the second species is decoupled and can be fixed as an a priori given fluid. We conclude this part by proposing the different scaling limits of the first species that we shall study throughout this paper. For simplicity, we shall make a long-range assumption for cross-interactions between species. Specifically, we will assume that agents in the first species interact with agents in the second species within an effective range that is much larger than the effective range of self-interactions in the second species. In other words, we assume that $\delta_c\rightarrow \infty$. Notice that, as a consequence of \eqref{H-hypothesis-phi-zeta-W} we obtain $\phi(x/\delta_c)\rightarrow \phi(0)=1$ and $\zeta(x/\delta_c)\rightarrow \zeta(0)=1$, so that operators $F_{c,\delta_c}$ and $G_{c,\delta_c}$ get substantially simplified into
	\begin{align*}
	F_c[\bar \rho,\bar u,\bar e](t,x,v,\theta)&:=\int_{\mathbb{R}^d}\frac{\bar \rho(t,x)\bar u(t,x)}{\bar e(t,x)}\,dx-\frac{v}{\theta},\\
	G_c[\bar\rho,\bar e](t,x,v,\theta)&:=\frac{1}{\theta}-\int_{\mathbb{R}^d}\frac{\bar \rho(t,x)}{\bar e(t,x)}\,dx.
	\end{align*}
	Note that the operators $F_c$ and $G_c$ are now independent with the spatial variable $x$. To conclude, let us finally introduce the specific scale relation between the effective range of self-interactions for the first and second species, using the scaling parameter $\varepsilon$. This leads to two distinguished regimes as follows.
	
	\subsubsection{\bf (Regular influence functions)} In this case, we assume that the effective range of TCS and aggregation self-interactions in the first species is comparable to that of the second species, i.e.,
	$$\delta_1=\mathcal{O}(1),\quad \delta_a=\mathcal{O}(1),$$
	as $\varepsilon\rightarrow 0$. Then, we can propose two different scalings, depending on the relation between the strength of relaxation and TCS-aggregation interactions in each species.
	
	\medskip
	
		$\bullet$ {\bf (Strong relaxation of $\theta$)}
		In this case, we assume that
		\begin{equation}\label{E-scaling-regular-fast}
		\mu=\mathcal{O}(1),\quad \tau^v=\tau^a=\tau^v_c=\tau^\theta=\tau_c^\theta=\mathcal{O}(\varepsilon),
		\end{equation}
		as $\varepsilon\rightarrow 0$. This means that both species consist of particles with comparable mass. In addition, TCS, aggregation and relaxation interactions for the first species are stronger than for the second one. In particular, the relaxation of the internal variable of the first species towards the second one is strong. This choice leads to the system
		\begin{align}\label{E-kinetic-regular-fast}
		\begin{aligned}
		&\partial_t f_\e+v\cdot \nabla_x f_\e+\frac{1}{\varepsilon}\nabla_v\cdot \left(F[f_\e]\,f_\e+H[f_\e]\,f_\e+F_c[\bar \rho,\bar u,\bar e]f_\e\right)\\
		&\hspace{2.5cm}+\frac{1}{\varepsilon}\partial_\theta\left(G[f_\e]\,f_\e+G_c[\bar \rho,\bar e]\,f_\e\right)=0,\\
		&F[f](t,x,v,\theta):=\int_{\mathbb{R}^{2d}\times\bbr_+}\phi(x-x_*)\left(\frac{v_*}{\theta_*}-\frac{v}{\theta}\right)f(t,z_*)\,dz_*,\\
		&H[f](t,x,v,\theta):=-\int_{\mathbb{R}^{2d}\times\bbr_+}\nabla W(x-x_*)f(t,z_*)\,dz_*,\\
		&G[f](t,x,v,\theta):=\int_{\mathbb{R}^{2d}\times\bbr_+}\zeta(x-x_*)\left(\frac{1}{\theta}-\frac{1}{\theta_*}\right)f(t,z_*)\,dz_*,\\
		&F_c[\bar \rho,\bar u,\bar e](t,x,v,\theta):=\int_{\mathbb{R}^d}\frac{\bar \rho(t,x)\bar u(t,x)}{\bar e(t,x)}\,dx-\frac{v}{\theta},\\
		&G_c[\bar\rho,\bar e](t,x,v,\theta):=\frac{1}{\theta}-\int_{\mathbb{R}^d}\frac{\bar \rho(t,x)}{\bar e(t,x)}\,dx.
		\end{aligned}
		\end{align}
		
		\medskip
		
	    $\bullet$ {\bf (Weak relaxation of $\theta$)}
		In this case, we assume that
		\begin{equation}\label{E-scaling-regular-slow}
		\mu=\mathcal{O}(1),\quad \tau^v=\tau^a=\tau^v_c=\tau^\theta=\mathcal{O}(\varepsilon),\quad \tau_c^\theta=\mathcal{O}(1),
		\end{equation}
		as $\varepsilon\rightarrow 0$. This has similar implications as \eqref{E-scaling-regular-fast} except for the fact that relaxation of the internal variable of the first species towards the second one is slow and it occurs at the same scale as alignment interactions in the second species. This choice leads to the system
		\begin{align}\label{E-kinetic-regular-slow}
		\begin{aligned}
		&\partial_t f_\e+v\cdot \nabla_x f_\e+\frac{1}{\varepsilon}\nabla_v\cdot \left(F[f_\e]\,f_\e+H[f_\e]\,f_\e+F_c[\bar \rho,\bar u,\bar e]f_\e\right)\\
		&\hspace{2.5cm}+\partial_\theta\left(\frac{1}{\varepsilon} G[f_\e]\,f_\e+G_c[\bar \rho,\bar e]\,f_\e\right)=0,\\
		\end{aligned}
		\end{align}

	\begin{remark}
	Note that the parameters $\kappa_i$, $\nu_i$ and $\kappa_a$ for coupling strengths are absorbed by the scaling parameter $\e$ in the dimensionless versions of the meso-macro multiscale systems \eqref{E-kinetic-regular-fast} and \eqref{E-kinetic-regular-slow}. Thus, from now on, we focus on the case when these coupling strengths are normalized to 1, i.e., $\kappa_i=\nu_i=\kappa_a=1$ as in \eqref{A-4} and \eqref{A-6}.
    \end{remark}
	
	\subsubsection{\bf (Singular influence functions)}\label{SSS-scaling-singular}
	In this case, we assume that the effective range of TCS self-interactions in the first species is much smaller than in the second species, but, for simplicity, aggregation self-interactions are of the same order, i.e.,
	$$\delta_1=\mathcal{O}(\varepsilon),\quad \delta_a=\mathcal{O}(1),$$
	as $\varepsilon\rightarrow 0$. Again, we propose two different scalings depending on the relation between the strength of relaxation and alignment interactions in each species. Here, we consider the following choices of $\phi$ and $\zeta$ inspired by the typical influence functions of the C-S model 
	\begin{equation}\label{influence-function}
	\phi(x)=\frac{1}{(1+c_{\lambda_1} \vert x\vert^2)^{\lambda_1/2}},\quad \zeta (x)=\frac{1}{(1+c_{\lambda_2} \vert x\vert^2)^{\lambda_2/2}},\quad x\in \mathbb{R}^d,
	\end{equation}
	for general parameters $\lambda_1,\,\lambda_2>0$. The coefficients $c_{\lambda_i}=2^{2/{\lambda_i}}-1$ have been chosen so that the tails of influence functions $\phi_1$ and $\zeta_1$ in \eqref{E-coupling-weights-2} are small. Namely, we obtain that
	$$\phi_1(x)=\phi\left(\frac{x}{\sigma_1}\right)\leq \frac{1}{2},\quad \zeta_1(x)=\zeta\left(\frac{x}{\sigma_1}\right)\leq \frac{1}{2},$$
	whenever $\vert x\vert\geq \sigma_1$. Hence, $\sigma_1$ really reflects an effective range of TCS self-interactions in the first species.

	\medskip
	
		$\bullet$ {\bf (Strong relaxation of $\theta$)}
		In this case, we assume that
		\begin{equation}\label{E-scaling-singular-fast}
		\mu=\mathcal{O}(1),\quad \tau^v=\mathcal{O}(\varepsilon^{1+\lambda_1}),\quad \tau^\theta=\mathcal{O}(\varepsilon^{1+\lambda_2}),\quad \tau^a=\mathcal{O}(\e),\quad\tau_c^v=\tau_c^\theta=\mathcal{O}(\varepsilon),
		\end{equation}
		as $\varepsilon\rightarrow 0$. This means that both species consist of particles with comparable mass. In addition, TCS, aggregation and relaxation interactions for the first species are stronger than for the second one. In particular, relaxation of the internal variable of the first species towards the second one is weak. This choice leads to the system
		\begin{align}\label{E-kinetic-singular-fast}
		\begin{aligned}
		&\partial_t f_\e+v\cdot \nabla_x f_\e+\frac{1}{\varepsilon}\nabla_v\cdot \left(F_\varepsilon[f_\e]\,f_\e+H[f_\e]f_\e+F_c[\bar \rho,\bar u,\bar e]f_\e\right)\\
		&\hspace{2.5cm}+\frac{1}{\varepsilon}\partial_\theta\left(G_\varepsilon[f_\e]\,f+G_c[\bar \rho,\bar e]\,f_\e\right)=0,\\
		&F_\varepsilon[f](t,x,v,\theta):=\int_{\mathbb{R}^{2d}\times\bbr_+}\phi_\varepsilon(x-x_*)\left(\frac{v_*}{\theta_*}-\frac{v}{\theta}\right)f(t,z_*)\,dz_*,\\
		&G_\varepsilon[f](t,x,v,\theta):=\int_{\mathbb{R}^{2d}\times\bbr_+}\zeta_\varepsilon(x-x_*)\left(\frac{1}{\theta}-\frac{1}{\theta_*}\right)f(t,z_*)\,dz_*,
		\end{aligned}
		\end{align}
		where $\phi_\varepsilon$ and $\zeta_\varepsilon$ are now the singularly scaled influence functions
		\begin{equation}\label{E-coupling-weights-epsilon}
		\phi_\varepsilon(x)=\frac{1}{(\varepsilon^2+c_{\lambda_1}\vert x\vert^2)^{\lambda_1/2}},\quad \zeta_\varepsilon(x)=\frac{1}{(\varepsilon^2+c_{\lambda_2}\vert x\vert^2)^{\lambda_2/2}},\quad x\in \mathbb{R}^d.
		\end{equation}
        Here, $W$ is a generic aggregation potential verifying the regularity condition \eqref{H-hypothesis-phi-zeta-W}.

		\medskip
		
		$\bullet$ {\bf (Weak relaxation of $\theta$)}
		In this case, we assume that
		\begin{equation}\label{E-scaling-singular-slow}
		\mu=\mathcal{O}(1),\quad \tau^v=\mathcal{O}(\varepsilon^{1+\lambda_1}),\quad \tau^\theta=\mathcal{O}(\varepsilon^{1+\lambda_2}),\quad \tau^a=\mathcal{O}(\e),\quad \tau_c^v=\mathcal{O}(\varepsilon),\quad \tau_c^\theta=\mathcal{O}(\varepsilon^{\lambda_2}),
		\end{equation}
		as $\varepsilon\rightarrow 0$. This has similar implications as \eqref{E-scaling-singular-fast} except for the fact that relaxation of the internal variable of the first species towards the second one is slow and it occurs at an intermediate scale. This choice leads to the system
		\begin{align}\label{E-kinetic-singular-slow}
		\begin{aligned}
		&\partial_t f_\e+v\cdot \nabla_x f_\e+\frac{1}{\varepsilon}\nabla_v\cdot \left(F_\varepsilon[f_\e]\,f_\e+H[f_\e]\,f_\e+F_c[\bar \rho,\bar u,\bar e]f_\e\right)\\
		&\hspace{2.5cm}+\partial_\theta\left(\frac{1}{\varepsilon}G_\varepsilon[f_\e]\,f_\e+G_c[\bar \rho,\bar e]\,f_\e\right)=0.
		\end{aligned}
		\end{align}
	
	\begin{remark}[Singular influence functions]
		Notice that in the singular regime, the scale influence functions $\phi_\varepsilon$ and $\zeta_\varepsilon$ become singular at the origin asymptotically when $\varepsilon\rightarrow 0$. In fact, they take the form
		\begin{align}\label{E-singular-kernels}
		\begin{aligned}
		\phi_0(x)&:=\frac{1}{c_{\lambda_1}^{\lambda_1/2}\vert x\vert^{\lambda_1}},\quad x\in \mathbb{R}^d\setminus\{0\},\\
		\zeta_0(x)&:=\frac{1}{c_{\lambda_2}^{\lambda_2/2}\vert x\vert^{\lambda_2}},\quad x\in \mathbb{R}^d\setminus\{0\}.
		\end{aligned}
		\end{align}
	\end{remark}
	
	\begin{remark}[Singular aggregation potentials]\label{R-singular-aggregation-potentials}
	Due to our scaling assumption $\delta_a=\mathcal{O}(1)$, the limiting potential $W$ remains smooth in the limit. However, we could also assume $\delta_a=\mathcal{O}(\e)$ and a specific form of $W$, so that we recover meaningful singular potentials in the limit. For instance, Cucker and Dong \cite{CD14} proposed a novel alignment-aggregation model with similar choices of potentials $W$ like the influence functions \eqref{influence-function} in C-S model:
	\begin{equation}\label{E-Cucker-Dong-potentials}
	W(x)=\begin{dcases}
	\frac{1}{(1-\lambda_3)c_{\lambda_3}^{1/2}}(1+c_{\lambda_3}\vert x\vert^2)^{\frac{1-\lambda_3}{2}},  & \mbox{if }\lambda_3\in \mathbb{R}_+\setminus\{1\},\\
	\frac{1}{2c_{\lambda_3}^{1/2}}\log(1+c_{\lambda_3}\vert x\vert^2), & \mbox{if }\lambda_3=1.
	\end{dcases}
	\end{equation}
     These are attractive potentials, but a similar argument builds repulsive ones. Taking $\delta_a=\mathcal{O}(\e)$ and $\tau^a=\mathcal{O}(\e^{\lambda_3})$ when $\e\to 0$, we note that $H[f_\e]$ in \eqref{E-kinetic-singular-fast} and \eqref{E-kinetic-singular-slow} is replaced by
    $$
    H_\e[f_\e](t,x):=-\int_{\mathbb{R}^{2d}\times \mathbb{R}_+}\nabla W_\e(x-x_*)f_\e(t,z_*)\,dz_*,
    $$
    where the scaled aggregation potentials $W_\e$ take the form
    \begin{equation}\label{E-Cucker-Dong-potentials-scaled}
    W_\e(x)=\begin{dcases}
	\frac{1}{(1-\lambda_3)c_{\lambda_3}^{1/2}}(\e^2+c_{\lambda_3}\vert x\vert^2)^{\frac{1-\lambda_3}{2}},  & \mbox{if }\lambda_3\in \mathbb{R}_+\setminus\{1\},\\
	\frac{1}{2c_{\lambda_3}^{1/2}}\log(\e^2+c_{\lambda_3}\vert x\vert^2), & \mbox{if }\lambda_3=1.
	\end{dcases}
    \end{equation}
	Note that we find various singular potentials when $\e\to0$, ranging from H\"{o}lder-continuous Lennard-Jones potentials with sublinear growth, to Riesz and Newtonian potentials with logarithmic and algebraic singularities, see Figure \ref{fig:CD-singular-potentials}. We skip the analysis of singular aggregation kernels in this paper, but we shall briefly elaborate on some cases in Section \ref{subsec:3.5}.
	\end{remark}
	
	\begin{figure}
	\centering
    \begin{subfigure}[b]{0.3\textwidth}
    \begin{tikzpicture}[scale=0.65]
    \begin{axis}[
        axis x line=middle, axis y line=middle,
        xmin=0, xmax=2, xtick={0,0.5,1,1.5,2},
        ymin=0, ymax=2, ytick={0,0.5,1,1.5,2},
    ]
    \addplot [
        domain=0:2, 
        samples=200, 
        color=blue,
        line width=0.4mm,
    ]
    {pow(x,1-0.5)/((1-0.5)*pow(pow(2,2/0.5)-1,0.5/2))};
    \end{axis}
    \end{tikzpicture}
    \caption{$\lambda_3=0.5$}
    \label{fig:CD-singular-potentials-lambda3-0.5}
    \end{subfigure}
    \hspace{0.3cm}
    \begin{subfigure}[b]{0.3\textwidth}
    \begin{tikzpicture}[scale=0.65]
    \begin{axis}[
        axis x line=middle, axis y line=middle,
        xmin=0, xmax=3, xtick={0,...,4},
        ymin=-4, ymax=2, ytick={-4,...,2},
    ]
    \addplot [
        domain=0:3, 
        samples=200, 
        color=red,
        line width=0.4mm,
    ]
    {ln((pow(2,2/2)-1)*pow(x,2))/(2*pow(pow(2,2/2)-1,1/2))};
    \end{axis}
    \end{tikzpicture}
    \caption{$\lambda_3=1$}
    \label{fig:CD-singular-potentials-lambda3-1}
    \end{subfigure}
    \hspace{0.3cm}
    \begin{subfigure}[b]{0.3\textwidth}
    \begin{tikzpicture}[scale=0.65]
    \begin{axis}[
        axis x line=middle, axis y line=middle,
        xmin=0, xmax=5, xtick={0,...,5},
        ymin=-5, ymax=0, ytick={-5,...,0},
    ]
    \addplot [
        domain=0:5, 
        samples=200, 
        color=magenta,
        line width=0.4mm,
    ]
    {pow(x,1-2)/((1-2)*pow(pow(2,2/2)-1,2/2))};
    \end{axis}
    \end{tikzpicture}
    \caption{$\lambda_3=2$}
    \label{fig:CD-singular-potentials-lambda3-2}
    \end{subfigure}
    \caption{Singular aggregation potential $W_0$ with $\lambda_3=0.5$, $\lambda_3=1$ and $\lambda_3=2$.}\label{fig:CD-singular-potentials}
    \end{figure}
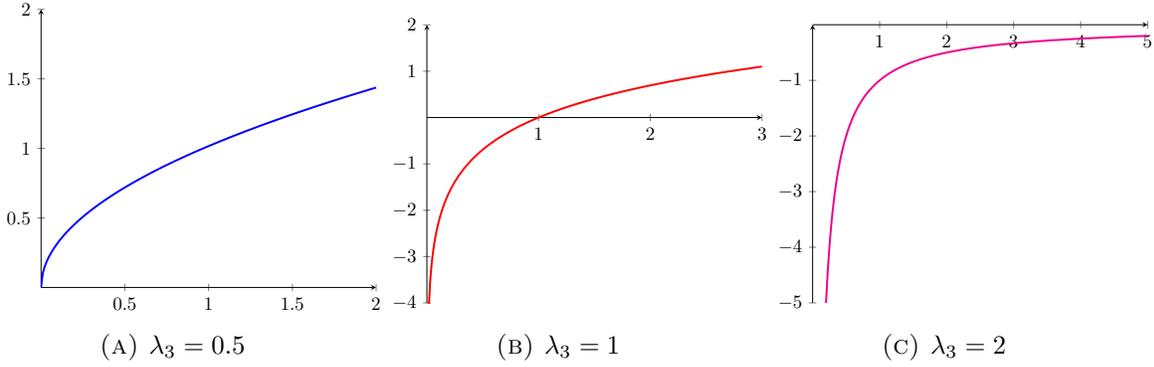

	\section{Hydrodynamic limit in the strong relaxation regime}
	\label{sec:2}
	\setcounter{equation}{0}
	
	In this section, we derive the hydrodynamic limit of system \eqref{A-4} towards \eqref{A-5}. To this end, we first prove that the internal variable support of $f_\e$ shrinks fast enough so that $\theta$ concentrates at the background mean value $\theta^\infty(t)$ (see \eqref{E-background-mean-theta}), after some very small initial time layer. With such control in hand, we obtain appropriate \textit{a priori} estimates for the velocity moments. Finally, we combine the estimates for velocity moments and the internal variable support to derive the rigorous hydrodynamic limit. 
	Throughout this part, we will always assume the following hypothesis on initial data:
	\begin{align}\label{E-hypothesis-initial-data}
	\begin{aligned}
	& f_\varepsilon^0(x,v,\theta)\geq 0\ \mbox{ and }\ f_\varepsilon^0\in C^\infty_c(\mathbb{R}^{2d}\times \mathbb{R}_+),\\
	& \Vert f_\varepsilon^0\Vert_{L^1(\mathbb{R}^{2d}\times \mathbb{R}_+)}=1\ \mbox{ and }\ \rho_\varepsilon^0\overset{*}{\rightharpoonup} \rho^0\mbox{ in }\mathcal{M}(\mathbb{R}^d),\\
	&\Vert \vert x\vert f_\varepsilon^0\Vert_{L^1(\mathbb{R}^{2d}\times \mathbb{R}_+)}\leq M_0,\ \Vert \vert v\vert^2 f_\varepsilon^0\Vert_{L^1(\mathbb{R}^{2d}\times \mathbb{R}_+)}\leq E_0\ \mbox{ and }\ \supp_\theta f_\varepsilon^0\subseteq [\theta_m^0,\theta_M^0],
	\end{aligned}
	\end{align}
	for each $\varepsilon>0$, where $M_0$, $E_0$, and $\theta_m^0<\theta_M^0$ are positive $\varepsilon$-independent constants. In addition, we will assume that $(\bar\rho,\bar u,\bar e)$ is any fixed strong (Lipschitz) solution to the macroscopic species \eqref{eq-hydro-TCS-couple}, which verifies the following properties within a time interval $[0,T]$
	\begin{align}\label{E-hypothesis-macro-species}
	\begin{aligned}
	&\bar \theta_m\leq \bar e(t,x)\leq \bar \theta_M,\\
	&|\bar u(t,x)|\le \bar{v}_M,
	\end{aligned}
	\end{align}
	for any $t\in [0,T]$ and $x\in\supp\bar\rho(t,\cdot)$, where $\bar\theta_m<\bar \theta_M$ and $\bar{v}_M$ are positive $\varepsilon$-independent constants. Finally, assume the compatibility condition
	\begin{equation}\label{E-hypothesis-thetaM-thetam}
	\theta_M^0-\theta_m^0< \frac{\min\{\theta_m^0,\bar\theta_m\}^2}{\max\{\theta_M^0,\bar\theta_M\}}.
	\end{equation}

	\begin{remark}[Strong solutions of the background fluid \eqref{eq-hydro-TCS-couple}]
	We emphasize that we are assuming the existence of a strong solution to \eqref{eq-hydro-TCS-couple} verifying the uniform control \eqref{E-hypothesis-macro-species}.

	$\bullet$ {\bf (Periodic domain)} This is justified when the spatial domain is the periodic box $\mathbb{T}^d$. Indeed, in \cite{HKMRZ18} the authors proved the existence of unique local-in-time strong solutions to the hydrodynamic  equations \eqref{eq-hydro-TCS-couple} under mild assumptions on the initial data $(\bar\rho^0,\bar u^0 ,\bar e^0)\in H^s(\mathbb{T}^d)\times H^{s+1}(\mathbb{T}^d)\times H^{s+1}(\mathbb{T}^d)$, for any integer $s>\frac{d}{2}+1$ and Lipschitz-continuous influence functions $\phi$ and $\zeta$. In particular, this suggests the existence of a small enough $T>0$ such that assumptions \eqref{E-hypothesis-macro-species} fulfill due to the compactness of the domain. Indeed, for small enough initial data, solutions were proved to be global in time, so that we could indeed set $T=\infty$.
	
	$\bullet$ {\bf (Free space)} For the free space $\mathbb{R}^d$ an analogue well-posedness result is not available in the literature. However, we still expect the coexistence of blow-ups and global strong solutions. Note that the persistence of strong solutions is guaranteed as long as $\nabla_x \bar u(t,x)$ remains uniformly bounded. This has been exploited in \cite{TT14} for the Euler-alignment system \eqref{A-0-2} in $\mathbb{R}^d$. Namely, the authors found an explicit critical threshold so that sub-critical initial configurations lead to global strong solutions whilst super-critical ones imply finite time blow-ups.	In fact, for influence function $\phi$ with slow tails (i.e. $\lambda_1\in (0,1)$) \textit{global strong solutions must flock}, according to Theorem 2.2 in \cite{TT14}. Then, we recover similar uniform control \eqref{E-hypothesis-macro-species} up to $T=\infty$ for sub-critical initial configurations.
	\end{remark}

	\begin{remark}[Control on $\theta^\infty$ and $u^\infty$]\label{R-hypothesis-macro-species-gain}
		Under conditions \eqref{E-hypothesis-macro-species} on the solution $(\bar\rho,\bar u,\bar e)$ to \eqref{eq-hydro-TCS-couple}, we obtain extra regularity $\theta^\infty\in W^{1,\infty}(0,T)$ and $\frac{u^\infty}{\theta^\infty}\in L^2(0,T)$. Specifically,
		\begin{align}\label{E-hypothesis-macro-species-gain-theta-infty}
		\begin{aligned}
		&\bar \theta_m\leq \theta^\infty(t)\leq \bar\theta_M,\\
		&0\leq \frac{d\theta^\infty}{dt}\leq \Vert \zeta\Vert_{L^\infty}\frac{\bar \theta_M^2}{\bar \theta_m^5}(\bar \theta_M-\bar\theta_m)^2
		\end{aligned}
		\end{align}
		for all $t\in [0,T]$ and
			\begin{equation}\label{E-hypothesis-macro-species-gain-u-infty}
			\int_0^T \left|\frac{u^\infty(t)}{\theta^\infty(t)}\right|^2\,dt<T\left|\frac{\bar{v}_M}{\bar{\theta}_m}\right|^2=:F_0^2.
			\end{equation}
		On the one hand, $\eqref{E-hypothesis-macro-species-gain-theta-infty}_1$ follows by integrating in $\eqref{E-hypothesis-macro-species}_1$ against $\bar\rho$. On the other hand, using the hydrodynamic equations for the background fluid
		\begin{align*}
		\frac{d\theta^\infty}{dt}&=\frac{d}{dt}\left(\int_{\bbr^d}\frac{\bar{\rho}}{\bar{e}}\,dx\right)^{-1}=-(\theta^\infty)^2\frac{d}{dt}\int_{\bbr^d}\frac{\bar{\rho}}{\bar{e}}\,dx = -(\theta^\infty)^2\int_{\bbr^d} \left(\frac{\partial_t \bar{\rho}}{\bar{e}}-\frac{\bar{\rho}\partial_t\bar{e}}{\bar{e}^2}\right)\,dx\\
		&=-(\theta^\infty)^2\int_{\bbr^d}\left(\frac{\nabla\cdot(\bar{\rho}\bar{u})}{\bar{e}}-\frac{\bar{\rho}\bar{u}\cdot\nabla\bar{e}}{\bar{e}^2}\right)\,dx \\
		&\quad +\frac{(\theta^\infty)^2}{2}\int_{\bbr^{2d}}\zeta(x-x_*)\left(\frac{1}{\bar{e}^2(t,x)}-\frac{1}{\bar e^2(t,x_*)}\right)\\
		& \hspace{2.7cm} \times \left(\frac{1}{\bar{e}(t,x)}-\frac{1}{\bar{e}(t,x_*)}\right)\bar{\rho}(t,x)\bar{\rho}(t,x_*)\,dx\,dx_*,
		\end{align*}
		for every $t\in [0,T]$. By integrating by parts, the first term in the right-hand side vanishes. Using the preceding control on $\theta^\infty$ along with the upper and lower bounds of $\bar e$ in $\eqref{E-hypothesis-macro-species}_1$ we conclude $\eqref{E-hypothesis-macro-species-gain-theta-infty}_2$. In particular, the smaller $(\bar\theta_m,\bar\theta_M)$, the flatter the slope $\frac{d\theta^\infty}{dt}$. Similarly, it follows from the definition of $u^\infty(t)$ that
			\begin{align*}
			\left|\frac{u^\infty(t)}{\theta^\infty(t)}\right|=\left|\int_{\bbr^d}\frac{\bar\rho(t,x)\bar u(t,x)}{\bar e(t,x)}\,dx\right|\le \left|\frac{\bar{v}_M}{\bar\theta_m}\right|, 
			\end{align*}
		for every $t\in [0,T]$, which implies the claimed $L^2$ estimate \eqref{E-hypothesis-macro-species-gain-u-infty}.
	\end{remark}

	\subsection{Hierarchy of moments}
	Under the above assumptions \eqref{E-hypothesis-initial-data}, \eqref{E-hypothesis-macro-species}, there exists a unique strong solution $f_\varepsilon$ to system \eqref{A-4} with initial data $f_\varepsilon^0$. Then, we can define the following hierarchy of moments
	\begin{align}\label{E-hierarchy-moments}
	\begin{aligned}
	\rho_\varepsilon(t,x)&:=\int_{\mathbb{R}^d\times \mathbb{R}_+}f_\varepsilon\,dv\,d\theta, & \mathcal{S}_\varepsilon^v(t,x)&:=\int_{\mathbb{R}^d\times \mathbb{R}_+}v\otimes vf_\varepsilon\,dv\,d\theta,\\
	j_\varepsilon(t,x)&:=\int_{\mathbb{R}^d\times \mathbb{R}_+}vf_\varepsilon\,dv\,d\theta, & \mathcal{S}_\varepsilon^\theta (t,x)&:=\int_{\mathbb{R}^d\times \mathbb{R}_+}v\,\theta f_\varepsilon\,dv\,d\theta,\\
	h_\varepsilon(t,x)&:=\int_{\mathbb{R}^d\times \mathbb{R}_+}\theta f_\varepsilon\,dv\,d\theta, & A_\varepsilon(t,x)&:=\int_{\mathbb{R}^d\times\bbr_+}\frac{v}{\theta} f_\varepsilon\,dv\,d\theta,\\
	& & B_\varepsilon(t,x)&:=\int_{\mathbb{R}^d\times \mathbb{R}_+}\frac{1}{\theta}f_\varepsilon\,dv\,d\theta.
	\end{aligned}
	\end{align}
	We notice that, given the strong nonlinearity of the forcing terms $F$, $F_c$, $G$ and $G_c$ in \eqref{A-4}, many nonlinear moments are involved in the dynamics. By multiplying \eqref{A-4} by $1$, $v$ and $\theta$ and integrating by parts, and using the definition of moments in \eqref{E-hierarchy-moments}, we obtain the following hierarchy of equations
	\begin{align}\label{E-hierarchy-moments-equations}
	\begin{aligned}
	&\partial_t\rho_\e+\nabla\cdot j_\e=0,\\
	&\e\partial_tj_\e+\e\nabla\cdot\mathcal{S}_\varepsilon^v+A_\varepsilon(\phi*\rho_\varepsilon)-\rho_\varepsilon(\phi*A_\varepsilon)+(\nabla W*\rho_\e)\rho_\e+A_\varepsilon-\rho_\varepsilon\frac{u^\infty(t)}{\theta^\infty(t)}=0,\\
	&\e\partial_th_\e+\e\nabla\cdot\mathcal{S}_\varepsilon^\theta+\rho_\varepsilon(\zeta*B_\varepsilon)-B_\varepsilon(\zeta*\rho_\varepsilon)+\frac{\rho_\varepsilon}{\theta^\infty(t)}-B_\varepsilon=0,
	\end{aligned}
	\end{align}
	where $\theta^\infty$ and $u^\infty$ are the background mean value of the internal variable and velocity respectively, see \eqref{E-background-mean-theta} and \eqref{E-background-mean-velocity}. Our final goal is to show rigorously that we can close the above hierarchy \eqref{E-hierarchy-moments-equations} of equations when $\varepsilon\rightarrow 0$. As a consequence of the scaling, the inertial terms $\e\nabla_x\cdot \mathcal{S}_\e^v$ and $\e\nabla_x\cdot\mathcal{S}_\e^\theta$ in second and third equations will disappear in the limit, see e.g. \cite{FST16,PS17} for the Cucker-Smale model.

	\subsection{Concentration of the internal variable support}
	In this part, we present the exponential concentration of the internal variable support of $f_\e$. To such an end, we define 
	\begin{align}\label{E-diameters}
	\begin{aligned}
	D_x^\varepsilon(t)&:=\diam(\supp_x f_\varepsilon(t)),\\
	D_v^\varepsilon(t)&:=\diam(\supp_v f_\varepsilon(t)),\\
	D_\theta^\varepsilon(t)&:=\diam(\supp_\theta f_\varepsilon(t)),
	\end{aligned}
	\end{align}
	for every $t\in [0,T]$ and $\varepsilon>0$. Recall that the characteristic system of \eqref{A-4} consists in the trajectories $Z_\varepsilon(t;0,z_0)=(X_\varepsilon(t;0,z_0),V_\varepsilon(t;0,z_0),\Theta_\varepsilon(t;0,z_0))$ solving
	\begin{align}\label{E-characteristic-system-A-4}
	\begin{aligned}
	&\frac{dX_\varepsilon}{dt}=V_\varepsilon,\\
	&\frac{dV_\varepsilon}{dt}=\frac{1}{\varepsilon}F[f_\varepsilon](t,Z_\varepsilon)+\frac{1}{\e}H[f_\e](t,Z_\e)+\frac{1}{\varepsilon}F_c[\bar\rho,\bar u,\bar e](t,Z_\varepsilon),\\
	&\frac{d\Theta_\varepsilon}{dt}=\frac{1}{\varepsilon}G[f_\varepsilon](t,Z_\varepsilon)+\frac{1}{\varepsilon}G_c[\bar \rho,\bar e](t,Z_\varepsilon),\\
	&Z_\varepsilon(0;0,z_0)=z_0,
	\end{aligned}
	\end{align}
	for $t\in [0,T]$ and any $z_0\in \mathbb{R}^{2d}\times \mathbb{R}_+$.
	
	\begin{lemma}[Concentration of internal variable]\label{lemma_temp}
		Let $f_\e$ be the solution of \eqref{A-4} subject to initial data $f^0_\e$. Suppose that initial data verify \eqref{E-hypothesis-initial-data} and $(\bar \rho,\bar u,\bar e)$ verifies \eqref{E-hypothesis-macro-species}. Consider the constants $0<\theta_m<\theta_M$ defined by
		\begin{equation}\label{E-thetam-thetaM}
		\theta_M:=\max\{\theta_M^0,\bar \theta_M\},\quad \theta_m:=\min\{\theta_m^0,\bar \theta_m\}.
		\end{equation}
		Then, the following estimates hold:
		\begin{enumerate}
			\item {\bf (Confinement of support)}
			\[\textup{supp}_\theta f_\varepsilon(t)\subset (\theta_m,\theta_M),\quad \mbox{for all} \quad t\in [0,T].\]
			\item {\bf (Exponential concentration)}
			\[D_\theta^\varepsilon(t)\le D_\theta^\varepsilon(0)e^{-\frac{1}{\e\theta_M^2}t},\quad \mbox{for all}\quad t\in [0,T].\]
			\item {\bf (Deviation from the background mean value $\theta^\infty(t)$)}
			\[\textup{supp}_\theta f_\varepsilon(t)\subset(\theta^\infty(t)-\eta_\e(t),\theta^\infty(t)+\eta_\e(t)),\quad \mbox{for all}\quad t\in [0,T].\]
			Here, the error $\eta_\e(t)$ is given as
			\[\eta_\e(t)=Ce^{-\frac{c}{\varepsilon}t}+C\varepsilon,\quad t\in [0,T],\]
			where $C,c\in \mathbb{R}_+$ only depend on parameters $\theta_M^0$, $\bar \theta_M$ and $\Vert \theta^\infty\Vert_{W^{1,\infty}(0,T)}$.
		\end{enumerate}
	\end{lemma}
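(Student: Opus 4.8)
The plan is to run the standard ``extremal characteristic'' argument along the flow $Z_\e(\cdot;0,z_0)=(X_\e,V_\e,\Theta_\e)$ of \eqref{E-characteristic-system-A-4}. Since $f_\e^0\in C_c^\infty$ and the driving field is locally Lipschitz in $z$ (by \eqref{H-hypothesis-phi-zeta-W} and the regularity of $(\bar\rho,\bar u,\bar e)$), $f_\e(t)$ is the pushforward of $f_\e^0$ under $Z_\e(t;0,\cdot)$, so that $\supp_\theta f_\e(t)=\{\Theta_\e(t;0,z_0):z_0\in\supp f_\e^0\}$. Put $M_\e(t):=\max\supp_\theta f_\e(t)$ and $m_\e(t):=\min\supp_\theta f_\e(t)$; these are Lipschitz, and at a.e.\ $t$ their derivatives are obtained by evaluating $\dot\Theta_\e$ along a characteristic realizing the extremum (a standard envelope argument). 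Since $\int_{\mathbb{R}^d}\bar\rho/\bar e\,dx=1/\theta^\infty(t)$, along a maximizing characteristic one has
\[
\dot M_\e=\frac{1}{\e}\int_{\mathbb{R}^{2d}\times\mathbb{R}_+}\zeta(X_\e-x_*)\Big(\tfrac{1}{M_\e}-\tfrac{1}{\theta_*}\Big)f_\e(t,z_*)\,dz_*+\frac{1}{\e}\Big(\tfrac{1}{M_\e}-\tfrac{1}{\theta^\infty(t)}\Big),
\]
and an analogous identity holds for $\dot m_\e$. Two sign facts then drive everything: (i) since $\theta_*\le M_\e(t)$ for every $\theta_*$ in the support and $\zeta\ge 0$, the alignment integral in $\dot M_\e$ is $\le 0$, and symmetrically the one in $\dot m_\e$ is $\ge 0$; (ii) by \eqref{E-hypothesis-macro-species-gain-theta-infty} one has $\theta^\infty(t)\in[\bar\theta_m,\bar\theta_M]\subseteq[\theta_m,\theta_M]$, so the relaxation term in $\dot M_\e$ is $\le 0$ whenever $M_\e\ge\theta_M$ and the one in $\dot m_\e$ is $\ge 0$ whenever $m_\e\le\theta_m$.

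Claim (1) follows from a standard barrier argument: because $m_\e(0)\ge\theta_m^0\ge\theta_m$ and $M_\e(0)\le\theta_M^0\le\theta_M$, fact (ii) prevents $m_\e$ from dropping below $\theta_m$ and $M_\e$ from exceeding $\theta_M$ on $[0,T]$ (strictly for $t>0$ once $\theta^\infty$ sits strictly inside, which holds unless the configuration is fully degenerate). For claim (2), subtracting the two identities and discarding both alignment integrals by fact (i) gives
\[
\frac{d}{dt}D_\theta^\e(t)=\dot M_\e-\dot m_\e\le\frac{1}{\e}\Big(\tfrac{1}{M_\e}-\tfrac{1}{m_\e}\Big)=-\frac{1}{\e}\frac{D_\theta^\e(t)}{M_\e m_\e}\le-\frac{1}{\e\theta_M^2}D_\theta^\e(t),
\]
where the last inequality uses $0<m_\e\le M_\e\le\theta_M$ from claim (1); Gr\"onwall's lemma then yields the exponential concentration.

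For claim (3) I would bound $M_\e-\theta^\infty$ from above and $\theta^\infty-m_\e$ from above separately, and then invoke $\supp_\theta f_\e(t)\subseteq[m_\e(t),M_\e(t)]$. Discarding the nonpositive alignment integral in $\dot M_\e$, using $\tfrac{1}{M_\e}-\tfrac{1}{\theta^\infty}=-\tfrac{M_\e-\theta^\infty}{M_\e\theta^\infty}$, the bound $M_\e\theta^\infty\le\theta_M^2$ from claim (1) and \eqref{E-hypothesis-macro-species-gain-theta-infty}, and $\dot\theta^\infty\ge 0$, one gets $\frac{d}{dt}(M_\e-\theta^\infty)\le-\frac{1}{\e\theta_M^2}(M_\e-\theta^\infty)$ whenever $M_\e>\theta^\infty$, hence $(M_\e(t)-\theta^\infty(t))_+\le\theta_M^0\,e^{-t/(\e\theta_M^2)}$. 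Symmetrically, discarding the nonnegative alignment integral in $\dot m_\e$ and using $\dot\theta^\infty(t)\le\|\theta^\infty\|_{W^{1,\infty}(0,T)}$, one obtains $\frac{d}{dt}(\theta^\infty-m_\e)\le\|\theta^\infty\|_{W^{1,\infty}(0,T)}-\frac{1}{\e\theta_M^2}(\theta^\infty-m_\e)$ whenever $m_\e<\theta^\infty$, so that by variation of constants $(\theta^\infty(t)-m_\e(t))_+\le\bar\theta_M\,e^{-t/(\e\theta_M^2)}+\theta_M^2\|\theta^\infty\|_{W^{1,\infty}(0,T)}\,\e$. Combining the two one-sided bounds gives claim (3) with $\eta_\e(t)=Ce^{-ct/\e}+C\e$ for suitable $C,c>0$ depending only on $\theta_M^0$, $\bar\theta_M$ and $\|\theta^\infty\|_{W^{1,\infty}(0,T)}$. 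The Gr\"onwall/Duhamel bookkeeping is routine; the two points that need care are justifying that $M_\e,m_\e$ are differentiable a.e.\ with derivative given by the extremal characteristic (so that the sign analysis is rigorous) and checking in claim (3) that the source $\dot\theta^\infty$ contributes only an $\mathcal{O}(\e)$ error rather than $\mathcal{O}(1)$ — which is exactly what the factor $\int_0^t e^{-(t-s)/(\e\theta_M^2)}\,ds\le\e\theta_M^2$ buys.
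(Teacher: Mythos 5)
Your proposal is correct and follows essentially the same route as the paper: extremal characteristics with a Danskin-type derivative formula for $\theta_\varepsilon^M,\theta_\varepsilon^m$, a barrier argument for the confinement, subtraction of the two differential inequalities for the exponential decay of $D_\theta^\varepsilon$, and one-sided Gr\"onwall/Duhamel estimates for the deviation from $\theta^\infty(t)$. The only (immaterial) differences are that you discard the alignment integral entirely in the diameter estimate where the paper retains its favorable sign, and that you exploit $\dot\theta^\infty\ge 0$ to make the upper deviation purely exponential, whereas the paper treats both sides symmetrically and absorbs $\Vert\dot\theta^\infty\Vert_{L^\infty}$ into the $\mathcal{O}(\varepsilon)$ term on each side.
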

	
	\begin{proof}
		Recall that by the  upper and lower bounds on $\bar e$, we readily infer analogue estimates $\bar\theta_m\leq \theta^\infty\leq \bar \theta_M$ for the mean value $\theta^\infty(t)$ of the internal variable in the background species, see \eqref{E-hypothesis-macro-species-gain-theta-infty} in Remark \ref{R-hypothesis-macro-species-gain}. We shall systematically use such a control along the proof. Let us define the maximum and minimum value of the phase support
		\begin{align*}
		\theta_\varepsilon^M(t)&:=\max\supp_\theta f_\varepsilon(t)=\max_{z_0\in \supp f_\varepsilon(0)}\Theta_\varepsilon(t;0,z_0),\\
		\theta_\varepsilon^m(t)&:=\min\supp_\theta f_\varepsilon(t)=\min_{z_0\in \supp f_\varepsilon(0)}\Theta_\varepsilon(t;0,z_0),
		\end{align*}
		where $\Theta_\varepsilon=\Theta_\varepsilon(t;0,z_0)$ is the $\theta$-component of the characteristic system \eqref{E-characteristic-system-A-4}. We notice that it is not necessarily true that maximum and minimum is propagated along a fixed characteristic; in other words, $\theta_\varepsilon^M(t)$ and $\theta_\varepsilon^m(t)$ are not necessarily characteristics. However, since the flow $(t,z_0)\mapsto \Theta_\varepsilon(t;0,z_0)$ is continuous, differentiable with respect to $t$ with continuous derivative, then a usual argument (see e.g. \cite[Corollary 3.3, Chapter 1]{DR95}) yields
		\begin{align}\label{E-derivative-maximum}
		\begin{aligned}
		\frac{d^+\theta_\varepsilon^M(t)}{dt}&=\max_{z_0\in M_\varepsilon(t)}\frac{\partial \Theta_\varepsilon}{\partial t}(t;0,z_0),\\
		\frac{d^+\theta_\varepsilon^m(t)}{dt}&=\min_{z_0\in m_\varepsilon(t)}\frac{\partial \Theta_\varepsilon}{\partial t}(t;0,z_0),
		\end{aligned}
		\end{align}
		for every $t\in [0,T]$, where $d^+/dt$ stands for the right sided derivative, and the critical sets $M_\varepsilon(t)$ and $m_\varepsilon(t)$ take the form
		\begin{align*}
		M_\varepsilon(t)&:=\{z_0\in \supp_\theta f_\varepsilon(0):\,\theta_\varepsilon^M(t)=\Theta_\varepsilon(t;0,z_0)\},\\
		m_\varepsilon(t)&:=\{z_0\in \supp_\theta f_\varepsilon(0):\,\theta_\varepsilon^m(t)=\Theta_\varepsilon(t;0,z_0)\}.
		\end{align*}
		We fix any $t\in [0,T]$ and $z^M_0\in M_\varepsilon(t)$ and use the characteristic system \eqref{E-characteristic-system-A-4} to achieve:
		\begin{align}\label{E-Theta_epsilon-max-derivative}
		\begin{aligned}
		\frac{\partial\Theta_\varepsilon}{\partial t}(t;0,z^M_0)&=\frac{1}{\e}\left(G[f_\e](t,Z_\varepsilon(t;0,z_0^M))+G_c[\bar{\rho},\bar{e}](t,Z_\varepsilon(t;0,z_0^M))\right)\\
		&=\frac{1}{\e}\int_{\bbr^{2d}\times\bbr_+}\zeta(X_\varepsilon(t;0,z_0^m)-x_*)\left(\frac{1}{\theta^M_\e(t)}-\frac{1}{\theta_*}\right)f_\e(t,z_*)\,dz_*\\
		& \quad +\frac{1}{\e}\left(\frac{1}{\theta^M_\e(t)}-\frac{1}{\theta^\infty(t)}\right)\\
		&\leq \frac{1}{\e}\int_{\bbr^{2d}\times\bbr_+}\zeta(D_x^\varepsilon(t))\left(\frac{1}{\theta^M_\e(t)}-\frac{1}{\theta_*}\right)f_\e(t,z_*)\,dz_*+\frac{1}{\e}\left(\frac{1}{\theta^M_\e(t)}-\frac{1}{\theta^\infty(t)}\right),
		\end{aligned}
		\end{align}
		where we used a maximality of $\theta_\e^M$ in the last inequality. Since $z_0^M\in M_\varepsilon(t)$ is arbitrary, \eqref{E-derivative-maximum} implies
		\begin{align}\label{thetaM}
		\begin{aligned}
		\frac{d^+\theta_\varepsilon^M}{dt}&\leq  \frac{1}{\e}\int_{\bbr^{2d}\times\bbr_+}\zeta(D_x^\varepsilon(t))\left(\frac{1}{\theta^M_\e(t)}-\frac{1}{\theta_*}\right)f_\e(t,z_*)\,dz_*+\frac{1}{\e}\left(\frac{1}{\theta^M_\e(t)}-\frac{1}{\theta^\infty(t)}\right)\\
		&\le \frac{1}{\e}\left(\frac{1}{\theta^M_\e(t)}-\frac{1}{\theta^\infty(t)}\right)\le \frac{1}{\e}\left(\frac{1}{\theta^M_\e(t)}-\frac{1}{\bar\theta_M}\right)=\frac{1}{\e}\frac{\bar \theta_M-\theta^M_\e(t)}{\bar\theta_M\theta^M_\e(t)},
		\end{aligned}
		\end{align}
		for every $t\in [0,T]$. A similar argument with $\theta_\varepsilon^m$ yields
		\begin{align}\label{thetam}
		\begin{aligned}
		\frac{d^+\theta_\varepsilon^m}{dt}&\geq  \frac{1}{\e}\int_{\bbr^{2d}\times\bbr_+}\zeta(D_x^\varepsilon(t))\left(\frac{1}{\theta^m_\e(t)}-\frac{1}{\theta_*}\right)f_\e(t,z_*)\,dz_*+\frac{1}{\e}\left(\frac{1}{\theta^m_\e(t)}-\frac{1}{\theta^\infty(t)}\right)\\
		&\geq \frac{1}{\e}\left(\frac{1}{\theta^m_\e(t)}-\frac{1}{\theta^\infty(t)}\right)\geq \frac{1}{\e}\left(\frac{1}{\theta^m_\e(t)}-\frac{1}{\bar\theta_m}\right)=\frac{1}{\e}\frac{\bar \theta_m-\theta^m_\e(t)}{\bar\theta_m\theta^m_\e(t)},
		\end{aligned}
		\end{align}
		for every $t\in [0,T]$.

		\medskip
		
		$\bullet$ {\sc Step 1}.	We claim that
		\[\theta^M_{\e}(t)\le \max\{\theta_M^0,\bar \theta_M\}=:\theta_M,\quad \mbox{for}\quad t\in [0,T].\]
		To show this result, we show that $\theta^M_\e(t)< \theta_M+\delta$ for any $\delta>0$ and $t\in [0,T]$. We define
		\[t_2:=\sup\{t\in [0,T]:~\theta^M_\e(s)<\theta_M+\delta \quad\mbox{for}\quad 0\le s\le t\}.\]
		Assume by contradiction that $t_2<+\infty$. Then, we can define
		$$t_1:=\inf\{t\in [0,t_2]:\,\theta_M+\frac{\delta}{2}\leq \theta_\varepsilon^M(s),\quad \mbox{for}\quad t\leq s\leq t_2\}.$$
		By continuity along the fact that $\theta_\varepsilon^M(0)\leq \theta_M$, we obtain that $0<t_1<t_2$ and
		\begin{align}
		&\theta_\varepsilon^M(t_1)=\theta_M+\frac{\delta}{2},\qquad \theta_\varepsilon^M(t_2)=\theta_M+\delta,\label{E-t1-t2}\\
		&\theta_M+\frac{\delta}{2}\leq \theta_\varepsilon^M(t)\leq \theta_M+\delta, \quad \mbox{for}\quad t_1\leq t\leq t_2.\label{E-t1-t2-intermediate}
		\end{align}
		On the one hand, by \eqref{E-t1-t2} we obtain
		$$\frac{\theta_\varepsilon^M(t_2)-\theta_\varepsilon^M(t_1)}{t_2-t_1}=\frac{\delta}{2(t_2-t_2)}>0.$$
		On the other hand, by \eqref{thetaM} and \eqref{E-t1-t2-intermediate}, we achive
		$$\frac{d^+\theta_\varepsilon^M}{dt}(t)\leq \frac{1}{\varepsilon}\frac{\theta_M-\theta_\varepsilon^M(t)}{\bar \theta_M\theta_\varepsilon^M(t)}\leq 0,$$
		for each $t\in [t_1,t_2]$. Consequently, by the mean value theorem adapted to one sided derivatives (see e.g. \cite{MV86}) we infer
		$$\frac{\theta_\varepsilon^M(t_2)-\theta_\varepsilon^M(t_1)}{t_2-t_2}\leq \sup_{t \in (t_1,t_2)}\frac{d ^+\theta_\varepsilon^M}{dt}(t)\leq 0,$$
		which yields a contradiction. Therefore, we conclude our claim. An analogous continuity argument can be applied to the minimal value $\theta_\varepsilon^m(t)$ and we obtain
		$$\theta_\varepsilon^m(t)\geq \min\{\theta_m^0,\bar \theta_m\}=:\theta_m,\quad \mbox{for}\quad t\in [0,T].$$
		
		\medskip
		
		$\bullet$ {\sc Step 2}. To estimate the diameter of the internal variable, we notice that
		$$D_\theta^\varepsilon(t)=\theta_\varepsilon^M(t)-\theta_\varepsilon^m(t),$$
		for each $t\in [0,T]$. Taking the difference between estimates \eqref{thetaM} and \eqref{thetam} we obtain
		\[
		\frac{d^+ D_\theta^\varepsilon}{dt}\le-\frac{1}{\e}\zeta(D_x^\varepsilon(t))\frac{D_\theta^\varepsilon(t)}{\theta_\varepsilon^M(t)\theta_\varepsilon^m(t)}-\frac{1}{\e}\frac{D_\theta^\varepsilon(t)}{\theta^M_\e(t)\theta^m_\e(t)}\le -\frac{D_\theta^\varepsilon(t)}{\e \theta_M^2},
		\]
		for each $t\in [0,T]$, where we have used the uniform bounds in the previous step. This implies the desired exponential decay for the diameter of internal variable.

		\medskip
		
		$\bullet$ {\sc Step 3}. Finally, we estimate the distance of the support of $f_\e(t)$ to $\theta^\infty(t)$. To such an end, we apply the ideas \eqref{E-derivative-maximum} and similiar ideas as in \eqref{E-Theta_epsilon-max-derivative} to obtain
		
		\begin{align*}
		\frac{d^+}{dt}(\theta_\varepsilon^M-\theta^\infty)&\leq \int_{\mathbb{R}^{2d}\times \mathbb{R}_+}\zeta(D_x^\varepsilon(t))\left(\frac{1}{\theta_\varepsilon^M(t)}-\frac{1}{\theta_*}\right)f_\varepsilon(t,z_*)\,dz_*\\
		&\quad -\frac{1}{\varepsilon}\frac{\theta_\varepsilon^M(t)-\theta^\infty(t)}{\theta_\varepsilon^M(t)\theta^\infty(t)}-\frac{d\theta^\infty}{dt}\leq -\frac{C_\varepsilon^M(t)}{\varepsilon}(\theta_\varepsilon^M(t)-\theta^\infty(t))+\left\vert\frac{d\theta^\infty}{dt}\right\vert,\\
		\frac{d^+}{dt}(\theta_\varepsilon^m-\theta^\infty)&\geq \int_{\mathbb{R}^{2d}\times \mathbb{R}_+}\zeta(D_x^\varepsilon(t))\left(\frac{1}{\theta_\varepsilon^m(t)}-\frac{1}{\theta_*}\right)f_\varepsilon(t,z_*)\,dz_*\\
		&\quad -\frac{1}{\varepsilon}\frac{\theta_\varepsilon^m(t)-\theta^\infty(t)}{\theta^m_\varepsilon(t)\theta^\infty(t)}-\frac{d\theta^\infty}{dt}\geq -\frac{C_\varepsilon^m(t)}{\varepsilon}(\theta_\varepsilon^m(t)-\theta^\infty(t))-\left\vert\frac{d\theta^\infty}{dt}\right\vert,
		\end{align*}
		for every $t\in [0,T]$, where again we have neglected the first term in the right-hand sides by the definition of $\theta_\varepsilon^M$ and $\theta^m$ and we have defined the time dependent coefficients
		\begin{align*}
		C_\varepsilon^M(t)&:=\left\{
		\begin{array}{ll}
		\frac{1}{\theta_M\bar \theta_M}, & \mbox{if}\quad \theta_\varepsilon^M(t)-\theta^\infty(t)\geq 0,\\
		\frac{1}{\theta_m\bar \theta_m}, & \mbox{if}\quad  \theta_\varepsilon^M(t)-\theta^\infty(t)< 0.
		\end{array}
		\right.\\
		C_\varepsilon^m(t)&:=\left\{
		\begin{array}{ll}
		\frac{1}{\theta_m\bar \theta_m}, & \mbox{if}\quad \theta_\varepsilon^m(t)-\theta^\infty(t)\geq 0,\\
		\frac{1}{\theta_M\bar \theta_M}, & \mbox{if}\quad  \theta_\varepsilon^m(t)-\theta^\infty(t)< 0.
		\end{array}
		\right.
		\end{align*}
		By using Gr\"{o}nwall's lemma for $\theta^M_\e$ and use a similar argument for $\theta_\varepsilon^m$, we obtain
		\begin{align*}
		\theta_\varepsilon^M(t)-\theta^\infty(t)&\leq (\theta_M+\bar \theta_M)e^{-\frac{1}{\varepsilon\theta_M\bar\theta_M}t}+\theta_M\bar \theta_M\left\Vert \frac{d\theta^\infty}{dt}\right\Vert_{L^\infty(0,T)}\varepsilon,\\
		\theta_\varepsilon^m(t)-\theta^\infty(t)&\geq- (\theta_M+\bar \theta_M)e^{-\frac{1}{\varepsilon\theta_M\bar\theta_M}t}-\theta_M\bar \theta_M\left\Vert \frac{d\theta^\infty}{dt}\right\Vert_{L^\infty(0,T)}\varepsilon,
		\end{align*}
		for each $t\in [0,T]$. Then, we recover the desired error estimate for appropriate coefficients $C$ and $c$ depending on $\theta_M$, $\bar\theta_M$ and $\Vert \theta^\infty\Vert_{W^{1,\infty}(0,T)}$, that is finite by the property \eqref{E-hypothesis-macro-species-gain-theta-infty} in Remark \ref{R-hypothesis-macro-species-gain}.
	\end{proof}

	Notice that relaxation is exponential with rate $\mathcal{O}(\varepsilon^{-1})$. Then, we can simplify the above estimates to achieve the following polynomial control for the concentration of internal variable support and its deviation from the background mean value $\theta^\infty$, in terms of $\e$, after a very short initial time layer.
	
	\begin{corollary}[Initial time layer]\label{cor_temp}
		Let $f_\e$ be the solution of \eqref{A-4} subject to initial data $f^0_\e$. Suppose that initial data verify \eqref{E-hypothesis-initial-data} and $(\bar \rho,\bar u,\bar e)$ verifies \eqref{E-hypothesis-macro-species}. Then, for any $\alpha>0$ and $0<\beta<1$, there exists a constant $C\in \mathbb{R}_+$, such that the internal variable support $D_\theta^\varepsilon(t)$ and the error $\eta_\varepsilon(t)$ in Lemma \ref{lemma_temp} verify
		\begin{equation}\label{temp_diam2}
		D_\theta^\varepsilon(t)\le C\e^\alpha,\quad  \eta_\varepsilon(t)\leq C\varepsilon^{\min\{\alpha,1\}},
		\end{equation}
		for every $t\in [\varepsilon^\beta,T]$. Here, $C$ depends on the exponents $\alpha,\beta$ along with the parameters $\theta_M^0$, $\theta_m^0$, $\bar \theta_M$, $\bar \theta_m$, $\Vert \zeta\Vert_{L^\infty}$ and $\Vert \theta^\infty\Vert_{W^{1,\infty}(0,T)}$.
	\end{corollary}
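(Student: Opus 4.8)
The plan is to read off the corollary directly from the quantitative decay estimates of Lemma~\ref{lemma_temp}, using the elementary fact that exponential relaxation with rate $\mathcal{O}(\varepsilon^{-1})$ beats any fixed power of $\varepsilon$ once we have waited a time of order $\varepsilon^\beta$ with $0<\beta<1$. So no genuinely new ingredient is needed; the content is a careful bookkeeping of how the short initial layer $t\geq\varepsilon^\beta$ converts the exponential-in-$t/\varepsilon$ bounds into algebraic-in-$\varepsilon$ bounds.

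First I would isolate the analytic lemma behind the statement: for every $\alpha>0$ and every $c>0$, $0<\beta<1$ there is a constant $C=C(\alpha,\beta,c)$ such that
\[
e^{-c\,\varepsilon^{-(1-\beta)}}\le C\,\varepsilon^{\alpha},\qquad \mbox{for all }\varepsilon\in(0,1].
\]
This holds because the map $\varepsilon\mapsto \varepsilon^{-\alpha}e^{-c\,\varepsilon^{-(1-\beta)}}$ is continuous on $(0,1]$ and tends to $0$ as $\varepsilon\to 0^+$ (substitute $s=\varepsilon^{-(1-\beta)}\to\infty$ and use $s^{\alpha/(1-\beta)}e^{-cs}\to0$), hence it is bounded on $(0,1]$; restricting to small $\varepsilon$ is harmless since the scaling parameter is taken small, and for $\varepsilon\geq1$ there is nothing to prove.

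Next I would insert $t\geq\varepsilon^\beta$ into Lemma~\ref{lemma_temp}. Since then $t/\varepsilon\geq\varepsilon^{\beta-1}=\varepsilon^{-(1-\beta)}$, the exponential concentration bound yields
\[
D_\theta^\varepsilon(t)\le D_\theta^\varepsilon(0)\,e^{-\frac{1}{\varepsilon\theta_M^2}t}\le (\theta_M^0-\theta_m^0)\,e^{-\frac{1}{\theta_M^2}\varepsilon^{-(1-\beta)}},
\]
where I used $D_\theta^\varepsilon(0)\le\theta_M^0-\theta_m^0$ from \eqref{E-hypothesis-initial-data}; the analytic lemma with $c=\theta_M^{-2}$ then gives $D_\theta^\varepsilon(t)\le C\varepsilon^\alpha$ on $[\varepsilon^\beta,T]$. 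For the deviation error, Lemma~\ref{lemma_temp} provides $\eta_\varepsilon(t)=Ce^{-\frac{c}{\varepsilon}t}+C\varepsilon$; applying the same reasoning to the first summand and bounding $C\varepsilon\le C\varepsilon^{\min\{\alpha,1\}}$ for the second gives $\eta_\varepsilon(t)\le C\varepsilon^{\min\{\alpha,1\}}$. Finally I would track the constants: $C$ ends up depending only on $\alpha,\beta$ and on $\theta_M^0,\theta_m^0,\bar\theta_M,\bar\theta_m,\Vert\zeta\Vert_{L^\infty},\Vert\theta^\infty\Vert_{W^{1,\infty}(0,T)}$, since those are exactly the quantities entering the constants $C,c$ of Lemma~\ref{lemma_temp}.

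The proof has no real obstacle; the only subtlety is the appearance of $\min\{\alpha,1\}$ rather than $\alpha$ in the bound for $\eta_\varepsilon$. This is intrinsic and not an artefact of the argument: the additive $C\varepsilon$ term in $\eta_\varepsilon$ originates from the $\Vert d\theta^\infty/dt\Vert_{L^\infty}\,\varepsilon$ contribution produced by Gr\"onwall's lemma in Step~3 of the proof of Lemma~\ref{lemma_temp}, and it cannot be improved because $\theta^\infty$ genuinely moves in time. I would therefore state the polynomial rates separately, as in the corollary, and emphasize that the residual $O(\varepsilon)$ deviation of $\supp_\theta f_\varepsilon(t)$ from $\theta^\infty(t)$ is precisely what drives the $O(\varepsilon)$ corrections in the subsequent hydrodynamic limit.
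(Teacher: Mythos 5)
Your proposal is correct and follows essentially the same route as the paper: insert $t\geq\varepsilon^\beta$ into the exponential bounds of Lemma \ref{lemma_temp}, observe that $e^{-c\varepsilon^{-(1-\beta)}}$ decays faster than any power $\varepsilon^\alpha$, and absorb the residual $C\varepsilon$ term into $C\varepsilon^{\min\{\alpha,1\}}$. The only difference is that you make the underlying elementary limit explicit and track the constants more carefully, which the paper leaves implicit.
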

	
	\begin{proof}
		Taking $t\in [\varepsilon^\beta,T]$ and using Lemma \ref{lemma_temp}, we obtain
		\begin{align*}
		D_\theta^\varepsilon(t)&\leq (\theta_M^0-\theta_m^0) e^{-\frac{1}{\varepsilon^{1-\beta}\theta_M^2}},\\
		\eta_\varepsilon(t)&\leq Ce^{-\frac{c}{\varepsilon^{1-\beta}}}+C\varepsilon .
		\end{align*}
		Since all the above negative exponential functions decay faster than any polynomial $\varepsilon^\alpha$, we conclude the claimed result by appropriately modifying $C$ if necessary.
	\end{proof}
	
	To summarize, the previous results show that after a small initial time layer $\e_0:=\e^\beta$, the diameter of $\textup{supp}_\theta f_\e(t)$ is smaller than $C\e^\alpha$ and its distance to the background mean value $\theta^\infty(t)$ is smaller than $C\e^{\min\{\alpha,1\}}$. This justifies that,  as $\e\rightarrow 0$, $f_\e$ will concentrate around $\theta=\theta^\infty(t)$. That will be crucially used to derive the rigorous hydrodynamic limit.

	\subsection{Moment estimates}
	
	In this part, we prove a priory estimates for the moments of $f_\varepsilon$. We start by controlling the $k$-th order velocity moment of $f_\e$.

	\begin{lemma}[$k$-th order velocity moment]\label{L2.2}
		Let $f_\e$ be a solution of \eqref{A-4} subjects to initial data $f^0_\e$. Suppose that initial data verify \eqref{E-hypothesis-initial-data} and $(\bar \rho,\bar u,\bar e)$ verifies \eqref{E-hypothesis-macro-species}. Then, for any $k\ge1$, the following estimate 
		\begin{align}\label{B-0}
		\begin{aligned}
		&k\left(\frac{1}{\theta_M}-\Vert \phi\Vert_{L^\infty}\frac{D_\theta^\varepsilon(0)}{\theta_m^2}\right)\||v|^kf_\e\|_{L^1(0,T;L^1(\bbr^{2d}\times\bbr_+))}\\
		&+\frac{k}{2\theta_M}\int_0^T\int_{\bbr^{4d}\times\bbr^2_+}\phi(x-x_*)(|v|^{k-2}v-|v_*|^{k-2}v_*)\cdot(v-v_*)f_\varepsilon(t,z)f_\varepsilon(t,z_*)\,dz\,dz_*\,dt\\
		&\qquad \le\e\||v|^kf_\e(0)\|_{L^1(\bbr^{2d}\times\bbr_+)}+k\left\|\left(\left|\frac{u^\infty(t)}{\theta^\infty(t)}\right|+\Vert \nabla W\Vert_{L^\infty}\right)\||v|^{k-1} f_\e\|_{L^1(\bbr^{2d}\times\bbr_+)}\right\|_{L^1(0,T)},
		\end{aligned}
		\end{align}
		holds, where the constants $\theta_m$ and $\theta_M$ are given by formula \eqref{E-thetam-thetaM} in Lemma \ref{lemma_temp}.
	\end{lemma}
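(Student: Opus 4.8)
The natural approach is to test the kinetic equation \eqref{A-4} against the weight $|v|^k$ and integrate in $z=(x,v,\theta)$ and in $t$. The transport term $v\cdot\nabla_x f_\e$ contributes nothing after integration in $x$; the $\partial_\theta$ terms contribute nothing since $|v|^k$ is $\theta$-independent; and the time derivative gives $\e\frac{d}{dt}\||v|^kf_\e\|_{L^1}$, which after integrating on $[0,T]$ produces the boundary terms $\e\||v|^kf_\e(T)\|_{L^1}-\e\||v|^kf_\e(0)\|_{L^1}$, the first of which is nonnegative and hence can be dropped (or kept) on the appropriate side. The crux is the velocity-divergence term: integrating by parts in $v$ turns $\int |v|^k\,\nabla_v\cdot((F[f_\e]+H[f_\e]+F_c)f_\e)\,dz$ into $-k\int |v|^{k-2}v\cdot(F[f_\e]+H[f_\e]+F_c)\,f_\e\,dz$. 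Each of the three forcing contributions must then be estimated.

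\textbf{Key steps.} First I would handle the self-alignment term $F[f_\e]$. Writing it out and symmetrizing in $(z,z_*)$ in the usual Cucker--Smale fashion, the contribution becomes
\[
-\frac{k}{2}\int_{\bbr^{4d}\times\bbr_+^2}\phi(x-x_*)\left(\frac{|v|^{k-2}v}{?}-\frac{|v_*|^{k-2}v_*}{?}\right)\cdot\left(\frac{v_*}{\theta_*}-\frac{v}{\theta}\right)f_\e(t,z)f_\e(t,z_*)\,dz\,dz_*,
\]
but because of the TCS structure the velocities appear divided by $\theta$ and $\theta_*$, which are not equal. The trick is to split $\frac{v_*}{\theta_*}-\frac{v}{\theta}=\frac{1}{\theta_M}(v_*-v)+\left(\frac{1}{\theta_*}-\frac{1}{\theta_M}\right)v_*-\left(\frac{1}{\theta}-\frac{1}{\theta_M}\right)v$ (or more symmetrically, isolate a ``good'' coercive piece with a fixed reference temperature and treat the remainder as an error controlled by $D_\theta^\e$). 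Using $\supp_\theta f_\e(t)\subset(\theta_m,\theta_M)$ from Lemma~\ref{lemma_temp}(1) and the contractivity $D_\theta^\e(t)\le D_\theta^\e(0)$ from Lemma~\ref{lemma_temp}(2), the error terms are bounded by $\|\phi\|_{L^\infty}\frac{D_\theta^\e(0)}{\theta_m^2}$ times a moment of order $k$, which is why that precise combination $\frac{1}{\theta_M}-\|\phi\|_{L^\infty}\frac{D_\theta^\e(0)}{\theta_m^2}$ appears as the coefficient on the left. The leading coercive piece reproduces the double-integral monotonicity term $\phi(x-x_*)(|v|^{k-2}v-|v_*|^{k-2}v_*)\cdot(v-v_*)$ with the prefactor $\frac{k}{2\theta_M}$.

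\textbf{The remaining forces.} The cross term $F_c[\bar\rho,\bar u,\bar e]=\int \frac{\bar\rho\bar u}{\bar e}\,dx-\frac{v}{\theta}=\frac{u^\infty}{\theta^\infty}-\frac{v}{\theta}$ splits into a linear-drag part $-\frac{v}{\theta}$, whose contribution $-k\int |v|^{k-2}v\cdot\left(-\frac{v}{\theta}\right)f_\e = k\int \frac{|v|^k}{\theta}f_\e$ has a favorable sign and, using $\theta\le\theta_M$ on the support, dominates $\frac{k}{\theta_M}\||v|^kf_\e\|_{L^1}$ — this is the ``good'' term that absorbs the error from the $F[f_\e]$ estimate — and a forcing part $\frac{u^\infty}{\theta^\infty}$, giving $-k\int|v|^{k-2}v\cdot\frac{u^\infty}{\theta^\infty}f_\e$, bounded in absolute value by $k\left|\frac{u^\infty}{\theta^\infty}\right|\,\||v|^{k-1}f_\e\|_{L^1}$ by Cauchy--Schwarz ($|v|^{k-2}v\cdot\frac{u^\infty}{\theta^\infty}|\le|v|^{k-1}|\frac{u^\infty}{\theta^\infty}|$). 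The aggregation term contributes $k\int|v|^{k-2}v\cdot(\nabla W*\rho_\e)f_\e$, bounded by $k\|\nabla W\|_{L^\infty}\||v|^{k-1}f_\e\|_{L^1}$ since $\|\nabla W*\rho_\e\|_{L^\infty}\le\|\nabla W\|_{L^\infty}\|\rho_\e\|_{L^1}=\|\nabla W\|_{L^\infty}$. Integrating the whole identity on $[0,T]$, dropping the nonnegative endpoint term at $t=T$, collecting the coercive pieces on the left and the forcing bounds on the right, and noting the double integral on the left is nonnegative for $k\ge 1$ (monotonicity of $v\mapsto|v|^{k-2}v$) yields exactly \eqref{B-0}. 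The main obstacle is bookkeeping the temperature-splitting in the $F[f_\e]$ term so that the error is genuinely $\mathcal{O}(D_\theta^\e(0)/\theta_m^2)$ and gets subtracted from — rather than added to — the coercive $\frac{k}{\theta_M}$ coming from the linear drag; everything else is routine integration by parts and elementary inequalities.
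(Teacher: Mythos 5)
Your overall strategy is the paper's: test \eqref{A-4} against $|v|^k$, integrate by parts in $v$ (the transport and $\partial_\theta$ terms drop out), symmetrize the self-alignment contribution, use the drag $-v/\theta$ together with $\theta\le\theta_M$ on the support to produce the coercive $\frac{k}{\theta_M}\||v|^kf_\e\|_{L^1}$ term, and bound the $u^\infty/\theta^\infty$ and $\nabla W$ forcings by $k\bigl(|u^\infty/\theta^\infty|+\|\nabla W\|_{L^\infty}\bigr)\||v|^{k-1}f_\e\|_{L^1}$. All of that matches the paper's decomposition into $\mathcal{I}_{11},\mathcal{I}_{12},\mathcal{I}_{13}$ and is correct.

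The one step that would fail as written is your temperature splitting
\[
\frac{v_*}{\theta_*}-\frac{v}{\theta}=\frac{1}{\theta_M}(v_*-v)+\Bigl(\frac{1}{\theta_*}-\frac{1}{\theta_M}\Bigr)v_*-\Bigl(\frac{1}{\theta}-\frac{1}{\theta_M}\Bigr)v .
\]
The remainders involve $\frac{1}{\theta}-\frac{1}{\theta_M}=\frac{\theta_M-\theta}{\theta\theta_M}$, and $\theta_M-\theta$ is \emph{not} controlled by $D_\theta^\e(0)$: the constant $\theta_M=\max\{\theta_M^0,\bar\theta_M\}$ need not lie in $\supp_\theta f_\e(t)$ (take $\bar\theta_M\gg\theta_M^0$ with a narrow initial support near $\theta_m$), so the naive bound on these remainders is only of order $\theta_M-\theta_m$, which destroys the coefficient $\frac{1}{\theta_M}-\|\phi\|_{L^\infty}D_\theta^\e(0)/\theta_m^2$ in \eqref{B-0}. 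The reference must be one of the two temperatures actually present in the integrand; the paper writes
\[
\frac{v_*}{\theta_*}-\frac{v}{\theta}=\frac{1}{\theta}(v_*-v)+v_*\,\frac{\theta-\theta_*}{\theta\theta_*},
\]
so the error is genuinely proportional to $|\theta-\theta_*|\le D_\theta^\e(t)\le D_\theta^\e(0)$ (both $\theta$ and $\theta_*$ lie in the support), while the coercive piece still delivers the $\frac{k}{2\theta_M}$ dissipation because $\frac{1}{\theta}\ge\frac{1}{\theta_M}$ multiplies the nonnegative symmetrized product $(|v|^{k-2}v-|v_*|^{k-2}v_*)\cdot(v-v_*)$. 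Your splitting can be salvaged by further writing $a_*v_*-av=a_*(v_*-v)+(a_*-a)v$ with $a=\frac{1}{\theta}-\frac{1}{\theta_M}\ge 0$, exploiting the sign of the first piece and noting $a_*-a=\frac{1}{\theta_*}-\frac{1}{\theta}$; but that is exactly reassembling the paper's splitting, and without it the claimed error coefficient is not obtained.
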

	
	\begin{proof}
		We multiply the kinetic equation \eqref{A-4} by $|v|^k$ and integrate over $\bbr^{2d}\times\bbr_+$ to obtain
		\begin{align}
		\begin{aligned}\label{B-4}
		&\e\frac{d}{dt}\int_{\bbr^{2d}\times\bbr_+} |v|^k f_\e\,dz\\
		&=k\int_{\mathbb{R}^{2d}\times \mathbb{R}_+}\vert v\vert^{k-2}v\cdot F_c[\bar\rho,\bar u,\bar e]\,f_\varepsilon\,dz+k\int_{\bbr^{2d}\times\bbr_+}|v|^{k-2}v\cdot H[f_\e]f_\e\,dz\\
		&\quad +k\int_{\bbr^{2d}\times\bbr_+}|v|^{k-2}v\cdot F[f_\varepsilon]\,f_\e\,dz=:\mathcal{I}_{11}+\mathcal{I}_{12}+\mathcal{I}_{13}.
		\end{aligned}
		\end{align}

		$\bullet$ {\sc Estimate of $\mathcal{I}_{11}$}: We first recall that
		\[F_c[\bar{\rho},\bar{u},\bar{e}](t,x,v,\theta)=\int_{\bbr^d}\frac{\bar{\rho}\bar{u}}{\bar{e}}\,dx -\frac{v}{\theta}\]
		Therefore, $\mathcal{I}_{11}$ is estimated as
		\begin{align*}
		\mathcal{I}_{11} &= k\int_{\bbr^{2d}\times\bbr_+}|v|^{k-2}v\cdot \left(\int_{\bbr^d}\frac{\bar{\rho}\bar{u}}{\bar{e}}\,dx-\frac{v}{\theta}\right)f_\e\,dz\\
		&\le k\left|\frac{u^\infty(t)}{\theta^\infty(t)}\right|\int_{\bbr^{2d}\times\bbr_+}|v|^{k-1}f_\e\,dz -\frac{k}{\theta_M}\int_{\bbr^{2d}\times\bbr_+}|v|^kf_\e\,dz,
		\end{align*}
		where $\theta_M$ is given in the diameter estimate in Lemma \ref{lemma_temp}.
		
		\medskip
		
		$\bullet$ {\sc Estimate of $\mathcal{I}_{12}$}: Similarly, by the regularity assumption \eqref{H-hypothesis-phi-zeta-W} we note that
		\[\vert H[f_\e](t,x)\vert = \left\vert\int_{\bbr^{2d}\times\bbr_+}\nabla W(x-x_*)f_\e(t,z_*)\,dz_*\right\vert\leq \Vert \nabla W\Vert_{L^\infty}.\]
		Therefore, we derive the following estimate for $\mathcal{I}_{12}$:
		\[\mathcal{I}_{12} \le k\|\nabla W\|_{L^\infty}\int_{\bbr^{2d}\times\bbr_+}|v|^{k-1}f_\e\,dz.\]

		$\bullet$ {\sc Estimate of $\mathcal{I}_{13}$}: Recall now that
		$$F[f_\varepsilon](t,x,v,\theta)=\int_{\mathbb{R}^{2d}\times \mathbb{R}_+}\phi(x-x_*)\left(\frac{v_*}{\theta_*}-\frac{v}{\theta}\right)f_\varepsilon(t,z_*)\,dz_*.$$
		By the change of variables $(x,v,\theta)\leftrightharpoons (x_*,v_*,\theta_*)$ in the integrals in $\mathcal{I}_{13}$, we obtain the usual symmetrized form
		\begin{align*}
		\mathcal{I}_{13}&=\frac{k}{2}\int_{\bbr^{4d}\times\bbr^2_+}\phi(x-x_*)(|v|^{k-2}v-|v_*|^{k-2}v_*)\cdot\left(\frac{v_*}{\theta_*}-\frac{v}{\theta}\right)f_\varepsilon(t,z)f_\varepsilon(t,z_*)\,dz\,dz_*\\
		&=\frac{k}{2}\int_{\bbr^{4d}\times\bbr^2_+}\phi(x-x_*)\frac{1}{\theta}(|v|^{k-2}v-|v_*|^{k-2}v_*)\cdot(v_*-v)\,f_\varepsilon(t,z)f_\varepsilon(t,z_*)\,dz\,dz_*\\
		&\quad+\frac{k}{2}\int_{\bbr^{4d}\times\bbr^2_+}\phi(x-x_*)(|v|^{k-2}v-|v_*|^{k-2}v_*)\cdot v_*\frac{\theta-\theta_*}{\theta_*\theta}f_\varepsilon(t,z)f_\varepsilon(t,z_*)\,dz\,dz_*\\
		&=:\mathcal{I}_{131}+\mathcal{I}_{132}.
		\end{align*}
		
		\medskip
		
		$\diamond$ {\sc Estimate of $\mathcal{I}_{131}$}: It is straightforward to see that for $k\ge1$,
		\begin{align*}
		(|v|^{k-2}v-|v_*|^{k-2}v_*)\cdot(v-v_*) &= |v|^k+|v_*|^k-(|v|^{k-2}+|v_*|^{k-2})\,v\cdot v_*\\
		&\geq |v|^k+|v_*|^k-(|v|^{k-2}+|v_*|^{k-2})|v||v_*|\\
		&=(|v|^{k-1}-|v_*|^{k-1})(|v|-|v_*|)\ge 0.
		\end{align*}
		Therefore, we directly obtain that $\mathcal{I}_{131}\le0$. We move it to the left-hand side of \eqref{B-4}.
		
		\medskip
		
		$\diamond$ {\sc Estimate of $\mathcal{I}_{132}$}: To estimate $\mathcal{I}_{132}$, we use the contraction of the internal variable support in Lemma \ref{lemma_temp} to observe that
		\begin{align*}
		|\mathcal{I}_{132}|&=\frac{k}{2}\left|\int_{\bbr^{4d}\times\bbr^2_+}\phi(x-x_*)\left(|v|^{k-2}v-|v_*|^{k-2}v_*\right)\cdot v_*\frac{\theta-\theta_*}{\theta_*\theta}f_\varepsilon(t,z)f_\varepsilon(t,z_*)\,dz\,dz_*\right|\\
		&\le \frac{k\|\phi\|_{L^\infty}D_\theta^\varepsilon(t)}{2\theta_m^2} \int_{\bbr^{4d}\times\bbr^2_+}\left(|v|^{k-1}|v_*|+|v_*|^k\right)f_\varepsilon(t,z)f_\varepsilon(t,z_*)\,dz\,dz_*\\
		&\le\frac{k\|\phi\|_{L^\infty}D_\theta^\varepsilon(t)}{2\theta_m^2} \int_{\bbr^{4d}\times\bbr^2_+}\left(\frac{k-1}{k}|v|^k+\frac{1}{k}|v_*|^k+|v_*|^k\right)f_\varepsilon(t,z)f_\varepsilon(t,z_*)\,dz\,dz_*\\
		&\le\frac{k\|\phi\|_{L^\infty}D_\theta^\varepsilon(t)}{\theta_m^2}\int_{\bbr^{2d}\times\bbr_+}|v|^kf\,dz.
		\end{align*}
		Combining all the preceding estimates for $\mathcal{I}_{11}$, $\mathcal{I}_{12}$ and $\mathcal{I}_{13}$ into \eqref{B-4}, we derive
		\begin{align}
		\begin{aligned}\label{B-1}
		\e\frac{d}{dt}\int_{\bbr^{2d}\times\bbr_+}|v|^kf_\e \,dz-\mathcal{I}_{131}&\le k\left(\left|\frac{u^\infty(t)}{\theta^\infty(t)}\right|+\Vert \nabla W\Vert_{L^\infty}\right)\int_{\bbr^{2d}\times\bbr_+}|v|^{k-1}f_\e\,dz \\&\quad-k\left(\frac{1}{\theta_M}-\frac{\|\phi\|_{L^\infty}D_\theta^\varepsilon(0)}{\theta_m^2}\right)\int_{\bbr^{2d}\times\bbr_+}|v|^kf_\e \,dz.
		\end{aligned}
		\end{align}
		Finally, we integrate \eqref{B-1} from $t=0$ to $t=T$ to achieve the desired estimate.
	\end{proof}
	
	Then, under the assumption on the initial diameter of internal variable so that the coefficient $\frac{1}{\theta_M}-\frac{\|\phi\|_{L^\infty}D_\theta^\varepsilon(0)}{\theta_m^2}$ is positive, we find uniform-in-$\e$ bounds for the first and second velocity moments of $f_\e$, along with the first-order position moment.
	
	\begin{corollary}[Velocity and position moments]\label{C2.3}
		Let $f_\e$ be the solution of \eqref{A-4} subject to initial data $f^0_\e$. Assume that the initial data verify \eqref{E-hypothesis-initial-data}, $(\bar \rho,\bar u,\bar e)$ verifies \eqref{E-hypothesis-macro-species} and the  parameters fulfill the compatibility condition \eqref{E-hypothesis-thetaM-thetam}. 
		Then,
		\begin{align}\label{E-moment-estimates}
		\begin{aligned}
		\Vert \vert x\vert f_\varepsilon\Vert_{L^\infty(0,T;L^1(\mathbb{R}^{2d}\times \mathbb{R}_+))}&\leq M_0+T^{1/2}\Vert \vert v\vert f_\varepsilon\Vert_{L^2(0,T;L^1(\bbr^{2d}\times\bbr_+))},\\ 
		\||v|f_\e\|_{L^2(0,T;L^1(\bbr^{2d}\times\bbr_+))}& \le\left(\||v|^2f_\e\|_{L^1(0,T;L^1(\bbr^{2d}\times\bbr_+))}\right)^{1/2},\\
		\||v|^2f_\e\|_{L^1(0,T;L^1(\bbr^{2d}\times\bbr_+))}&\le C\left(\e E_0+CG_0^2\right),
		\end{aligned}
		\end{align}
		for every $\varepsilon>0$, where $G_0:=F_0+T^{1/2}\Vert \nabla W\Vert_{L^\infty}$. In addition, the next estimate holds
		\begin{equation}\label{E-dissipation-estimate}
		\frac{1}{\theta_M}\int_0^T\int_{\mathbb{R}^{4d}\times \mathbb{R}_+^2}\phi(x-x_*)\vert v-v_*\vert^2\,f_\varepsilon(t,z)f_\varepsilon(t,z_*)\,dz\,dz_*\,dt\leq \varepsilon E_0+CG_0^2.
		\end{equation}
	\end{corollary}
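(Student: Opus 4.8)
The statement collects four estimates of increasing depth. The plan is to dispatch the first two by soft arguments, then extract the second velocity moment and the dissipation bound simultaneously from Lemma~\ref{L2.2} with $k=2$ together with an absorption step.

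\emph{Soft estimates.} For the position moment, I would use the representation $f_\e(t)=(Z_\e(t;0,\cdot))_\#f_\e^0$ through the characteristic flow \eqref{E-characteristic-system-A-4}, so that $|X_\e(t;0,z_0)|\le |x_0|+\int_0^t|V_\e(s;0,z_0)|\,ds$ and hence
\[
\||x|f_\e(t)\|_{L^1}\le \||x|f_\e^0\|_{L^1}+\int_0^t\||v|f_\e(s)\|_{L^1}\,ds\le M_0+T^{1/2}\||v|f_\e\|_{L^2(0,T;L^1)},
\]
the last step being Cauchy--Schwarz in time; taking the supremum over $t\in[0,T]$ gives \eqref{E-moment-estimates}$_1$. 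For the interpolation inequality, mass conservation $\|f_\e(t)\|_{L^1}=1$ and Cauchy--Schwarz in $(v,\theta)$ give $\||v|f_\e(t)\|_{L^1}\le \||v|^2f_\e(t)\|_{L^1}^{1/2}$; squaring and integrating in $t$ yields \eqref{E-moment-estimates}$_2$.

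\emph{Second moment and dissipation.} I would now invoke Lemma~\ref{L2.2} with $k=2$. On the left-hand side the symmetrized term reduces to $\frac{1}{\theta_M}\int_0^T\!\!\int\phi(x-x_*)|v-v_*|^2 f_\e f_\e$, which is nonnegative and equals the quantity appearing in \eqref{E-dissipation-estimate}. The coefficient multiplying $\||v|^2f_\e\|_{L^1(0,T;L^1)}$ on the left is $2\bigl(\tfrac{1}{\theta_M}-\|\phi\|_{L^\infty}\tfrac{D_\theta^\e(0)}{\theta_m^2}\bigr)$; since $\|\phi\|_{L^\infty}=\phi(0)=1$ by \eqref{H-hypothesis-phi-zeta-W} and $D_\theta^\e(0)\le \theta_M^0-\theta_m^0$ by \eqref{E-hypothesis-initial-data}, the compatibility condition \eqref{E-hypothesis-thetaM-thetam}—which reads exactly $\theta_M^0-\theta_m^0<\theta_m^2/\theta_M$ with $\theta_m,\theta_M$ from \eqref{E-thetam-thetaM}—forces this coefficient to be bounded below by an $\e$-independent constant $2c_0>0$. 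This sign condition is the only non-automatic ingredient; everything else is routine.

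For the right-hand side of \eqref{B-0}, I would bound $\||v|^2f_\e(0)\|_{L^1}\le E_0$, and estimate the remaining term by Cauchy--Schwarz in time, Minkowski's inequality, the $L^2$-bound \eqref{E-hypothesis-macro-species-gain-u-infty} on $u^\infty/\theta^\infty$, and the interpolation of the previous step:
\[
2\Bigl\|\Bigl(\bigl|\tfrac{u^\infty}{\theta^\infty}\bigr|+\|\nabla W\|_{L^\infty}\Bigr)\||v|f_\e\|_{L^1}\Bigr\|_{L^1(0,T)}\le 2G_0\,\||v|f_\e\|_{L^2(0,T;L^1)}\le 2G_0\,\||v|^2f_\e\|_{L^1(0,T;L^1)}^{1/2}.
\]
Setting $X:=\||v|^2f_\e\|_{L^1(0,T;L^1)}$ and $D:=\tfrac{1}{\theta_M}\int_0^T\!\!\int\phi|v-v_*|^2f_\e f_\e$, Lemma~\ref{L2.2} then reads $2c_0X+D\le \e E_0+2G_0\sqrt{X}$; absorbing $2G_0\sqrt{X}\le c_0X+G_0^2/c_0$ by Young's inequality gives $c_0X+D\le \e E_0+G_0^2/c_0$, hence $X\le c_0^{-1}\e E_0+c_0^{-2}G_0^2$ and $D\le \e E_0+c_0^{-1}G_0^2$, which are precisely \eqref{E-moment-estimates}$_3$ and \eqref{E-dissipation-estimate} with $C:=c_0^{-1}$. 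The main obstacle, as noted, is simply verifying that the friction coefficient stays uniformly positive in $\e$, which is exactly what condition \eqref{E-hypothesis-thetaM-thetam} is designed to guarantee.
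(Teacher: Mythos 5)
Your proposal is correct and follows essentially the same route as the paper: soft arguments (transport plus Cauchy--Schwarz) for the first two estimates, then Lemma \ref{L2.2} with $k=2$, positivity of the friction coefficient via \eqref{E-hypothesis-thetaM-thetam} together with $\|\phi\|_{L^\infty}=1$ and $D_\theta^\e(0)\le\theta_M^0-\theta_m^0$, and Young's inequality to absorb $2G_0\sqrt{X}$, yielding both \eqref{E-moment-estimates}$_3$ and \eqref{E-dissipation-estimate} with $C=\delta^{-1}$. The only cosmetic differences are that you derive the position bound via characteristics rather than by multiplying the equation by $|x|$, and your $G_0=F_0+T^{1/2}\|\nabla W\|_{L^\infty}$ correctly matches the corollary's statement (the paper's displayed chain has a small typo writing $F_0+T^{1/2}+\|\nabla W\|_{L^\infty}$ instead).
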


	\begin{proof}
		Multiplying \eqref{A-4} by $\vert x\vert$ and integrating by parts we achieve the inequality
		$$\frac{d}{dt}\int_{\mathbb{R}^{2d}\times \mathbb{R}_+}\vert x\vert\,f_\varepsilon(t,z)\,dz\leq \int_{\mathbb{R}^{2d}\times \mathbb{R}_+} \vert v\vert\,f_\varepsilon(t,z)\,dz,$$
		for each $t\in [0,T]$. Integrating with respect to time from $0$ to $t$ we obtain
		$$\Vert \vert x\vert f_\varepsilon(t)\Vert_{L^1(\mathbb{R}^{2d}\times \mathbb{R}_+)}\leq \Vert \vert x\vert f_\varepsilon^0\Vert_{L^1(\mathbb{R}^{2d}\times\bbr_+)}+ \int_0^T\Vert \vert v\vert f_\varepsilon(t)\Vert_{L^1(\mathbb{R}^{2d}\times \mathbb{R}_+)}\,dt,$$
		for every $t\in [0,T]$. Hence, the assumption \eqref{E-hypothesis-initial-data} on the initial data and the Cauchy-Schwartz inequality yields the first estimate in \eqref{E-moment-estimates}. Again, by the Cauchy-Schwartz inequality and the conservation of mass, we readily obtain the second estimate in \eqref{E-moment-estimates}. We finally focus on the third estimate in \eqref{E-moment-estimates} and \eqref{E-dissipation-estimate}. To such an end, we apply \eqref{B-0} in Lemma \ref{lemma_temp} with $k=2$ and recover
		\begin{align}
		\begin{aligned}\label{B-6}
		2&\left(\frac{1}{\theta_M}-\frac{\|\phi\|_{L^\infty}D_\theta^\varepsilon(0)}{\theta_m^2}\right)\||v|^2f_\e\|_{L^1(0,T;L^1(\bbr^{2d}\times\bbr_+))}\\
		&+\frac{1}{\theta_M}\int_0^T\int_{\bbr^{4d}\times\bbr^2_+}\phi(x-x_*)\vert v-v_*\vert^2\,f_\varepsilon(t,z)f_\varepsilon(t,z_*)\,dz\,dz_*\,dt\\
		&\quad \le \e\||v|^2f_\e^0\|_{L^1(\bbr^{2d}\times\bbr_+)}+2\left\|\left(\left|\frac{u^\infty(t)}{\theta^\infty(t)}\right|+\Vert \nabla W\Vert_{L^\infty}\right)\||v| f_\e\|_{L^1(\bbr^{2d}\times\bbr_+)}\right\|_{L^1(0,T)}\\
		&\quad \le\e\||v|^2f_\e^0\|_{L^1(\bbr^{2d}\times\bbr_+)}+2\||v|f_\e\|_{L^2(0,T;L^1(\bbr^{2d}\times\bbr_+))}\left\|\frac{u^\infty}{\theta^\infty}+\Vert \nabla W\Vert_{L^\infty}\right\|_{L^2(0,T)}\\
		&\quad \leq \varepsilon E_0+2\left(F_0+T^{1/2}+\Vert \nabla W\Vert_{L^\infty}\right)\Vert \vert v\vert^2 f_\varepsilon\Vert_{L^1(0,T;L^1(\mathbb{R}^{2d}\times \mathbb{R}_+))}^{1/2},
		\end{aligned}
		\end{align}
		where in the last line we have used the assumptions \eqref{E-hypothesis-initial-data} on initial data and \eqref{E-hypothesis-macro-species} for the macroscopic species (see also Remark \ref{R-hypothesis-macro-species-gain}), along with the second estimate in \eqref{E-moment-estimates}.
		We now define the parameter
		$$\delta := \frac{1}{\theta_M}-\frac{\|\phi\|_{L^\infty}D_\theta^\varepsilon(0)}{\theta_m^2}>0,$$
		which is positive thanks to the assumption \eqref{E-hypothesis-thetaM-thetam}. Using Young's inequality in the second term of the last line in \eqref{B-6} we obtain
		$$
		2G_0\Vert \vert v\vert^2 f_\varepsilon\Vert_{L^1(0,T;L^1(\mathbb{R}^{2d}\times \mathbb{R}_+))}^{1/2}\leq \frac{G_0^2}{\delta}+\delta\Vert \vert v\vert^2 f_\varepsilon\Vert_{L^1(0,T;L^1(\mathbb{R}^{2d}\times \mathbb{R}_+))}.
		$$
		We substitute the above estimate on \eqref{B-6} to derive
		\begin{multline}\label{E-velocity-moment-regular-phi}
		\left(\frac{2}{\theta_M}-\frac{2\|\phi\|_{L^\infty}D_\theta^\varepsilon(0)}{\theta_m^2}-\delta\right)\||v|^2f_\e\|_{L^1(0,T;L^1(\bbr^{2d}\times\bbr_+))}\\
		+\frac{1}{\theta_M}\int_0^T\int_{\bbr^{4d}\times\bbr^2_+}\phi(x-x_*)\vert v-v_*\vert^2\,f_\varepsilon(t,z)f_\varepsilon(t,z_*)\,dz\,dz_*\,dt \leq \varepsilon E_0+\frac{G_0^2}{\delta}.
		\end{multline}
		To conclude, we choose $C:=\delta^{-1}$ and we obtain the desired estimates.
	\end{proof}

	The preceding estimates for the internal variable support in Lemma \ref{lemma_temp} and the position and velocity moments in Lemma \ref{L2.2} will conform to the key tool in order to derive compactness of the system as $\varepsilon\rightarrow 0$. That will be the first step with regards to the hydrodynamic limit of system \eqref{A-4} and will be addressed in the next section.

	\subsection{Hydrodynamic limit of \eqref{A-4}}
	Our goal here is to present the rigorous hydrodynamic limit of \eqref{A-4} whose explicit statement takes the following form:
	
	\begin{theorem}\label{T2.1}
		Let $f_\e$ be a solution to the equation \eqref{A-4} subject to the initial data $f_\e^0$. Assume that the initial data verify \eqref{E-hypothesis-initial-data}, $(\bar \rho,\bar u,\bar e)$ verifies \eqref{E-hypothesis-macro-species} and the  parameters fulfill the compatibility condition \eqref{E-hypothesis-thetaM-thetam}. Then,
		\begin{align*}
		&\rho_\varepsilon\rightarrow\rho,\quad \mbox{in }C([0,T],\mathcal{M}(\mathbb{R}^{d})-\mbox{narrow}),\\
		&j_\varepsilon\overset{*}{\rightharpoonup} j,\quad \mbox{in }L^2_w(0,T;\mathcal{M}(\mathbb{R}^d)^d),
		\end{align*}
		when $\varepsilon\rightarrow 0$, for some probability measure $\rho$, some finite Radon measure $j$ and some subsequence of $\{\rho_\varepsilon\}_{\varepsilon>0}$ and $\{j_\varepsilon\}_{\varepsilon>0}$ that we denote in the same way. In addition, $(\rho,j)$ solves the following problem
		$$
		\begin{array}{ll}
		\displaystyle \partial_t \rho+\nabla\cdot j =0, & (t,x)\in [0,T)\times \mathbb{R}^d,\\
		\displaystyle j-u^\infty(t)\rho+\theta^\infty(t)(\nabla W*\rho)\rho=\rho (\phi*j)-j(\phi*\rho), & (t,x)\in (0,T)\times \mathbb{R}^d,\\
		\rho(t=0)=\rho^0, & x\in \mathbb{R}^d,
		\end{array}
		$$
		in distributional sense, where $\theta^\infty(t)$ and $u^\infty(t)$ are the background mean value of the internal variable and velocity, see \eqref{E-background-mean-theta} and \eqref{E-background-mean-velocity}.
	\end{theorem}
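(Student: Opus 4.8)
The plan is to combine the uniform moment estimates of Corollary \ref{C2.3} with the concentration of the internal variable support from Lemma \ref{lemma_temp} and Corollary \ref{cor_temp} to pass to the limit in the weak formulation of the moment hierarchy \eqref{E-hierarchy-moments-equations}. First I would establish the compactness statements. From Corollary \ref{C2.3}, the densities $\rho_\varepsilon$ are uniformly bounded probability measures, and the continuity equation $\partial_t\rho_\varepsilon+\nabla\cdot j_\varepsilon=0$ together with the uniform $L^2(0,T;L^1)$ bound on $\vert v\vert f_\varepsilon$ gives equicontinuity of $t\mapsto \rho_\varepsilon(t,\cdot)$ in a suitable negative Sobolev norm (or in the bounded-Lipschitz/Wasserstein distance restricted to the common compact support in $x$); hence by an Arzel\`a--Ascoli argument $\rho_\varepsilon\to\rho$ in $C([0,T],\mathcal{M}(\mathbb{R}^d)\text{-narrow})$ along a subsequence. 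Since $\Vert j_\varepsilon\Vert_{\mathcal{M}}\leq \Vert\vert v\vert f_\varepsilon\Vert_{L^1}$ with the right-hand side in $L^2(0,T)$, the sequence $j_\varepsilon$ is bounded in $L^2_w(0,T;\mathcal{M}(\mathbb{R}^d)^d)$, so by the duality representation of weak Lebesgue--Bochner spaces recalled in Appendix \ref{Appendix-LB} it has a weak-$*$ convergent subsequence $j_\varepsilon\overset{*}{\rightharpoonup} j$. Passing to the limit in $\partial_t\rho_\varepsilon+\nabla\cdot j_\varepsilon=0$ is then immediate and yields the continuity equation for $(\rho,j)$ together with the initial condition $\rho(0)=\rho^0$ from \eqref{E-hypothesis-initial-data}.

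The core of the argument is the second (constitutive) equation. I would test the second line of \eqref{E-hierarchy-moments-equations} against a time-space test function and examine each term. The inertial term $\varepsilon\partial_t j_\varepsilon+\varepsilon\nabla\cdot\mathcal{S}_\varepsilon^v$ vanishes: $\varepsilon j_\varepsilon\to 0$ in $L^2_w$ by the bound above, and $\varepsilon\mathcal{S}_\varepsilon^v\to 0$ because $\Vert\mathcal{S}_\varepsilon^v\Vert_{\mathcal{M}}\leq \Vert\vert v\vert^2 f_\varepsilon\Vert_{L^1}$ is bounded in $L^1(0,T)$ by \eqref{E-moment-estimates}. For the remaining terms the key observation is that after the initial layer $t\geq\varepsilon^\beta$ the internal variable support is pinned at $\theta^\infty(t)$ up to an error $\eta_\varepsilon(t)=\mathcal{O}(\varepsilon^{\min\{\alpha,1\}})$ by Corollary \ref{cor_temp}; consequently I can replace $A_\varepsilon=\int \frac{v}{\theta}f_\varepsilon$ by $\frac{1}{\theta^\infty(t)}j_\varepsilon$ up to an error controlled by $\eta_\varepsilon(t)/\theta_m^2$ times $\Vert\vert v\vert f_\varepsilon\Vert_{L^1}$, which is $o(1)$ in $L^1(0,T)$ since $\Vert\vert v\vert f_\varepsilon\Vert_{L^2(0,T;L^1)}$ is uniformly bounded and the contribution of $[0,\varepsilon^\beta]$ is $O(\varepsilon^{\beta/2})$. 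Multiplying the second equation of \eqref{E-hierarchy-moments-equations} by $\theta^\infty(t)$ and using this substitution turns it into
\[
\theta^\infty(t)\big(A_\varepsilon-\rho_\varepsilon(\phi*A_\varepsilon)+A_\varepsilon(\phi*\rho_\varepsilon)\big)\approx j_\varepsilon-\rho_\varepsilon(\phi*j_\varepsilon)+j_\varepsilon(\phi*\rho_\varepsilon),
\]
while the forcing $-\rho_\varepsilon u^\infty(t)/\theta^\infty(t)$ times $\theta^\infty(t)$ is exactly $-\rho_\varepsilon u^\infty(t)$, and the aggregation term $(\nabla W*\rho_\varepsilon)\rho_\varepsilon$ times $\theta^\infty(t)$ gives $\theta^\infty(t)(\nabla W*\rho_\varepsilon)\rho_\varepsilon$. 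Passing to the limit using $\rho_\varepsilon\to\rho$ strongly (narrowly, uniformly in $t$) and $j_\varepsilon\overset{*}{\rightharpoonup} j$ weakly, together with the uniform Lipschitz bounds on $\phi$ and $\nabla W$ from \eqref{H-hypothesis-phi-zeta-W}, yields the claimed identity $j-u^\infty\rho+\theta^\infty(\nabla W*\rho)\rho=\rho(\phi*j)-j(\phi*\rho)$ in distributions.

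The main obstacle is handling the bilinear terms $\rho_\varepsilon(\phi*j_\varepsilon)$ and $j_\varepsilon(\phi*\rho_\varepsilon)$ in the limit, since they pair a merely weakly-$*$ convergent factor ($j_\varepsilon$) with a convolution. The point is that $\phi*\rho_\varepsilon\to\phi*\rho$ strongly (uniformly on compacts, by narrow convergence of $\rho_\varepsilon$ and continuity of $\phi$), and similarly $\phi*j_\varepsilon\to\phi*j$ in a strong-enough topology because convolution with the bounded continuous kernel $\phi$ upgrades weak-$*$ convergence of measures to pointwise (indeed locally uniform) convergence of the resulting continuous functions; so each product is of the form (strong)$\times$(weak-$*$) or (weak-$*$)$\times$(strong) and passes to the limit. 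One must also verify that all mass stays in a common compact set in $x$: this follows from finite speed of propagation of $\mathrm{supp}_x f_\varepsilon$, which in turn requires controlling $D_v^\varepsilon(t)$; here the velocity support is not uniformly bounded a priori, but the first-moment bound $\Vert\vert x\vert f_\varepsilon\Vert_{L^\infty(0,T;L^1)}\leq M_0+T^{1/2}\Vert\vert v\vert f_\varepsilon\Vert_{L^2(0,T;L^1)}$ from \eqref{E-moment-estimates} provides the needed tightness in $x$ uniformly in $\varepsilon$, which is enough for narrow convergence. Finally, one checks that the initial-layer contributions on $[0,\varepsilon^\beta]$ are negligible in the distributional formulation, which is immediate since the integrand is bounded in $L^1(0,T)$ and the layer shrinks.
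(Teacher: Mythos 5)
Your proposal follows essentially the same route as the paper: compactness from the moment bounds of Corollary \ref{C2.3} plus time-equicontinuity of $\rho_\varepsilon$, identification of the nonlinear moment $A_\varepsilon$ with $j/\theta^\infty(t)$ via the concentration estimates of Lemma \ref{lemma_temp} and Corollary \ref{cor_temp} after the initial layer, and a strong-times-weak pairing to pass to the limit in the bilinear alignment and aggregation terms (the paper packages this last step as a symmetrization with the kernel $K_\psi$ together with a tensor-product convergence lemma proved by Stone--Weierstrass, which is equivalent to your convolution argument). One imprecision: your claim that $\phi*j_\varepsilon\to\phi*j$ locally uniformly is not justified, since $j_\varepsilon$ converges only weakly-$*$ in $L^2_w(0,T;\mathcal{M}(\mathbb{R}^d)^d)$, i.e.\ weakly in time as well, so $(\phi*j_\varepsilon)(t,x)$ converges only weakly in $t$; the correct way to treat $\rho_\varepsilon(\phi*j_\varepsilon)$ is to transfer the convolution onto the strongly (narrowly, uniformly in $t$) convergent factor, testing $j_\varepsilon$ against $x_*\mapsto\int\psi(t,x)\phi(x-x_*)\rho_\varepsilon(t,dx)$, which converges in $L^2(0,T;C_0(\mathbb{R}^d))$ — this is precisely what the paper's symmetrized formulation accomplishes, and with this fix your argument closes.
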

	
	We refer to Appendix \ref{Appendix-LB} for a summarized presentation of weak-* Lebesgue-Bochner spaces $L^p_w(0,T;X^*)$ for a Banach space $X^*$, their comparison with classical Lebesgue-Bochner spaces
	$L^p(0,T;X^*)$ along with their duality properties.
	
	\begin{remark}[Weak formulations]
		Notice that the first and second equation of the above limiting system are verified in distributional sense in $[0,T)\times \mathbb{R}^d$ and $(0,T)\times \mathbb{R}^d$ respectively. That is, the following equations fulfill
		\begin{align*}
		&\int_0^T\int_{\mathbb{R}^d} \partial_t\varphi\rho(t,dx)\,dt+\int_0^T\int_{\mathbb{R}^d}\nabla\varphi\cdot j(t,dx)\,dt=-\int_{\mathbb{R}^d}\varphi(0,\cdot)\rho^0(dx),\\
		&\int_0^T\int_{\mathbb{R}^d}\psi(j(t,dx)-u^\infty(t)\rho(t,dx)+\theta^\infty(t)(\nabla W*\rho)\rho(t,dx))\,dt\\
		&\qquad=\int_0^T\int_{\mathbb{R}^d}\psi(\phi*j)\rho(t,dx)\,dt-\int_0^T\int_{\mathbb{R}^d}\psi(\phi*\rho)j(t,dx)\,dt,
		\end{align*}
		for any $\varphi\in C^1_c([0,T)\times \mathbb{R}^d)$ and $\psi\in C^1_c((0,T)\times \mathbb{R}^d)$. Whilst the former involves generic test functions $\varphi$, in the latter test functions $\psi$ must vanish at $t=0$. Moreover, notice that the regularity of the influence function $\phi$ guarantees that $\phi*\rho$ and $\phi*j$ belong to $C_b(\mathbb{R}^d)$, so that the nonlinear terms in the right-hand side are well defined.
	\end{remark}
	
	Now, we derive the proof of Theorem \ref{T2.1}.
	
	\subsection*{Proof of compactness}
	From the conservation of mass, Lemma \ref{lemma_temp} and Corollary \ref{C2.3} we obtain the following uniform estimates for moments
	\begin{align}\label{E-a-priori-estimates-fast-regular}
	\begin{aligned}
	\|\rho_\e\|_{L^\infty(0,T; L^1(\bbr^d))}&=1,\\
	\Vert \vert x\vert\rho_\varepsilon\Vert_{L^\infty(0,T; L^1(\mathbb{R}^d))}&\leq M_0+(CT(\varepsilon E_0+CG_0^2))^{1/2},\\
	\|j_\e\|_{L^2(0,T;L^1(\bbr^d))}&\le \left(C\left(\e E_0+CG_0^2\right)\right)^{1/2},\\
	\Vert \mathcal{S}_\varepsilon^v\Vert_{L^1(0,T;L^1(\mathbb{R}^d))}&\leq C(\varepsilon E_0+CG_0^2),\\
	\Vert \mathcal{S}_\varepsilon^\theta\Vert_{L^2(0,T;L^1(\mathbb{R}^d))}&\leq \theta_M\left(C\left(\e E_0+CG_0^2\right)\right)^{1/2},
	\end{aligned}
	\end{align}
	for every $\varepsilon>0$. As a consequence of the first and third inequalities in \eqref{E-a-priori-estimates-fast-regular}, we obtain that $\{\rho_\varepsilon\}_{\varepsilon>0}$ is bounded in $L^\infty(0,T;L^1(\mathbb{R}^d))$ and $\{j_\varepsilon\}_{\varepsilon>0}$ is bounded in $L^2(0,T;L^1(\mathbb{R}^d))$. In addition, by the representation Theorem \ref{Appendix-LB-Riesz-representation-no-RNP} in Appendix \ref{Appendix-LB} we obtain that
	\begin{align*}
	L^\infty(0,T;L^1(\mathbb{R}^d))&\subseteq L^\infty_w(0,T;\mathcal{M}(\mathbb{R}^d))\equiv L^1(0,T;C_0(\mathbb{R}^d))^*,\\
	L^2(0,T;L^1(\mathbb{R}^d))&\subseteq L^2_w(0,T;\mathcal{M}(\mathbb{R}^d))\equiv L^2(0,T;C_0(\mathbb{R}^d))^*.
	\end{align*}
	Hence, by Alaoglu--Bourbaki's theorem
	\begin{align}\label{E-compactness-fast-regular}
	\begin{aligned}
	&\rho_\e\xrightharpoonup{*}\rho,\quad \mbox{in}\quad L^\infty_w(0,T;\mathcal{M}(\bbr^d)),\\
	&j_\e\xrightharpoonup{*} j,\quad \mbox{in}\quad L^2_w(0,T;\mathcal{M}(\bbr^d)^d),
	\end{aligned}
	\end{align}
	as $\varepsilon\rightarrow 0$, modulo subsequence, for some $(\rho,j)\in L^\infty_w(0,T;\mathcal{M}(\mathbb{R}^d))\times L^2_w(0,T;\mathcal{M}(\mathbb{R}^d)^d)$. In fact, an argument like in \cite[Theorem 3.8]{PS17} allows proving a gain of time regularity of $\rho_\varepsilon$. Combining it with \eqref{E-a-priori-estimates-fast-regular} we indeed obtain that
	\begin{equation}\label{E-compactness-fast-regular-gain}
	\rho_\varepsilon\rightarrow \rho\quad \mbox{in }C([0,T],\mathcal{M}(\mathbb{R}^d)-\mbox{narrow}),
	\end{equation}
	that is,
	$$\lim_{\varepsilon\rightarrow 0}\sup_{t\in [0,T]}\left\vert \int_{\mathbb{R}^d}\varphi(\rho_\varepsilon(t,dx)-\rho(t,dx))\right\vert=0,$$
	for any test function $\varphi\in C_b(\mathbb{R}^d)$.
	
	\subsection*{Proof of the limit}
	
	Writing the first equation in \eqref{E-hierarchy-moments-equations} in weak form, we achieve
	$$\int_0^T\int_{\mathbb{R}^d}\partial_t\varphi \rho_\varepsilon\,dx\,dt+\int_0^T\int_{\mathbb{R}^d} \nabla\varphi\cdot j_\varepsilon\,dx\,dt=-\int_{\mathbb{R}^d}\varphi(0,\cdot)\rho_\varepsilon^0\,dx,$$
	for any $\varphi\in C^1_c([0,T)\times \mathbb{R}^d)$. By \eqref{E-compactness-fast-regular} we readily identify the limit of the terms in the left-hand side. In addition, restricting \eqref{E-compactness-fast-regular-gain} to $t=0$, we can also identify the initial datum $\rho(t=0)=\lim_{\varepsilon\rightarrow 0}\rho_\varepsilon^0=\rho^0$ in order to pass to the limit in the right-hand side. Putting everything together yields 
	$$\int_0^T\int_{\mathbb{R}^d}\partial_t\varphi \rho(t,dx)\,dt+\int_0^T\int_{\mathbb{R}^d} \nabla\varphi\cdot j(t,dx)\,dt=-\int_{\mathbb{R}^d}\varphi(0,\cdot)\rho^0(dx).$$
	We focus on the remaining equations of the hierarchy \eqref{E-hierarchy-moments-equations}. In weak formulation, they read
	\begin{align}
	\varepsilon\int_0^T\int_{\mathbb{R}^d}&\partial_t\psi \,j_\varepsilon\,dx\,dt+\varepsilon\int_0^T\int_{\mathbb{R}^d}\mathcal{S}_\varepsilon^v\,\nabla\psi\,dx\,dt\nonumber\\
	&=\int_0^T\int_{\mathbb{R}^d}\psi(\phi*A_\varepsilon)\rho_\varepsilon\,dx\,dt-\int_0^T\int_{\mathbb{R}^d}\psi(\phi*\rho_\varepsilon)A_\varepsilon\,dx\,dt\label{E-velocity-weak-form-fast-regular}\\
	&\quad -\int_0^T\int_{\bbr^d}\psi(\nabla W*\rho_\e)\rho_\e\,dx\,dt+ \int_0^T\int_{\mathbb{R}^d}\psi\frac{u^\infty(t)}{\theta^\infty(t)}\rho_\varepsilon\,dx\,dt-\int_0^T\int_{\mathbb{R}^d}\psi A_\varepsilon\,dx\,dt,\nonumber\\
	\varepsilon\int_0^T\int_{\mathbb{R}^d}&\partial_t\psi \,h_\varepsilon\,dx\,dt+\varepsilon\int_0^T\int_{\mathbb{R}^d}\mathcal{S}_\varepsilon^\theta\nabla\psi\,dx\,dt\nonumber\\
	&=\int_0^T\int_{\mathbb{R}^d}\psi(\zeta*\rho_\varepsilon)B_\varepsilon\,dx\,dt-\int_0^T\int_{\mathbb{R}^d}\psi(\zeta*B_\varepsilon)\rho_\varepsilon\,dx\,dt\label{E-internal-variable-weak-form-fast-regular}\\
	&\quad +\int_0^T\int_{\mathbb{R}^d}\psi B_\varepsilon\,dx\,dt-\int_0^T\int_{\mathbb{R}^d}\psi\frac{\rho_\varepsilon}{\theta^\infty(t)}\,dx\,dt,\nonumber
	\end{align}
	for any $\psi\in C^1_c((0,T)\times \mathbb{R}^d)$. Notice that the inertial terms in the left-hand sides of \eqref{E-velocity-weak-form-fast-regular} and \eqref{E-internal-variable-weak-form-fast-regular} vanish as $\varepsilon\rightarrow 0$ thanks to \eqref{E-a-priori-estimates-fast-regular}. However, the right-hand sides yield to a not closed system as they depend on the nonlinear moments $A_\varepsilon$, $B_\varepsilon$. Then, the last step will be to identify their limits in terms of $\rho$ and $j$. Indeed, recall that the internal variable support of $f_\varepsilon$ shrinks rapidly and concentrates on the relaxation value $\theta^\infty(t)$ after very short initial layer $t=\e_0$, see Corollary \ref{cor_temp}. This suggests that
	$$A_\varepsilon\approx\frac{j}{\theta^\infty(t)}\quad \mbox{and}\quad B_\varepsilon\approx \frac{\rho}{\theta^\infty(t)},$$
	after a short initial time layer $t>\varepsilon^\beta$. In fact, we have the following result.

	\begin{lemma}[Identifying nonlinear moments]\label{L2.3}
		Let $f_\e$ be a solution to the equation \eqref{A-4} subject to the initial data $f_\e^0$. Suppose that initial data verify \eqref{E-hypothesis-initial-data}, $(\bar \rho,\bar u,\bar e)$ verifies \eqref{E-hypothesis-macro-species} and parameters fulfill the compatibility condition \eqref{E-hypothesis-thetaM-thetam}. Then, for any $\e_0>0$ we obtain
		\begin{align*}
		&A_\e\xrightharpoonup{*} \frac{j}{\theta^\infty(t)},\quad \mbox{in}\quad L^2_w({\e_0},T;\mathcal{M}(\bbr^d)^d),\\
		&B_\e \rightarrow \frac{\rho}{\theta^\infty(t)},\quad \mbox{in}\quad C([\varepsilon_0,T],\mathcal{M}(\bbr^d)-\mbox{narrow}),
		\end{align*}
		when $\varepsilon\rightarrow 0$.
	\end{lemma}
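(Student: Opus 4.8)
The plan is to exploit the concentration estimate of Corollary~\ref{cor_temp}: after the short initial layer $t=\varepsilon^\beta$ the $\theta$-support of $f_\varepsilon$ collapses onto the background mean $\theta^\infty(t)$ up to an error $\eta_\varepsilon(t)=\mathcal{O}(\varepsilon)$ (choosing e.g. $\alpha=1$). On that support $1/\theta$ is therefore essentially the constant $1/\theta^\infty(t)$, so $B_\varepsilon$ and $A_\varepsilon$ must be close to $\rho_\varepsilon/\theta^\infty(t)$ and $j_\varepsilon/\theta^\infty(t)$ respectively; one then passes to the limit using the convergences of $\rho_\varepsilon$ and $j_\varepsilon$ obtained in the proof of compactness. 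First I would fix $\varepsilon_0>0$ and note that, once $\varepsilon$ is small enough that $\varepsilon^\beta\le \varepsilon_0$, Corollary~\ref{cor_temp} together with the confinement $\supp_\theta f_\varepsilon(t)\subset(\theta_m,\theta_M)$ of Lemma~\ref{lemma_temp} and the bound $\theta^\infty(t)\ge \bar\theta_m\ge\theta_m$ from Remark~\ref{R-hypothesis-macro-species-gain} give the pointwise estimate
$$
\left|\frac{1}{\theta}-\frac{1}{\theta^\infty(t)}\right|=\frac{|\theta-\theta^\infty(t)|}{\theta\,\theta^\infty(t)}\le \frac{\eta_\varepsilon(t)}{\theta_m^2},\qquad \theta\in\supp_\theta f_\varepsilon(t),\quad t\in[\varepsilon_0,T],
$$
with $\sup_{t\in[\varepsilon_0,T]}\eta_\varepsilon(t)\to 0$ as $\varepsilon\to0$.

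Next I would estimate the errors. Writing $B_\varepsilon-\rho_\varepsilon/\theta^\infty(t)=\int_{\mathbb{R}^d\times\mathbb{R}_+}(\tfrac1\theta-\tfrac1{\theta^\infty(t)})f_\varepsilon\,dv\,d\theta$ and using mass conservation $\|\rho_\varepsilon(t)\|_{\mathcal{M}(\mathbb{R}^d)}=1$, the above bound yields $\sup_{t\in[\varepsilon_0,T]}\|B_\varepsilon(t)-\rho_\varepsilon(t)/\theta^\infty(t)\|_{\mathcal{M}(\mathbb{R}^d)}\le \theta_m^{-2}\sup_{t\in[\varepsilon_0,T]}\eta_\varepsilon(t)\to0$. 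Similarly $A_\varepsilon-j_\varepsilon/\theta^\infty(t)=\int_{\mathbb{R}^d\times\mathbb{R}_+}(\tfrac1\theta-\tfrac1{\theta^\infty(t)})\,v\,f_\varepsilon\,dv\,d\theta$ gives $\|A_\varepsilon(t)-j_\varepsilon(t)/\theta^\infty(t)\|_{\mathcal{M}(\mathbb{R}^d)^d}\le \theta_m^{-2}\eta_\varepsilon(t)\,\||v|f_\varepsilon(t)\|_{L^1(\mathbb{R}^{2d}\times\mathbb{R}_+)}$; squaring, integrating in time and invoking the uniform bound on $\||v|f_\varepsilon\|_{L^2(0,T;L^1)}$ from Corollary~\ref{C2.3} yields $\|A_\varepsilon-j_\varepsilon/\theta^\infty\|_{L^2(\varepsilon_0,T;\mathcal{M}(\mathbb{R}^d)^d)}\le \theta_m^{-2}\big(\sup_{[\varepsilon_0,T]}\eta_\varepsilon\big)\,\||v|f_\varepsilon\|_{L^2(0,T;L^1)}\to0$.

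Finally I would pass to the limit in the remaining ``principal'' parts. Since $\theta^\infty\in W^{1,\infty}(0,T)$ is continuous and $\theta^\infty\ge\bar\theta_m>0$, the same holds for $1/\theta^\infty$. From $\rho_\varepsilon\to\rho$ in $C([0,T],\mathcal{M}(\mathbb{R}^d)-\mbox{narrow})$ (see \eqref{E-compactness-fast-regular-gain}) and uniform boundedness of $1/\theta^\infty$, one gets $\rho_\varepsilon/\theta^\infty\to\rho/\theta^\infty$ in $C([\varepsilon_0,T],\mathcal{M}(\mathbb{R}^d)-\mbox{narrow})$, and combining this with the total-variation bound for $B_\varepsilon-\rho_\varepsilon/\theta^\infty$ gives $B_\varepsilon\to\rho/\theta^\infty$ in $C([\varepsilon_0,T],\mathcal{M}(\mathbb{R}^d)-\mbox{narrow})$. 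Likewise, $j_\varepsilon\xrightharpoonup{*}j$ in $L^2_w(0,T;\mathcal{M}(\mathbb{R}^d)^d)=L^2(0,T;C_0(\mathbb{R}^d)^d)^*$ (see \eqref{E-compactness-fast-regular}), and since for every $\Phi\in L^2(\varepsilon_0,T;C_0(\mathbb{R}^d)^d)$ one has $\langle j_\varepsilon/\theta^\infty,\Phi\rangle=\langle j_\varepsilon,\Phi/\theta^\infty\rangle$ with $\Phi/\theta^\infty$ again in $L^2(\varepsilon_0,T;C_0(\mathbb{R}^d)^d)$ (because $1/\theta^\infty\in L^\infty(0,T)$), it follows that $j_\varepsilon/\theta^\infty\xrightharpoonup{*}j/\theta^\infty$ in $L^2_w(\varepsilon_0,T;\mathcal{M}(\mathbb{R}^d)^d)$; as $A_\varepsilon-j_\varepsilon/\theta^\infty\to0$ strongly in $L^2(\varepsilon_0,T;\mathcal{M}(\mathbb{R}^d)^d)$, hence weakly-$*$, we conclude $A_\varepsilon\xrightharpoonup{*}j/\theta^\infty$ in $L^2_w(\varepsilon_0,T;\mathcal{M}(\mathbb{R}^d)^d)$.

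The argument is essentially elementary once Corollary~\ref{cor_temp} is in hand; I expect the only delicate points to be the topological bookkeeping of the weak-$*$ Lebesgue--Bochner spaces of Appendix~\ref{Appendix-LB} — in particular checking that multiplication by the bounded continuous scalar $1/\theta^\infty(t)$ preserves uniform narrow convergence (for $B_\varepsilon$) and weak-$*$ convergence in $L^2_w$ (for $A_\varepsilon$) — together with the care needed because $\varepsilon_0$ is held fixed while $\varepsilon\to0$, so that the concentration estimate of Corollary~\ref{cor_temp}, valid only on $[\varepsilon^\beta,T]$, covers all of $[\varepsilon_0,T]$ precisely once $\varepsilon$ is taken small enough.
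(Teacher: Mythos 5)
Your proposal is correct and follows essentially the same route as the paper's proof: both use the concentration estimate of Corollary \ref{cor_temp} (valid on $[\varepsilon^\beta,T]\supseteq[\varepsilon_0,T]$ once $\varepsilon\le\varepsilon_0^{1/\beta}$) to bound $|1/\theta-1/\theta^\infty(t)|$ on the support, control the resulting error terms via mass conservation and the uniform $L^2(0,T;L^1)$ bound on $|v|f_\varepsilon$ from Corollary \ref{C2.3}, and then identify the limits of $\rho_\varepsilon/\theta^\infty$ and $j_\varepsilon/\theta^\infty$ from the compactness statements \eqref{E-compactness-fast-regular}--\eqref{E-compactness-fast-regular-gain}. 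The only cosmetic difference is that you phrase the argument as a norm decomposition (strong smallness of $A_\varepsilon-j_\varepsilon/\theta^\infty$ plus weak-$*$ convergence of the principal part), whereas the paper tests directly against $\psi\in L^2(\varepsilon_0,T;C_0(\mathbb{R}^d))$ and $\varphi\in C_b(\mathbb{R}^d)$ and splits the resulting integrals.
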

	
	\begin{proof}
		Fix any $\varepsilon_0>0$ and let $\psi\in L^2(\varepsilon_0,T;C_0(\mathbb{R}^d))$ be any test functions. Then, 
		\begin{align*}
		&\left|\int_{\e_0}^T \int_{\bbr^d} \psi(t,x) \left(A_\e(t,dx)-\frac{j(t,dx)}{\theta^\infty(t)}\right)\,dt\right|\\
		&\qquad \leq \left\vert\int_{\e_0}^T\int_{\mathbb{R}^{2d}\times \mathbb{R}_+}\psi(t,x)\frac{\theta^\infty(t)-\theta}{\theta^\infty(t)\theta}v f_\varepsilon\,dz\,dt\right\vert\\
		&\qquad \qquad \qquad +\left|\int_{\e_0}^T\int_{\bbr^d}\psi(t,x)\frac{1}{\theta^\infty(t)}(j_\e(t,dx)-j(t,dx))\,dt\right|\\
		&\qquad \le \frac{\|\eta_\e\|_{L^\infty(\e_0,T)}}{\bar\theta_m\theta_m}\Vert \psi\Vert_{L^2(\varepsilon_0,T;C_0(\mathbb{R}^d))}\|\vert v\vert f_\varepsilon\|_{L^2(0,T;L^1(\bbr^{2d}\times \bbr_+))}\\
		&\qquad \qquad\qquad  +\left|\int_{\e_0}^T\int_{\bbr^d}\frac{\psi(t,x)}{\theta^\infty(t)}(j_\e(t,dx)-j(t,dx))\,dt\right|,
		\end{align*} 
		where in the last step we have used the uniform lower bound on $\theta^\infty(t)$ and the concentration estimate of $\supp_\theta f_\varepsilon$ in Lemma \ref{lemma_temp}. Since $\|\vert v\vert f_\e\|_{L^2(0,T;L^1(\bbr^{2d}\times \bbr_+))}$ is uniformly bounded by Corollary \ref{C2.3} and $\|\eta_\varepsilon\|_{L^\infty(\e_0,T)}\le C\e^{\min\{\alpha,1\}}$ for $\varepsilon\leq \varepsilon_0^{1/\beta}$ by Corollary \ref{cor_temp}, then the first term vanishes as $\e\to0$. Likewise, since $j_\e\xrightharpoonup{*} j$ in $L^2_w(0,T;\mathcal{M}(\bbr^d)^d)$ by \eqref{E-compactness-fast-regular} and $\frac{\psi}{\theta^\infty}\in L^2(\varepsilon_0,T;C_0(\mathbb{R}^d))$, the second term also vanishes as $\e\to0$. In conclusion, we have
		\[\lim_{\e\to0}\int_{\e_0}^T \int_{\bbr^d} \psi(t,x) \left(A_\e(t,dx)-\frac{j(t,dx)}{\theta^\infty(t)}\right)\,dt=0,\]
		which implies the desired weak convergence. For the convergence of $B_\e$, using that $\rho_\varepsilon\overset{*}{\rightharpoonup} \rho$ in $L^\infty_w(0,T;\mathcal{M}(\mathbb{R}^d))$ by \eqref{E-compactness-fast-regular}, a similar argument shows that
		$$B_\varepsilon\overset{*}{\rightharpoonup} \frac{\rho}{\theta^\infty(t)},\quad \mbox{in}\quad L^\infty_w(\varepsilon_0,T;\mathcal{M}(\mathbb{R}^d)).$$
		However, we can improve the convergence as follows. Take any $\varphi\in C_b(\mathbb{R}^d)$ and compute
		\begin{align*}
		&\sup_{t\in [\varepsilon_0,T]}\left\vert\int_{\mathbb{R}^d}\varphi\,\left(B_\varepsilon(t,dx)-\frac{\rho(t,dx)}{\theta^\infty}\right)\right\vert,\\
		&\leq \sup_{t\in [\varepsilon_0,T]}\left\vert \int_{\mathbb{R}^{2d}\times \mathbb{R}_+}\varphi(x)\left(\frac{1}{\theta}-\frac{1}{\theta^\infty(t)}\right)f_\varepsilon(t,z)\,dz \right\vert+\sup_{t\in [\varepsilon_0,T]}\left\vert\int_{\mathbb{R}^d} \frac{\varphi(x)}{\theta^\infty(t)}(\rho_\varepsilon(t,dx)-\rho(t,dx))\right\vert\\
		&\leq \frac{\Vert \eta_\varepsilon\Vert_{L^\infty(\varepsilon_0,T)}}{\bar \theta_m\theta_m}\Vert \varphi\Vert_{C_b(\mathbb{R}^d)}+\frac{1}{\bar\theta_m}\sup_{t\in [\varepsilon_0,T]}\left\vert\int_{\mathbb{R}^d}\varphi(x)\,(\rho_\varepsilon(t,dx)-\rho(t,dx))\right\vert.
		\end{align*}
		By the same argument, the first term vanishes,  as $\varepsilon\rightarrow 0$. Since $\rho_\varepsilon\rightarrow \rho$ in $C([0,T],\mathcal{M}(\mathbb{R}^d)-\mbox{narrow})$ by \eqref{E-compactness-fast-regular-gain}, the second term also vanishes, as $\varepsilon\rightarrow 0$, and we end the proof.
	\end{proof}
	
	The convergence of the nonlinear moments $A_\varepsilon$ also implies the convergence of the nonlinear term $\rho_\varepsilon\otimes A_\varepsilon$.

	\begin{lemma}[Weak convergence of tensor product]\label{L2.4}
		Let $f_\e$ be a solution to the equation \eqref{A-4} subject to the initial data $f_\e^0$. Assume that the initial data verify \eqref{E-hypothesis-initial-data}, $(\bar \rho,\bar u,\bar e)$ verifies \eqref{E-hypothesis-macro-species} and the  parameters fulfill the compatibility condition \eqref{E-hypothesis-thetaM-thetam}. Then,  we obtain
		\begin{align*}
		\begin{aligned}
		&\rho_\e\otimes A_\e \xrightharpoonup{*} \frac{\rho\otimes j}{\theta^\infty(t)}, & & \mbox{in}\quad L^2_w(\e_0,T;\mathcal{M}(\bbr^{2d})),\\ 
		&\rho_\e\otimes\rho_\e\xrightharpoonup{*}\rho\otimes\rho, & &\mbox{in}\quad C([0,T],\mathcal{M}(\bbr^{2d})-\mbox{narrow}),
		\end{aligned}
		\end{align*}
		for any $\e_0>0$, as $\varepsilon\rightarrow 0$.
	\end{lemma}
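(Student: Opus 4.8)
The plan is to test both products against a dense class of \emph{product} test functions and then extend by density together with uniform boundedness. I would first record the relevant uniform bounds: by Corollary~\ref{C2.3} the total variation of $\rho_\e(t)\otimes A_\e(t)$ is controlled by $|A_\e(t)|(\bbr^d)\le\theta_m^{-1}\||v|f_\e(t)\|_{L^1}$, which is bounded in $L^2(0,T)$ uniformly in $\e$, so $\{\rho_\e\otimes A_\e\}$ is bounded in $L^2_w(\e_0,T;\mathcal{M}(\bbr^{2d})^d)\equiv L^2(\e_0,T;C_0(\bbr^{2d}))^*$; while $\rho_\e(t)\otimes\rho_\e(t)$ is a probability measure for every $t$, hence $\{\rho_\e\otimes\rho_\e\}$ is bounded in $L^\infty(0,T;\mathcal{M}(\bbr^{2d}))$. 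Moreover, the uniform bound on $\||x|\rho_\e\|_{L^\infty(0,T;L^1)}$ from \eqref{E-a-priori-estimates-fast-regular} (for $\e\le1$) makes the family $\{\rho_\e(t)\}$ tight uniformly in $\e$ and $t$, hence $\{\rho_\e(t)\otimes\rho_\e(t)\}$ tight uniformly in $\e$ and $t$, and the same tightness passes to $\rho\otimes\rho$. Since simple functions are dense in $L^2(\e_0,T;C_0(\bbr^{2d}))$ and the span of $\{\varphi_1\otimes\varphi_2:\varphi_1,\varphi_2\in C_0(\bbr^d)\}$ is dense in $C_0(\bbr^{2d})$ by the Stone--Weierstrass theorem, it is enough to prove the first convergence against $\Psi(t,x,y)=\psi(t)\varphi_1(x)\varphi_2(y)$ with $\psi\in L^2(\e_0,T)$ and $\varphi_1,\varphi_2\in C_0(\bbr^d)$.

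For the first convergence, I would set $p_\e(t):=\int\varphi_1\,d\rho_\e(t)$ and $q_\e(t):=\int\varphi_2\,dA_\e(t)\in\bbr^d$, so that $\int_{\e_0}^T\int_{\bbr^{2d}}\Psi\,d(\rho_\e\otimes A_\e)\,dt=\int_{\e_0}^T\psi\,p_\e\,q_\e\,dt$. The key point is that $\rho_\e$ converges \emph{strongly}, uniformly in time: by \eqref{E-compactness-fast-regular-gain} one has $\|p_\e-p\|_{L^\infty(0,T)}\to0$ with $p(t):=\int\varphi_1\,d\rho(t)$ continuous and $\|p_\e\|_\infty\le\|\varphi_1\|_\infty$, while $q_\e$ stays bounded in $L^2(\e_0,T)$ by the previous step. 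Splitting $\psi\,p_\e\,q_\e-\psi\,p\,q=\psi(p_\e-p)q_\e+\psi\,p\,(q_\e-q)$ with $q(t):=\int\varphi_2\,\tfrac{dj(t)}{\theta^\infty(t)}$, the first term is bounded by $\|p_\e-p\|_{L^\infty}\|\psi\|_{L^2}\|q_\e\|_{L^2}\to0$, while the second term equals $\int_{\e_0}^T\int_{\bbr^d}\psi(t)p(t)\varphi_2(x)\,(dA_\e(t,x)-\tfrac{dj(t,x)}{\theta^\infty(t)})\,dt$, which vanishes as $\e\to0$ by the weak-$*$ convergence $A_\e\xrightharpoonup{*}j/\theta^\infty$ of Lemma~\ref{L2.3}, since $\psi\,p\,\varphi_2\in L^2(\e_0,T;C_0(\bbr^d))$. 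The limit is admissible because $\|\theta^\infty(t)^{-1}\rho(t)\otimes j(t)\|_{\mathrm{TV}}\le\bar\theta_m^{-1}|j(t)|(\bbr^d)\in L^2(0,T)$, and density together with the uniform bound from the first step upgrades the product-function convergence to all of $L^2(\e_0,T;C_0(\bbr^{2d}))$.

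For the second convergence, I would fix $\varphi\in C_b(\bbr^{2d})$ and $\delta>0$, use the uniform tightness above to choose $R>0$ and a cutoff $\chi\in C_c(\bbr^{2d})$ with $0\le\chi\le1$ and $\chi\equiv1$ on $B_R\times B_R$ such that $\varphi$ and $\chi\varphi$ differ by at most $\delta$ after integration against $\rho_\e\otimes\rho_\e$ or $\rho\otimes\rho$, uniformly in $\e\le1$ and $t$; then approximate $\chi\varphi$ within $\delta$ in $C_0(\bbr^{2d})$ by a finite sum $\sum_{k}\varphi_1^k\otimes\varphi_2^k$ with $\varphi_i^k\in C_c(\bbr^d)$. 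Each product term factorizes, $\int(\varphi_1^k\otimes\varphi_2^k)\,d(\rho_\e\otimes\rho_\e)(t)=\big(\int\varphi_1^k\,d\rho_\e(t)\big)\big(\int\varphi_2^k\,d\rho_\e(t)\big)$, and by \eqref{E-compactness-fast-regular-gain} each factor converges uniformly in $t$ with uniform bounds, so the finite sum converges uniformly in $t$ to the same quantity for $\rho\otimes\rho$; combining the three approximations gives $\limsup_{\e\to0}\sup_{[0,T]}|\int\varphi\,d(\rho_\e\otimes\rho_\e-\rho\otimes\rho)|\le C\delta$, hence the narrow convergence in $C([0,T],\mathcal{M}(\bbr^{2d}))$. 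I expect the main obstacle to be the first convergence: since $A_\e$ converges only in the weak-$*$, time-integrated sense one cannot pass both factors of the product to the limit at once, and the resolution is to exploit the stronger uniform-in-time convergence of $\rho_\e$ to ``freeze'' that factor into the $\e$-independent multiplier $p(t)$, after which the weak-$*$ convergence of $A_\e$ applies against the now fixed test function $\psi\,p\,\varphi_2$; making this rigorous is exactly where the reduction to product test functions and the uniform $L^2_t$ bounds of Corollary~\ref{C2.3} are needed.
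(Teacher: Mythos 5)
Your proof is correct and follows essentially the same route as the paper's: reduction to product test functions by density, a splitting that pairs the uniform-in-time narrow convergence of $\rho_\e$ with the $L^2_t$ bound on $A_\e$ and the weak-$*$ convergence of $A_\e$ against a frozen test function, and for the second claim Stone--Weierstrass plus a tightness/cut-off argument. The only cosmetic differences are that you use a two-term rather than three-term decomposition of the product and perform the cut-off before (rather than after) the Stone--Weierstrass approximation; both are equivalent regroupings of the same estimates.
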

	
	\begin{proof}
	
	$\bullet$ {\sc Step 1}. Recall that by Appendix \ref{Appendix-LB} we obtain the following representation
		$$L^2_w(\varepsilon_0,T;\mathcal{M}(\mathbb{R}^{2d}))=L^2_w(\varepsilon_0,T;C_0(\mathbb{R}^{2d}))^*.$$ In order to derive the first convergence we set any test function $\varphi\in L^2(\varepsilon_0,T;C_0(\mathbb{R}^{2d}))$. By a density argument, it suffices to deal with  the case when $\varphi$ takes the following form:
		$$\varphi(t,x,x_*)=\chi(t)\sigma(x)\psi(x_*),$$
		for each $t\in (\varepsilon_0,T)$ and $x,x_*\in \mathbb{R}^d$, where $\chi\in L^2(\varepsilon_0,T)$ and $\sigma,\psi\in C_0(\bbr^d)$.
		\begin{align*}
		\mathcal{I}_\varepsilon&:=\int_{\e_0}^T\int_{\bbr^{2d}}\varphi(t,x,x_*)\left((\rho_\e(t,dx))(A_\e(t,dx_*))-\rho(t,dx)\frac{j(t,dx_*)}{\theta^\infty(t)}\right)\\
		&=\int_{\e_0}^T \chi(t)\left(\int_{\bbr^d}\sigma(x)(\rho_\e(t,dx)-\rho(t,dx))\right)\left(\int_{\bbr^d}\psi(x_*)\left(A_\e(t,dx_*)-\frac{j(t,dx_*)}{\theta^\infty(t)}\right)\right)\,dt\\
		&\quad+\int_{\e_0}^T \chi(t)\left(\int_{\bbr^d}\sigma(x)(\rho_\e(t,dx)-\rho(t,dx))\right)\left(\int_{\bbr^d}\psi(y)\frac{j(t,dy)}{\theta^\infty(t)}\right)\,dt\\
		&\quad+\int_{\e_0}^T \chi(t)\left(\int_{\bbr^d}\sigma(x)\rho(t,dx)\right)\left(\int_{\bbr^d}\psi(x_*)\left(A_\e(t,dx_*)-\frac{j(t,dx_*)}{\theta^\infty(t)}\right)\right)\,dt\\
		&=: \mathcal{I}_{\varepsilon,1}+\mathcal{I}_{\varepsilon,2}+\mathcal{I}_{\varepsilon,3}.
		\end{align*}
		To show the desired convergence we need to show that $\mathcal{I}_{\varepsilon,i}\rightarrow 0$ as $\varepsilon\rightarrow 0$. We will restrict to the first term $\mathcal{I}_{\varepsilon,1}$ since the reasoning in the remaining two terms is similar. Since $\rho_\e\rightarrow\rho$ in $C([0,T],\mathcal{M}(\mathbb{R}^d)-\mbox{narrow})$ by \eqref{E-compactness-fast-regular-gain} and $\chi\in L^2(\varepsilon_0,T)$, we have that
		\[\chi(t)\int_{\bbr^d}\sigma(x)(\rho_\e(t,dx)-\rho(t,dx))\to 0,\quad\mbox{in}\quad L^2(\e_0,T).\]
		Consequently, 
		\[\chi(t)\left(\int_{\bbr^d}\sigma(x)(\rho_\e(t,dx)-\rho(t,dx))\right)\psi(x_*)\to 0,\quad\mbox{in}\quad  L^2(\e_0,T;C_0(\bbr^d)).\]
		Moreover, since $A_\e\xrightharpoonup{*}\frac{j}{\theta^\infty}$ in $L^2_w(\e_0,T;\mathcal{M}(\bbr^d))$ by Lemma \ref{L2.3}, we have that $\mathcal{I}_{\varepsilon,1}\to0$ as $\e\to 0$. 
		
        \medskip

        $\bullet$ {\sc Step 2}. The convergence of $\rho_\e\otimes\rho_\e$ in $L^\infty_w(0,T;\mathcal{M}(\mathbb{R}^d))$ can be attained by applying a similar argument. However, its convergence in $C([0,T],\mathcal{M}(\mathbb{R}^d)-\mbox{narrow})$ requires a more delicate treatment. Specifically, set any $\delta>0$ and $\varphi\in C_0(\mathbb{R}^{2d})$, and apply the Stone-Weierstrass theorem to find functions $\sigma_1,\ldots,\sigma_n$ and $\psi_1,\ldots,\psi_n$ in $C_0(\mathbb{R}^d)$ so that
        \begin{equation}\label{E-Stone-Weierstrass}
        \left\vert \varphi(x,x_*)-\sum_{k=1}^n\sigma_k(x)\psi_k(x_*)\right\vert\leq \delta,
        \end{equation}
        for each $x,x_*\in \mathbb{R}^d$. Then, we obtain the decomposition
        \begin{align*}
        \mathcal{I}_\e&:=\sup_{t\in [0,T]}\left\vert\int_{\mathbb{R}^{2d}}\varphi\,(\rho_\varepsilon\otimes \rho_\varepsilon-\rho\otimes \rho)\,dx\,dx_*\right\vert\\
        &\leq \sup_{t\in [0,T]}\left\vert\sum_{k=1}^n\int_{\mathbb{R}^{2d}}\sigma_k\otimes \psi_k\,(\rho_\varepsilon-\rho)\otimes \rho_\varepsilon\,dx\,dx_*\right\vert\\ &\quad +\sup_{t\in [0,T]}\left\vert\sum_{k=1}^n\int_{\mathbb{R}^{2d}}\sigma_k\otimes \psi_k\,\rho\otimes (\rho_\varepsilon-\rho)\,dx\,dx_*\right\vert\\
        &\quad +\sup_{t\in [0,T]}\left\vert\int_{\mathbb{R}^{2d}}\left(\varphi-\sum_{k=1}^n\sigma_k\otimes \psi_k\right)(\rho_\varepsilon\otimes \rho_\varepsilon-\rho\otimes \rho)\,dx\,dx_*\right\vert =: \mathcal{I}_{\e,1}+\mathcal{I}_{\e,2}+\mathcal{I}_{\e,3}.
        \end{align*}
        On the one hand, by \eqref{E-Stone-Weierstrass} we obtain $\mathcal{I}_{\e,3}\leq 2\delta$. On the other hand, we find that 
        $$\mathcal{I}_{\e,1}\leq \sum_{k=1}^n\Vert \psi_k\Vert_{L^\infty}\sup_{t\in [0,T]}\left\vert\int_{\mathbb{R}^d}\sigma_k(x)(\rho_\varepsilon(t,dx)-\rho(t,dx))\right\vert,$$
        and a similar estimate holds for $\mathcal{I}_{\e,2}$. Since $\rho_\varepsilon\rightarrow \rho$ in $C([0,T],\mathcal{M}(\mathbb{R}^d)-\mbox{narrow})$, by \eqref{E-compactness-fast-regular-gain} we obtain that $\mathcal{I}_{\e,1}\to 0$ and $\mathcal{I}_{\e,2}\to 0$, as $\e\to 0$. Moreover, given that $\delta>0$ is arbitrary, we have that $\mathcal{I}_\e\to 0$ when $\e\to0$. Thus, by arbitrariness of $\varphi\in C_0(\mathbb{R}^{2d})$, we find that $\rho_\varepsilon\otimes \rho_\varepsilon\rightarrow\rho\otimes \rho$ in the weaker space $C([0,T],\mathcal{M}(\mathbb{R}^{2d})-\mbox{weak}*)$. To improve such a convergence into narrow convergence, we a use standard cut-off argument and the uniform tightness of $\rho_\varepsilon$ in $\eqref{E-a-priori-estimates-fast-regular}_2$, thus ending the proof.
	\end{proof}
	
	We are now ready to end the proof of Theorem \ref{T2.1}. Fix any $\psi\in C^1_c((0,T)\times \mathbb{R}^d)$ and set $\varepsilon_0>0$ small enough so that $\supp\psi\subset [\varepsilon_0,T]\times \mathbb{R}^d$. Then, passing to the limit as $\varepsilon\rightarrow 0$ in the weak formulation \eqref{E-velocity-weak-form-fast-regular} for velocity we obtain
	\begin{align*}
	0&=\lim_{\varepsilon\rightarrow 0}\left\{\int_{\varepsilon_0}^T\int_{\mathbb{R}^d}\psi(\phi*A_\varepsilon)\rho_\varepsilon\,dx\,dt-\int_{\varepsilon_0}^T\int_{\mathbb{R}^d}\psi(\phi*\rho_\varepsilon)A_\varepsilon\,dx\,dt\right\}\\
	&\quad-\lim_{\e\rightarrow0}\int_0^T\int_{\bbr^d}\psi(\nabla W*\rho_\e)\rho_\e\,dx\,dt\\
	&\quad+\lim_{\varepsilon\rightarrow 0}\left\{\int_0^T\int_{\mathbb{R}^d}\psi\frac{u^\infty(t)}{\theta^\infty(t)}\rho_\varepsilon\,dx\,dt-\int_{\varepsilon_0}^T\int_{\mathbb{R}^d}\psi A_\varepsilon\,dx\,dt\right\}.
	\end{align*}
	On the one hand, the linear terms in the second line easily pass to the limit, which  can be identified in terms of $\rho$ and $j$ by virtue of \eqref{E-compactness-fast-regular-gain} and Lemma \ref{L2.3}, thus obtaining
	\begin{multline*}
	\lim_{\varepsilon\rightarrow 0}\left\{\int_0^T\int_{\mathbb{R}^d}\psi\frac{u^\infty(t)}{\theta^\infty(t)}\rho_\varepsilon\,dx\,dt-\int_{\varepsilon_0}^T\int_{\mathbb{R}^d}\psi A_\varepsilon\,dx\,dt\right\}\\
	=\int_0^T\int_{\mathbb{R}^d}\psi\frac{u^\infty(t)}{\theta^\infty(t)}\rho(t,dx)\,dt-\int_0^T\int_{\mathbb{R}^d}\frac{\psi}{\theta^\infty(t)}j(t,dx)\,dt.
	\end{multline*}
	On the other hand, let us restate the nonlinear terms in the first line as follows
	\begin{align*}
	\int_{\varepsilon_0}^T&\int_{\mathbb{R}^d}\psi(\phi*A_\varepsilon)\rho_\varepsilon\,dx\,dt-\int_{\varepsilon_0}^T\int_{\mathbb{R}^d}\psi(\phi*\rho_\varepsilon)A_\varepsilon\,dx\,dt\\
	&=\int_{\varepsilon_0}^T\int_{\mathbb{R}^{2d}}\psi(t,x)\phi(x-x_*)(\rho_\varepsilon(t,x)A_\varepsilon(t,x_*)-A_\varepsilon(t,x)\rho_\varepsilon(t,x_*))\,dx\,dx_*\,dt\\
	&=\frac{1}{2}\int_{\varepsilon_0}^T\int_{\mathbb{R}^{2d}} K_\psi(t,x,x_*)(\rho_\varepsilon(t,dx) A_\varepsilon(t,dx_*)-A_\varepsilon(t,dx) \rho_\varepsilon(t,dx_*))\,dt,
	\end{align*}
	where $K_\psi(t,x,x_*):=(\psi(t,x)-\psi(t,x_*))\phi(x-x_*)$ and we have used the usual symmetrization of the integral in the last term. Since $K_\psi\in L^2(\varepsilon_0,T;C_0(\mathbb{R}^{2d}))$, we can then apply Lemma \ref{L2.4} to pass to the limit and we find
	\begin{align*}
	\lim_{\varepsilon\rightarrow 0}&\int_{\varepsilon_0}^T\int_{\mathbb{R}^d}\psi(\phi*A_\varepsilon)\rho_\varepsilon\,dx\,dt-\int_{\varepsilon_0}^T\int_{\mathbb{R}^d}\psi(\phi*\rho_\varepsilon)A_\varepsilon\,dx\,dt\\
	&=\frac{1}{2}\int_{\varepsilon_0}^T\int_{\mathbb{R}^{2d}} \frac{1}{\theta^\infty(t)}K_\psi(t,x,x_*)(\rho(t,dx) j(t,dx_*)-j(t,dx)\rho(t,dx_*))\,dt\\
	&=\int_0^T\int_{\mathbb{R}^d}\frac{\psi}{\theta^\infty(t)}(\phi*j)\rho(t,dx)\,dt-\int_0^T\int_{\mathbb{R}^d}\frac{\psi}{\theta^\infty(t)}(\phi*\rho)j(t,dx)\,dt,
	\end{align*}
	where we have undone the symmetrization. Similarly, we use the convergence of $\rho_\e\otimes\rho_\e$ to obtain the limit of the nonlinear term in the second line:
	\begin{align*}
	\lim_{\e\to0} \int_0^T \psi(\nabla W*\rho_\e)\rho_\e\,dx\,dt=\int_0^T\psi(\nabla W*\rho)\rho\,dx\,dt.
	\end{align*} 
	Putting everything together yields the equation
	\begin{align*}
	0&=\int_0^T\int_{\mathbb{R}^d}\psi\frac{u^\infty(t)}{\theta^\infty(t)}\rho(t,dx)\,dt-\int_0^T\int_{\mathbb{R}^d}\frac{\psi}{\theta^\infty(t)}j(t,dx)\,dt,\\
	&\quad +\int_0^T\int_{\mathbb{R}^d}\frac{\psi}{\theta^\infty}(\phi*j)\rho(t,dx)\,dt-\int_0^T\int_{\mathbb{R}^d}\frac{\psi}{\theta^\infty(t)}(\phi*\rho)j(t,dx)\,dt\\
	&\quad -\int_0^T\int_{\bbr^d}\psi(\nabla W*\rho)\rho(t,dx)\,dt,
	\end{align*}
	for each $\psi\in C^1_c((0,T)\times \mathbb{R}^d)$, then identifying the limiting velocity equation. Regarding the internal variable equation, it suffices to show that the limit of the first term of the right-hand side in \eqref{E-internal-variable-weak-form-fast-regular} vanishes as $\e\to0$. Indeed, we have
	\begin{align*}
    	&\left\vert\int_0^T\int_{\mathbb{R}^d}\psi(t,x)(\zeta*\rho_\varepsilon)B_\varepsilon\,dx\,dt-\int_0^T\int_{\mathbb{R}^d}\psi(t,x)(\zeta*B_\varepsilon)\rho_\varepsilon\,dx\,dt\right\vert\\
    	&\quad \leq \Vert \psi\Vert_{C_c((0,T)\times \mathbb{R}^d)}\int_{\varepsilon_0}^T\int_{\mathbb{R}^{4d}\times \mathbb{R}^2_+}\zeta(x-x_*)\left\vert\frac{1}{\theta}-\frac{1}{\theta_*}\right\vert f_\varepsilon(t,z)f_\varepsilon(t,z_*)\,dz\,dz_*\,dt\\
    	&\quad \leq \frac{\Vert \zeta\Vert_{L^\infty}\Vert \psi\Vert_{C_c((0,T)\times \mathbb{R}^d)}}{\theta_m^2}\int_{\varepsilon_0}^T D_\theta^\varepsilon(t)\,dt\leq C\frac{\Vert \zeta\Vert_{L^\infty}\Vert \psi\Vert_{C_c((0,T)\times \mathbb{R}^d)}}{\theta_m^2}T\varepsilon^{\alpha},
    \end{align*}
    for any $\varepsilon<\varepsilon^{1/\beta}$, where we have used the concentration estimate of the diameter of internal variable in Corollary \ref{cor_temp}. Therefore, the full right-hand side of \eqref{E-internal-variable-weak-form-fast-regular} vanishes as $\e\to0$, which concludes the proof of Theorem \ref{T2.1}.

	\subsection{Weakly singular influence functions}\label{subsec:3.5}
	We emphasize that the fact that $\phi$, $\zeta$, and $\nabla W$ are  uniformly bounded with respect to $\varepsilon$ has strongly be used in many parts of the proof in the previous subsection. In particular, the hydrodynamic limit of the system \eqref{E-kinetic-singular-fast}, where influence functions $\phi$, $\zeta$ are replaced by the singularly scaled $\phi_\varepsilon$, $\zeta_\varepsilon$ in \eqref{E-coupling-weights-epsilon}, does not immediately follow from the previous Theorem \ref{T2.1}, but appropriate modifications are required. In this section we sketch the main ideas guaranteeing that the corresponding hydrodynamic limit of \eqref{E-kinetic-singular-fast} holds rigorously when $W$ still verifies the strong regularity condition \eqref{H-hypothesis-phi-zeta-W}. We will briefly elaborate on the case of singularly scaled potentials $W_\e$ like in \eqref{E-Cucker-Dong-potentials-scaled} later in Remark \ref{R-hydro-fast-singular-potentials}. Unless otherwise specified, we will only assume conditions \eqref{E-hypothesis-initial-data} on the initial data and assumption \eqref{E-hypothesis-macro-species} on $(\bar\rho,\bar u,\bar e)$.
	
	\medskip
	
	$\bullet$ {\bf Concentration of internal variable support}.\\
	In this case, the characteristic system associated with \eqref{E-kinetic-singular-fast} takes a similar form
	\begin{align*}
	&\frac{dX_\varepsilon}{dt}=V_\varepsilon,\\
	&\frac{dV_\varepsilon}{dt}=\frac{1}{\varepsilon}F_\varepsilon[f_\varepsilon](t,Z_\varepsilon)+\frac{1}{\e}H[f_\e](t,Z_\e)+\frac{1}{\varepsilon}F_c[\bar\rho,\bar u,\bar e](t,Z_\varepsilon),\\
	&\frac{d\Theta_\varepsilon}{dt}=\frac{1}{\varepsilon}G_\varepsilon[f_\varepsilon](t,Z_\varepsilon)+\frac{1}{\varepsilon}G_c[\bar \rho,\bar e](t,Z_\varepsilon),\\
	&Z_\varepsilon(0;0,z_0)=z_0,
	\end{align*}
	for $t\in [0,T]$ and any $z_0\in \mathbb{R}^{2d}\times \mathbb{R}_+$, where the operators $F_\varepsilon$ and $G_\varepsilon$ now contain the information about the scaled influence functions $\phi_\varepsilon$ and $\zeta_\varepsilon$, which become singular as $\varepsilon\rightarrow 0$. Note that the operator $H$ with the Lipschitz aggregation force $\nabla W$ remains. By inspection in the proofs of Lemma \ref{lemma_temp} and Corollary \ref{cor_temp} we notice that the terms coming from $G_\varepsilon$ in the estimate of the internal variable were neglected. Therefore, Lemma \ref{lemma_temp} and Corollary \ref{cor_temp} remain valid for the singular regime under the same assumptions.
	
	\medskip
	
	$\bullet$ {\bf A priori estimates}.\\
	Unfortunately, by inspection on the proofs of Lemma \ref{L2.2} and Corollary \ref{C2.3}, we notice that there is a delicate point where the uniform-in-$\varepsilon$ bound of $\phi$ and $W$ are used. Specifically, we apply \eqref{B-4} with $k=2$, integrate over $[0,T]$ and use the control of the internal variable concentration in Lemma \ref{lemma_temp} and Corollary \ref{cor_temp} to derive
	\begin{multline}\label{E-v-moment-singular-phi}
	2\left(\frac{1}{\theta_M}-\frac{\|\phi_\e\|_{L^\infty}D_\theta^\e(0)}{\theta_m^2}\right)\int_0^T\int_{\bbr^{2d}\times\bbr_+}|v|^2f_\e\,dz\,dt\\
	+\frac{1}{\theta_M}\int_0^T\int_{\bbr^{4d}\times\bbr_+^2}\phi_\e(x-x_*)|v-v_*|^2f_\e(t,z)f_\e(t,z_*)\,dz\,dz_*\,dt\\
	\leq \e\int_{\bbr^{2d}\times\bbr_+}|v|^2f_\e^0(z)\,dz+2\int_0^T\left|\frac{u^\infty(t)}{\theta^\infty(t)}\right|\int_{\bbr^{2d}\times\bbr_+}|v|f_\e\,dz\,dt\\
	-2\int_0^T\int_{\bbr^{4d}\times\bbr_+^2}\nabla W (x-x_*)\cdot vf_\e(t,z)f_\e(t,z_*)\,dz\,dz_*\,dt.
	\end{multline}
	On the one hand, by the control of $u^\infty/\theta^\infty$ in Remark \ref{R-hypothesis-macro-species-gain} and the Cauchy--Schwartz inequality, we obtain
    $$2\int_0^T\left|\frac{u^\infty(t)}{\theta^\infty(t)}\right|\int_{\bbr^{2d}\times\bbr_+}|v|f_\e\,dz\,dt\leq 2F_0\Vert \vert v\vert^2 f_\varepsilon\Vert_{L^1(0,T;L^1(\mathbb{R}^{2d}\times \mathbb{R}_+))}^{1/2}.
    $$
    On the other hand, by the uniform bound of $\nabla W$ a similar argument yields
    \begin{multline*}
    -2\int_0^T\int_{\bbr^{4d}\times\bbr_+^2}\nabla W (x-x_*)\cdot vf_\e(t,z)f_\e(t,z_*)\,dz\,dz_*\,dt\\
    \leq  2T^{1/2}\Vert \nabla W\Vert_{L^\infty}\Vert \vert v\vert^2 f_\varepsilon\Vert_{L^1(0,T;L^1(\mathbb{R}^{2d}\times \mathbb{R}^+))}^{1/2}.
    \end{multline*}
    Plugging both bounds into \eqref{E-v-moment-singular-phi}, using Young's inequality and noting that $\Vert \phi_\varepsilon\Vert_{L^\infty}=1/\varepsilon$, we recover an analogue estimate like \eqref{E-velocity-moment-regular-phi}
    \begin{multline*}
		\left(\frac{2}{\theta_M}-\frac{2D_\theta^\varepsilon(0)}{\varepsilon\theta_m^2}-\delta\right)\||v|^2f_\e\|_{L^1(0,T;L^1(\bbr^{2d}\times\bbr_+))}\\
		+\frac{1}{\theta_M}\int_0^T\int_{\bbr^{4d}\times\bbr^2_+}\phi(x-x_*)\vert v-v_*\vert^2\,f_\varepsilon(t,z)f_\varepsilon(t,z_*)\,dz\,dz_*\,dt \leq \varepsilon E_0+\frac{G_0^2}{\delta},
    \end{multline*}
    for any $\delta>0$ and any $\varepsilon>0$. Then, the compatibility condition \eqref{E-hypothesis-thetaM-thetam} is not enough and further information on $D_\theta^\varepsilon(0)$ is necessary in order to compensate the new factor $\frac{1}{\varepsilon}$ coming from singularities. Specifically we shall assume that the initial diameter of the internal variable shrinks asymptotically at a specific rate, namely,
	\begin{equation}\label{E-hypothesis-thetaM-thetam-strong}
	\limsup_{\varepsilon\rightarrow 0}\frac{D^\varepsilon_\theta(0)}{\varepsilon}< \frac{\theta_m^2}{\theta_M}.
	\end{equation}
	Under this conditions, we recover the a priori estimates in Lemma \ref{L2.2} and Corollary \ref{C2.3} modulo an appropriate subsequence, that we denote in the same way for simplicity.

\begin{remark}[Strong initial concentration]
		We emphasize that the new compatibility condition \eqref{E-hypothesis-thetaM-thetam-strong} is more restrictive than the previous one \eqref{E-hypothesis-thetaM-thetam}, as it does not account for generic initial data $f_\varepsilon^0$ with uniformly bounded internal variable supports. Instead, $\supp_\theta f_\varepsilon^0$ must shrink fast enough. Then, the concentration phenomenon does not only rely on the dynamics itself but also on the choice of initial data. This suggests that the initial layer in Corollary \ref{cor_temp} is no longer necessary for the asymptotic concentration of the internal variable, that shrinks instantaneously after $t=0$ at rate $\mathcal{O}(\varepsilon)$.
	\end{remark}
	
	$\bullet$ {\bf Passing to the limit}.\\
	The above estimates suggest that an analogue of Theorem \ref{T2.1} also holds.
	
	\begin{theorem}\label{T-hydro-fast-singular}
		Let $f_\e$ be a solution to the equation \eqref{E-kinetic-singular-fast} subject to the initial data $f_\e^0$ with parameters $\lambda_1\in (0,1)$, $\lambda_2>0$. Assume that the initial data verify \eqref{E-hypothesis-initial-data}, $(\bar \rho,\bar u,\bar e)$ verifies \eqref{E-hypothesis-macro-species} and the parameters fulfill the stronger compatibility condition \eqref{E-hypothesis-thetaM-thetam-strong}. Then,
		\begin{align*}
		&\rho_\varepsilon\rightarrow\rho,\quad \mbox{in }C([0,T],\mathcal{M}(\mathbb{R}^{d})-\mbox{narrow}),\\
		&j_\varepsilon\overset{*}{\rightharpoonup} j,\quad \mbox{in }L^2_w(0,T;\mathcal{M}(\mathbb{R}^d)^d),
		\end{align*}
		as $\varepsilon\rightarrow 0$, for some probability measure $\rho$, some finite Radon measure $j$ and some subsequence of $\{\rho_\varepsilon\}_{\varepsilon>0}$ and $\{j_\varepsilon\}_{\varepsilon>0}$ that we denote in the same way. In addition, $(\rho,j)$ solves the following problem
		$$
		\begin{array}{ll}
		\displaystyle \partial_t \rho+\nabla\cdot j =0, & (t,x)\in [0,T)\times \mathbb{R}^d,\\
		\displaystyle j-u^\infty(t)\rho+\theta^\infty(t)(\nabla W*\rho)\rho=\rho (\phi_0*j)-j(\phi_0*\rho), & (t,x)\in (0,T)\times \mathbb{R}^d,\\
		\rho(t=0)=\rho^0, & x\in \mathbb{R}^d,
		\end{array}
		$$
		in distributional sense, for an appropriate meaning of the nonlinear term. Here, $\phi_0$ is the singular influence function in \eqref{E-singular-kernels} and, $\theta^\infty(t)$, $u^\infty(t)$ are the background mean value of internal variable and velocity, see \eqref{E-background-mean-theta} and \eqref{E-background-mean-velocity}.
	\end{theorem}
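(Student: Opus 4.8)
The plan is to run the proof of Theorem \ref{T2.1} line by line, with $\phi\rightsquigarrow\phi_\varepsilon$ and $\zeta\rightsquigarrow\zeta_\varepsilon$, and to isolate the single place where the singularity genuinely bites. As already observed in the discussion preceding the statement, Lemma \ref{lemma_temp} and Corollary \ref{cor_temp} hold verbatim in the singular regime (the $\zeta_\varepsilon$-contributions are discarded in their proofs), while Lemma \ref{L2.2} and Corollary \ref{C2.3} survive provided the strengthened compatibility condition \eqref{E-hypothesis-thetaM-thetam-strong} holds --- this is precisely the hypothesis that absorbs the extra factor $\Vert\phi_\varepsilon\Vert_{L^\infty}=1/\varepsilon$ appearing in the $k=2$ velocity-moment estimate \eqref{E-v-moment-singular-phi}. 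Hence the uniform bounds \eqref{E-a-priori-estimates-fast-regular} hold along a subsequence, and the compactness statement ($\rho_\varepsilon\to\rho$ in $C([0,T],\mathcal{M}(\mathbb{R}^d)-\mbox{narrow})$ and $j_\varepsilon\overset{*}{\rightharpoonup}j$ in $L^2_w(0,T;\mathcal{M}(\mathbb{R}^d)^d)$) is obtained exactly as in the proof of Theorem \ref{T2.1}, using Alaoglu--Bourbaki together with the gain-of-time-regularity argument of \cite[Theorem 3.8]{PS17}.

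Next I would note that Lemma \ref{L2.3} and Lemma \ref{L2.4} carry over unchanged, since their proofs use only the concentration of $\supp_\theta f_\varepsilon$ and the compactness of $\{\rho_\varepsilon\}$, $\{j_\varepsilon\}$ --- never the influence functions themselves. Thus $A_\varepsilon\overset{*}{\rightharpoonup}j/\theta^\infty$, $B_\varepsilon\to\rho/\theta^\infty$, $\rho_\varepsilon\otimes A_\varepsilon\overset{*}{\rightharpoonup}\rho\otimes j/\theta^\infty$ in $L^2_w(\varepsilon_0,T;\mathcal{M}(\mathbb{R}^{2d}))$ and $\rho_\varepsilon\otimes\rho_\varepsilon\to\rho\otimes\rho$ narrowly. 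With these in hand, in the weak velocity equation \eqref{E-velocity-weak-form-fast-regular} (now with $\phi$ replaced by $\phi_\varepsilon$) the inertial terms vanish by \eqref{E-a-priori-estimates-fast-regular}, the linear terms $\psi\,u^\infty/\theta^\infty\,\rho_\varepsilon$ and $\psi A_\varepsilon$ pass to the limit by Lemma \ref{L2.3}, and the aggregation term $\psi(\nabla W*\rho_\varepsilon)\rho_\varepsilon$ passes to the limit exactly as in Theorem \ref{T2.1}, since $W$ still satisfies \eqref{H-hypothesis-phi-zeta-W}.

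The heart of the matter is the nonlinear alignment term, which I would symmetrize as $\frac12\int_{\varepsilon_0}^T\int_{\mathbb{R}^{2d}}K_{\psi,\varepsilon}(t,x,x_*)\,(\rho_\varepsilon(t,dx)A_\varepsilon(t,dx_*)-A_\varepsilon(t,dx)\rho_\varepsilon(t,dx_*))\,dt$, where $K_{\psi,\varepsilon}(t,x,x_*):=(\psi(t,x)-\psi(t,x_*))\phi_\varepsilon(x-x_*)$. Here the assumption $\lambda_1\in(0,1)$ enters decisively: since $\phi_\varepsilon(y)\le c_{\lambda_1}^{-\lambda_1/2}|y|^{-\lambda_1}$ and $|\psi(t,x)-\psi(t,x_*)|\le\Vert\nabla_x\psi\Vert_{L^\infty}|x-x_*|$, one gets the $\varepsilon$-uniform bound $|K_{\psi,\varepsilon}|\le C\Vert\nabla_x\psi\Vert_{L^\infty}|x-x_*|^{1-\lambda_1}$ with $1-\lambda_1>0$, and $K_{\psi,\varepsilon}\to K_{\psi,0}$ (with $\phi_0$ the singular kernel of \eqref{E-singular-kernels}) locally uniformly off the diagonal $\{x=x_*\}$. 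One then splits using a smooth cutoff $\chi_\delta$ vanishing in a $\delta$-neighbourhood of the diagonal: on the set $\{\chi_\delta=1\}$ one has $\chi_\delta K_{\psi,\varepsilon}\in L^2(\varepsilon_0,T;C_0(\mathbb{R}^{2d}))$ with $\chi_\delta K_{\psi,\varepsilon}\to\chi_\delta K_{\psi,0}$ in that space, so Lemma \ref{L2.4} identifies the limit of that piece; on the complementary neighbourhood of the diagonal the contribution is controlled by $C\,\theta_m^{-1}\delta^{1-\lambda_1}\Vert|v|f_\varepsilon\Vert_{L^1(0,T;L^1(\mathbb{R}^{2d}\times\mathbb{R}_+))}\le C\delta^{1-\lambda_1}$, uniformly in $\varepsilon$ by Corollary \ref{C2.3}; letting $\delta\to0$ closes the argument. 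The very same computation shows that the limiting object is well defined as $\frac12\int_{\varepsilon_0}^T\int_{\mathbb{R}^{2d}}K_{\psi,0}\,d(\rho\otimes j-j\otimes\rho)\,dt$, which is the ``appropriate meaning'' of $\rho(\phi_0*j)-j(\phi_0*\rho)$ tested against $\psi\in C^1_c((0,T)\times\mathbb{R}^d)$, since $K_{\psi,0}$ is bounded (indeed $(1-\lambda_1)$-H\"older near the diagonal) and $\rho\otimes j$ is a finite measure.

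Finally, for consistency one also passes to the limit in the internal-variable hierarchy equation \eqref{E-internal-variable-weak-form-fast-regular}; the only genuinely new feature there is the singular factor $\Vert\zeta_\varepsilon\Vert_{L^\infty}=\varepsilon^{-\lambda_2}$, which is compensated by the super-polynomial concentration $D_\theta^\varepsilon(t)\le C\varepsilon^\alpha$ for $t\in[\varepsilon^\beta,T]$ (any $\alpha>0$, Corollary \ref{cor_temp}) together with $D_\theta^\varepsilon(0)=\mathcal{O}(\varepsilon)$ on the short layer $[0,\varepsilon^\beta]$ under \eqref{E-hypothesis-thetaM-thetam-strong}, so that $\Vert\zeta_\varepsilon\Vert_{L^\infty}\int_0^T D_\theta^\varepsilon(t)\,dt\to0$ and the nonlinear $\zeta_\varepsilon$-term vanishes exactly as in the proof of Theorem \ref{T2.1}. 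Collecting all limits and identifying $\rho(t=0)=\rho^0$ by restricting the narrow convergence of $\rho_\varepsilon$ to $t=0$ yields the announced weak formulation. I expect the main obstacle to be making the weak-$*$ pairing $\langle K_{\psi,\varepsilon},\rho_\varepsilon\otimes A_\varepsilon\rangle$ robust across the diagonal --- this is where $\lambda_1<1$ is indispensable and where the moment bounds of Corollary \ref{C2.3} are used --- while the remainder is a transcription of the regular case.
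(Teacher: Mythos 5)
Your proposal is correct and its architecture coincides with the paper's: both transcribe the proof of Theorem \ref{T2.1}, note that Lemma \ref{lemma_temp} and Corollary \ref{cor_temp} survive because the $\zeta_\varepsilon$-terms are discarded there, invoke the strengthened condition \eqref{E-hypothesis-thetaM-thetam-strong} to absorb $\Vert\phi_\varepsilon\Vert_{L^\infty}=\varepsilon^{-1}$ in the second-order velocity moment, reuse Lemmas \ref{L2.3}--\ref{L2.4}, and kill the $\zeta_\varepsilon$-nonlinearity by taking $\alpha>\lambda_2$ in Corollary \ref{cor_temp}. The one place where you deviate is the passage to the limit in the symmetrized alignment term. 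The paper writes $K_{\psi,\varepsilon}=(K_{\psi,\varepsilon}-K_{\psi,0})+K_{\psi,0}$, bounds the difference uniformly by $C\varepsilon^{1-\lambda_1}\Vert\nabla_x\psi\Vert_{L^\infty}$ via the mean value theorem, and observes that for $\lambda_1\in(0,1)$ the limiting kernel $K_{\psi,0}$ already lies in $L^2(0,T;C_0(\mathbb{R}^{2d}))$ (the Lipschitz cancellation of $\psi$ makes it $(1-\lambda_1)$-H\"older across the diagonal), so Lemma \ref{L2.4} applies directly with no cutoff. You instead insert a diagonal cutoff $\chi_\delta$, pass to the limit on $\{\chi_\delta=1\}$ by weak-$*$/strong pairing, and control the diagonal strip by $C\delta^{1-\lambda_1}$ uniformly in $\varepsilon$. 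Both decompositions rest on exactly the same cancellation $\vert\psi(t,x)-\psi(t,x_*)\vert\lesssim\vert x-x_*\vert$; yours is slightly longer for $\lambda_1<1$ but has the advantage of being the template that generalizes to $\lambda_1=1$ (where $K_{\psi,0}$ is bounded but discontinuous and the paper must switch to the non-concentration property \eqref{commut_A}), whereas the paper's version is the shortest route in the stated range. Your identification of the limiting nonlinear term as $\tfrac12\int K_{\psi,0}\,d(\rho\otimes j-j\otimes\rho)$ is precisely the paper's ``appropriate meaning''.
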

	
	Most of the steps in the previous proof remains unchanged. In fact, since Corollary \ref{C2.3} remains true under \eqref{E-hypothesis-thetaM-thetam-strong}, similar a priori estimates as in \eqref{E-a-priori-estimates-fast-regular} fulfill. Then, we still recover the same compactness results in \eqref{E-compactness-fast-regular} and \eqref{E-compactness-fast-regular-gain} along with the identification of the nonlinear moments in Lemma \ref{L2.3} and Lemma \ref{L2.4}. Indeed, passing to the limit in all the linear terms of \eqref{E-velocity-weak-form-fast-regular}-\eqref{E-internal-variable-weak-form-fast-regular} in weak formulation is identical. The only delicate point concerns the identification of the nonlinear terms. Recall that given any test function $\psi\in C^1_c((0,T)\times \mathbb{R}^d)$, and $\varepsilon_0>0$ small enough so that $\supp\psi\subset [\varepsilon_0,T]\times \mathbb{R}^d$, the nonlinear alignment term in the right-hand side of the weak formulation for the velocity equation \eqref{E-velocity-weak-form-fast-regular} can be restated in a symmetrized way as follows
	\begin{align*}
	\int_0^T\int_{\mathbb{R}^d}&\psi(\phi_\varepsilon*A_\varepsilon)\rho_\varepsilon\,dx\,dt-\int_0^T\int_{\mathbb{R}^d}\psi(\phi_\varepsilon*\rho_\varepsilon)A_\varepsilon\,dx\,dt\\
	&=\int_{\e_0}^T\int_{\mathbb{R}^{2d}}\psi(t,x)\phi_\varepsilon(x-x_*)(\rho_\varepsilon(t,x)A_\varepsilon(t,x_*)-A_\varepsilon(t,x)\rho_\varepsilon(t,x_*))\,dx\,dx_*\,dt\\
	&=\frac{1}{2}\int_{\e_0}^T\int_{\mathbb{R}^{2d}} K_{\psi,\varepsilon}(t,x,x_*)(\rho_\varepsilon(t,dx) A_\varepsilon(t,dx_*)-A_\varepsilon(t,dx) \rho_\varepsilon(t,dx_*))\,dt,
	\end{align*}
	where $K_{\psi,\varepsilon}(t,x,x_*):=(\psi(t,x)-\psi(t,x_*))\phi_\varepsilon(x-x_*)$. Consider the integral
	\begin{align}\label{E-nonlinear-term-fast-singular}
	\begin{aligned}
	\mathcal{I}_\varepsilon&:=\frac{1}{2}\int_{\e_0}^T\int_{\mathbb{R}^{2d}}K_{\psi,\varepsilon}(\rho_\varepsilon\otimes A_\varepsilon-A_\varepsilon\otimes \rho_\varepsilon)\,dx\,dx_*\,dt\\
	&\hspace{3.5cm}-\frac{1}{2}\int_{\e_0}^T\int_{\mathbb{R}^{2d}}K_{\psi,0}\left(\rho\otimes \frac{j}{\theta^\infty}-\frac{j}{\theta^\infty}\otimes \rho\right)\,dx\,dx_*\,dt\\
	&=\frac{1}{2}\int_{\e_0}^T\int_{\mathbb{R}^{2d}}(K_{\psi,\varepsilon}-K_{\psi,0})(\rho_\varepsilon\otimes A_\varepsilon-A_\varepsilon\otimes \rho_\varepsilon)\,dx\,dx_*\,dt\\
	&\quad+\frac{1}{2}\int_{\e_0}^T\int_{\mathbb{R}^{2d}}K_{\psi,0}\left[(\rho_\varepsilon\otimes A_\varepsilon-A_\varepsilon\otimes \rho_\varepsilon)-\left(\rho\otimes \frac{j}{\theta^\infty}-\frac{j}{\theta^\infty}\otimes \rho\right)\right]\,dx\,dx_*\,dt\\
	&=:\mathcal{I}_{\varepsilon,1}+\mathcal{I}_{\varepsilon,2}.
	\end{aligned}
	\end{align}
	Our goal is to show that $\mathcal{I}_{\varepsilon,i}\rightarrow 0$, as $\varepsilon\rightarrow 0$. On the one hand, since $\lambda_1\in (0,1)$, then the smoothness of the test function $\psi$ kills the singularity at the diagonal and $K_{\psi,0}\in L^2(0,T;C_0(\mathbb{R}^{2d}))$. Thus, using the weak-$*$ convergence of tensor product $\rho_\varepsilon\otimes A_\varepsilon$ in $L^2_w(0,T;$ $\mathcal{M}(\mathbb{R}^{2d}))$ in Lemma \ref{L2.4}, we obtain that $\mathcal{I}_{\varepsilon,2}\rightarrow 0$, as $\varepsilon\rightarrow 0$. On the other hand, by the mean value theorem and the smoothness of $\psi$ we obtain the following error estimate
	$$\vert K_{\psi,\varepsilon}(t,x,x_*)-K_{\psi,0}(t,x,x_*)\vert \leq C\varepsilon^{1-\lambda_1}\Vert \nabla_x\psi\Vert_{C_c((0,T)\times \mathbb{R}^d)},$$
	for each $t\in (0,T)$, each $x,x_*\in \mathbb{R}^d$ and each $\varepsilon>0$. Therefore, $$\vert \mathcal{I}_{\varepsilon,1}\vert \leq C\varepsilon^{1-\lambda_1}\Vert \nabla\psi\Vert_{C_c((0,T)\times \mathbb{R}^d)}T^{1/2}\Vert A_\varepsilon\Vert_{L^2(0,T;L^1(\mathbb{R}^d))}.$$
	Hence, the a priori estimates allow concluding that $\mathcal{I}_{\varepsilon,1}\rightarrow 0$, when $\varepsilon\rightarrow 0$. Second, the convergence of the aggregation nonlinear term involving $W$ can be obtained in a similar way by using the regularity assumption \eqref{H-hypothesis-phi-zeta-W} and the convergence of $\rho_\e\otimes \rho_\e$ in $C([0,T],\mathcal{M}(\mathbb{R}^{2d})-\mbox{narrow})$ obtained in Lemma \ref{L2.4}. Finally, the nonlinear term for the internal variable equation in the weak formulation \eqref{E-internal-variable-weak-form-fast-regular} can be estimated by the same argument as in the regular influence function case. Precisely, by Corollary \ref{cor_temp}, we obtain
	\begin{align*}
    	&\left\vert\int_{\e_0}^T\int_{\mathbb{R}^d}\psi(t,x)(\zeta_\e*\rho_\varepsilon)B_\varepsilon\,dx\,dt-\int_{\e_0}^T\int_{\mathbb{R}^d}\psi(t,x)(\zeta_\e*B_\varepsilon)\rho_\varepsilon\,dx\,dt\right\vert\\
    	&\qquad \leq C\frac{\Vert \zeta_\e\Vert_{L^\infty}\Vert \psi\Vert_{C_c((0,T)\times \mathbb{R}^d)}}{\theta_m^2}T\varepsilon^{\alpha}\le C\frac{\Vert \psi\Vert_{C_c((0,T)\times \mathbb{R}^d)}}{\theta_m^2}T\varepsilon^{\alpha-\lambda_2},
    \end{align*}
   for any $\varepsilon<\varepsilon_0^{1/\beta}$. Since $\alpha$ in Corollary \ref{cor_temp} can be taken larger than $\lambda_2$, the right-hand side again vanishes as $\e\to0$. Therefore, the limit of the internal variable equation \eqref{E-internal-variable-weak-form-fast-regular} becomes trivial again, which conclude the proof.

	\medskip
	
	$\bullet$ {\bf Non-concentration for $\lambda_1=1$}.
	Notice that only coefficients $\lambda_1$ in the restricted range $(0,1)$ have been considered in the preceding proof of Theorem \ref{T-hydro-fast-singular}. Indeed, $K_{\psi,0}$ in the symmetrized formulation \eqref{E-nonlinear-term-fast-singular} of the nonlinear term becomes discontinuous at diagonal points $x=x_*$ for any value $\lambda_1\geq 1$. However, notice that $K_{\psi,0}$ remains bounded for $\lambda_1=1$. In this part we extend the proof of Theorem \ref{T-hydro-fast-singular} to this limiting singular regime.  
	On the one hand, using the previous extra dissipation estimate \eqref{E-dissipation-estimate} in Corollary \ref{C2.3} and arguing as in \cite[Lemma 3.10]{PS17} for the singular Cucker--Smale model, we obtain the following non-concentration of the commutator $\rho_\e\otimes A_\e-A_\e\otimes \rho_\e$ at diagonal points:
	\begin{align}
	\liminf_{\delta,\varepsilon\rightarrow 0}\vert\rho_\varepsilon(t,\cdot) \otimes A_\varepsilon(t,\cdot)-A_\varepsilon(t,\cdot)\otimes \rho_\varepsilon(t,\cdot)\vert(\Delta+B_\delta)&=0,\label{commut_A}
	\end{align} 
	for a.e. $t\in [0,T]$, where $\Delta :=\{(x,x)\in\bbr^{2d}:\,x\in\bbr^d\}$ is the diagonal set. 

	On the other hand, let us show the rigorous passage to the limit in \eqref{E-nonlinear-term-fast-singular}. Thanks to the nonconcentration property \eqref{commut_A} and \cite[Proposition 3.11]{PS17} we can pass to the limit on $\mathcal{I}_{\varepsilon,2}$ in \eqref{E-nonlinear-term-fast-singular} and we obtain $\mathcal{I}_{\varepsilon,2}\rightarrow	0$ when $\varepsilon\rightarrow 0$. Also, since $\lambda_1=1$ we obtain the following estimate by the mean value theorem
	$$\vert K_{\psi,\varepsilon}(t,x,x_*)-K_{\psi,0}(t,x,x_*)\vert\leq C\phi_\varepsilon(x-x_*)^{1/2}\varepsilon^{1/2},$$
	for each $t\in (0,T)$, any $x,x_*\in \mathbb{R}^d$ and appropriate $C\in \mathbb{R}_+$. Therefore, we estimate $\mathcal{I}_{\e,1}$ by splitting it into two terms as follows
	\begin{align*}
	\vert \mathcal{I}_{\varepsilon,1}\vert&\leq C\varepsilon^{1/2}\int_{\e_0}^T\int_{\mathbb{R}^{4d}\times \mathbb{R}_+^2}\left\vert \frac{v}{\theta}-\frac{v_*}{\theta_*}\right\vert\phi_\varepsilon(x-x_*)^{1/2}f_\varepsilon(t,z)f_\varepsilon(t,z_*)\,dz\,dz_*\,dt\\
	&\leq C\varepsilon^{1/2}\int_{\varepsilon_0}^T\int_{\mathbb{R}^{4d}\times\mathbb{R}_+^2}\phi_\varepsilon(x-x_*)^{1/2}\frac{1}{\theta_*}\vert v-v_*\vert f_\varepsilon(t,z)f_\varepsilon(t,z_*)\,dz\,dz_*\,dt\\
	&\quad +C\varepsilon^{1/2}\int_{\varepsilon_0}^T\int_{\mathbb{R}^{4d}\times \mathbb{R}_+^2}\phi_\varepsilon(x-x_*)^{1/2}\vert v\vert\,\left\vert\frac{1}{\theta}-\frac{1}{\theta_*}\right\vert f_\varepsilon(t,z)f_\varepsilon(t,z_*)\,dz\,dz_*\,dt\\
	&\leq \frac{C}{\theta_m}\varepsilon^{1/2}\int_{\varepsilon_0}^T\int_{\mathbb{R}^{4d}\times \mathbb{R}_+^2}\phi_\varepsilon(x-x_*)^{1/2}\vert v-v_*\vert f_\varepsilon(t,z)f_\varepsilon(t,z_*)\,dz\,dz_*\,dt\\
	&\quad +\frac{C}{\theta_m^2}\int_{\varepsilon_0}^TD_\theta^\varepsilon(t)\int_{\mathbb{R}^{2d}\times \mathbb{R}_+}\vert v\vert f_\varepsilon(t,z)\,dz\,dt,
	\end{align*}
	where in the last line we have used that $\Vert\phi_\varepsilon\Vert_{L^\infty}=\frac{1}{\varepsilon}$. Hence, using Jensen's and the Cauchy--Schwartz inequality in the right-hand side we obtain
	\begin{align*}
	\vert \mathcal{I}_{\varepsilon,1}\vert&\leq \frac{CT^{1/2}}{\theta_m^2}\varepsilon^{1/2}\left(\int_0^T\int_{\mathbb{R}^{4d}\times \mathbb{R}_+^2}\phi_\varepsilon(x-x_*)\vert v-v_*\vert^2 f_\varepsilon(t,z)f_\varepsilon(t,z_*)\,\,dz\,dz_*\,dt\right)^{1/2}\\
	&\quad +\frac{C}{\theta_m^2}\left(\int_{\varepsilon_0}^T D_\theta^\varepsilon(t)^2\,dt\right)^{1/2}\Vert \vert v\vert f_\varepsilon\Vert_{L^2(0,T;L^1(\mathbb{R}^{2d}\times \mathbb{R}_+))}\\
	&\leq \frac{CT^{1/2}}{\theta_m^2}(\theta_M (E_0+C G_0^2))^{1/2}\varepsilon^{1/2}+\frac{C}{\theta_m^2}(C(E_0+CG_0^2))^{1/2}T^{1/2}\varepsilon^\alpha,
	\end{align*}
	for any $\varepsilon<\varepsilon^{1/\beta}$, where we have used the estimates in Corollary \ref{C2.3} and the concentration of the internal variable support in Corollary \ref{cor_temp}. Then, $\mathcal{I}_{\varepsilon,1}\rightarrow 0$, as $\varepsilon\rightarrow 0$.

\begin{remark}[Hydrodynamic limit with singular         aggregation potentials]\label{R-hydro-fast-singular-potentials}
    We emphasize that uniform bound assumption for $\nabla W$ in \eqref{H-hypothesis-phi-zeta-W} has been critical in two steps: 
    \begin{itemize}
    \item The control of second order velocity moment \eqref{E-v-moment-singular-phi}.
    \item The identification of the nonlinear aggregation term in the weak formulation of the velocity equation \eqref{E-velocity-weak-form-fast-regular}.
    \end{itemize}
    However, the scaled aggregation potentials $W_\varepsilon$ in \eqref{E-Cucker-Dong-potentials-scaled} (leading to singular aggregation potentials) can also be addressed in the special case $\lambda_3=\frac{\lambda_1}{2}$. In fact, a similar result like in Theorem \ref{T-hydro-fast-singular} holds true for the choice of parameters $\lambda_1\in (0,1]$, $\lambda_2>0$ and $\lambda_3=\lambda_1/2$. On the one hand, note that identifying the limit of the singular nonlinear aggregation term can be addressed using the same usual symmetrization method, that kills singularities, for any value $\lambda_3\in (0,1)$ (note that $\nabla W_\varepsilon$ is anti-symmetric). On the other hand, although $\nabla W_\varepsilon$ cannot be bounded uniformly in $\e$, the last term in the right-hand side of \eqref{E-v-moment-singular-phi} can be controlled by the dissipation term:
    \begin{align*}
	&2\left|\int_0^T\int_{\bbr^{4d}\times\bbr_+^2}\nabla W_\e(x-x_*)\cdot vf_\e(t,z)f_\e(t,z_*)\,dz\,dz_*\,dt\right|\\
	&\qquad=\left|\int_0^T\int_{\bbr^{4d}\times\bbr_+^2}\nabla W_\e(x-x_*)\cdot (v-v_*)f_\e(t,z)f_\e(t,z_*)\,dz\,dz_*\,dt\right|\\
	&\qquad \le \int_0^T\int_{\bbr^{4d}\times\bbr_+^2}\phi_\e(x-x_*)^{1/2}|v-v_*|f_\e(t,z)f_\e(t,z_*)\,dz\,dz_*\,dt\\
	&\qquad \le \int_0^T\left(\int_{\bbr^{4d}\times\bbr^2_+}\phi_\e(x-x_*)|v-v_*|^2f_\e(t,z)f_\e(t,z_*)\,dz\,dz_*\right)^{1/2}\,dt\\
	&\qquad \le \frac{T}{2\delta}+\delta\int_0^T\int_{\bbr^{4d}\times\bbr_+^2}\phi_\e(x-x_*)|v-v_*|^2f_\e(t,z)f_\e(t,z_*)\,dz\,dz_*\,dt,
	\end{align*}
	for any $\delta>0$. In the second line, we have symmetrized the integral, in the third line we have noticed that  $\vert \nabla W_\varepsilon(x)\vert\leq \phi_\varepsilon(x)$ for $\lambda_3=\frac{\lambda_1}{2}$, and in the last steps we have applied the Cauchy--Schwartz and Young inequalities. By virtue of the above strong initial concentration condition \eqref{E-hypothesis-thetaM-thetam-strong}, we can chose a suitable $\delta>0$ leading to an analogue control \eqref{E-velocity-moment-regular-phi} for the second-order velocity moment and the dissipation estimate.
\end{remark}

\section{Hydrodynamic limit in the weak relaxation regime}\label{sec:3}
\setcounter{equation}{0}

In this section, we derive the hydrodynamic limit of system \eqref{A-6} towards \eqref{A-7}. We follow a similar procedure as in Section \ref{sec:2}. First, we prove that the internal variable support of $f_\e$ still shrinks fast enough, as $\varepsilon\rightarrow 0$. Notice that in contrast with the previous scaling \eqref{A-4}, under the new scaling \eqref{A-6} we no longer have strong relaxation of the internal variable. In turns, such a relaxation term $G_c[\bar\rho,\bar e]f$ involves a weaker scale now. Despite this defect, we still preserve a strong alignment term $\frac{1}{\e}G[f]f$. Therefore, we expect that the internal variable support will rapidly concentrate at some value $\theta(t)$ when $\varepsilon\rightarrow 0$ after a small enough time layer. Although $\theta(t)$ does not instantaneously agree with the background mean value $\theta^\infty(t)$ (see \eqref{E-background-mean-theta}) due to the weak relaxation scale, $\theta(t)$ asymptotically converges to $\theta^\infty(t)$ after sufficiently long time. With such control in hand, we recover appropriate \textit{a priori} estimates for the velocity moments. Finally, we combine the estimates for velocity moments and the internal variable support to derive the rigorous hydrodynamic limit. 

Along this part, we will assume again the previous hypothesis \eqref{E-hypothesis-initial-data} for initial data $f_\varepsilon^0$, the properties \eqref{E-hypothesis-macro-species} for the solution $(\bar \rho,\bar u,\bar e)$ and the compatibility condition \eqref{E-hypothesis-thetaM-thetam} for parameters. 
Since the weak relaxation regime is technically more involved, we will need further assumptions that we list here. On the one hand, define the spatial and velocity radii
\begin{align}\label{E-position-velocity-radii}
\begin{aligned}R_x^\varepsilon(t)&:=\sup\{\vert x\vert:\,x\in \supp_x f_\varepsilon(t)\},\\
R_v^\varepsilon(t)&:=\sup\{\vert v\vert:\,v\in \supp_v f_\varepsilon(t)\},
\end{aligned}
\end{align}
for any $t\in [0,T]$. Then, we will assume the initial uniform confinement of supports
\begin{equation}\label{E-hypothesis-initial-data-uniform-compact-support}
R_x^\varepsilon(0)\leq r_M^0\quad \mbox{and}\quad R_v^\varepsilon(0)\leq v_M^0,
\end{equation}
for all $\varepsilon>0$ and some constants $r_M^0,\,v_M^0>0$ not depending on $\varepsilon>0$. On the other hand, some asymptotic control on the initial value of the internal variable is required to fully characterize its dynamics. Specifically, we will assume that there exists some value $\theta_0\in \mathbb{R}_+$ and a parameter $\gamma>0$ such that
\begin{equation}\label{E-hypothesis-initial-data-internal-variable}
\int_{\mathbb{R}^{2d}\times \mathbb{R}_+}\theta f_\varepsilon^0(z)\,dz=\theta_0+\mathcal{O}(\varepsilon^\gamma),\qquad (\mbox{as }\varepsilon\rightarrow 0).
\end{equation}

\subsection{Hierarchy of moments}
Again, associated with the hierarchy of moments $\rho_\varepsilon$, $j_\varepsilon$, $h_\varepsilon$, $A_\varepsilon$, $B_\varepsilon,\mathcal{S}_\varepsilon^v$ and $\mathcal{S}_\varepsilon^\theta$ in \eqref{E-hierarchy-moments}, we obtain the following hierarchy of equations
\begin{align}\label{E-hierarchy-moments-equations-slow}
\begin{aligned}
&\partial_t\rho_\e+\nabla\cdot j_\e=0,\\
&\e\partial_tj_\e+\e\nabla\cdot\mathcal{S}_\varepsilon^v+A_\varepsilon(\phi*\rho_\varepsilon)-\rho_\varepsilon(\phi*A_\varepsilon)+(\nabla W*\rho_\e)\rho_\e+A_\varepsilon-\rho_\varepsilon\frac{u^\infty(t)}{\theta^\infty(t)}=0,\\
&\e\partial_th_\e+\e\nabla\cdot\mathcal{S}_\varepsilon^\theta+\rho_\varepsilon(\zeta*B_\varepsilon)-B_\varepsilon(\zeta*\rho_\varepsilon)+\varepsilon\left(\frac{\rho_\varepsilon}{\theta^\infty(t)}-B_\varepsilon\right)=0,
\end{aligned}
\end{align}
where $\theta^\infty$ and $u^\infty$ are the background mean value of the internal variable and velocity respectively, see \eqref{E-background-mean-theta} and \eqref{E-background-mean-velocity}. Our final goal is to show rigorously that we can close the above hierarchy of equations \eqref{E-hierarchy-moments-equations-slow} when $\varepsilon\rightarrow 0$. Note the difference with the hierarchy \eqref{E-hierarchy-moments-equations} coming from the weak relaxation scale of the internal variable. Indeed, whilst the inertial terms $\e\partial j_\varepsilon+\e\nabla\cdot \mathcal{S}_\e^v$ in the velocity equation will disappear again in the limit $\varepsilon\rightarrow0$, we will see that a non-trivial dynamics for the internal variable is found due to the weaker relaxation of internal variable.

\subsection{Concentration of the internal variable support}

We recall the definitions of $D_x^\varepsilon(t)$, $D_v^\varepsilon(t)$ and $D_\theta^\varepsilon(t)$ in \eqref{E-diameters} for the diameters of the position, velocity and internal variable supports. Also, we recall that under the new scaling \eqref{A-6}, the characteristic system \eqref{E-characteristic-system-A-4} is modified and consists of trajectories $Z_\varepsilon(t;0,z_0)=(X_\varepsilon(t;0,z_0),V_\varepsilon(t;0,z_0),\Theta_\varepsilon(t;0,z_0))$ solving
\begin{align}\label{E-characteristic-system-A-6}
\begin{aligned}
&\frac{dX_\varepsilon}{dt}=V_\varepsilon,\\
&\frac{dV_\varepsilon}{dt}=\frac{1}{\varepsilon}F[f_\varepsilon](t,Z_\varepsilon)+\frac{1}{\e}H[f_\e](t,Z_\e)+\frac{1}{\varepsilon}F_c[\bar\rho,\bar u,\bar e](t,Z_\varepsilon),\\
&\frac{d\Theta_\varepsilon}{dt}=\frac{1}{\varepsilon}G[f_\varepsilon](t,Z_\varepsilon)+G_c[\bar \rho,\bar e](t,Z_\varepsilon),\\
&Z_\varepsilon(0;0,z_0)=z_0,
\end{aligned}
\end{align}
for $t\in [0,T]$ and any $z_0\in \mathbb{R}^{2d}\times \mathbb{R}_+$.

\begin{lemma}[Concentration of internal variable I]\label{lemma_temp_2}
	Let $f_\e$ be the solution of \eqref{A-6} subject to initial data $f^0_\e$. Assume that the initial data verify \eqref{E-hypothesis-initial-data} and $(\bar \rho,\bar u,\bar e)$ verifies \eqref{E-hypothesis-macro-species}. Consider the constants $0<\theta_m<\theta_M$ defined by \eqref{E-thetam-thetaM}. Then, the following estimates are verified:
	\begin{enumerate}
	\item {\bf (Confinement of support)}
	\[\textup{supp}_\theta f_\varepsilon(t)\subset (\theta_m,\theta_M),\quad \mbox{for all} \quad t\in [0,T].\]
	\item {\bf (Exponential concentration)}
	\begin{equation}\label{C-1}
	\frac{d^+ D_\theta^\varepsilon(t)}{dt}\le -\frac{1}{\theta_M^2}\left(\frac{\zeta(D_x^\varepsilon(t))}{\e} +1\right)D_\theta^\varepsilon(t)\le 0,\quad \mbox{for all} \quad t\in [0,T].
	\end{equation}
	\end{enumerate}
\end{lemma}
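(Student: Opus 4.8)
The plan is to adapt the proof of Lemma \ref{lemma_temp} to the weaker relaxation scaling in \eqref{A-6}. First I would track the extremal points of the internal-variable support along the characteristic flow \eqref{E-characteristic-system-A-6}, setting
\[
\theta_\e^M(t):=\max_{z_0\in\supp f_\e(0)}\Theta_\e(t;0,z_0),\qquad \theta_\e^m(t):=\min_{z_0\in\supp f_\e(0)}\Theta_\e(t;0,z_0),
\]
so that $\supp_\theta f_\e(t)=[\theta_\e^m(t),\theta_\e^M(t)]$ and $D_\theta^\e(t)=\theta_\e^M(t)-\theta_\e^m(t)$. As in Lemma \ref{lemma_temp}, these envelopes need not be realized along a single characteristic, so I would invoke the standard one-sided differentiation result for such quantities (as in \cite[Corollary 3.3, Chapter 1]{DR95}), which produces, exactly as in \eqref{E-derivative-maximum}, the right derivatives of $\theta_\e^M$ and $\theta_\e^m$ as maxima/minima of $\partial_t\Theta_\e$ over the associated critical sets.

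Next, along a maximizing characteristic and using the $\theta$-equation in \eqref{E-characteristic-system-A-6}, I would compute as in \eqref{E-Theta_epsilon-max-derivative},
\[
\frac{\partial\Theta_\e}{\partial t}=\frac{1}{\e}\int_{\bbr^{2d}\times\bbr_+}\zeta(X_\e-x_*)\Big(\frac{1}{\theta_\e^M(t)}-\frac{1}{\theta_*}\Big)f_\e(t,z_*)\,dz_*+\Big(\frac{1}{\theta_\e^M(t)}-\frac{1}{\theta^\infty(t)}\Big).
\]
The essential difference with \eqref{E-Theta_epsilon-max-derivative} is that the relaxation term $G_c[\bar\rho,\bar e]$ now enters at scale $\mathcal O(1)$ rather than $\mathcal O(1/\e)$. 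Since $\theta_\e^M$ is maximal, $\frac1{\theta_\e^M}-\frac1{\theta_*}\le 0$ on $\supp_\theta f_\e(t)$, and since $\zeta$ is radially non-increasing one has $\zeta(X_\e-x_*)\ge\zeta(D_x^\e(t))$, so the alignment integral is bounded above by $\frac1\e\zeta(D_x^\e(t))\int(\frac1{\theta_\e^M}-\frac1{\theta_*})f_\e\,dz_*\le 0$; dropping it and using $\bar\theta_m\le\theta^\infty(t)\le\bar\theta_M$ from \eqref{E-hypothesis-macro-species-gain-theta-infty} in Remark \ref{R-hypothesis-macro-species-gain} gives $\frac{d^+\theta_\e^M}{dt}\le\frac{\bar\theta_M-\theta_\e^M(t)}{\bar\theta_M\theta_\e^M(t)}$, and symmetrically $\frac{d^+\theta_\e^m}{dt}\ge\frac{\bar\theta_m-\theta_\e^m(t)}{\bar\theta_m\theta_\e^m(t)}$. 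From here the confinement in part (1) follows verbatim from the continuity-plus-contradiction argument of Step 1 in Lemma \ref{lemma_temp}, together with the one-sided mean value theorem (cf. \cite{MV86}), yielding $\theta_\e^M(t)\le\max\{\theta_M^0,\bar\theta_M\}=\theta_M$ and $\theta_\e^m(t)\ge\min\{\theta_m^0,\bar\theta_m\}=\theta_m$ for all $t\in[0,T]$.

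For part (2) I would retain the alignment integrals. Keeping $\zeta(X_\e-x_*)\ge\zeta(D_x^\e(t))$ in both the maximum and minimum estimates (the sign of $\frac1{\theta_\e^M}-\frac1{\theta_*}$, respectively $\frac1{\theta_\e^m}-\frac1{\theta_*}$, makes both comparisons point the right way, as in \eqref{thetaM}--\eqref{thetam}), then subtracting the two one-sided bounds and using the conservation of mass $\int f_\e\,dz_*=1$ gives
\[
\frac{d^+D_\theta^\e}{dt}\le\Big(\frac1{\theta_\e^M(t)}-\frac1{\theta_\e^m(t)}\Big)\Big(\frac{\zeta(D_x^\e(t))}{\e}+1\Big)=-\frac{D_\theta^\e(t)}{\theta_\e^M(t)\theta_\e^m(t)}\Big(\frac{\zeta(D_x^\e(t))}{\e}+1\Big).
\]
By part (1) we have $\theta_\e^M(t)\theta_\e^m(t)\le\theta_M^2$, so combined with $D_\theta^\e(t)\ge0$ and $\zeta\ge0$ this yields precisely \eqref{C-1}. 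The bulk of the argument is routine; the only genuinely delicate ingredient — exactly as in Lemma \ref{lemma_temp} — is the non-smooth time dependence of $\theta_\e^M$ and $\theta_\e^m$, handled via the envelope differentiation result of \cite{DR95} and the one-sided mean value theorem of \cite{MV86}. The new point to check is that the weakened $\mathcal O(1)$ relaxation term still suffices for the confinement step: it does, since it drives $\theta$ towards $[\bar\theta_m,\bar\theta_M]\subseteq[\theta_m,\theta_M]$ independently of $\e$, while the strong $\mathcal O(1/\e)$ alignment term is what produces the $\e^{-1}$ contraction rate in \eqref{C-1}.
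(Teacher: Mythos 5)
Your proposal is correct and follows essentially the same route as the paper's proof: one-sided differentiation of the extremal values $\theta_\e^M$, $\theta_\e^m$ along characteristics, dropping the (now $\mathcal{O}(1/\e)$ alignment, $\mathcal{O}(1)$ relaxation) terms with favorable sign to get confinement via the continuity argument of Lemma \ref{lemma_temp}, and then subtracting the two one-sided bounds with $\zeta(X_\e-x_*)\ge\zeta(D_x^\e(t))$ and conservation of mass to obtain \eqref{C-1}. No gaps.
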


\begin{proof}
The proof is similar to that of Lemma \ref{lemma_temp}. In fact, a similar argument as in the derivation of \eqref{thetaM} and \eqref{thetam} yields the following estimates
\begin{align}
	\frac{d^+\theta_\varepsilon^M}{dt}&\leq  \frac{1}{\e}\int_{\bbr^{2d}\times\bbr_+}\zeta(D_x^\varepsilon(t))\left(\frac{1}{\theta^M_\e(t)}-\frac{1}{\theta_*}\right)f_\e(t,z_*)\,dz_*+\left(\frac{1}{\theta^M_\e(t)}-\frac{1}{\theta^\infty(t)}\right),\label{thetaM-slow}\\
	\frac{d^+\theta_\varepsilon^m}{dt}&\geq  \frac{1}{\e}\int_{\bbr^{2d}\times\bbr_+}\zeta(D_x^\varepsilon(t))\left(\frac{1}{\theta^m_\e(t)}-\frac{1}{\theta_*}\right)f_\e(t,z_*)\,dz_*+\left(\frac{1}{\theta^m_\e(t)}-\frac{1}{\theta^\infty(t)}\right),\label{thetam-slow}
	\end{align}
	for every $t\in [0,T]$, where we recall that $\theta_\varepsilon^M(t)=\max\supp_\theta f_\varepsilon(t)$ and $\theta_\varepsilon^m(t)=\min\supp_\theta f_\varepsilon(t)$. Then, we infer two different results.
	
	\medskip
	
	$\bullet$ {\sc Step 1}. On the one hand, since the first terms in the right-hand sides of \eqref{thetaM-slow} and \eqref{thetam-slow} have the appropriate sign, we neglect them and we recover
	\begin{align*}
	\frac{d^+\theta_\varepsilon^M}{dt}&\leq \frac{1}{\theta_\varepsilon^M(t)}-\frac{1}{\theta^\infty(t)}\leq \frac{1}{\theta_\varepsilon^M(t)}-\frac{1}{\bar\theta_M}\\
	\frac{d^+\theta_\varepsilon^m}{dt}&\geq \frac{1}{\theta_\varepsilon^m(t)}-\frac{1}{\theta^\infty(t)}\geq \frac{1}{\theta_\varepsilon^m(t)}-\frac{1}{\bar\theta_m},
	\end{align*}
	for every $t\in [0,T]$, where we have used the uniform control \eqref{E-hypothesis-macro-species-gain-theta-infty} of $\theta^\infty$ in Remark \ref{R-hypothesis-macro-species-gain} as a consequence of assumption \eqref{E-hypothesis-macro-species}. By the same continuity argument as in Lemma \ref{lemma_temp}, the preceding estimates ensure that $\theta_\varepsilon^M(t)\leq \theta_M:=\max\{\theta_M^0,\bar\theta_M\}$ and $\theta_\varepsilon^m(t)\geq \theta_m:=\min\{\theta_m^0,\bar\theta_m\}$, for any $t\in [0,T]$, thus proving the confinement of internal variable support.
	
	\medskip
	
	$\bullet$ {\sc Step 2}. Now, we take the difference between \eqref{thetaM-slow} and \eqref{thetam-slow} and recall that $D_\theta^\varepsilon(t)=\theta_\varepsilon^M(t)-\theta_\varepsilon^m(t)$. This implies the inequality
	$$\frac{d^+D_\theta^\varepsilon}{dt}\leq \left(\frac{1}{\varepsilon}\zeta(D_x^\varepsilon(t))+1\right)\left(\frac{1}{\theta_\varepsilon^M(t)}-\frac{1}{\theta_\varepsilon^m(t)}\right)\leq -\frac{1}{\theta_M^2}\left(\frac{1}{\varepsilon}\zeta(D_x^\varepsilon(t))+1\right)D_\theta^\varepsilon(t),$$
	for each $t\in [0,T]$, where in the last step we have used the previous uniform estimates of $\theta_\varepsilon^M$ and $\theta_\varepsilon^m$. This ends the proof.
\end{proof}

Therefore, we still have monotonicity of the internal variable support of $f_\e$. Indeed, it relaxes exponentially fast with rate of order $\mathcal{O}(1)$, as $\varepsilon\rightarrow 0$. Unfortunately, this is too weak in comparison with the results in Section \ref{sec:2} since the dynamics would take a large initial time layer in order for the internal variable support of $f_\varepsilon$ to shrink and concentrate at a single value. In order to circumvent this problem, we have to ensure an adequate behavior of the spatial support as $\varepsilon\rightarrow 0$. Otherwise, if $D_x^\varepsilon(t)$ diverged as $\varepsilon\rightarrow 0$, then the rate $\mathcal{O}(\varepsilon^{-1})$ could be lost. In the sequel, we obtain such a uniform-in-$\varepsilon$ control under the aforementioned confinement condition \eqref{E-hypothesis-initial-data-uniform-compact-support} of initial data.

\begin{lemma}[Concentration of internal variable II]\label{lem_vel_supp}
	Let $f_\e$ be the solution of \eqref{A-6} subject to initial data $f^0_\e$. Assume that the initial data verify \eqref{E-hypothesis-initial-data}, $(\bar \rho,\bar u,\bar e)$ verifies \eqref{E-hypothesis-macro-species} and the parameters fulfill the compatibility condition \eqref{E-hypothesis-thetaM-thetam}. In addition, assume the initial uniform control \eqref{E-hypothesis-initial-data-uniform-compact-support} of the spatial and velocity supports. Then, there exists $v_M\geq v_M^0$ such that
	\begin{align*}
	R_x^\varepsilon(t)&\leq r_M^0+v_M T,\\
	R_v^\varepsilon(t)&\leq v_M,
	\end{align*}
    for every $t\in [0,T]$ and $\varepsilon>0$. In addition, there exists $C_T\in \mathbb{R}_+$ depending on initial parameters and $T$ such that the following decay takes place
    $$D_\theta^\varepsilon(t)\leq D_\theta^\varepsilon(0)e^{-\frac{C_T}{\varepsilon}t},\quad \mbox{for all}\quad t\in [0,T].$$
\end{lemma}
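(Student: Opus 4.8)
The plan is to argue in three stages. First I would establish a bound on the velocity support of $f_\varepsilon$ that is uniform in $\varepsilon$, by running a dissipative differential inequality along the characteristic flow \eqref{E-characteristic-system-A-6}. Then, integrating the transport part, I would confine the spatial support. Finally, the resulting uniform lower bound on $\zeta(D_x^\varepsilon(t))$ fed into the inequality \eqref{C-1} of Lemma \ref{lemma_temp_2} turns its $\mathcal{O}(1)$ relaxation rate for $D_\theta^\varepsilon$ into the desired $\mathcal{O}(\varepsilon^{-1})$ one.

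For the velocity support, I would reproduce the differentiation-of-maxima scheme from the proof of Lemma \ref{lemma_temp} (see \cite[Corollary 3.3, Chapter 1]{DR95}): writing $R_v^\varepsilon(t)=\max_{z_0\in\supp f_\varepsilon(0)}|V_\varepsilon(t;0,z_0)|$ and differentiating along a characteristic $Z_\varepsilon$ realizing the maximum. Dotting the velocity equation of \eqref{E-characteristic-system-A-6} with $V_\varepsilon$, the crucial move is to split the TCS alignment force via $\frac{v_*}{\theta_*}-\frac{V_\varepsilon}{\Theta_\varepsilon}=v_*\bigl(\frac{1}{\theta_*}-\frac{1}{\Theta_\varepsilon}\bigr)+\frac{1}{\Theta_\varepsilon}(v_*-V_\varepsilon)$: at the maximizer $|v_*|\le R_v^\varepsilon=|V_\varepsilon|$, so $V_\varepsilon\cdot(v_*-V_\varepsilon)\le 0$ and the second piece contributes nonpositively, while the first is controlled by $\|\phi\|_{L^\infty}(R_v^\varepsilon)^2 D_\theta^\varepsilon(t)/\theta_m^2$ using the support confinement from Lemma \ref{lemma_temp_2} and $\|\phi\|_{L^\infty}=\phi(0)=1$. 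For the cross-interaction, $V_\varepsilon\cdot F_c=V_\varepsilon\cdot\frac{u^\infty}{\theta^\infty}-\frac{|V_\varepsilon|^2}{\Theta_\varepsilon}\le |V_\varepsilon|\,\frac{\bar{v}_M}{\bar\theta_m}-\frac{(R_v^\varepsilon)^2}{\theta_M}$ by \eqref{E-hypothesis-macro-species-gain-u-infty}, and $V_\varepsilon\cdot H[f_\varepsilon]\le |V_\varepsilon|\,\|\nabla W\|_{L^\infty}$. Since $D_\theta^\varepsilon(t)\le D_\theta^\varepsilon(0)\le\theta_M^0-\theta_m^0$ by Lemma \ref{lemma_temp_2} and $\theta_M^0-\theta_m^0<\theta_m^2/\theta_M$ by \eqref{E-hypothesis-thetaM-thetam}, the coefficient $\frac{D_\theta^\varepsilon(t)}{\theta_m^2}-\frac{1}{\theta_M}$ is bounded above by a negative constant $-\delta_0$ independent of $\varepsilon$ and $t$. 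Collecting these bounds and using Young's inequality yields $\varepsilon\,\frac{d^+}{dt}(R_v^\varepsilon)^2\le -\delta_0(R_v^\varepsilon)^2+C$ with $C$ depending only on $\|\nabla W\|_{L^\infty}$, $\bar{v}_M$, $\bar\theta_m$, $\delta_0$; a comparison argument for one-sided derivatives — in which $\varepsilon$ merely accelerates relaxation without raising the equilibrium — then gives $R_v^\varepsilon(t)\le v_M:=\max\{v_M^0,(C/\delta_0)^{1/2}\}$ for all $t\in[0,T]$ and $\varepsilon>0$, in particular $v_M\ge v_M^0$.

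With the velocity confined, $\frac{d^+}{dt}R_x^\varepsilon(t)\le R_v^\varepsilon(t)\le v_M$ integrates to $R_x^\varepsilon(t)\le r_M^0+v_M T$, whence $D_x^\varepsilon(t)\le 2R_x^\varepsilon(t)\le R_T:=2(r_M^0+v_M T)$. Because $\zeta$ is non-increasing, $\zeta(D_x^\varepsilon(t))\ge\zeta(R_T)>0$, and \eqref{C-1} then reads $\frac{d^+}{dt}D_\theta^\varepsilon(t)\le -\frac{\zeta(R_T)}{\varepsilon\theta_M^2}D_\theta^\varepsilon(t)$; Grönwall's lemma for one-sided derivatives closes the proof with $C_T:=\zeta(R_T)/\theta_M^2$.

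The delicate step is the velocity-support estimate: a priori the $\varepsilon^{-1}$-scaled alignment force threatens $\mathcal{O}(\varepsilon^{-1})$ growth of $R_v^\varepsilon$. What saves the day is that, after the TCS splitting and evaluation at the maximizer, the genuinely stiff part of $F[f_\varepsilon]+F_c$ reduces to the sign-definite relaxation term $-(R_v^\varepsilon)^2/\theta_M$ from $F_c$, a nonpositive alignment contribution, and a perturbation with coefficient $\|\phi\|_{L^\infty}D_\theta^\varepsilon(0)/\theta_m^2$; the compatibility condition \eqref{E-hypothesis-thetaM-thetam} is exactly what forces this coefficient to be uniformly subcritical, so the stiff terms combine into a stable relaxation whose equilibrium — hence the ceiling $v_M$ — is $\varepsilon$-independent and fixed only by $\|\nabla W\|_{L^\infty}+\bar{v}_M/\bar\theta_m$. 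A further point to watch is that the velocity bound must not invoke any control on $D_x^\varepsilon$ (it only uses $0\le\phi\le 1$), so that no circular dependence arises between the velocity- and spatial-support estimates, which in turn feed the final $D_\theta^\varepsilon$ decay.
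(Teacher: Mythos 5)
Your proposal is correct and follows essentially the same route as the paper: the differentiation-of-maxima scheme along characteristics, the splitting of the TCS force so that the alignment part is sign-definite at the maximizer and the remainder carries a factor $D_\theta^\varepsilon(0)/\theta_m^2$ made subcritical by \eqref{E-hypothesis-thetaM-thetam}, the integration of $\frac{d^+}{dt}R_x^\varepsilon\le R_v^\varepsilon$, and finally $\zeta(D_x^\varepsilon)\ge \zeta(2(r_M^0+v_MT))$ fed into \eqref{C-1}. The only (immaterial) differences are that the paper runs a linear Grönwall inequality for $R_v^\varepsilon$ itself rather than a Young-inequality argument for $(R_v^\varepsilon)^2$, and uses the algebraically equivalent splitting $\frac{v_*}{\theta_*}-\frac{v}{\theta}=\frac{v_*-v}{\theta_*}+v\bigl(\frac{1}{\theta_*}-\frac{1}{\theta}\bigr)$.
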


\begin{proof}
	Let us define the critical sets
	\begin{align*}
	M_x^\varepsilon(t)&:=\{z_0\in \supp f_\varepsilon^0:\,\vert X_\varepsilon(t;0,z_0)\vert =R_x^\varepsilon(t)\},\\
	M_v^\varepsilon(t)&:=\{z_0\in \supp f_\varepsilon^0:\,\vert V_\varepsilon(t;0,z_0)\vert =R_v^\varepsilon(t)\},
	\end{align*}
	and notice that
	\begin{align*}
	R_x^\varepsilon(t)&=\max_{z_0\in M_x^\varepsilon(t)}\vert X_\varepsilon(t;0,z_0)\vert,\\
	R_v^\varepsilon(t)&=\max_{z_0\in M_v^\varepsilon(t)}\vert V_\varepsilon(t;0,z_0)\vert,
	\end{align*}
	for every $t\in [0,T]$. Again, we notice that it is not necessarily true that the maximum is propagated along fixed trajectories. However, since the flow $(t,z_0)\mapsto Z_\varepsilon(t;0,z_0)$ is continuous, differentiable with respect to $t$ with continuous derivative, we obtain an analogous identity as in \eqref{E-derivative-maximum} from the usual argument in \cite[Corollary 3.3, Chapter 1]{DR95}, namely,
	\begin{align}\label{E-derivative-maximum-position-velocity}
	\begin{aligned}
	\frac{d^+}{dt} \frac{R_x^\varepsilon(t)^2}{2}&=\max_{z_0\in M_x^\varepsilon(t)}V_\varepsilon(t;0,z_0)\cdot X_\varepsilon(t;0,z_0),\\
	\frac{d^+}{dt} \frac{R_v^\varepsilon(t)^2}{2}&=\max_{z_0\in M_v^\varepsilon(t)}\frac{\partial V_\varepsilon}{\partial t}(t;0,z_0)\cdot V_\varepsilon(t;0,z_0),
	\end{aligned}
	\end{align}
	for every $t\in [0,T]$, where again $d^+/dt$ denotes the right derivative.
	
	\medskip
	
	$\bullet$ {\sc Step 1}. Estimate of the velocity radius.\\
	Fix any time $t\in [0,T]$ and any $z_0\in M_v^\varepsilon(t)$. For simplicity of notation, let us denote 
	$$z_\varepsilon^M:=(x_\varepsilon^M,v_\varepsilon^M,\theta_\varepsilon^M):=(X_\varepsilon(t;0,z_0),V_\varepsilon(t;0,z_0),\Theta_\varepsilon(t;0,z_0)).$$
	Then, we obtain
	\begin{align*}\frac{\partial V_\varepsilon}{\partial t}&(t;0,z_0)\cdot V_\varepsilon(t;0,z_0)=v_\varepsilon^M\cdot\left(\frac{1}{\varepsilon}F[f_\varepsilon](t,z_\varepsilon^M)+\frac{1}{\e}H[f_\e](t,z_\e^M)+\frac{1}{\varepsilon}F_c[\bar\rho,\bar u,\bar e](t,z_\varepsilon^M)\right)\\
	&=\frac{1}{\varepsilon}v_\varepsilon^M\cdot\left(\int_{\mathbb{R}^{2d}\times \mathbb{R}_+}\phi(x_\varepsilon^M-x_*)\left(\frac{v_*}{\theta_*}-\frac{v_\varepsilon^M}{\theta_\varepsilon^M}\right)f_\varepsilon(t,z_*)\,dz_*\right)\\
	&\quad - \frac{1}{\e}v_\e^M\cdot\left(\int_{\bbr^{2d}\times\bbr_+}\nabla W(x_\e^M-x_*)f_\e(t,z_*)\,dz_*\right)+\frac{1}{\varepsilon}v_\varepsilon^M\cdot\left(\frac{u^\infty(t)}{\theta^\infty(t)}-\frac{v_\varepsilon^M}{\theta_\varepsilon^M}\right)\\
	&\leq -\frac{1}{\varepsilon}\frac{\vert v_\varepsilon^M\vert^2}{\theta_M}+\frac{1}{\varepsilon}\int_{\mathbb{R}^{2d}\times \mathbb{R}_+}\phi(x_\varepsilon^M-x_*)\frac{v_\varepsilon^M\cdot (v_*-v_\varepsilon^M)}{\theta_*}f_\varepsilon(t,z_*)\,dz_*\\
	&\quad +\frac{1}{\varepsilon}\vert v_\varepsilon^M\vert^2\int_{\mathbb{R}^{2d}\times \mathbb{R}_+}\phi(x_\varepsilon^M-x_*)\left(\frac{1 }{\theta_*}-\frac{1}{\theta_\varepsilon^M}\right)f_\varepsilon(t,z_*)\,dz_*\\
	&\quad +\frac{1}{\varepsilon}|v_\varepsilon^M| \left(\frac{|u^\infty(t)|}{\theta^\infty(t)}+\|\nabla W\|_{L^\infty}\right)\\
	&\leq -\frac{1}{\varepsilon}\left(\frac{1}{\theta_M}-\Vert \phi\Vert_{L^\infty}\frac{D_\theta^\varepsilon(t)}{\theta_m^2}\right)R_v^\varepsilon(t)^2+\frac{1}{\varepsilon}\left(\frac{\vert u^\infty(t)\vert}{\theta^\infty(t)} +\Vert \nabla W\Vert_{L^\infty}\right)R_v^\varepsilon(t).
	\end{align*}
	In the last inequality we have neglected the second term in the left-hand side, thanks to the property 
	$$v_\varepsilon^M\cdot(v_*-v_\varepsilon^M)\leq \vert v_\varepsilon^M\vert\left(\vert v_*\vert-\vert v_\varepsilon^M\vert\right)\leq 0,$$
	for each $v_*\in \supp_v f_\varepsilon(t)$, due to the fact that $\vert v_\varepsilon^M\vert=R_v^\varepsilon(t)$. Thanks to Lemma \ref{lemma_temp_2}, the internal variable diameter is non-increasing, i.e., $D_\theta^\varepsilon(t)\leq D_\theta^\varepsilon(0)\leq \theta_M^0-\theta_m^0$. Thus, putting everything together into \eqref{E-derivative-maximum-position-velocity} and by arbitrariness of $z_0\in M_v^\varepsilon(t)$, we have
	$$\frac{d^+}{dt}R_v^\varepsilon(t)\leq -\frac{C_1}{\varepsilon}R_v^\varepsilon(t)+\frac{C_2}{\varepsilon},$$
	for every $t\in [0,T]$, where we denote
	$$C_1:=\frac{1}{\theta_M}-\Vert \phi\Vert_{L^\infty}\frac{(\theta_M^0-\theta_m^0)}{\theta_m^2},\quad C_2:=\sup_{t\in [0,T]}\frac{\vert u^\infty(t)\vert}{\theta^\infty(t)}+\Vert \nabla W\Vert_{L^\infty}.$$
	Notice that $C_2$ is finite by Remark \ref{R-hypothesis-macro-species-gain} and hypothesis \eqref{H-hypothesis-phi-zeta-W}, and $C_1$ is positive by the compatibility condition \eqref{E-hypothesis-thetaM-thetam}. Thereby, Gr\"{o}nwall's lemma concludes that
	$$R_v^\varepsilon(t)\leq R_v^\varepsilon(0) e^{-\frac{C_2}{\varepsilon}t}+\frac{C_2}{C_1}(1-e^{-\frac{C_1}{\varepsilon}t})\leq v_M^0+\frac{C_2}{C_1}=:v_M,$$
	for every $t\in [0,T]$.
	
	\medskip
	
	$\bullet$ {\sc Step 2}. Estimate of the spacial radius.\\
	From $\eqref{E-derivative-maximum-position-velocity}_2$ and the Cauchy--Schwartz inequality, we infer
	$$\frac{d^+}{dt}R_x^\varepsilon(t)\leq R_v^\varepsilon(t),$$
	for ech $t\in [0,T]$. By the control on $R_v^\varepsilon(t)$ in {\sc Step 1}, we attain the claimed bound of $R_x^\varepsilon(t)$.
	
	\medskip
	
	$\bullet$ {\sc Step 3}. Strong concentration of the internal variable diameter.\\
	Notice that by the preceding estimate we obtain
	$$\zeta(D_x^\varepsilon(t))\geq \zeta(2R_v^\varepsilon(t))\geq \zeta(2(r_M^0+v_MT))=:C_T,$$
	for every $t\in [0,T]$ and each $\varepsilon>0$. Then, by Lemma \ref{lemma_temp_2} we conclude
	$$\frac{d^+}{dt}D_\theta^\varepsilon(t)\leq -\frac{1}{\theta_M^2}\left(\frac{C_T}{\varepsilon}+1\right)D_\theta^\varepsilon(t),$$
	for every $t\in [0,T]$ and this ends the proof.
\end{proof}

Therefore, as in the strong relaxation regime in Section \ref{sec:2}, we notice that relaxation of the internal variable diameter is still exponential with rate $\mathcal{O}(\varepsilon^{-1})$. However, there is a crucial difference between both scalings. In fact, notice that since the relaxation term of the  internal variable in \eqref{A-6} toward $\theta^\infty(t)$ is weak, we cannot expect the same deviation estimate around $\theta^\infty(t)$ as in Corollary \ref{cor_temp}. Instead, we show that, after a small initial time layer, the internal variable support is concentrated on a small neighborhood of a value $\theta(t)$, which relaxes towards $\theta^\infty(t)$ according to an appropriate ODE.

\begin{corollary}[Initial time layer]\label{C-initial-time-layer-slow-regular}
	Let $f_\e$ be the solution of \eqref{A-6} subject to the initial data $f^0_\e$. Assume that the initial data verify \eqref{E-hypothesis-initial-data}, $(\bar \rho,\bar u,\bar e)$ verifies \eqref{E-hypothesis-macro-species} and the parameters fulfill the compatibility condition \eqref{E-hypothesis-thetaM-thetam}. In addition, assume the initial uniform confinement \eqref{E-hypothesis-initial-data-uniform-compact-support} and the initial asymptotic control \eqref{E-hypothesis-initial-data-internal-variable}, for some value $\theta_0\in \mathbb{R}_+$ and some parameter $\gamma>0$. Then,  there exists a constant $C\in \mathbb{R}_+$ such that
	\begin{align*}
	&D_\theta^\varepsilon(t)\leq C\varepsilon^\alpha,\\
	&\supp f_\theta(t)\subset (\theta(t)-C\varepsilon^{\min\{\beta,\gamma\}},\theta(t)+C\varepsilon^{\min\{\beta,\gamma\}}),
	\end{align*}
	for any $\alpha>0$ and $0<\beta<1$, and for every $t\in [\varepsilon^\beta,T]$, where $\theta=\theta(t)$ is the solutions to the following relaxation ODE:
	\begin{align}\label{relax_ODE}
	\begin{aligned}
	&\dot{\theta}(t) = \frac{1}{\theta(t)}-\frac{1}{\theta^\infty(t)}, \quad t\in [0,T],\\
	&\theta(0) = \theta_0.
	\end{aligned}
	\end{align}
	Here, $C$ depends on the exponents $\alpha$ and $\beta$ along with the parameters $\theta_M^0$, $\theta_m^0$, $\bar\theta_m$, $\bar\theta_M$, $\Vert \zeta\Vert_{L^\infty}$ and $\Vert \theta^\infty(t)\Vert_{W^{1,\infty}(0,T)}$. 
\end{corollary}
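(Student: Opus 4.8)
The plan is to prove the two assertions separately, building on the exponential concentration of the internal variable diameter from Lemma \ref{lem_vel_supp} and on a closed (perturbed) relaxation ODE for the bulk first moment of $\theta$.

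The bound $D_\theta^\varepsilon(t)\leq C\varepsilon^\alpha$ will come essentially for free. Since the hypotheses here include the initial confinement \eqref{E-hypothesis-initial-data-uniform-compact-support}, Lemma \ref{lem_vel_supp} applies and gives $D_\theta^\varepsilon(t)\leq D_\theta^\varepsilon(0)e^{-\frac{C_T}{\varepsilon}t}\leq(\theta_M^0-\theta_m^0)e^{-C_T\varepsilon^{\beta-1}}$ for every $t\in[\varepsilon^\beta,T]$; because $\beta<1$, the quantity $C_T\varepsilon^{\beta-1}$ blows up as $\varepsilon\to0$, so this decays faster than any power of $\varepsilon$, which is stronger than $C\varepsilon^\alpha$ for any prescribed $\alpha>0$.

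For the deviation estimate I would track the bulk first moment $\bar h_\varepsilon(t):=\int_{\mathbb{R}^{2d}\times\mathbb{R}_+}\theta\,f_\varepsilon(t,z)\,dz$. Multiplying \eqref{A-6} by $\theta$ and integrating by parts, the transport and velocity-flux contributions vanish trivially, and the strong alignment contribution $\mp\frac{1}{\varepsilon}\int G[f_\varepsilon]f_\varepsilon\,dz$ vanishes identically as well: the kernel $\zeta(x-x_*)(\tfrac1\theta-\tfrac1{\theta_*})$ is antisymmetric under $z\leftrightarrow z_*$, so its double integral against $f_\varepsilon\otimes f_\varepsilon$ is zero. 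What remains, using conservation of mass, is $\frac{d}{dt}\bar h_\varepsilon(t)=\int\tfrac1\theta f_\varepsilon\,dz-\tfrac1{\theta^\infty(t)}$. Because $\supp_\theta f_\varepsilon(t)$ is contained in an interval of length $D_\theta^\varepsilon(t)$ which also contains the average $\bar h_\varepsilon(t)$, and $\theta\geq\theta_m$ there by Lemma \ref{lemma_temp_2}, a direct estimate (Jensen together with the length of the support) gives $0\leq\int\tfrac1\theta f_\varepsilon\,dz-\tfrac1{\bar h_\varepsilon(t)}\leq\theta_m^{-2}D_\theta^\varepsilon(t)$. Hence $\bar h_\varepsilon$ solves the relaxation ODE \eqref{relax_ODE} up to a forcing error $R_\varepsilon(t)$ with $|R_\varepsilon(t)|\leq\theta_m^{-2}D_\theta^\varepsilon(t)$, while \eqref{E-hypothesis-initial-data-internal-variable} gives $|\bar h_\varepsilon(0)-\theta_0|=\mathcal{O}(\varepsilon^\gamma)$.

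It then remains to compare $\bar h_\varepsilon$ with the solution $\theta$ of \eqref{relax_ODE}. One first checks, by elementary ODE theory, that \eqref{relax_ODE} is well posed on $[0,T]$ with $\theta(t)$ confined to $[\theta_m,\theta_M]$: indeed $\theta_0\in[\theta_m^0,\theta_M^0]$ as a limit of $\int\theta f_\varepsilon^0\,dz$, and the sign of $\tfrac1\theta-\tfrac1{\theta^\infty(t)}$ near the endpoints, together with $\bar\theta_m\leq\theta^\infty\leq\bar\theta_M$ from \eqref{E-hypothesis-macro-species-gain-theta-infty}, prevents escape. Subtracting the two equations, the favorable sign of $\tfrac1{\bar h_\varepsilon}-\tfrac1\theta=-\frac{\bar h_\varepsilon-\theta}{\bar h_\varepsilon\theta}$ yields $\frac{d}{dt}|\bar h_\varepsilon-\theta|\leq|R_\varepsilon|$, hence
\[
|\bar h_\varepsilon(t)-\theta(t)|\leq|\bar h_\varepsilon(0)-\theta_0|+\frac{1}{\theta_m^2}\int_0^T D_\theta^\varepsilon(s)\,ds\leq C\varepsilon^\gamma+C\varepsilon^\beta,\qquad t\in[0,T],
\]
where the last step bounds the time integral by splitting at $s=\varepsilon^\beta$, using $D_\theta^\varepsilon\leq\theta_M^0-\theta_m^0$ on $[0,\varepsilon^\beta]$ and the super-polynomial decay above on $[\varepsilon^\beta,T]$. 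Combining with the first assertion, for $t\in[\varepsilon^\beta,T]$ the whole support $\supp_\theta f_\varepsilon(t)$ lies within $D_\theta^\varepsilon(t)\leq C\varepsilon^\alpha$ of $\bar h_\varepsilon(t)$, hence within $C\varepsilon^{\min\{\alpha,\beta,\gamma\}}\leq C\varepsilon^{\min\{\beta,\gamma\}}$ of $\theta(t)$ upon choosing $\alpha\geq\min\{\beta,\gamma\}$, which is the claim. I expect the one genuinely structural point to be the antisymmetry cancellation of the $\mathcal{O}(\varepsilon^{-1})$ alignment term — which is precisely what turns the limiting dynamics of the internal variable into an $\varepsilon$-independent ODE — together with the uniform-in-$\varepsilon$ lower bound $\theta\geq\theta_m$; everything else reduces to Gr\"onwall-type comparisons controlled by the already established decay of $D_\theta^\varepsilon$.
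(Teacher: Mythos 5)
Your proof is correct, and while it rests on the same two pillars as the paper's argument --- the $\mathcal{O}(\e^{-1})$ exponential contraction of $D_\theta^\e$ from Lemma \ref{lem_vel_supp} and a comparison with the relaxation ODE \eqref{relax_ODE} --- the comparison step is organized differently. The paper works with the extremes $\theta_\e^M(t)=\max\supp_\theta f_\e(t)$ and $\theta_\e^m(t)=\min\supp_\theta f_\e(t)$: it derives one-sided differential inequalities of the form $\frac{d^+}{dt}(\theta_\e^M-\theta)\le -C_\e^M(t)(\theta_\e^M-\theta)$ (and the mirror one for $\theta_\e^m$), applies Gr\"onwall from $t_*=\e^\beta$ onward, and then only needs to seed the estimate at $t_*$; this seeding compares $\theta_\e^M(t_*)$ to the average $\theta_\e(t_*)=\int\theta f_\e\,dz$ and bounds the drift of $\theta_\e-\theta$ on $[0,t_*]$ crudely by $2t_*/\theta_m$. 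You instead close a perturbed version of \eqref{relax_ODE} for the average $\bar h_\e$ on all of $[0,T]$, with the sharper forcing error $|R_\e(t)|\le\theta_m^{-2}D_\theta^\e(t)$ coming from Jensen plus the length of the support, and only add the diameter $D_\theta^\e(t)\le C\e^\alpha$ at the very end. Both routes exploit the same structural facts --- the antisymmetry cancellation of the $\mathcal{O}(\e^{-1})$ alignment term in the $\theta$-moment equation (which is indeed what makes the limiting internal-variable dynamics an $\e$-independent ODE), the confinement $\theta(t),\bar h_\e(t)\in[\theta_m,\theta_M]$, and the dissipative sign of $1/\bar h_\e-1/\theta$ --- and both land on the same $\mathcal{O}(\e^{\min\{\beta,\gamma\}})$ rate. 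Your version is marginally more self-contained for this particular step, since the one-sided derivative formalism for the extremes is only needed in the lemmas you invoke rather than in the corollary's proof itself.
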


\begin{proof}
    The control on $D_\theta^\varepsilon(t)$ after a small initial time layer $t\geq \varepsilon^\beta$ is obtained from Lemma \ref{lem_vel_supp} in the same way as in Corollary \ref{cor_temp}. Then, we just focus on the deviation estimate of $\supp_\theta f_\varepsilon$ around the solution $\theta(t)$ of the initial value problem \eqref{relax_ODE}. By an analogous argument like in {\sc Step 3} in the proof of Lemma \ref{lemma_temp}, we have the following estimates
    \begin{align*}
    \frac{d^+}{dt}(\theta_\varepsilon^M(t)-\theta(t))&\leq \frac{1}{\varepsilon}\int_{\mathbb{R}^{2d}\times \mathbb{R}_+}\zeta(D_x^\varepsilon(t))\left(\frac{1}{\theta_\varepsilon^M}-\frac{1}{\theta_*}\right) f_\varepsilon(t,z_*)\,dz_*+\left(\frac{1}{\theta_\varepsilon^M}-\frac{1}{\theta^\infty(t)}\right)-\frac{d\theta}{dt},\\
    \frac{d^+}{dt}(\theta_\varepsilon^m(t)-\theta(t))&\geq \frac{1}{\varepsilon}\int_{\mathbb{R}^{2d}\times \mathbb{R}_+}\zeta(D_x^\varepsilon(t))\left(\frac{1}{\theta_\varepsilon^m}-\frac{1}{\theta_*}\right) f_\varepsilon(t,z_*)\,dz_*+\left(\frac{1}{\theta_\varepsilon^m}-\frac{1}{\theta^\infty(t)}\right)-\frac{d\theta}{dt},
    \end{align*}
    for every $t\in [0,T]$, where we recall that
    $$\theta_\varepsilon^M(t)=\max\supp_\theta f_\varepsilon(t),\quad \theta_\varepsilon^m(t)=\min\supp_\theta f_\varepsilon(t).$$
    Now, since the first terms in the right-hand sides have the correct sign, we can neglect them. In addition, we substitute $\frac{d\theta}{dt}$ by the ODE \eqref{relax_ODE} to find
    \begin{align}\label{E-deviation-theta-slow-regular-pre}
    \begin{aligned}
    \frac{d^+}{dt}(\theta_\varepsilon^M(t)-\theta(t))&\leq \frac{1}{\theta_\varepsilon^M(t)}-\frac{1}{\theta(t)}\leq -C_\varepsilon^M(t)(\theta_\varepsilon^M(t)-\theta(t)),\\
    \frac{d^+}{dt}(\theta_\varepsilon^m(t)-\theta(t))&\geq \frac{1}{\theta_\varepsilon^m(t)}-\frac{1}{\theta(t)}\geq -C_\varepsilon^m(t)(\theta_\varepsilon^m(t)-\theta(t)),
    \end{aligned}
    \end{align} 
    for every $t\in [0,T]$. Here, we have used the uniform estimates for $\theta_\varepsilon^M$ and $\theta_\varepsilon^m$ in Lemma \ref{lemma_temp_2} and similar control $\theta_m\leq \theta(t)\leq \theta_M$ coming from the ODE \eqref{relax_ODE}, the information on $\theta_0\in [\theta_0^m,\theta_0^M]$ from \eqref{E-hypothesis-initial-data-internal-variable} and the uniform bounds \eqref{E-hypothesis-macro-species-gain-theta-infty} of $\theta^\infty(t)$ in Remark \ref{R-hypothesis-macro-species-gain}. Specifically, the time-dependent functions $C_\varepsilon^M(t)$ and $C_\varepsilon^m(t)$ are given by
    $$
    C_\varepsilon^M(t):=\left\{\begin{array}{ll}
    \frac{1}{\theta_M^2}, & \mbox{if }\theta_\varepsilon^M(t)\geq\theta(t),\\
    \frac{1}{\theta_m^2}, & \mbox{if }\theta_\varepsilon^M(t)<\theta(t),
    \end{array}\right.\quad 
    C_\varepsilon^m(t):=\left\{\begin{array}{ll}
    \frac{1}{\theta_m^2}, & \mbox{if }\theta_\varepsilon^m(t)\geq\theta(t),\\
    \frac{1}{\theta_M^2}, & \mbox{if }\theta_\varepsilon^m(t)<\theta(t).
    \end{array}\right.
    $$
    We now use Gronw{a}ll's lemma in \eqref{E-deviation-theta-slow-regular-pre} to deduce
    \begin{align}\label{E-deviation-theta-slow-regular-pre-2}
    \begin{aligned}
    \theta_\varepsilon^M(t)-\theta(t)
    &\leq \vert \theta_\varepsilon^M(t_*)-\theta(t_*)\vert e^{-\frac{t-t_*}{\theta_M^2}}\leq \vert \theta_\varepsilon^M(t_*)-\theta(t_*)\vert,\\ \theta_\varepsilon^m(t)-\theta(t)
    &\geq -\vert \theta_\varepsilon^m(t_*)-\theta(t_*)\vert e^{-\frac{t-t_*}{\theta_M^2}}\geq -\vert \theta_\varepsilon^m(t_*)-\theta(t_*)\vert,
    \end{aligned}
    \end{align}
    for any $0\leq t_*<t\leq T$. Let us define the average of internal variable
    $$\theta_\varepsilon(t):=\int_{\mathbb{R}^{2d}\times \mathbb{R}_+}\theta f_\varepsilon(t,z)\,dz,\quad t\in [0,T].$$
    Then, we have
    \begin{align*}
    \vert \theta_\varepsilon^M(t_*)-\theta(t_*)\vert&\leq \vert \theta_\varepsilon^M(t_*)-\theta_\varepsilon(t_*)\vert+\vert \theta_\varepsilon(t_*)-\theta(t_*)\vert\\
    &\leq D_\theta^\varepsilon(t_*)+\vert \theta_\varepsilon(0)-\theta(0)\vert+\left\vert\int_0^{t_*}\frac{d}{ds}(\theta_\varepsilon(s)-\theta(s))\,ds\right\vert\\
    &\leq D_\theta^\varepsilon(t_*)+\vert \theta_\varepsilon(0)-\theta(0)\vert+\frac{2}{\theta_m}t_*.
    \end{align*}
    Note that the last inequality has been obtained by computing
    $$\left\vert\frac{d}{ds}(\theta_\varepsilon(s)-\theta(s))\right\vert=\left\vert\int_{\mathbb{R}^{2d}\times \mathbb{R}_+}\frac{1}{\theta}f_\varepsilon(t,z)\,dz-\frac{1}{\theta(t)}\right\vert\leq \frac{2}{\theta_m},$$
    for any $s\in (0,T)$, where we have used the kinetic equation \eqref{A-6} and the relaxation ODE \eqref{relax_ODE} to derive above estimate. Setting $t_*=\varepsilon^\beta$, using the above polynomial control of $D_\theta^\varepsilon(t_*)$ and hypothesis \eqref{E-hypothesis-initial-data-internal-variable} and putting everything together into \eqref{E-deviation-theta-slow-regular-pre-2} yields
    $$\theta_\varepsilon^M(t)-\theta(t)\leq C(\varepsilon^\alpha+\varepsilon^\beta+\varepsilon^\gamma),$$
    for each $t\in [\varepsilon^\beta,T]$. Since $\alpha>0$ is arbitrary, this ends the upper bound and a similar lower estimates for $\theta_\varepsilon^m(t)-\theta(t)$ concludes the proof.
\end{proof}

To summarize, the previous results show that after a small initial time layer $\e_0=\e^\beta$, the diameter of $\textup{supp}_\theta f_\e(t)$ is smaller than $C\e^\alpha$ and its distance to $\theta(t)$ is smaller than $C\e^{\min\{\beta,\gamma\}}$. This justifies that $f_\e$ will concentrate around $\theta=\theta(t)$, as $\e\rightarrow 0$, although the relaxation to the background mean value $\theta^\infty(t)$ does not occurs instantaneously (as in Section \ref{sec:2}), but it actually occurs slowly according to the relaxation ODE \eqref{relax_ODE}. 

For clarity, we sketch in Figure \ref{fig1} the main difference in the dynamics of the internal variable support for both the strong and weak relaxation regimes. On the one hand, blue and red lines denote the relaxation of internal variable support in the strong and weak relaxation regimes respectively. Dashed lines are used for the background mean value $\theta^\infty(t)$, whilst dotted lines represent the solution $\theta(t)$ of the relaxation ODE \eqref{relax_ODE}. Finally, the vertical dotted line determines the initial time layer $\e_0=\varepsilon^\beta$, after which the internal variable support becomes very narrow with the size of order $\mathcal{O}(\e^\alpha)$ and $\mathcal{O}(\e^{\min\{\beta,\gamma\}})$ respectively.

\begin{figure}[h!]
	\includegraphics[width=0.6\textwidth]{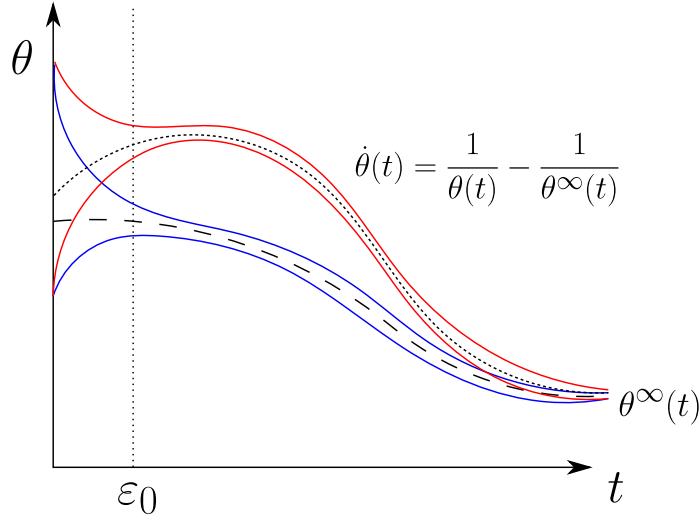}
	\caption{Relaxation of internal variable in the weak and strong regimes.}
	\label{fig1}
\end{figure}

\subsection{Moment estimates}

In this part, we derive again a priori estimates for the moments of $f_\varepsilon$. We notice that under the hypothesis in the preceding part guaranteeing the concentration of the internal variable support, the same arguments in Section \ref{sec:2} can be applied to derive the estimates. In particular, an analogue of Corollary \ref{C2.3} holds true.
	
	\begin{corollary}[Velocity and position moments]\label{C-a-priori-slow-regular}
		Let $f_\e$ be the solution of \eqref{A-4} subject to initial data $f^0_\e$. Suppose that initial data verify \eqref{E-hypothesis-initial-data}, $(\bar \rho,\bar u,\bar e)$ verifies \eqref{E-hypothesis-macro-species} and parameters fulfill the compatibility condition \eqref{E-hypothesis-thetaM-thetam}. 
		Then,
		\begin{align}\label{E-moment-estimates-slow}
		\begin{aligned}
		\Vert \vert x\vert f_\varepsilon\Vert_{L^\infty(0,T;L^1(\mathbb{R}^{2d}\times \mathbb{R}_+))}&\leq M_0+T^{1/2}\Vert \vert v\vert f_\varepsilon\Vert_{L^2(0,T;L^1(\mathbb{R}^{2d}\times\bbr_+))},\\ 
		\||v|f_\e\|_{L^2(0,T;L^1(\bbr^{2d}\times\bbr_+))}& \le\left(\||v|^2f_\e\|_{L^1(0,T;L^1(\bbr^{2d}\times\bbr_+))}\right)^{1/2},\\
		\||v|^2f_\e\|_{L^1(0,T;L^1(\bbr^{2d}\times\bbr_+))}&\le C\left(\e E_0+C G_0^2\right),
		\end{aligned}
		\end{align}
		for every $\varepsilon>0$. In addition, the  estimate 
		\begin{equation}\label{E-dissipation-estimate-slow}
		\frac{1}{\theta_M}\int_0^T\int_{\mathbb{R}^{4d}\times \mathbb{R}_+^2}\phi(x-x_*)\vert v-v_*\vert^2\,f_\varepsilon(t,z)f_\varepsilon(t,z_*)\,dz\,dz_*\,dt\leq \varepsilon E_0+C G_0^2
		\end{equation}
		holds.
	\end{corollary}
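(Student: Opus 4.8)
The plan is to transcribe almost verbatim the proof of Corollary \ref{C2.3} from the strong relaxation regime, the key observation being that the only difference between \eqref{A-4} and \eqref{A-6} lies in the scaling of the internal-variable relaxation term $G_c[\bar\rho,\bar e]f_\e$, which is a pure $\partial_\theta$-flux. Since the moment weights $1$, $v$, $|v|^2$ are all independent of $\theta$, this term (and the whole $\partial_\theta$-flux) disappears after integrating in $\theta$; consequently the $v$-hierarchy for solutions of \eqref{A-6} is exactly the one already obtained for \eqref{A-4}. In particular, the only facts about the internal variable that enter the velocity-moment computation are the confinement $\supp_\theta f_\e(t)\subset(\theta_m,\theta_M)$ and the monotonicity $D_\theta^\e(t)\le D_\theta^\e(0)\le \theta_M^0-\theta_m^0$, both of which are guaranteed by Lemma \ref{lemma_temp_2}.

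\textbf{Step 1 (velocity moment inequality).} Multiplying \eqref{A-6} by $|v|^k$ and integrating over $\bbr^{2d}\times\bbr_+$ one recovers identity \eqref{B-4} with the same three terms $\mathcal{I}_{11},\mathcal{I}_{12},\mathcal{I}_{13}$, estimated precisely as in the proof of Lemma \ref{L2.2}: $\mathcal{I}_{11}$ and $\mathcal{I}_{12}$ using the explicit form of $F_c$ and the bound $\|\nabla W\|_{L^\infty}$; $\mathcal{I}_{13}$ through the usual symmetrization into $\mathcal{I}_{131}\le 0$ and an error term $\mathcal{I}_{132}$ controlled by $\|\phi\|_{L^\infty}D_\theta^\e(t)\theta_m^{-2}$. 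Using $D_\theta^\e(t)\le D_\theta^\e(0)$ from Lemma \ref{lemma_temp_2} in this last step and integrating in time, estimate \eqref{B-0} of Lemma \ref{L2.2} holds verbatim for solutions of \eqref{A-6}, with $\theta_m,\theta_M$ as in \eqref{E-thetam-thetaM}.

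\textbf{Step 2 (position and first-order velocity moments).} Multiplying \eqref{A-6} by $|x|$ and integrating by parts gives $\frac{d}{dt}\||x|f_\e\|_{L^1}\le \||v|f_\e\|_{L^1}$; integrating in time and using \eqref{E-hypothesis-initial-data} and the Cauchy--Schwarz inequality yields the first line of \eqref{E-moment-estimates-slow}, and the second line follows from Cauchy--Schwarz and $\|f_\e(t)\|_{L^1}=1$.

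\textbf{Step 3 (second-order velocity moment and dissipation).} Applying the $k=2$ case of Step~1, the left-hand side also controls the dissipation in \eqref{E-dissipation-estimate-slow}. On the right-hand side I bound $\bigl\|\tfrac{u^\infty}{\theta^\infty}+\|\nabla W\|_{L^\infty}\bigr\|_{L^2(0,T)}\le F_0+T^{1/2}\|\nabla W\|_{L^\infty}=G_0$ via \eqref{E-hypothesis-macro-species-gain-u-infty}, insert the second line of \eqref{E-moment-estimates-slow}, and absorb by Young's inequality with $\delta:=\frac{1}{\theta_M}-\frac{\|\phi\|_{L^\infty}D_\theta^\e(0)}{\theta_m^2}$, which is strictly positive since $\|\phi\|_{L^\infty}=\phi(0)=1$, $D_\theta^\e(0)\le\theta_M^0-\theta_m^0$ and the compatibility condition \eqref{E-hypothesis-thetaM-thetam}. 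Choosing $C:=\delta^{-1}$ closes the third line of \eqref{E-moment-estimates-slow} together with \eqref{E-dissipation-estimate-slow}, exactly as in \eqref{E-velocity-moment-regular-phi}. There is no genuine obstacle here; the only point worth stressing is that one relies on the \emph{time-uniform} bound $D_\theta^\e(t)\le D_\theta^\e(0)$ of Lemma \ref{lemma_temp_2} (so no initial-layer refinement is needed), and that, in contrast with Corollary \ref{C-initial-time-layer-slow-regular}, the confinement hypotheses \eqref{E-hypothesis-initial-data-uniform-compact-support}--\eqref{E-hypothesis-initial-data-internal-variable} play no role, since only the $\theta$-confinement enters the moment computation.
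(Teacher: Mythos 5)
Your proposal is correct and follows essentially the same route as the paper, which simply invokes the arguments of Lemma \ref{L2.2} and Corollary \ref{C2.3}: since the weights $|x|$, $|v|^k$ are $\theta$-independent, the differently scaled $\partial_\theta$-flux in \eqref{A-6} drops out upon integration, and the only internal-variable inputs needed (confinement in $(\theta_m,\theta_M)$ and monotonicity of $D_\theta^\e$) are supplied by Lemma \ref{lemma_temp_2}. Your observation that the extra hypotheses \eqref{E-hypothesis-initial-data-uniform-compact-support}--\eqref{E-hypothesis-initial-data-internal-variable} play no role here is consistent with the corollary's statement, which does not assume them.
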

	
	Indeed, notice that by Lemma \ref{lem_vel_supp} both spatial and velocity supports have uniformly bounded radii. Then, it is apparent that \eqref{E-moment-estimates-slow} can be improved into
	\begin{align*}
	\Vert \vert x\vert f_\varepsilon\Vert_{L^\infty(0,T;L^1(\mathbb{R}^{2d}\times\mathbb{R}_+))}&\leq \sup_{0\leq t\leq T}R_x^\varepsilon(t)\leq r_M^0+v_MT,\\
	\Vert \vert v\vert f_\varepsilon\Vert_{L^\infty(0,T;L^1(\mathbb{R}^{2d}\times\mathbb{R}_+))}&\leq \sup_{0\leq t\leq T}R_v^\varepsilon(t)\leq v_M,\\
	\Vert \vert v\vert^2 f_\varepsilon\Vert_{L^\infty(0,T;L^1(\mathbb{R}^{2d}\times\mathbb{R}_+))}&\leq \sup_{0\leq t\leq T}R_v^\varepsilon(t)^2\leq v_M^2.
	\end{align*}
	Since they will not be necessary, we stick to the initial (weaker) estimates as in Section \ref{sec:2}.

\subsection{Hydrodynamic limit of \eqref{A-6}}

In this subsection, we present the rigorous hydrodynamic limit of \eqref{A-6}, whose explicit statement takes the following form:
	
	\begin{theorem}\label{T4.1}
		Let $f_\e$ be the solution of \eqref{A-6} subject to initial data $f^0_\e$. Assume that the initial data verify \eqref{E-hypothesis-initial-data}, $(\bar \rho,\bar u,\bar e)$ verifies \eqref{E-hypothesis-macro-species} and the parameters fulfill the compatibility condition \eqref{E-hypothesis-thetaM-thetam}. In addition, assume the initial uniform confinement \eqref{E-hypothesis-initial-data-uniform-compact-support} and the initial asymptotic control \eqref{E-hypothesis-initial-data-internal-variable}, for some value $\theta_0\in \mathbb{R}_+$ and some parameter $\gamma>0$. Then,
		\begin{align*}
		&\rho_\varepsilon\rightarrow\rho,\quad \mbox{in }C([0,T],\mathcal{M}(\mathbb{R}^{d})-\mbox{narrow}),\\
		&j_\varepsilon\overset{*}{\rightharpoonup} j,\quad \mbox{in }L^2_w(0,T;\mathcal{M}(\mathbb{R}^d)^d),
		\end{align*}
		when $\varepsilon\rightarrow 0$, for some probability measure $\rho$, some finite Radon measure $j$ and some subsequence of $\{\rho_\varepsilon\}_{\varepsilon>0}$ and $\{j_\varepsilon\}_{\varepsilon>0}$ that we denote in the same way. In addition, $(\rho,j,\theta(t))$ solves the following problem
		$$
		\begin{array}{ll}
		\displaystyle \partial_t \rho+\nabla\cdot j =0, & (t,x)\in [0,T)\times \mathbb{R}^d,\\
		\displaystyle j-\frac{\theta(t)u^\infty(t)}{\theta^\infty(t)}\rho+\theta(t)(\nabla W*\rho)\rho=\rho (\phi*j)-j(\phi*\rho), & (t,x)\in (0,T)\times \mathbb{R}^d,\\
		\displaystyle \frac{d\theta(t)}{dt}=\frac{1}{\theta(t)}-\frac{1}{\theta^\infty(t)}, & t\in [0,T],\\
		\rho(t=0)=\rho^0, & x\in \mathbb{R}^d,\\
		\theta(0)=\theta_0, & 
		\end{array}
		$$
		in distributional sense, where $\theta^\infty(t)$ and $u^\infty(t)$ are the background mean values of the internal variable and velocity, see \eqref{E-background-mean-theta} and \eqref{E-background-mean-velocity}.
	\end{theorem}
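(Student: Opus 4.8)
The strategy is to follow the scheme of the proof of Theorem \ref{T2.1}, the only structural novelty being that the internal variable of the kinetic species no longer relaxes instantaneously to the background mean value $\theta^\infty(t)$, but concentrates on the moving value $\theta(t)$ solving the relaxation ODE \eqref{relax_ODE}; I only indicate the modifications. First I would collect the uniform-in-$\varepsilon$ moment bounds. By Corollary \ref{C-a-priori-slow-regular} (together with the conservation of mass and the confinement of supports in Lemma \ref{lem_vel_supp}), the quantities $\rho_\varepsilon$, $j_\varepsilon$, $\mathcal{S}_\varepsilon^v$ and $\mathcal{S}_\varepsilon^\theta$ satisfy exactly the same a priori estimates as in \eqref{E-a-priori-estimates-fast-regular}. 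Using the weak Lebesgue--Bochner representation recalled in Appendix \ref{Appendix-LB} and Alaoglu--Bourbaki's theorem, I would extract a subsequence with $\rho_\varepsilon\xrightharpoonup{*}\rho$ in $L^\infty_w(0,T;\mathcal{M}(\mathbb{R}^d))$ and $j_\varepsilon\xrightharpoonup{*}j$ in $L^2_w(0,T;\mathcal{M}(\mathbb{R}^d)^d)$, and the same gain of time regularity argument as in \cite[Theorem 3.8]{PS17} upgrades the first convergence to $\rho_\varepsilon\rightarrow\rho$ in $C([0,T],\mathcal{M}(\mathbb{R}^d)-\mbox{narrow})$, which in particular identifies the initial datum $\rho(0)=\rho^0$.

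Next, the nonlinear moments $A_\varepsilon$, $B_\varepsilon$ and the relevant tensor products must be identified. The key input here is the deviation estimate of Corollary \ref{C-initial-time-layer-slow-regular}: for every $\varepsilon_0>0$ and all $t\in[\varepsilon_0,T]$ with $\varepsilon$ small, $\supp_\theta f_\varepsilon(t)$ lies in a $C\varepsilon^{\min\{\beta,\gamma\}}$-neighbourhood of $\theta(t)$, with $\theta_m\le\theta(t)\le\theta_M$. Since $\theta(t)$ is an $x$-independent $L^\infty(0,T)$ coefficient bounded below, it does not affect the admissibility of test functions, so repeating verbatim the arguments of Lemma \ref{L2.3} and Lemma \ref{L2.4} with $\theta^\infty(t)$ replaced by $\theta(t)$ throughout yields, for every $\varepsilon_0>0$ and as $\varepsilon\rightarrow 0$,
\begin{align*}
&A_\varepsilon\xrightharpoonup{*}\frac{j}{\theta(t)}\ \ \mbox{in}\ \ L^2_w(\varepsilon_0,T;\mathcal{M}(\mathbb{R}^d)^d),\qquad B_\varepsilon\rightarrow\frac{\rho}{\theta(t)}\ \ \mbox{in}\ \ C([\varepsilon_0,T],\mathcal{M}(\mathbb{R}^d)-\mbox{narrow}),\\
&\rho_\varepsilon\otimes A_\varepsilon\xrightharpoonup{*}\frac{\rho\otimes j}{\theta(t)}\ \ \mbox{in}\ \ L^2_w(\varepsilon_0,T;\mathcal{M}(\mathbb{R}^{2d})),\qquad \rho_\varepsilon\otimes\rho_\varepsilon\rightarrow\rho\otimes\rho\ \ \mbox{in}\ \ C([0,T],\mathcal{M}(\mathbb{R}^{2d})-\mbox{narrow}).
\end{align*}
The first-term vanishing in these identifications now uses the $\mathcal{O}(\varepsilon^{\min\{\beta,\gamma\}})$ deviation around $\theta(t)$ and the uniform bound of $\||v|f_\varepsilon\|_{L^2(0,T;L^1)}$ from Corollary \ref{C-a-priori-slow-regular}.

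Then I would pass to the limit in the weak formulations of the hierarchy \eqref{E-hierarchy-moments-equations-slow}. The continuity equation is immediate and, with the narrow convergence above, recovers $\partial_t\rho+\nabla\cdot j=0$ with $\rho(0)=\rho^0$. In the velocity equation, the inertial contributions $\varepsilon\partial_t j_\varepsilon$ and $\varepsilon\nabla_x\cdot\mathcal{S}_\varepsilon^v$ vanish by \eqref{E-a-priori-estimates-fast-regular}; symmetrizing the alignment term $\int\psi(\phi*A_\varepsilon)\rho_\varepsilon-\int\psi(\phi*\rho_\varepsilon)A_\varepsilon$ with the kernel $K_\psi(t,x,x_*)=(\psi(t,x)-\psi(t,x_*))\phi(x-x_*)\in L^2(\varepsilon_0,T;C_0(\mathbb{R}^{2d}))$ and using the convergence of $\rho_\varepsilon\otimes A_\varepsilon$, handling the aggregation term via $\rho_\varepsilon\otimes\rho_\varepsilon\rightarrow\rho\otimes\rho$, and passing to the limit in the linear terms via $\rho_\varepsilon\rightarrow\rho$ and $A_\varepsilon\xrightharpoonup{*}j/\theta(t)$, collecting terms and multiplying through by $\theta(t)>0$ produces exactly the momentum relation $j-\tfrac{\theta(t)u^\infty(t)}{\theta^\infty(t)}\rho+\theta(t)(\nabla W*\rho)\rho=\rho(\phi*j)-j(\phi*\rho)$ in the distributional sense on $(0,T)\times\mathbb{R}^d$. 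For the internal variable equation of \eqref{E-hierarchy-moments-equations-slow}, the commutator term is controlled as in the proof of Theorem \ref{T2.1} by $\frac{\|\zeta\|_{L^\infty}}{\theta_m^2}\int_{\varepsilon_0}^T D_\theta^\varepsilon(t)\,dt\le CT\varepsilon^\alpha$, and the weakly relaxed term $\varepsilon(\rho_\varepsilon/\theta^\infty(t)-B_\varepsilon)$ carries an extra factor $\varepsilon$, so this equation becomes trivial in the limit; the ODE $\dot\theta=\tfrac1\theta-\tfrac1{\theta^\infty}$ with $\theta(0)=\theta_0$ holds simply by the definition of $\theta$ in \eqref{relax_ODE} (whose global-in-time solvability on $[0,T]$ follows from $\theta^\infty\in W^{1,\infty}(0,T)$, $\theta^\infty\geq\bar\theta_m>0$ and the confinement $\theta_m\le\theta(t)\le\theta_M$).

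The main obstacle is not in this limiting passage, which is a routine adaptation of Section \ref{sec:2}, but in the preceding concentration analysis already carried out in Lemmas \ref{lemma_temp_2}--\ref{lem_vel_supp} and Corollary \ref{C-initial-time-layer-slow-regular}: unlike in the strong relaxation regime, the $\mathcal{O}(\varepsilon^{-1})$ rate of contraction of $D_\theta^\varepsilon$ is produced by the alignment term $\tfrac1\varepsilon G[f_\varepsilon]$ and not by the relaxation term, hence it depends on keeping $\zeta(D_x^\varepsilon(t))$ uniformly bounded away from zero, which in turn requires the uniform-in-$\varepsilon$ confinement of the spatial and velocity supports; this is precisely why the extra hypotheses \eqref{E-hypothesis-initial-data-uniform-compact-support} on the initial supports and \eqref{E-hypothesis-initial-data-internal-variable} on the initial internal variable are imposed, the latter being needed to pin down the initial condition $\theta(0)=\theta_0$ for the limiting relaxation ODE. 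Once these ingredients are in place, concatenating the compactness, the identification of nonlinear moments, and the passage to the limit closes the proof.
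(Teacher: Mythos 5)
Your proposal is correct and, for the compactness step, the identification of the nonlinear moments around $\theta(t)$, and the passage to the limit in the continuity and momentum equations, it coincides with the paper's proof. The one place where you genuinely diverge is the internal-variable equation. You keep it in the form $\e\partial_t h_\e+\e\nabla\cdot\mathcal{S}_\e^\theta+[\rho_\e(\zeta*B_\e)-B_\e(\zeta*\rho_\e)]+\e(\rho_\e/\theta^\infty-B_\e)=0$, observe that every term is $o(1)$, conclude that this equation is vacuous in the limit, and obtain the ODE for $\theta(t)$ ``by definition'' from Corollary \ref{C-initial-time-layer-slow-regular}. This does close the proof of the stated theorem, since $\theta(t)$ is indeed defined there as the solution of \eqref{relax_ODE} with $\theta(0)=\theta_0$, and the momentum relation is established with that same $\theta(t)$. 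The paper instead divides the temperature moment equation by $\e$ before passing to the limit: the inertial terms become $O(1)$ and converge (using the additional identifications $h_\e\to\theta(t)\rho$ and $\mathcal{S}_\e^\theta\overset{*}{\rightharpoonup}\theta(t)j$, which your route does not need) to $\partial_t(\theta(t)\rho)+\nabla\cdot(\theta(t)j)$, the commutator still vanishes because $\frac{1}{\e}\int_{\e_0}^T D_\theta^\e(t)\,dt\lesssim \e^{\alpha-1}$ with $\alpha>1$, and the relaxation term survives at order one, producing $\partial_t(\theta(t)\rho)+\nabla\cdot(\theta(t)j)+\bigl(\tfrac{1}{\theta(t)}-\tfrac{1}{\theta^\infty(t)}\bigr)\rho=0$; combined with the continuity equation this re-derives the relaxation ODE from the kinetic dynamics rather than by fiat. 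The paper's version buys the interpretation advertised in the introduction --- the non-trivial internal-variable dynamics genuinely emerges as the $O(1)$ limit of the temperature equation --- whereas yours is logically sufficient for the theorem but silent on this point.
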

	
\begin{proof}
Thanks to the a priori estimates in Corollary \ref{C-a-priori-slow-regular}, we readily infer \eqref{E-a-priori-estimates-fast-regular}. Therefore, we use the same arguments as in the proof of Theorem \ref{T2.1} in Section \ref{sec:2} to find a probability measure $\rho\in C([0,T];\mathcal{M}(\mathbb{R}^d)-\mbox{narrow})$ and a finite Radon measure $j\in L^2_w(0,T;\mathcal{M}(\mathbb{R}^d)^d)$ so that we recover the convergence results in \eqref{E-compactness-fast-regular} and \eqref{E-compactness-fast-regular-gain}, namely,
        \begin{align*}
		&\rho_\varepsilon\rightarrow\rho,\quad \mbox{in }C([0,T],\mathcal{M}(\mathbb{R}^{d})-\mbox{narrow}),\\
		&j_\varepsilon\overset{*}{\rightharpoonup} j,\quad \mbox{in }L^2_w(0,T;\mathcal{M}(\mathbb{R}^d)^d).
		\end{align*}
Now, we use the deviation estimate of $\supp_\theta f_\varepsilon$ around $\theta(t)$ in \eqref{C-initial-time-layer-slow-regular} and we reproduce similar ideas as in the proofs of Lemmas \ref{L2.3} and \ref{L2.4} to find that, for every $\varepsilon_0>0$
        \begin{align*}
        &h_\e \rightarrow \theta(t)\rho,\quad \mbox{in}\quad C([\varepsilon_0,T],\mathcal{M}(\bbr^d)-\mbox{narrow}),\\
        &S_\e^\theta\xrightharpoonup{*} \theta(t)j,\quad \mbox{in}\quad L^2_w({\e_0},T;\mathcal{M}(\bbr^d)^d),\\
		&A_\e\xrightharpoonup{*} \frac{j}{\theta(t)},\quad \mbox{in}\quad L^2_w({\e_0},T;\mathcal{M}(\bbr^d)^d),\\
		&B_\e \rightarrow \frac{\rho}{\theta(t)},\quad \mbox{in}\quad C([\varepsilon_0,T],\mathcal{M}(\bbr^d)-\mbox{narrow}),\\
		&\rho_\e\otimes A_\e \xrightharpoonup{*} \frac{\rho\otimes j}{\theta(t)},\quad \mbox{in}\quad L^2_w(\e_0,T;\mathcal{M}(\bbr^{2d})),\\
		&\rho_\e\otimes\rho_\e\rightarrow \rho\otimes \rho,\quad \mbox{in}\quad C([0,T],\mathcal{M}(\bbr^{2d})-\mbox{narrow}),
		\end{align*}
		as $\varepsilon\rightarrow 0$. We emphasize that $\supp_\theta f_\varepsilon$ now concentrates asymptotically around $\theta(t)$, whence the dependence of limits on $\theta(t)$ instead of $\theta^\infty(t)$. We are now ready to pass to the limit in the hierarchy of equations \eqref{E-hierarchy-moments-equations-slow}. On the one hand, passing to the limit in the density and velocity equation follows the same train of thoughts as in the strong relaxation regime in Section \ref{sec:2} and yields the system of equations
		$$
		\begin{array}{ll}
		\displaystyle \partial_t \rho+\nabla\cdot j =0, & t\in [0,T)\times \mathbb{R}^d,\\
		\displaystyle j-\frac{\theta(t)u^\infty(t)}{\theta^\infty(t)}\rho+\theta(t)(\nabla W*\rho)\rho=\rho (\phi*j)-j(\phi*\rho), & t\in (0,T)\times \mathbb{R}^d,\\
		\rho(t=0)=\rho^0, & x\in \mathbb{R}^d.
		\end{array}
		$$
		Then, we focus on the limit of the internal variable equation in \eqref{E-hierarchy-moments-equations-slow}. In weak form it reads
		\begin{align*}
		\int_0^T\int_{\mathbb{R}^d}\partial_t\psi \,h_\varepsilon\,dx\,dt&+\int_0^T\int_{\mathbb{R}^d}\mathcal{S}_\varepsilon^\theta\,\nabla\psi\,dx\,dt\nonumber\\
    	&=\frac{1}{\varepsilon}\int_0^T\int_{\mathbb{R}^d}\psi(\zeta*\rho_\varepsilon)B_\varepsilon\,dx\,dt-\frac{1}{\varepsilon}\int_0^T\int_{\mathbb{R}^d}\psi(\zeta*B_\varepsilon)\rho_\varepsilon\,dx\,dt\\
    	&\qquad +\int_0^T\int_{\mathbb{R}^d}\psi B_\varepsilon\,dx\,dt-\int_0^T\int_{\mathbb{R}^d}\psi\frac{\rho_\varepsilon}{\theta^\infty(t)}\,dx\,dt,
    	\end{align*}
    	for any $\psi\in C^1_c((0,T)\times \mathbb{R}^d)$. The limit of the linear terms is clear. We show again that the nonlinear term vanishes, as $\varepsilon\rightarrow 0$, due to a similar argument supported by concentration of the internal variable support. Specifically, set any $\varepsilon_0>0$ so that $\supp\psi\subseteq [\varepsilon_0,T]\times \mathbb{R}^d$. Then, 
    	\begin{multline*}
    	\left\vert\frac{1}{\varepsilon}\int_0^T\int_{\mathbb{R}^d}\psi(\zeta*\rho_\varepsilon)B_\varepsilon\,dx\,dt-\frac{1}{\varepsilon}\int_0^T\int_{\mathbb{R}^d}\psi(\zeta*B_\varepsilon)\rho_\varepsilon\,dx\,dt\right\vert\\
    	\leq \frac{1}{\varepsilon}\Vert \psi\Vert_{C_c((0,T)\times \mathbb{R}^d)}\int_{\varepsilon_0}^T\int_{\mathbb{R}^{4d}\times \mathbb{R}^2_+}\zeta(x-x_*)\left\vert\frac{1}{\theta}-\frac{1}{\theta_*}\right\vert f_\varepsilon(t,z)f_\varepsilon(t,z_*)\,dz\,dz_*\,dt\\
    	\leq \frac{1}{\varepsilon}\frac{\Vert \zeta\Vert_{L^\infty}\Vert \psi\Vert_{C_c((0,T)\times \mathbb{R}^d)}}{\theta_m^2}\int_{\varepsilon_0}^T D_\theta^\varepsilon(t)\,dt\leq C\frac{\Vert \zeta\Vert_{L^\infty}\Vert \psi\Vert_{C_c((0,T)\times \mathbb{R}^d)}}{\theta_m^2}T\varepsilon^{\alpha-1},
    	\end{multline*}
    	for any $\varepsilon\leq \varepsilon_0^{1/\beta}$, where we have used the concentration estimate in Corollary \ref{C-initial-time-layer-slow-regular}. Since $\alpha$ can be taken larger than $1$, the right-hand side tends to $0$, as $\varepsilon\rightarrow 0$. Therefore, after passing to the limit, we obtain the weak equation
    	\begin{align*}
		\int_0^T\int_{\mathbb{R}^d}\partial_t\psi(t,x)\theta(t)\rho(t,dx)\,dt&+\int_0^T\int_{\mathbb{R}^d}\theta(t)j(t,dx)\cdot\nabla\psi(t,x)\,dt\\
    	&=\int_0^T\int_{\mathbb{R}^d} \frac{\psi(t,x)}{\theta(t)}\rho(t,dx)\,dt-\int_0^T\int_{\mathbb{R}^d}\frac{\psi(t,x)}{\theta^\infty(t)}\rho(t,dx)\,dt,
    	\end{align*}
    	for any $\psi\in C^1_c((0,T)\times \mathbb{R}^d)$, that is,
    	$$\partial_t(\theta(t)\rho)+\nabla\cdot(\theta(t)j)+\left(\frac{1}{\theta(t)}-\frac{1}{\theta^\infty}\right)\rho=0.$$
    	Using the continuity equation $\partial_t\rho+\nabla_x\cdot j=0$, such an equation amounts to
        \[\frac{d\theta(t)}{dt}+\left(\frac{1}{\theta(t)}-\frac{1}{\theta^\infty}\right)=0,\]
        which is exactly the same as the relaxation ODE \eqref{relax_ODE} for $\theta(t)$.
\end{proof}

\subsection{Weakly singular influence functions}\label{subsec:4.5}

	Again, we notice that the uniform bound of the influence functions $\phi$, $\zeta$ and the aggregation potential $\nabla W$ has strongly be used in some parts of the preceding proofs. Hence, the hydrodynamic limit of the system \eqref{E-kinetic-singular-slow}, where the influence functions $\phi$ and $\zeta$ are replaced by $\phi_\varepsilon$ and $\zeta_\varepsilon$ in \eqref{E-coupling-weights-epsilon}, requires appropriate modifications of the arguments in Theorem \ref{T4.1} leading to the following version.

		\begin{theorem}
		Let $f_\e$ be a solution to the equation \eqref{E-kinetic-singular-slow} subject to initial data $f^0_\e$ with parameters $\lambda_1\in(0,1)$, and $\lambda_2>0$. Assume that the initial data verify \eqref{E-hypothesis-initial-data}, $(\bar \rho,\bar u,\bar e)$ verifies \eqref{E-hypothesis-macro-species} and the  parameters fulfill the stronger compatibility condition \eqref{E-hypothesis-thetaM-thetam}. In addition, assume the initial uniform confinement \eqref{E-hypothesis-initial-data-uniform-compact-support} and the initial asymptotic control \eqref{E-hypothesis-initial-data-internal-variable}, for some value $\theta_0\in \mathbb{R}_+$ and some parameter $\gamma>0$. Then,
		\begin{align*}
		&\rho_\varepsilon\rightarrow\rho,\quad \mbox{in }C([0,T],\mathcal{M}(\mathbb{R}^{d})-\mbox{narrow}),\\
		&j_\varepsilon\overset{*}{\rightarrow} j,\quad \mbox{in }L^2_w(0,T;\mathcal{M}(\mathbb{R}^d)^d),
		\end{align*}
		as $\varepsilon\rightarrow 0$, for some probability measure $\rho$, some finite Radon measure $j$ and some subsequence of $\{\rho_\varepsilon\}_{\varepsilon>0}$ and $\{j_\varepsilon\}_{\varepsilon>0}$ that we denote in the same way. In addition, $(\rho,j,\theta(t))$ solves the following problem
		$$
		\begin{array}{ll}
		\displaystyle \partial_t \rho+\nabla\cdot j =0, & (t,x)\in [0,T)\times \mathbb{R}^d,\\
		\displaystyle j-\frac{\theta(t)u^\infty(t)}{\theta^\infty(t)}\rho+\theta(t)(\nabla W*\rho)\rho=\rho (\phi*j)-j(\phi*\rho), & (t,x)\in (0,T)\times \mathbb{R}^d,\\
		\displaystyle \frac{d\theta(t)}{dt}=\frac{1}{\theta(t)}-\frac{1}{\theta^\infty(t)}, & t\in [0,T],\\
		\rho(t=0)=\rho^0, & x\in \mathbb{R}^d,\\
		\theta(0)=\theta_0, & 
		\end{array}
		$$
		in distributional sense, for an appropriate meaning of the nonlinear term. Here, $\phi_0$ is the singular influence function in \eqref{E-singular-kernels} and, $\theta^\infty(t)$ and $u^\infty(t)$ are the background mean value of the internal variable and velocity, see \eqref{E-background-mean-theta} and \eqref{E-background-mean-velocity}.
	\end{theorem}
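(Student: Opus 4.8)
The plan is to run the proof of Theorem \ref{T4.1} while importing the modifications introduced in Section \ref{subsec:3.5} for the passage from regular to weakly singular influence functions. First I would check that the internal‑variable concentration results carry over: in Lemmas \ref{lemma_temp_2}, \ref{lem_vel_supp} and Corollary \ref{C-initial-time-layer-slow-regular} the contributions of the alignment operator for the internal variable always had the favorable sign and were simply discarded, so replacing $G$ by the singular $G_\varepsilon$ changes nothing in the confinement $\supp_\theta f_\varepsilon(t)\subset(\theta_m,\theta_M)$ or in the monotonicity of $D_\theta^\varepsilon$. The one point needing care is {\sc Step 1} of Lemma \ref{lem_vel_supp}: there $\Vert\phi\Vert_{L^\infty}$ becomes $\Vert\phi_\varepsilon\Vert_{L^\infty}=1/\varepsilon$, so the constant $C_1$ turns into $\frac{1}{\theta_M}-\frac{D_\theta^\varepsilon(0)}{\varepsilon\theta_m^2}$, which stays positive uniformly in $\varepsilon$ exactly under the strengthened compatibility condition \eqref{E-hypothesis-thetaM-thetam-strong}. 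With that, $R_x^\varepsilon$ and $R_v^\varepsilon$ remain uniformly bounded, and since $\zeta_\varepsilon(r)\to\zeta_0(r)>0$ for fixed $r>0$, we still have $\zeta_\varepsilon(D_x^\varepsilon(t))\geq C_T>0$ uniformly; hence $D_\theta^\varepsilon(t)\leq D_\theta^\varepsilon(0)e^{-C_T t/\varepsilon}$ and, after the layer $t\geq\varepsilon^\beta$, $D_\theta^\varepsilon(t)\leq C\varepsilon^\alpha$ for arbitrary $\alpha>0$ together with concentration of $\supp_\theta f_\varepsilon(t)$ on an $\mathcal{O}(\varepsilon^{\min\{\beta,\gamma\}})$ neighborhood of the solution $\theta(t)$ of the relaxation ODE \eqref{relax_ODE}.

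Next I would redo the a priori estimates as in \eqref{E-v-moment-singular-phi} (the location of the $1/\varepsilon$ in the $\theta$‑equation is immaterial for the $|v|^2$ estimate): multiply \eqref{E-kinetic-singular-slow} by $|v|^2$, integrate, use the internal‑variable concentration, the bound on $u^\infty/\theta^\infty$ from Remark \ref{R-hypothesis-macro-species-gain}, the uniform bound of $\nabla W$ and Young's inequality, so that — once more invoking \eqref{E-hypothesis-thetaM-thetam-strong} to absorb the factor $1/\varepsilon$ — one recovers the analogue of Corollary \ref{C-a-priori-slow-regular}, including the dissipation estimate \eqref{E-dissipation-estimate-slow}, modulo a subsequence. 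This gives the compactness \eqref{E-compactness-fast-regular}--\eqref{E-compactness-fast-regular-gain}, and since Lemmas \ref{L2.3} and \ref{L2.4} use only this compactness together with the concentration of $\supp_\theta f_\varepsilon$ (here around $\theta(t)$), they apply verbatim, yielding $A_\varepsilon\xrightharpoonup{*}j/\theta(t)$, $B_\varepsilon\to\rho/\theta(t)$, $\rho_\varepsilon\otimes A_\varepsilon\xrightharpoonup{*}\rho\otimes j/\theta(t)$ and $\rho_\varepsilon\otimes\rho_\varepsilon\to\rho\otimes\rho$.

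Finally I would pass to the limit in the hierarchy \eqref{E-hierarchy-moments-equations-slow}. The continuity equation is immediate; in the velocity equation the inertial terms vanish, the linear drag and aggregation terms converge via Lemma \ref{L2.3} and the convergence of $\rho_\varepsilon\otimes\rho_\varepsilon$, producing $\frac{\theta(t)u^\infty(t)}{\theta^\infty(t)}\rho$ and $-\theta(t)(\nabla W*\rho)\rho$, and the singular alignment term is treated exactly as in \eqref{E-nonlinear-term-fast-singular}: after symmetrization it reads $\frac{1}{2}\int K_{\psi,\varepsilon}(\rho_\varepsilon\otimes A_\varepsilon-A_\varepsilon\otimes\rho_\varepsilon)$ with $K_{\psi,\varepsilon}=(\psi(t,x)-\psi(t,x_*))\phi_\varepsilon(x-x_*)$; since $\lambda_1\in(0,1)$ the smoothness of $\psi$ removes the diagonal singularity, so $K_{\psi,0}\in L^2(0,T;C_0(\mathbb{R}^{2d}))$, and splitting off $K_{\psi,\varepsilon}-K_{\psi,0}$ (which is $\mathcal{O}(\varepsilon^{1-\lambda_1})$ in $L^\infty$) one term vanishes by the bound on $A_\varepsilon$ and the other by the weak‑$*$ convergence of $\rho_\varepsilon\otimes A_\varepsilon$, leaving $\rho(\phi_0*j)-j(\phi_0*\rho)$. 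In the internal‑variable equation the nonlinear term is bounded, via the concentration of $\supp_\theta f_\varepsilon$ and $\Vert\zeta_\varepsilon\Vert_{L^\infty}\sim\varepsilon^{-\lambda_2}$, by $C\varepsilon^{\alpha-1-\lambda_2}$, which tends to $0$ once $\alpha$ is chosen larger than $1+\lambda_2$ (admissible since $\alpha$ is arbitrary in Corollary \ref{C-initial-time-layer-slow-regular}); the remaining linear terms, combined with the continuity equation, produce $\dot\theta=\frac{1}{\theta}-\frac{1}{\theta^\infty}$ with $\theta(0)=\theta_0$ by \eqref{E-hypothesis-initial-data-internal-variable}.

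The hard part will be the coupling between the singular kernel and the weak relaxation: the velocity‑radius bound of Lemma \ref{lem_vel_supp}, which is precisely the mechanism keeping $\zeta_\varepsilon(D_x^\varepsilon(t))$ bounded below and hence preserving the $\mathcal{O}(\varepsilon^{-1})$ concentration rate, now degenerates because $\Vert\phi_\varepsilon\Vert_{L^\infty}=1/\varepsilon$; it is this interplay that forces both the strengthened hypothesis \eqref{E-hypothesis-thetaM-thetam-strong} and the restriction $\lambda_1\in(0,1)$ — the borderline $\lambda_1=1$ would in addition require the diagonal non‑concentration estimate for the commutator $\rho_\varepsilon\otimes A_\varepsilon-A_\varepsilon\otimes\rho_\varepsilon$, as in Section \ref{subsec:3.5}.
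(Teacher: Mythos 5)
Your proposal is correct and follows essentially the same route as the paper, which itself only sketches this proof by combining the modifications of Theorem \ref{T-hydro-fast-singular} (strong initial concentration \eqref{E-hypothesis-thetaM-thetam-strong} to absorb the $1/\varepsilon$ from $\Vert\phi_\varepsilon\Vert_{L^\infty}$, symmetrization with $K_{\psi,\varepsilon}$ for the singular alignment term) with the structure of Theorem \ref{T4.1}. In fact you supply more detail than the paper does — in particular the explicit check that $C_1$ in Lemma \ref{lem_vel_supp} stays positive and the exponent bookkeeping $\varepsilon^{\alpha-1-\lambda_2}$ for the internal-variable nonlinearity — and your observations match the paper's own remarks on where the regularity of $\nabla W$ is essential and why $\lambda_1=1$ would require the non-concentration estimate.
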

	
	Since the proof follows similar modifications like in Theorem \ref{T-hydro-fast-singular} exploiting the initial strong concentration assumption \eqref{E-hypothesis-thetaM-thetam-strong}, we omit details here and just comment on the main difficulties. On the one hand, note that the same arguments as in Subsection \ref{subsec:3.5} guarantee that the initial concentration assumption \eqref{E-hypothesis-thetaM-thetam-strong} in place of the weak compatibility condition \eqref{E-hypothesis-thetaM-thetam} solves the main delicate points. Indeed, such a condition allows compensating extra factors $\frac{1}{\varepsilon}$ coming from the singularity of the influence function $\phi_\e$, under the cost that a stronger initial concentration of the internal variable support is assumed. On the other hand, an analogous dissipation estimate like \eqref{E-dissipation-estimate} holds so that the non-concentration property \eqref{commut_A} persists and it allows extending the result to the limiting regime $\lambda_1=1$ like in Subsection \eqref{subsec:3.5}. Unfortunately, the uniform bound of $\nabla W$ in \eqref{H-hypothesis-phi-zeta-W} has been applied in an essential way in the proof if Lemma \ref{lem_vel_supp} in order to control (uniformly in $\e$) the growth of spacial and velocity supports. Hence, the case of singularly scaled potentials $W_\e$ like in \eqref{E-Cucker-Dong-potentials-scaled} cannot be addressed with our method and would require a significantly different approach and analysis than the present one.

    \section{Numerical simulations}
	In this section, we present the numerical simulations for the limiting macroscopic equations \eqref{A-5} and \eqref{A-7}, both for strong and weak relaxation regimes. For computational efficiency, we choose the 1-dimensional periodic spatial domain $\mathbb{T}$. However, similar results could be shown for compactly supported initial data in the free space $\bbr$. Along this part we shall use the following notation:
	\begin{enumerate}
	\item {\bf (Lie group)} We regard $\mathbb{T}$ as the quotient of the interval $[0,1]$ under identification of the endpoints $0$ and $1$. It has a Lie group structure under the group operation
	$$x+y:=[a+b+n],$$
	for any equivalent classes $x=[a]$, $y=[b]\in \mathbb{T}$ with $a,b\in [0,1]$, where $n\in \mathbb{Z}$ is the only integer such that $a-b+n\in (0,1]$. Hence, note that the identity element is $[0]$ and the inverse element of $x$ is given by 
	$$-x:=[1-a].$$
	\item {\bf (Geodesic distance)} Given two equivalence classes $x=[a]$, $y=[b]\in \mathbb{T}$ for some numbers $a,b\in [0,1]$, we define the distance over $\mathbb{T}$ by
	$$\vert x-y\vert_{\mathbb{T}}:=\vert a-b+n\vert,$$
	where $n\in \mathbb{Z}$ is the only integer such that $a-b+n\in \left(-\frac{1}{2},\frac{1}{2}\right]$. It is nothing than the usual geodesic distance associated to the Lie group. From here on, we will denote $\vert x-y\vert:=\vert x-y\vert_{\mathbb{T}}$ and $\vert x\vert=\vert x-[0]\vert_\mathbb{T}$ for simplicity when there is no confusion.
	\item {\bf (Convolution)} Given two periodic functions $f,g:\mathbb{T}\longrightarrow \mathbb{R}$, we define its convolution $f*g:\mathbb{T}\longrightarrow \mathbb{R}$, as the usual Lie group convolution
	$$(f*g)(x)=\int_{\mathbb{T}}f(x-y)g(y)\,dy=\int_0^1 f([a]-[b])g([b])\,db,\quad x=[a]\in \mathbb{T}.$$
	\end{enumerate}

	\subsection{Strong relaxation regime}\label{subsec:5.1}
	In this part, we present some numerical simulations of the limiting macroscopic system \eqref{A-5} under the strong relaxation regime. In periodic variables such a system is determined by the following coupled equations for $(\rho,u)$:
	\begin{align}\label{eq_rho_u_strong}
	\begin{aligned}
	&\partial_t\rho +\nabla\cdot (\rho u) = 0, & & (t,x)\in \mathbb{R}_+\times \mathbb{T}, \\
	& u -u^\infty(t)+\theta^\infty(t)(\nabla W*\rho)=\phi*(\rho u)-(\phi*\rho)u. & & (t,x)\in \mathbb{R}_+\times \mathbb{T}.
	\end{aligned}
	\end{align}
	Here, the relaxation velocity $u^\infty(t)\in \mathbb{R}^d$ and the internal variable $\theta^\infty(t)\in \mathbb{R}_+$ are
	$$u^\infty(t)=\theta^\infty(t)\int_{\mathbb{T}}\frac{\bar{\rho}(t,x)\bar{u}(t,x)}{\bar{e}(t,x)}\,dx,\quad \theta^\infty(t)=\left(\int_{\mathbb{T}}\frac{\bar{\rho}(t,x)}{\bar{e}(t,x)}\,dx\right)^{-1}.$$
	These are determined instantaneously by the background fluid $(\bar \rho,\bar u,\bar e)$, which is governed by the hydrodynamic equations:
	\begin{align}
	\begin{aligned}\label{eq_background}
	&\partial_t \bar{\rho} +\nabla\cdot(\bar{\rho}\bar{u})=0,\\
	&\partial_t (\bar{\rho}\bar{u}) +\nabla\cdot (\bar{\rho} \bar{u}\otimes \bar{u}) =\int_{\mathbb{T}}\phi(x-x_*)\left(\frac{\bar{u}(t,x_*)}{\bar{e}(t,x_*)}-\frac{\bar{u}(t,x)}{\bar{e}(t,x)}\right)\bar{\rho}(t,x)\bar{\rho}(t,x_*)\,dx_*,\\
	&\partial_t (\bar{\rho}\bar{e}) +\nabla\cdot (\bar{\rho} \bar{u}\bar{e}) =\int_{\mathbb{T}}\zeta(x-x_*)\left(\frac{1}{\bar{e}(t,x)}-\frac{1}{\bar{e}(t,x_*)}\right)\bar{\rho}(t,x)\bar{\rho}(t,x_*)\,dx_*.
	\end{aligned}
	\end{align}
	We choose $\lambda_1=\lambda_2=1$ so that the influence functions $\phi$ and $\zeta$ are given by 
	$$\phi(x)=\zeta(x)=\frac{1}{(1+3\vert x\vert^2)^\frac{1}{2}},\quad x\in \mathbb{T}.$$
	Finally, we choose the aggregation potential $W$ as
	$$
	W(x) = \eta(\vert x\vert)\left(\frac{1}{2}\log(1+|x|^2)-\frac{1}{2}\log\frac{5}{4}\right),\quad x\in\mathbb{T},
	$$
	where $\eta=\eta(r)$ is a smooth bump function with $\eta(r) = 1$ for $0\leq r\leq \frac{1}{6}$ and $\eta(r) = 0$ for $r\geq \frac{1}{3}$. We note that the above $W$ is a modification of the potential $\frac{1}{2}\log(1+|x|^2)$ introduced in \eqref{E-Cucker-Dong-potentials} with $\lambda_3=1$, so that it is smoothly extended to the periodic domain.

	\medskip
	
	$\bullet$ {\sc Initial data}. We choose the initial data for the background fluid $(\bar{\rho},\bar{u},\bar{e})$ as
	\begin{equation}\label{E-numerics-initial-data-macro-species}
	\bar{\rho}^0(x)\equiv 1,\quad \bar{u}^0(x) = 0.5+\sin(2\pi x),\quad \bar{e}^0(x) = 2+\cos(2\pi x),\quad x\in\mathbb{T},
	\end{equation}
	while the initial data for $\rho$ is
	\begin{equation}\label{E-numerics-initial-data-limit}
	\rho^0(x) = \frac{1}{Z}\exp\left(-50\left\vert x-[0.5]\right\vert^2\right),\quad x\in\mathbb{T},
	\end{equation}
	for some normalizing coefficient $Z$ so that $\int_{\mathbb{T}}\rho^0(x)\,dx=1$.
	
	\medskip
	
	$\bullet$ {\sc Discussion of numerical methods}. For the sake of completeness, we explain here the numerical methods that we have used to solve numerically both the hydrodynamic  equations \eqref{eq_background} and the limiting macroscopic system \eqref{eq_rho_u_strong}. Given spacial and temporal grid sizes $\Delta x=\frac{1}{M}$ and $\Delta t=\frac{T}{N}$ with $M,N\in \mathbb{N}$, we discretize the spatial variable $x\in \mathbb{T}$ by nodes $x_0,\ldots,x_M\in \mathbb{T}$ with $\vert x_{j+1}-x_j\vert =\frac{1}{M}$, for $j=0,\ldots,M-1$, and the temporal variable $t\in [0,T]$ by nodes $t_0,\ldots,t_N$ with $t_n:=n\Delta t$, for $n=0,\ldots,N$. For any function $w=w(t,x)$ for $t\in [0,T]$ and $x\in \mathbb{T}$, we denote its numerical approximation by
	$$w^n_j\approx w(t_n,x_j).$$
	Considering intermediate nodes, we can determine a staggered grid and we similarly denote 
	$$w^{n+\frac{1}{2}}_j\approx  w(t_{n+\frac{1}{2}},x_j),\quad w^n_{j+\frac{1}{2}}\approx  w(t_n,x_{j+\frac{1}{2}}),\quad w^{n+\frac{1}{2}}_{j+\frac{1}{2}}\approx w(t_{n+\frac{1}{2}},x_{j+\frac{1}{2}}).$$
	
	\medskip
	
	$\diamond$ On the one hand, the numerical solution $(\bar\rho^{n+1},\bar u^{n+1},\bar e^{n+1})$ of \eqref{eq_background} is computed from $(\bar \rho^n,\bar u^n,\bar e^n)$ by using the central difference method on the staggered grid for non-homogeneous balance law suggested by Nessyahu and Tadmor \cite{NT90} (see also \cite[Section 4]{LRR00}). 
	We briefly recall the scheme in the simpler case of a 1-dimensional scalar balance law
	\[\partial_t w +\partial_xf(w) =g(w).\]
    Assume that the numerical solution $w^n\approx w(t_n,\cdot)$ at the time step $n$ is given. Then, the (generalized) Nessyahu-Tadmor scheme computes the solution $w^{n+1}\approx w(t_{n+1},\cdot)$ at the next time step on the staggered grid as
	\begin{align*}
	w^{n+1}_{j+\frac{1}{2}}&=\frac{1}{2}(w^n_j+w^n_{j+1})+\frac{1}{8}({w^n_j}'-{w^n_{j+1}}')+\frac{\Delta t}{\Delta x}\left(f\left(w_j^{n+\frac{1}{2}}\right)-f\left(w_{j+1}^{n+\frac{1}{2}}\right)\right)\\
	&\quad+\frac{\Delta t}{2}\left(g\left(w_{j+1}^{n+\frac{1}{2}}\right)+g\left(w_{j}^{n+\frac{1}{2}}\right)\right),
	\end{align*}
	where the predictor $w^{n+\frac{1}{2}}_j$ is given by
	\[w^{n+\frac{1}{2}}_j = w^n_j+\frac{\Delta t}{2}\left(g(w^n_j)-\frac{{f^n_j}'}{\Delta x}\right).\]
	Here, ${w^n_j}'$ and ${f^n_j}'$ are first-order approximations of spatial derivative, computed as follows
	\begin{align*}
	{w^n_j}'&:=\textup{minmod}(w^n_{j+1}-w^n_j, w^n_j-w^n_{j-1}),\\ {f^n_j}'&:=\textup{minmod}(f(w^n_{j+1})-f(w^n_j),f(w^n_j)-f(w^n_{j-1})),
	\end{align*}
	where $\textup{minmod}$ is the minmod function:
	\[\textup{minmod}(x,y):=\begin{cases}\textup{sgn}(x)\min(|x|,|y|),\quad &\mbox{if}\quad  \textup{sgn}(x)=\textup{sgn}(y),\\
	0,\quad&\mbox{otherwise}.\end{cases}\]
	
	$\diamond$ On the other hand, the solution $(\rho^{n+1},u^{n+1})$ to \eqref{eq_rho_u_strong} is computed from $(\rho^n,u^n)$ as follows. First, from known $(\rho^n,u^n)$ we can compute $\rho^{n+1}$ by solving the initial value problem associated with the continuity equation
	\begin{equation}\label{eq_rho}
	\begin{array}{ll}
	\partial_t\rho+\nabla\cdot (\rho u)=0, & (t,x)\in \mathbb{R}_+\times \mathbb{T},\\
	\rho(0,\cdot)=\rho^0(x), & x\in \mathbb{T},
	\end{array}
	\end{equation}
	driven by $u=u^n$ and issued at $\rho^n$. To solve \eqref{eq_rho}, we use the well-known fifth-order weighted essentially non-oscillatory scheme (WENO5) \cite{S98} for discretizing the spatial variable, and use the third-order total variation diminishing Runge-Kutta scheme (TVD-RK3) \cite{S88} for the time integration. We refer to the recent survey paper \cite{S20} for further details. Second, once $\rho^{n+1}$ is known, $u^{n+1}$ can be recovered from the linear implicit equation
	\begin{equation}\label{eq_u}
	u-u^\infty(t)+\theta^\infty(t)(\nabla W*\rho)=\phi*(\rho u)-(\phi*\rho)u,
	\end{equation}
	for $\rho=\rho^{n+1}$ and $u^\infty(t_{n+1})$. Specifically, we approximate the convolution on the left-hand side and the integral on the right-hand side by using simple rectangle rule. Then, the equation \eqref{eq_u} for $u$ is approximated as
    \[u_i^{n+1}-u^\infty +\sum_{j=1}^M\nabla W(x_i-x_j)\rho_j^{n+1}\Delta x= \sum_{j=1}^M \phi(x_i-x_j)(u_j^{n+1}-u_i^{n+1})\rho_j^{n+1}\Delta x,\]
    or, in vector form,
    \begin{equation}\label{lineq}
    (I_M-\Delta x\,\Phi^{n+1})u^{n+1} = u^\infty(t_{n+1})\mathbbm{1}-\Delta x\mathcal{W} \rho^{n+1}.
    \end{equation}
    Here, $\rho^{n+1} = (\rho_1^{n+1},\ldots,\rho_M^{n+1})^\top $, $u^{n+1}=(u_1^{n+1},\ldots, u_M^{n+1})^\top$, $\mathbbm{1}:=(1,1,\ldots,1)^\top$ and the $(i,j)$-component of the matrices $\Phi^{n+1}\in \bbr^{M\times M}$ and $\mathcal{W}$ are defined as
    \begin{align*}
    &\Phi_{ij}^{n+1}:= \phi(x_i-x_j) \rho_j^{n+1}-\delta_{ij}\left(\sum_{k=1}^M \phi(x_i-x_k) \rho_k^{n+1}\right),\\
    &\mathcal{W}_{ij}:=\nabla W(x_i-x_j).
    \end{align*}
    Therefore, $u^{n+1}$ can be obtained by solving the linear algebraic system \eqref{lineq}. We use the MATLAB internal function.
    
    \medskip

    To summarize, we compute the numerical solution through the following protocol: given numerical solutions $(\rho^n,u^n)$ and $(\bar\rho^n,\bar u^n,\bar e^n)$ at time step $n$,
    
    \begin{enumerate}
    \item Compute $(\bar\rho^{n+1},\bar u^{n+1},\bar e^{n+1})$ and $u^\infty(t_{n+1})$ by solving \eqref{eq_background} from $(\bar\rho^n,\bar u^n,\bar e^n)$.
    \item Compute $\rho^{n+1}$ by solving \eqref{eq_rho} with $u=u^n$ from $\rho^n$. 
    \item Compute $u^{n+1}$ by solving \eqref{eq_u} with $\rho=\rho^{n+1}$ and $u^\infty(t_{n+1})$.
    \end{enumerate}

	\begin{figure}[h!]
		\centering
		\begin{subfigure}[b]{1\textwidth}
			\includegraphics[width=\textwidth]{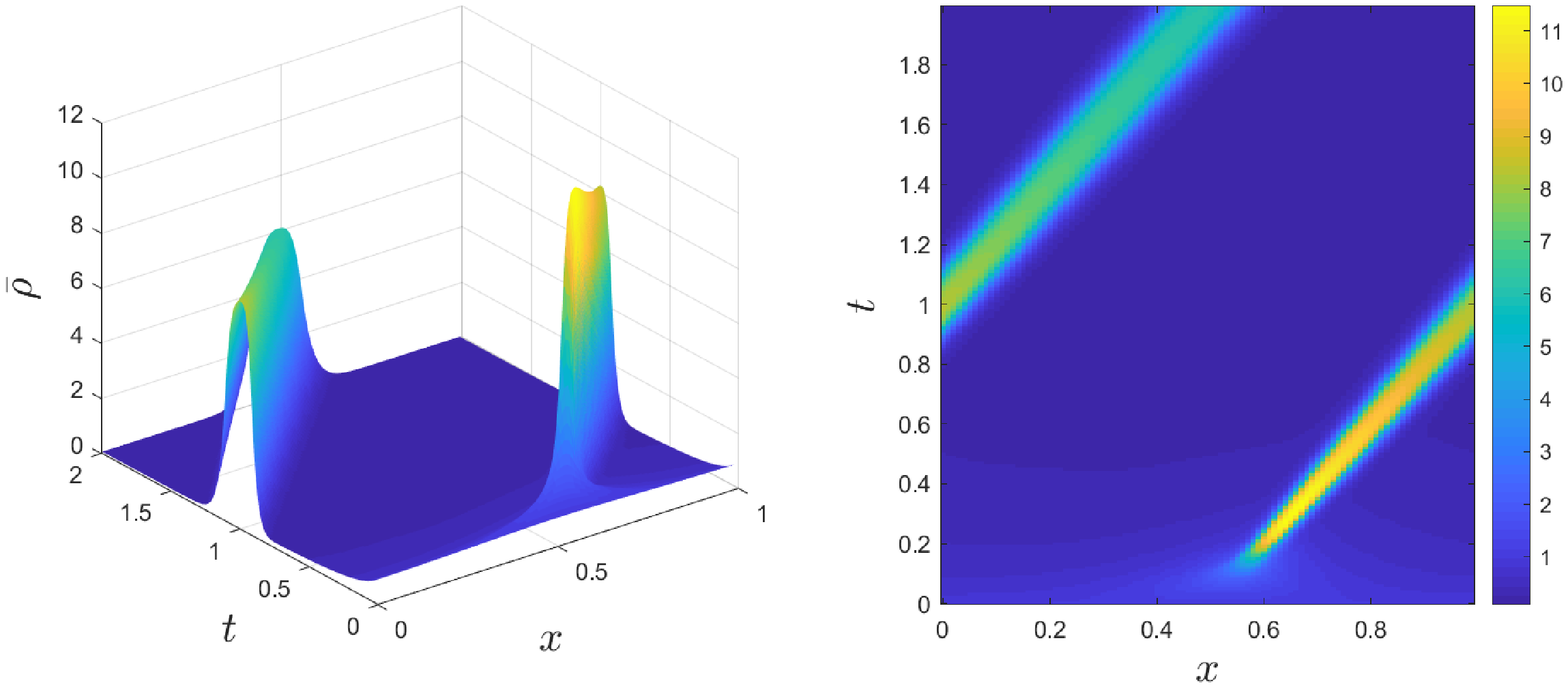}
			\caption{Profile of the density $\bar{\rho}$ of background fluid.}
		\end{subfigure}
		
		\begin{subfigure}[b]{0.45\textwidth}
			\includegraphics[width=\textwidth]{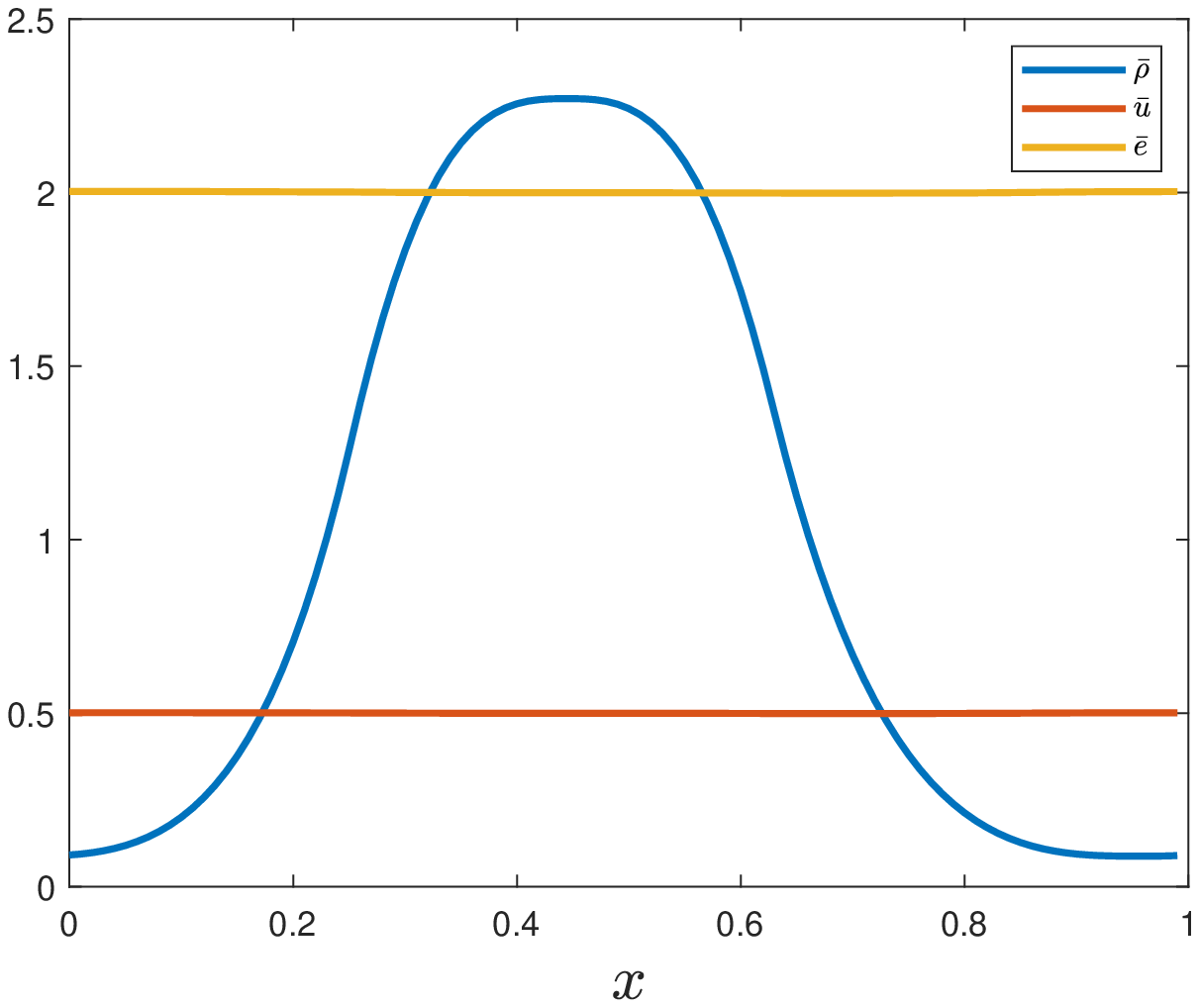}
			\caption{$(\bar{\rho},\bar{u},\bar{e})$ at $t=20$.}
		\end{subfigure}
		\begin{subfigure}[b]{0.45\textwidth}
			\includegraphics[width=\textwidth]{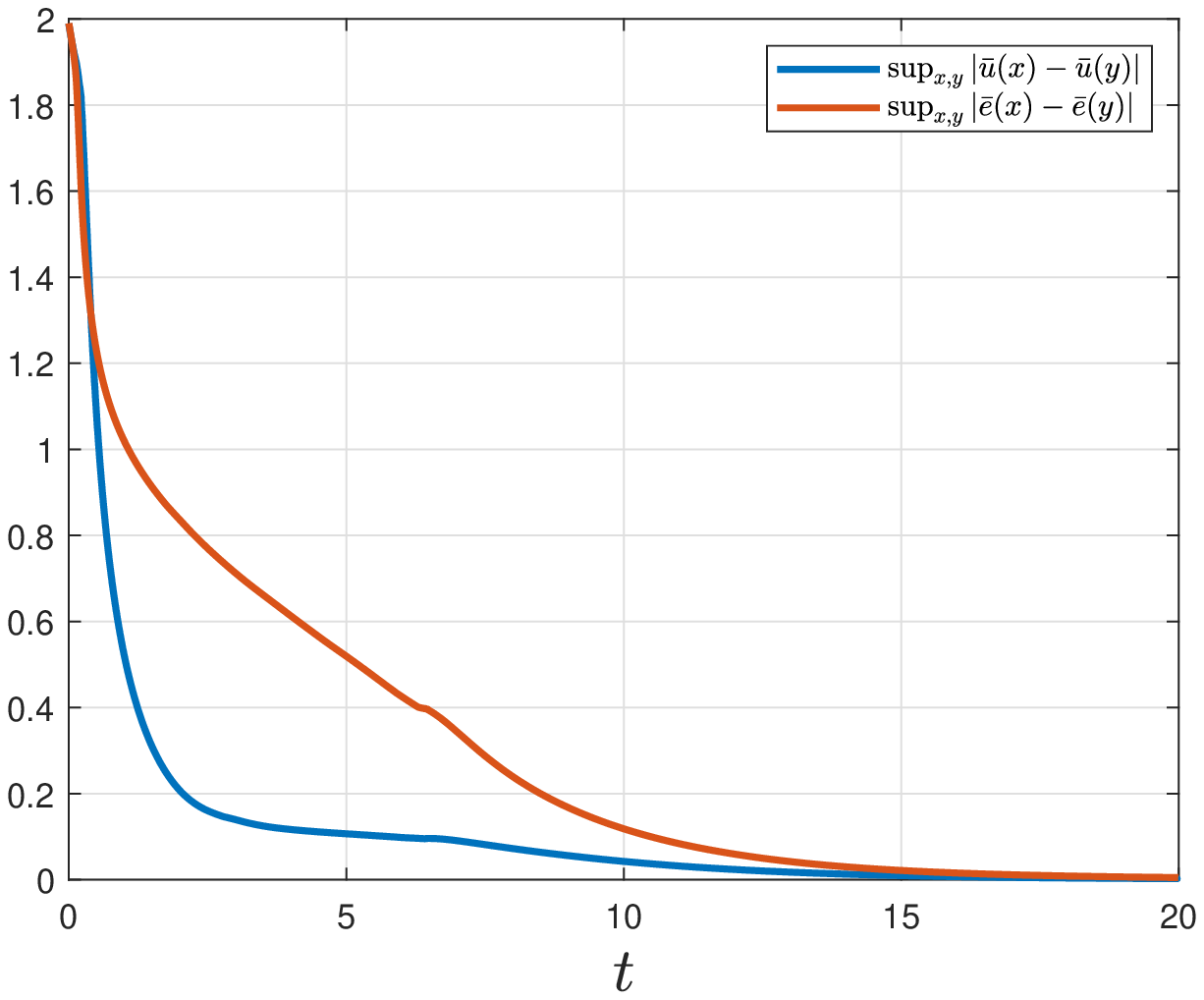}
			\caption{Dynamics of the fluctuation of $\bar{u}$ and $\bar{e}$.}
		\end{subfigure}
		\caption{Dynamics of the background fluid.}
		\label{fig:2}
	\end{figure}

\medskip

	$\bullet$ {\sc Numerical results}. First, in Figure \ref{fig:2} we present the results for the background fluid $(\bar{\rho},\bar{u},\bar{e})$. On the one hand, Figure \ref{fig:2}(A) shows the evolution of the profile of density $\bar{\rho}$. Initially, the density has been set so that it is accumulated around $x=0.5$. Since the initial velocity $\bar{u}^0(x)=0.5+\sin(2\pi x)$ is larger on $[0,0.5]$ than on $[0.5,1]$, that creates a compression for density. In addition, note that the averaged velocity takes the value
	\[\int_{\mathbb{T}}\bar{\rho}(0,x)\bar{u}(0,x)\,d x=\int_0^1 (0.5+\sin(2\pi x))\,d x=0.5>0.\]
	Hence, the density $\bar \rho$ moves slowly toward the positive direction. In Figure \ref{fig:2}(B), we present the asymptotic profile of $(\bar{\rho},\bar{u},\bar{e})$ at $t=20$. Since the background fluid satisfies the hydrodynamic  flocking equation, then the velocity $\bar u$ and the internal variable $\bar e$ are expected to become homogeneous over the spatial domain, for large enough time, see \cite{HKMRZ18}. Indeed, Figure \ref{fig:2}(B) shows exactly that the limit values of $\bar{u}$ and $\bar{e}$ are 0.5 and 2. Finally, Figure \ref{fig:2}(C) presents the relaxations of fluctuations of $\bar{u}$ and $\bar{e}$ along time $t$. Fluctuation of $\bar{u}$ and $\bar{e}$ are computed as the maximal oscillations of $\bar u$ and $\bar e$ over the spacial domain, i.e.,
	$$\sup_{x,y\in\mathbb{T}}|\bar{u}(t,x)-\bar{u}(t,y)|,\quad \sup_{x,y\in\mathbb{T}}|\bar{e}(t,x)-\bar{e}(t,y)|.$$
	As expected, fluctuations of $\bar{u}$ and $\bar{e}$ converge to $0$ and become negligible after $t\approx 15$.
	
	\begin{figure}[h!]
		\centering
		\begin{subfigure}[b]{0.45\textwidth}
			\includegraphics[width=\textwidth]{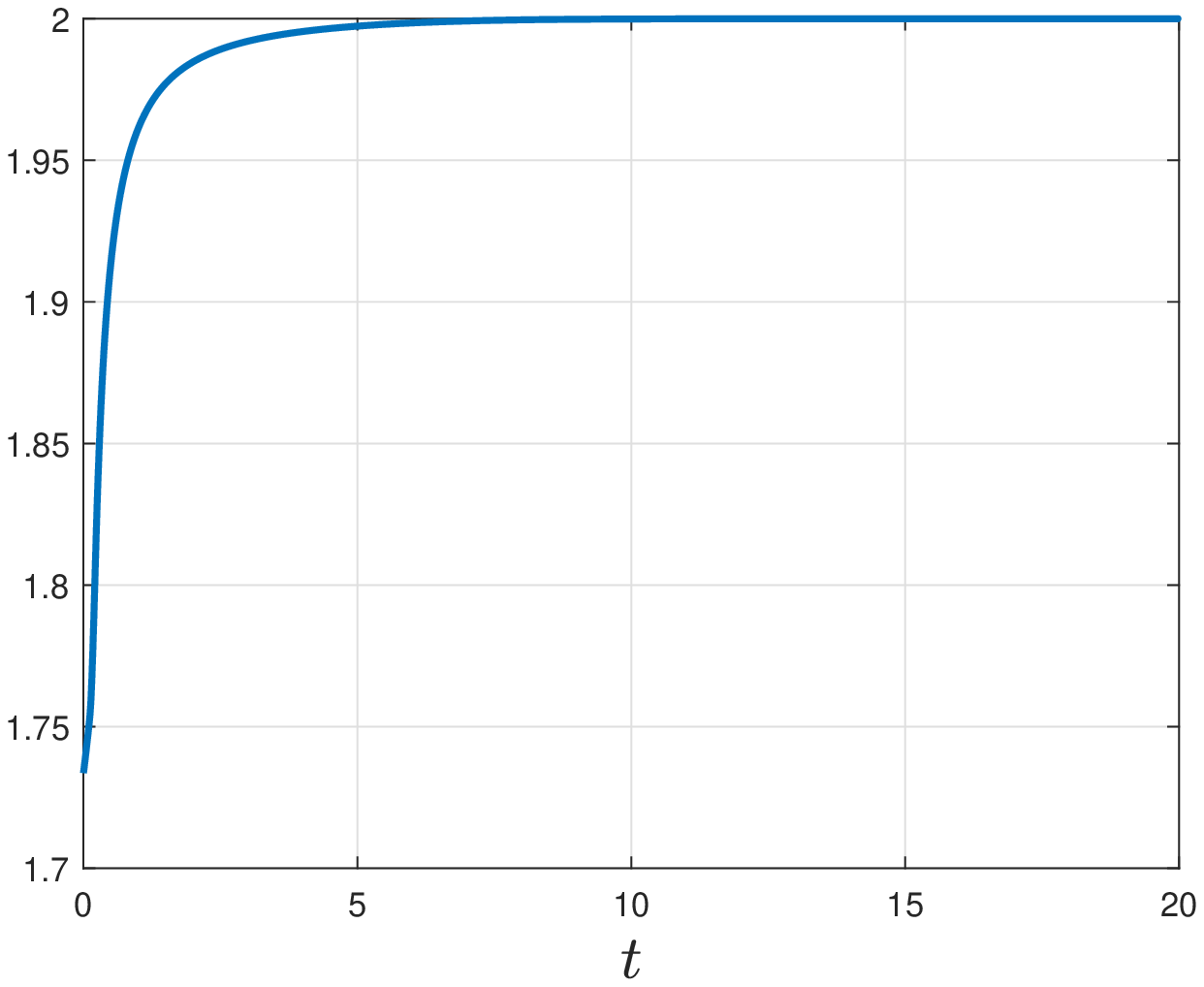}
			\caption{Dynamics of $\theta^\infty(t)$.}
		\end{subfigure}
		\begin{subfigure}[b]{0.45\textwidth}
			\includegraphics[width=\textwidth]{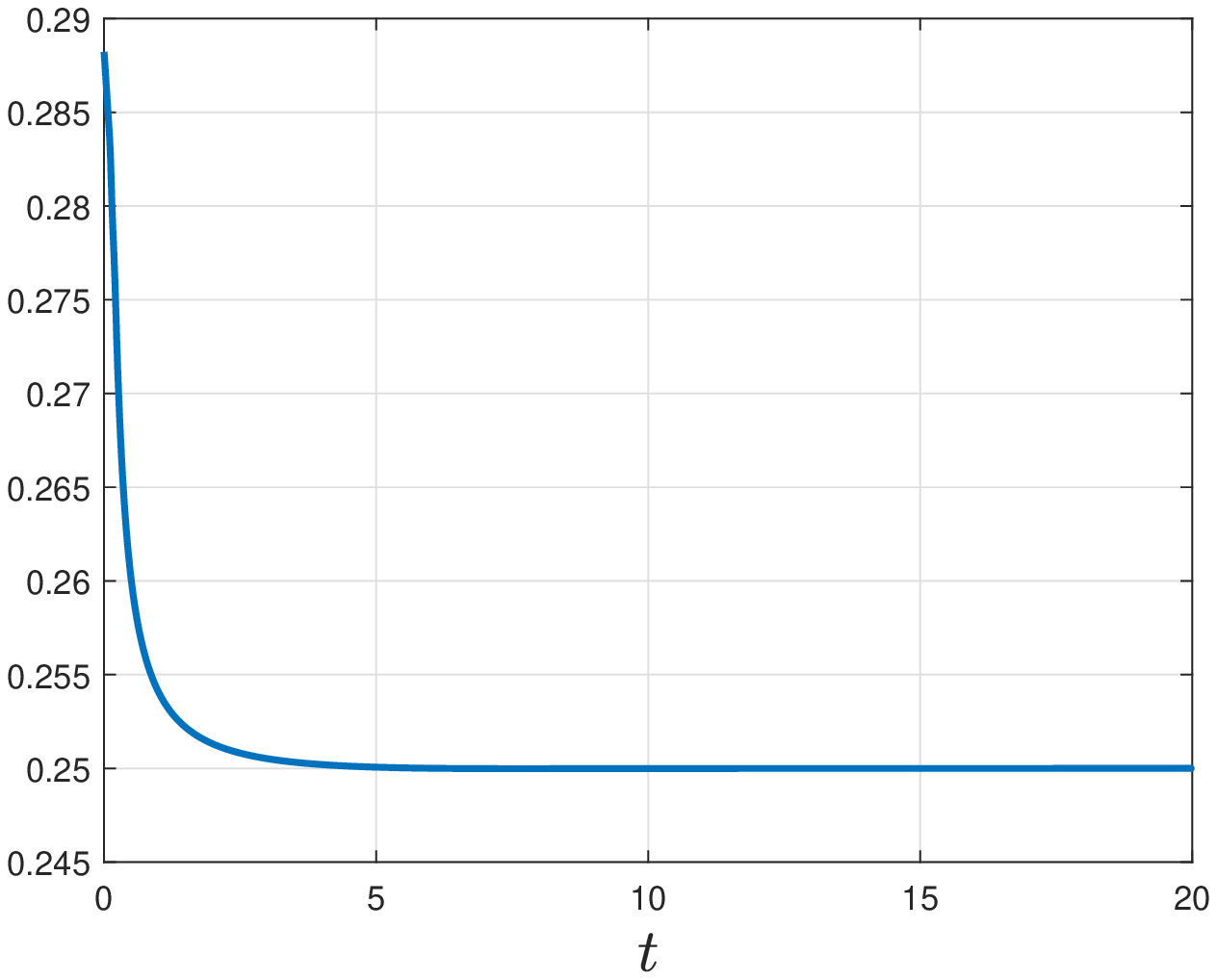}
			\caption{Dynamics of $\frac{u^\infty(t)}{\theta^\infty(t)}$.}
		\end{subfigure}
		\caption{Dynamics of two main quantities $\theta^\infty(t)$ and $\frac{u^\infty(t)}{\theta^\infty(t)}$.}
		\label{fig:3-1}
	\end{figure}
	
	Next, in Figure \ref{fig:3-1}(A) and (B) we observe the evolution of the average values $\theta^\infty(t)$ and  $\frac{u^\infty(t)}{\theta^\infty(t)}$ for the internal variable and the velocity of the background fluid. They appear in the right-hand side of \eqref{eq_u} and play a crucial role in the cross-interaction between the background fluid $(\bar\rho,\bar u,\bar e)$ and the limiting system $(\rho,u)$. Surprisingly, although $\bar{u}$ and $\bar{e}$ are not homogenized until $t\approx 15$, (recall Figure \ref{fig:2}(B),(C)), the quantities $\theta^\infty(t)$ and $\frac{u^\infty(t)}{\theta^\infty(t)}$ saturate earlier around $t\approx 5$. Moreover, the limit quantities are determined by the limit values of $\bar{u}$ and $\bar{e}$:
	\begin{align*}
	\lim_{t\to\infty} \theta^\infty(t)&=\lim_{t\to\infty} \left(\int_{\mathbb{T}}\frac{\bar{\rho}(t,x)}{\bar{e}(t,x)}\,dx\right)^{-1}=2,\\
	\lim_{t\to\infty} \frac{u^\infty(t)}{\theta^\infty(t)}&=\lim_{t\to\infty}\int_{\mathbb{T}}\frac{\bar{\rho}(t,x)\bar{u}(t,x)}{\bar{e}(t,x)}\,dx = 0.25.
	\end{align*}

	\begin{figure}[h!]
		\centering
		\begin{subfigure}[b]{1\textwidth}
			\includegraphics[width=\textwidth]{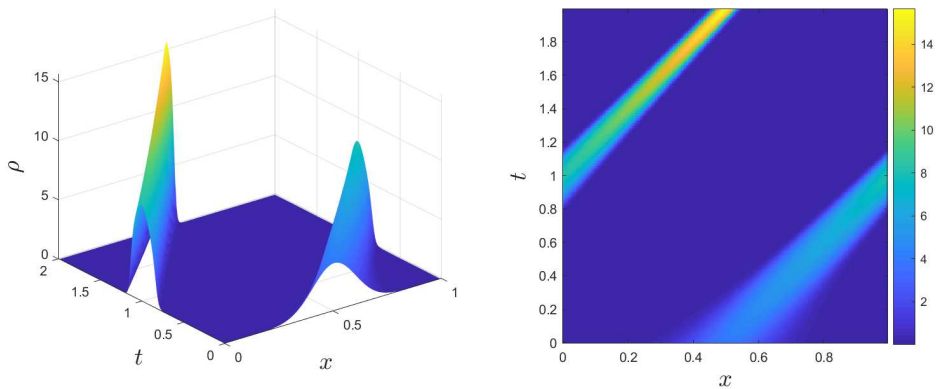}
			\caption{Profile of the density $\rho$.}
		\end{subfigure}
		
		\begin{subfigure}[b]{1\textwidth}
			\includegraphics[width=\textwidth]{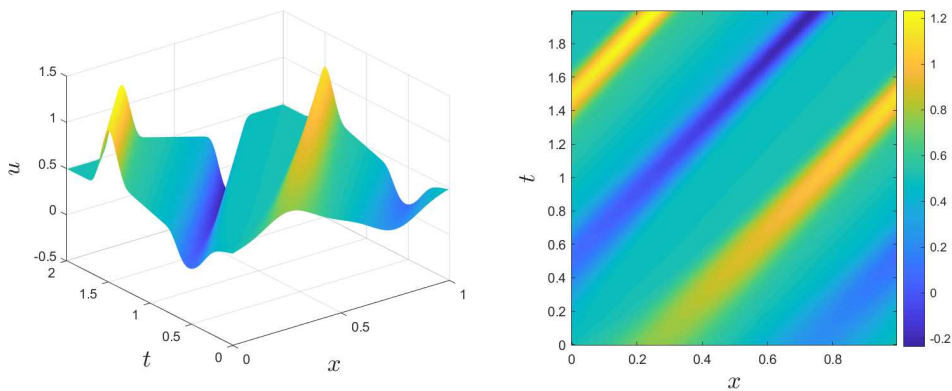}
			\caption{Profile of the velocity $u$.}
		\end{subfigure}
		
		\begin{subfigure}[b]{0.45\textwidth}
			\includegraphics[width=\textwidth]{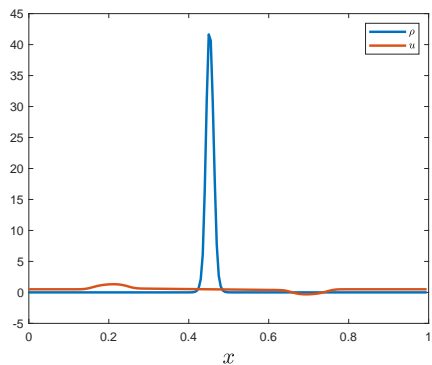}
			\caption{$(\rho,u)$ at $t=20$.}
		\end{subfigure}
		\begin{subfigure}[b]{0.45\textwidth}
			\includegraphics[width=\textwidth]{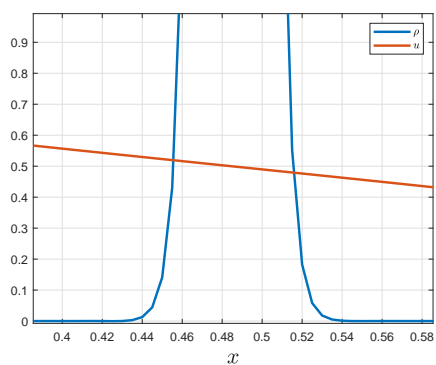}
			\caption{$(\rho,u)$ at $t=20$ (magnified).}
		\end{subfigure}
		\caption{Profiles of $\rho$ and $u$ for strong relaxation regime.}
		\label{fig:4}
	\end{figure}

	\begin{figure}[h!]
		\includegraphics[width=0.7\textwidth]{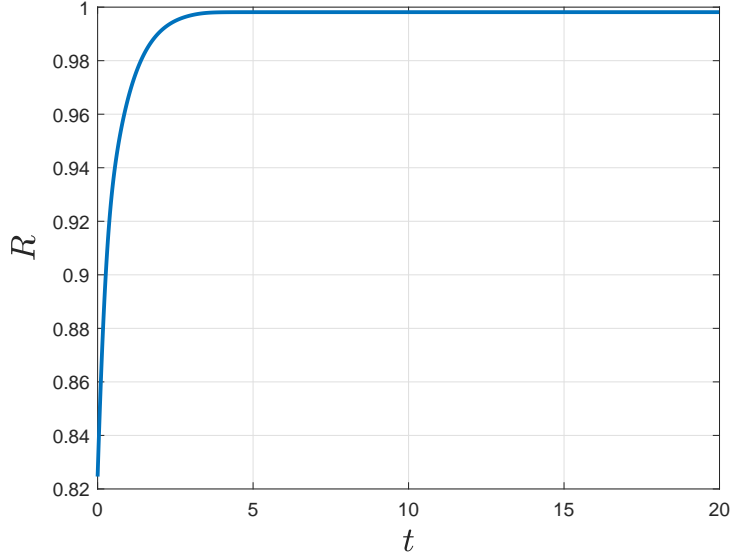}
		\caption{Dynamics of the order parameter in the strong relaxation regime.}
		\label{fig:order}
	\end{figure}
	
	In Figure \ref{fig:4} we present the results for the limiting system $(\rho,u)$. On the one hand, Figure \ref{fig:4}(A) and (B) shows the dynamics of $(\rho,u)$ along time $t$. We observe that the density $\rho$ starts to move toward the positive direction, as an effect of the background fluid. On the other hand, Figure \ref{fig:4}(C) shows the asymptotic profile of the density $\rho$ and velocity $u$ at time $t=20$. We observe that the density $\rho$ concentrates, due to the effect of the aggregation potential $W$. To quantify the degree of aggregation on the periodic domain, we introduce so-called the {\it order parameter} $R$, defined as
	\[R(t):=\left|\int_{\mathbb{T}} (\cos 2\pi x,\sin 2\pi x) \rho(t,x)\,d x\right|.\]
	Note that the range of $R$ is $[0,1]$ and $R=1$ when the density $\rho$ is concentrated on a single point. Therefore, the larger $R$, the more concentrated $\rho$ is. We present the dynamics of the order parameter in Figure \ref{fig:order}. The simulation result implies that the order parameter increases, which means that the density $\rho$ aggregates asymptotically. Finally, in Figure \ref{fig:4}(D), we magnify the same figure \ref{fig:4}(C) around the point at which $\rho$ concentrates. We observe that the velocity $u$ approximately takes the value $0.5$ at that point, which is the same value as the limit value of $u^\infty$.

	\subsection{Weak relaxation regime}
	
	In this part, we provide some numerical simulations of the limiting macroscopic system \eqref{A-7} under the weak relaxation regime. In periodic variables, such a system is determined by the following coupled equations for $(\rho,u,\theta(t))$
	\begin{align}\label{eq_rho_u_weak}
	\begin{aligned}
	&\partial_t\rho +\nabla\cdot (\rho u) = 0, & & (t,x)\in \mathbb{R}_+\times \mathbb{T},\\
	&u -\frac{\theta(t)}{\theta^\infty(t)}u^\infty(t)+\theta(t)(\nabla W*\rho)= \phi*(\rho u)-(\phi*\rho)u, & & (t,x)\in \mathbb{R}_+\times \mathbb{T},
	\end{aligned}
	\end{align}
	where $\theta(t)$ is defined by the solution of the relaxation ODE:
	\[\dot{\theta}(t) = \frac{1}{\theta(t)}-\frac{1}{\theta^\infty(t)},\quad t\in\mathbb{R}_+.\]
	We recover the same initial data $(\bar\rho^0,\bar u^0,\bar e^0)$ in \eqref{E-numerics-initial-data-macro-species} for the background fluid \eqref{eq_background} and $\rho^0$ in \eqref{E-numerics-initial-data-limit} for the limiting system \eqref{eq_rho_u_weak} like in Subsection \ref{subsec:5.1}. In addition, we set the choice $\theta(0)=5$ of initial internal variable. We skip the discussion about numerical methods, as we use similar schemes. In addition, since the dynamics of the background fluid is identical to the previous strong relaxation regime, we only focus on the dynamics of $(\rho,u)$ and we compare the new results with the strong relaxation regime in the previous Subsection \ref{subsec:5.1}. 
	
	\begin{figure}[h!]
		\centering
		\begin{subfigure}[b]{1\textwidth}
			\includegraphics[width=\textwidth]{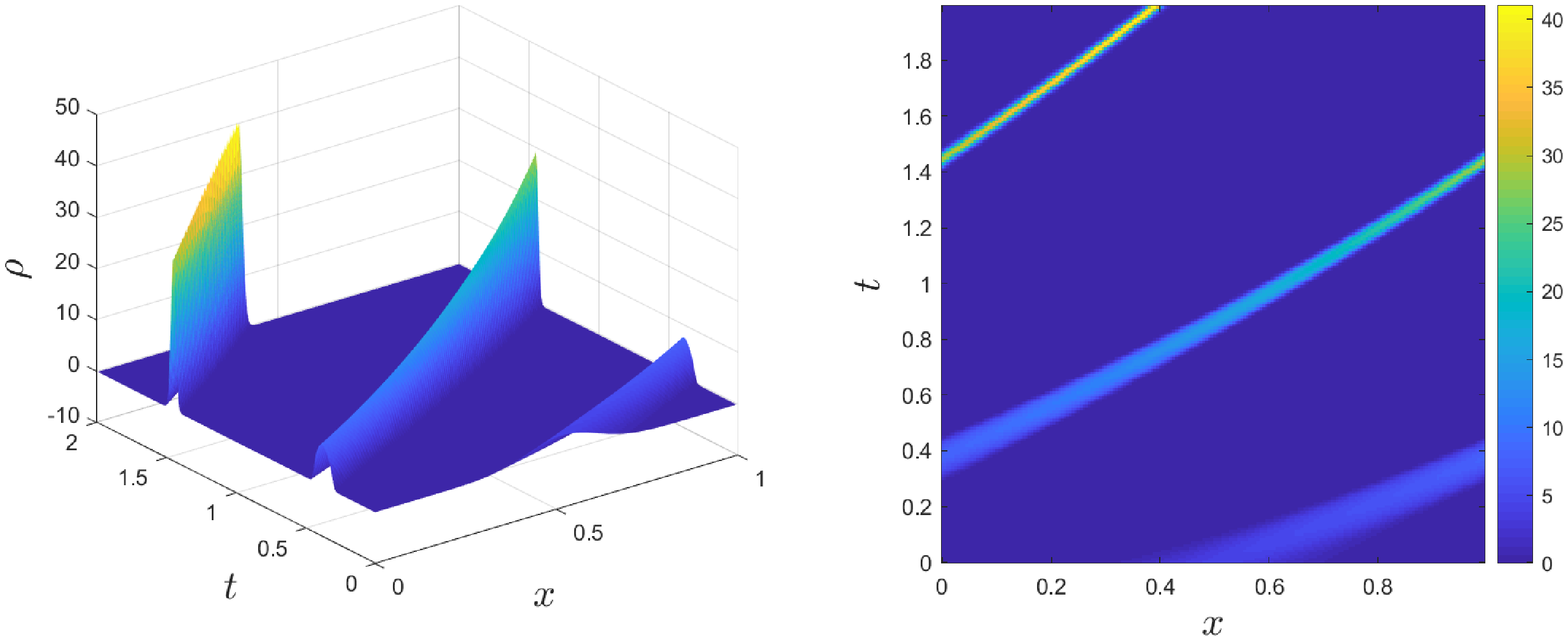}
			\caption{Profile of the density $\rho$.}
		\end{subfigure}
		
		\begin{subfigure}[b]{1\textwidth}
			\includegraphics[width=\textwidth]{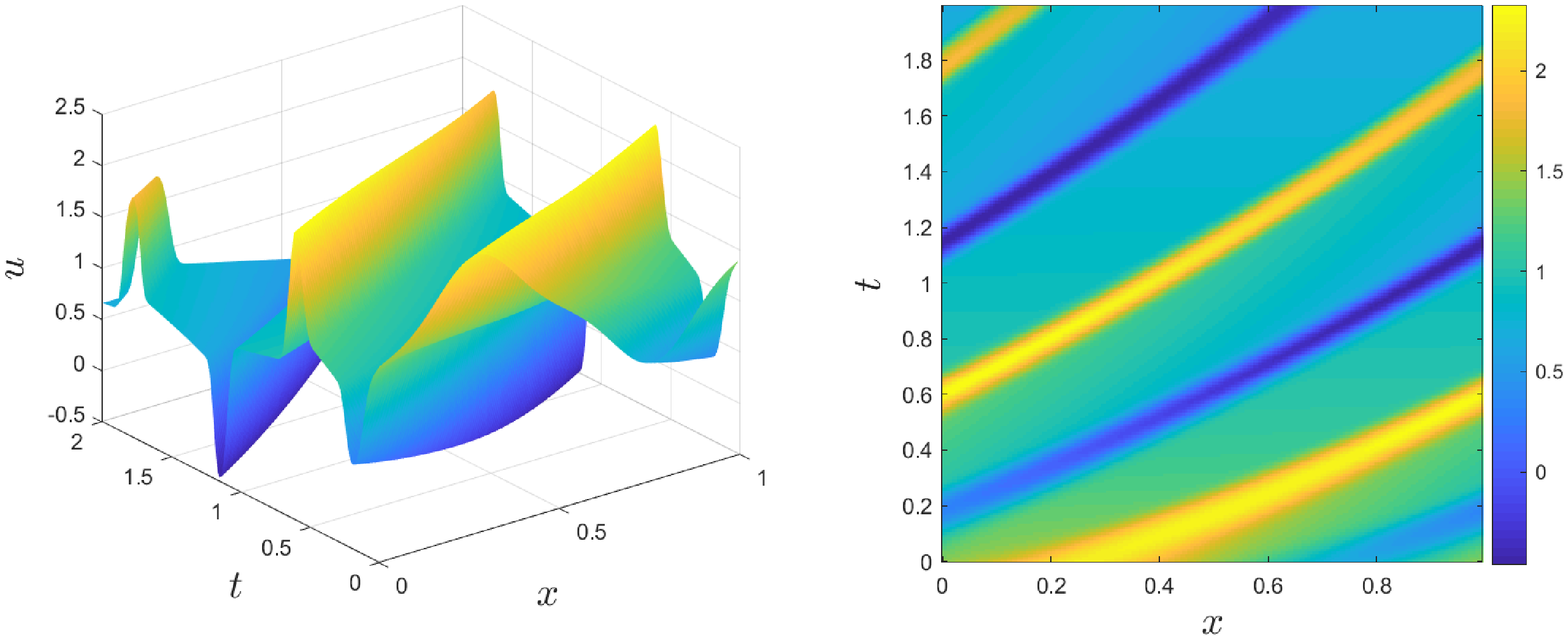}
			\caption{Profile of the velocity $u$.}
		\end{subfigure}
		
		\begin{subfigure}[b]{0.45\textwidth}
			\includegraphics[width=\textwidth]{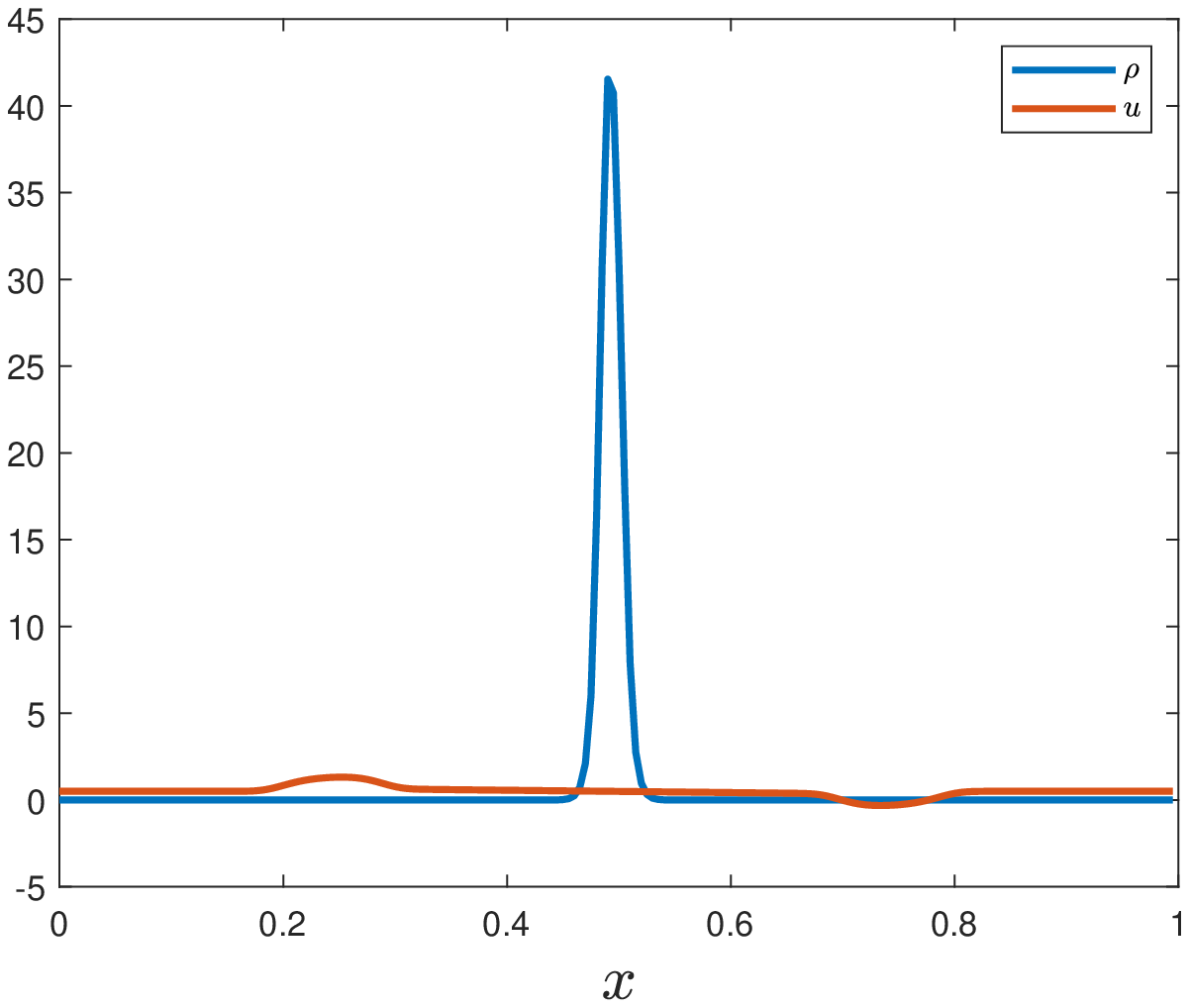}
			\caption{$(\rho,u)$ at $t=20$.}
		\end{subfigure}
		\begin{subfigure}[b]{0.45\textwidth}
			\includegraphics[width=\textwidth]{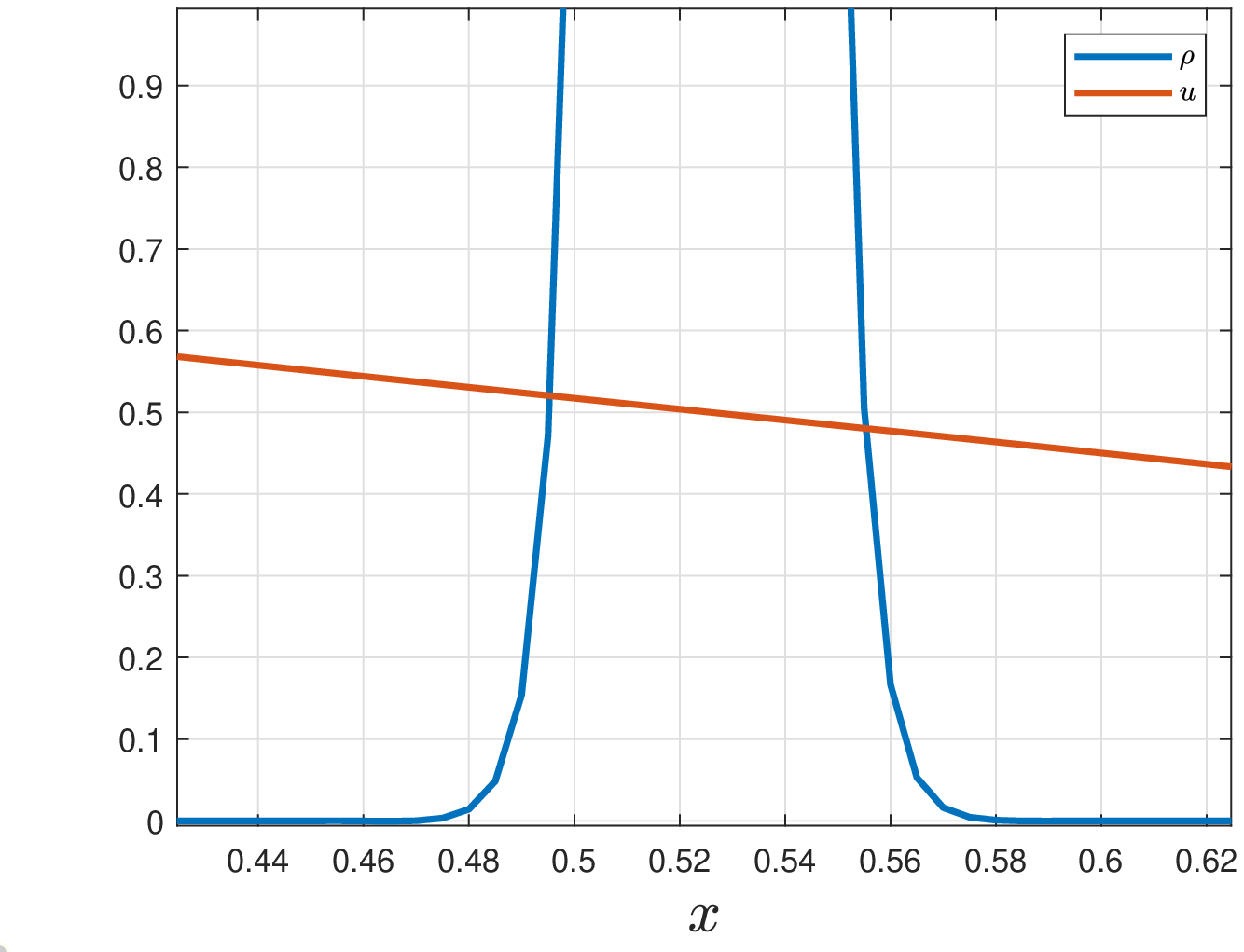}
			\caption{$(\rho,u)$ at $t=20$ (magnified).}
		\end{subfigure}
		
		\caption{Profiles of the density $\rho$ and $u$ for weak relaxation regime.}
		\label{fig:5}
	\end{figure}

	\begin{figure}[h!]
		\includegraphics[width=0.7\textwidth]{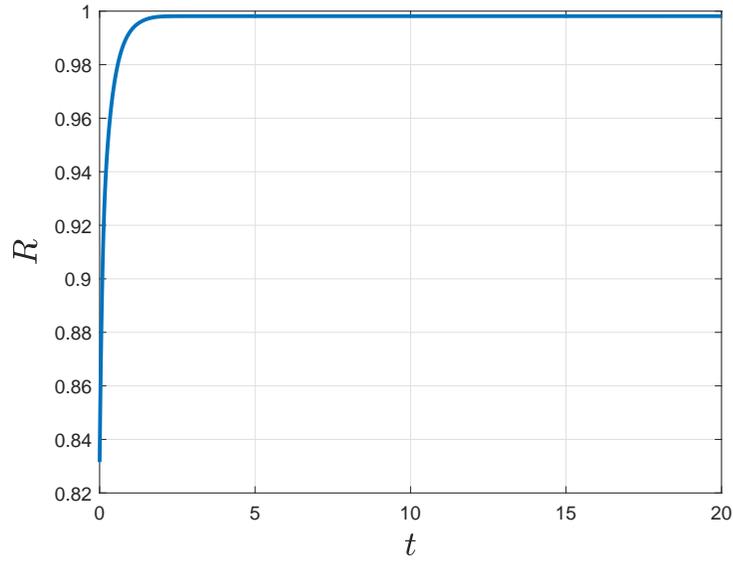}
		\caption{Dynamics of the order parameter in the weak relaxation regime.}
		\label{fig:order_weak}
	\end{figure}
	
	\begin{figure}[h!]
		\includegraphics[width=0.7\textwidth]{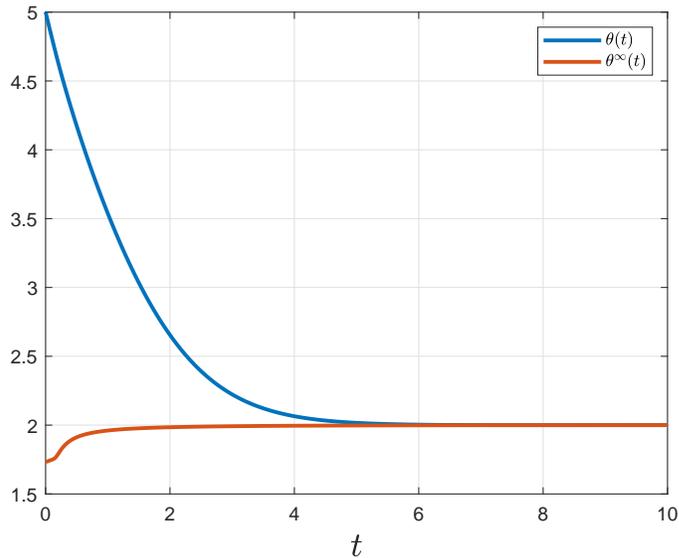}	
		\caption{Dynamics of $\theta(t)$ and $\theta^\infty(t)$.}
		\label{fig:6}
	\end{figure}
	
	In Figure \ref{fig:5}, we present the profiles of the solution $(\rho,u)$ of \eqref{eq_rho_u_weak}. We observe that the velocity $u$ is much larger than in the strong relaxation regime, and this is due to the higher initial value of the internal variable $\theta(t)$. Indeed, in Figure \ref{fig:6} we compare the dynamics of $\theta(t)$ and $\theta^\infty(t)$, and the result shows that the internal variable $\theta(t)$ is initially higher than $\theta^\infty(t)$ (due to the choice $\theta(0)=5$), and it relaxes toward $\theta^\infty(t)$ for larger time. The fact that higher internal variable $\theta(t)$ implies higher velocity $u$ can also be expected from the velocity equation $\eqref{eq_rho_u_weak}_2$.
	Namely, when the internal variable $\theta(t)$ is larger than $\theta^\infty(t)$, then the forcing term in the left-hand side of the implicit equation $\eqref{eq_rho_u_weak}_2$ is also larger, which leads to a larger value of the solution $u$. Consequently, the dynamics of density $\rho$ is also faster than in the strong relaxation regime, as shown in Figure \ref{fig:5}(A) and (B). Of course, when $\theta(0)$ is smaller than $\theta^\infty(0)$, then the situation is reversed, and the dynamics becomes slower than in the strong relaxation regime. 
	
	Moreover, we present the asymptotic profile of $(\rho,u)$ at time $t=20$ in Figure \ref{fig:5}(C). We point out that, after sufficiently large time, say $t\approx 20$, the solution of \eqref{eq_rho_u_weak} in the weak relaxation regime has almost the same shape as the solution of \eqref{eq_rho_u_weak} in the strong relaxation regime (see Figure \ref{fig:4}(C)), up to a position shift arising from the initial difference of velocities.  Therefore, we conclude that although the initial speed of the dynamics depends upon the initial value of the internal variable, the solution under the weak relaxation regime eventually converges to the solution of the strong relaxation regime. We also quantify aggregation by providing the dynamics of the order parameter $R$ in Figure \ref{fig:order_weak}. Compared to Figure \ref{fig:order}, we observe that the order parameter saturates faster than in the case of the strong relaxation regime, which also supports the fact that the weak relaxation regime leads to faster aggregation when the initial value $\theta(0)$ of the internal variable is higher than the background value $\theta^\infty(0)$. Finally, in Figure \ref{fig:5}(D) we present again the magnified Figure \ref{fig:5}(C) and we find that the asymptotic value of $u$ over the support of $\rho$ is also near $u^\infty=0.5$.
	
\appendix

\section{An extension of Riesz representation theorem}\label{Appendix-LB}
In this appendix, we recall some notation and basic concepts that are used systematically regarding Banach-valued $L^p$-type spaces and their dual representability. Specifically, we shall first the usual \textit{Lebesgue--Bochner spaces} $L^p(0,T;X)$ for any Banach space $X$ and we recall the corresponding representability of their topological dual spaces $L^p(0,T;X)^*$. This is an intriguing problem usually characterized by the \textit{Radon--Nikodym property (RNP)} of the topological dual $X^*$, that is known to fulfill if $X$ is reflexive or $X^*$ is separable. For $X^*$ verifying the RNP, we recall the classical \textit{Riesz representation theorem}, stating that 
$$L^p(0,T;X)^*\equiv L^{p'}(0,T;X^*).$$
Unfortunately, if $X^*$ fails RNP, the preceding representation is not valid. For those cases, we shall recall the \textit{Diculeanu-Foias theorem} stating that
$$L^p(0,T;X)^*\equiv L^{p'}_w(0,T;X^*),$$
where $L^{p'}_w(0,T;X)$ is the \textit{weak-* Bochner-Lebesgue spaces}.

\begin{definition}[Lebesgue--Bochner spaces \cite{DU77}]\label{Appendix-LB-D-LB}
Consider a Banach space $X$. We define
$$
L^p(0,T;X):=\left\{f:[0,T]\longrightarrow X:\,f\ \mbox{is measurable and}\ \Vert f\Vert_X\in L^p(0,T)\right\},
$$
for any $1\leq p\leq\infty$. So defined, $L^p(0,T;X)$ becomes a Banach with norm
$$\Vert f\Vert_{L^p(0,T;X)}=\Vert \Vert f\Vert_X\Vert_{L^p(0,T)}.$$
As usual, we shall identify functions that agree almost everywhere in $[0,T]$ by taking the quotient by the relation
$$f\sim g\Longleftrightarrow f(t)=g(t)\ \mbox{ for almost every }t\in [0,T].$$
\end{definition}

See \cite{DU77} for further insight about measurability and weak measurability (c.f., Pettis' measurability theorem) of Banach-valued functions.

\begin{theorem}[Riesz representation \cite{BT38,DU77}]\label{Appendix-LB-Riesz-representation-RNP}
Let $X$ be a Banach space, consider any exponent $1\leq p<\infty$ and define the mapping
$$\begin{array}{cccc}
\Phi_p: & L^{p'}(0,T;X^*) & \longrightarrow & L^p(0,T;X)^*,\\
 & f & \longmapsto & \Phi_p[f],
\end{array}\ \hspace{0.5cm}\ \left<\Phi_p[f],g\right>:=\int_{[0,T]}\left<f(t),g(t)\right>\,dt,$$
for any $g\in L^p(0,T;X)$. Then $\Phi_p$ is a linear isometry. In addition, $\Phi_p$ is surjective if, and only if, $X^*$ verifies RNP with respect to Lebesgue measure in $[0,T]$.
\end{theorem}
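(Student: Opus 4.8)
The plan is to prove the two assertions separately: that $\Phi_p$ is always a linear isometry, and that it is onto precisely when $X^*$ has the Radon--Nikodym property with respect to Lebesgue measure on $[0,T]$. For the isometry, linearity is immediate, and the bound $\Vert\Phi_p[f]\Vert\le\Vert f\Vert_{L^{p'}(0,T;X^*)}$ follows from the pointwise inequality $\vert\langle f(t),g(t)\rangle\vert\le\Vert f(t)\Vert_{X^*}\Vert g(t)\Vert_X$ and Hölder's inequality in the scalar variable $t$. For the reverse inequality I would build near-optimal test functions: since $f$ is strongly measurable it is essentially separably valued by Pettis' theorem, so for each $k\in\mathbb{N}$ one covers its essential range by countably many balls $B(y_j,1/k)$, picks $x_j\in X$ with $\Vert x_j\Vert\le 1$ and $\langle y_j,x_j\rangle\ge\Vert y_j\Vert-1/k$, and sets the measurable field $x_k(t):=x_{j(t)}$, with $j(t)$ the least index such that $f(t)\in B(y_{j(t)},1/k)$. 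Then $\Vert x_k(t)\Vert\le 1$ and $\liminf_k\langle f(t),x_k(t)\rangle\ge\Vert f(t)\Vert_{X^*}$ a.e. Taking $g_k:=\Vert f\Vert_{X^*}^{p'-1}x_k$ for $1<p<\infty$ (and a truncated variant for $p=1$), the identity $(p'-1)p=p'$ gives $\Vert g_k\Vert_{L^p}=\Vert f\Vert_{L^{p'}}^{p'/p}$ while dominated convergence gives $\langle\Phi_p[f],g_k\rangle\to\Vert f\Vert_{L^{p'}}^{p'}$, whence $\Vert\Phi_p[f]\Vert\ge\Vert f\Vert_{L^{p'}}$.

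For the direction \textbf{RNP $\Rightarrow$ surjectivity}, given $\Lambda\in L^p(0,T;X)^*$, each Borel $E\subseteq[0,T]$ yields a bounded functional $x\mapsto\Lambda(\chi_E\,x)$ on $X$ of norm at most $\Vert\Lambda\Vert\,\vert E\vert^{1/p}$, hence an element $\mu(E)\in X^*$. Finite additivity is clear, and the estimate $\Vert\mu(E)\Vert_{X^*}\le\Vert\Lambda\Vert\,\vert E\vert^{1/p}$ gives countable additivity and absolute continuity with respect to Lebesgue measure; symmetrizing with near-norming elements as above shows $\vert\mu\vert(E)\le\Vert\Lambda\Vert\,\vert E\vert^{1/p}$, so $\mu$ has bounded variation. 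The RNP of $X^*$ then furnishes $f\in L^1(0,T;X^*)$ with $\mu(E)=\int_E f\,dt$. For a simple function $g=\sum_i\chi_{E_i}x_i$ one computes $\Lambda(g)=\sum_i\langle\mu(E_i),x_i\rangle=\int_0^T\langle f(t),g(t)\rangle\,dt=\langle\Phi_p[f],g\rangle$; since simple functions are dense in $L^p(0,T;X)$ (here $p<\infty$) and testing $\Lambda$ against such $g$ forces $\Vert f\Vert_{L^{p'}}\le\Vert\Lambda\Vert$ by the same reverse-Hölder computation, a standard truncation argument, we conclude $\Lambda=\Phi_p[f]$.

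For \textbf{surjectivity $\Rightarrow$ RNP}, let $\mu\colon\mathcal{B}([0,T])\to X^*$ be countably additive, of bounded variation, with $\mu\ll$ Lebesgue; scalar Radon--Nikodym gives $\vert\mu\vert=\phi\,dt$ with $\phi\in L^1(0,T)$, $\phi\ge 0$. On $E_n:=\{\phi\le n\}$ the restricted measure $\mu_n:=\mu(\,\cdot\cap E_n)$ satisfies $\vert\mu_n\vert\le n\,dt$, so $g\mapsto\sum_i\langle\mu_n(A_i),x_i\rangle$ extends to a bounded functional on $L^p(0,T;X)$; by surjectivity it equals $\Phi_p[f_n]$ for some $f_n\in L^{p'}(0,T;X^*)$, whence $\mu_n(A)=\int_A f_n\,dt$, with $\Vert f_n\Vert_{X^*}\le\phi$ a.e. on $E_n$ and $f_n=f_m$ a.e. on $E_n\cap E_m$. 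These patch to a single $f$ with $\Vert f\Vert_{X^*}\le\phi\in L^1$, hence $f\in L^1(0,T;X^*)$, and dominated convergence together with $\vert\mu\vert(\{\phi=\infty\})=0$ yields $\mu(A)=\lim_n\mu_n(A)=\int_A f\,dt$, proving $X^*$ has the RNP.

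The main obstacle I expect is bookkeeping rather than conceptual: in both directions one must move carefully between the vector measure $\mu$, its variation $\vert\mu\vert$, and the scalar density $\phi$, and in particular verify that the candidate density lands in $L^{p'}$ -- in the ``$\Rightarrow$'' direction only after the truncation onto $\{\phi\le n\}$, since $\phi$ itself need not be $p'$-integrable. The other delicate point is the measurable-selection argument producing the near-norming fields $x_k(t)$ used repeatedly; this is routine once essential separability of strongly measurable maps is invoked, but it must be set up cleanly to cover both $p>1$ and $p=1$.
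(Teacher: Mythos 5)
The paper does not prove this theorem; it is recalled as a classical result with citations to Bochner--Taylor and Diestel--Uhl, and your argument is precisely the standard proof from those references (near-norming measurable selections via Pettis measurability for the isometry, the vector measure $E\mapsto \Lambda(\chi_E\,x)$ plus the Radon--Nikodym property for surjectivity, and truncation on $\{\phi\le n\}$ for the converse). Your proposal is correct, including the delicate points you flag (the $p=1$ variant and the verification that the density lies in $L^{p'}$), so there is nothing to add.
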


The typical criteria to test RNP are due to Philips, Dunford and Pettis and we summarize them in the following result, see \cite[Corollary III.2.13 and Theorem III.3.1]{DU77}.

\begin{proposition}
Let $X$ be a Banach space:
\begin{enumerate}
\item (Philips) If $X$ is reflexive, then $X$ has the RNP.
\item (Dunford-Pettis) If $X=Y^*$ is a separable dual, then $X$ has the RNP.
\end{enumerate}
\end{proposition}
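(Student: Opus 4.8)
I would start from the vector-measure formulation of the property at stake: $X$ has the Radon--Nikodym property (RNP) precisely when, for every finite measure space $(\Omega,\Sigma,\mu)$ and every $\mu$-continuous $X$-valued measure $F:\Sigma\to X$ of bounded variation, there exists $f\in L^1(\Omega;X)$ with $F(E)=\int_E f\,d\mu$ for all $E\in\Sigma$; the case of Lebesgue measure on $[0,T]$ is the one invoked in Theorem~\ref{Appendix-LB-Riesz-representation-RNP}. The plan is to prove part~(2) by explicitly reconstructing the density from its scalar components, and then to deduce part~(1) from part~(2) via a separable reduction.

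\emph{Part (2).} Assume $X=Y^*$ is a separable dual, fix a finite measure space $(\Omega,\Sigma,\mu)$ and a $\mu$-continuous $F:\Sigma\to Y^*$ of bounded variation, and set $h:=\tfrac{d|F|}{d\mu}\in L^1(\mu)$ by the scalar Radon--Nikodym theorem. For each $y\in Y$ the scalar measure $E\mapsto\langle F(E),y\rangle$ is $\mu$-continuous and dominated by $\|y\|\,|F|$, so scalar Radon--Nikodym again produces $g_y\in L^1(\mu)$ with $\langle F(E),y\rangle=\int_E g_y\,d\mu$ and $|g_y|\le\|y\|\,h$ a.e. Picking a countable dense $\mathbb{Q}$-linear subspace $\{y_n\}_{n\ge1}$ of $Y$, I would discard a null set so that on the remaining set $\Omega_0$ all the countably many linearity relations $g_{\alpha y_m+\beta y_n}=\alpha g_{y_m}+\beta g_{y_n}$ (rational $\alpha,\beta$) and all bounds $|g_{y_n}|\le\|y_n\|h$ hold pointwise. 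For $\omega\in\Omega_0$ the map $y_n\mapsto g_{y_n}(\omega)$ is then linear and $h(\omega)$-Lipschitz on a dense subset of $Y$, hence extends uniquely to $f(\omega)\in Y^*$ with $\|f(\omega)\|\le h(\omega)$. By construction $\langle f(\cdot),y\rangle$ is measurable for every $y\in Y$; since $X=Y^*$ is separable, $f$ is essentially separably valued, so Pettis' measurability theorem upgrades this to Bochner measurability, and $\|f\|\le h\in L^1(\mu)$ gives $f\in L^1(\Omega;X)$. Finally $F(E)=\int_E f\,d\mu$ holds after pairing with each $y_n$, hence with all of $Y$ by density, i.e.\ as an equality in $Y^*$.

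\emph{Part (1).} Here I would first isolate two classical facts: (i) the RNP is separably determined — it suffices to verify the defining property against vector measures whose range lies in a separable closed subspace, so $X$ has the RNP as soon as every separable closed subspace of $X$ does (this itself rests on a reduction lemma, see e.g.\ \cite{DU77}); and (ii) every separable reflexive space $Y$ has separable dual, since then $Y^{**}=Y$ is separable and "$Z^*$ separable $\Rightarrow$ $Z$ separable'' applied to $Z=Y^*$ yields separability of $Y^*$. Granting these, if $X$ is reflexive then every separable closed subspace $X_0\subseteq X$ is reflexive, hence $X_0=(X_0^*)^*$ with $X_0^*$ separable, so $X_0$ has the RNP by part~(2); invoking (i), $X$ has the RNP. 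As an alternative one can argue directly: along a refining net of finite measurable partitions the averages $F(E)/\mu(E)$ form a uniformly bounded $X$-valued martingale, and reflexivity makes bounded sets weakly compact — precisely the input needed for such martingales to converge, a.e.\ and in $L^1$, to the sought density.

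\emph{Main obstacle.} The step requiring most care is the measurability upgrade in part~(2): the construction only delivers $\langle f(\cdot),y\rangle$ measurable for $y$ in the \emph{predual} $Y$, i.e.\ weak-$*$ measurability of $f$, whereas Pettis' theorem is phrased for weak measurability. Separability of $X=Y^*$ is exactly what bridges the gap, since on the separable norm-closed span of the essential range of $f$ the weak-$*$ and weak Borel $\sigma$-algebras coincide. In part~(1) the analogous delicate point is the reduction lemma behind (i), which is where finiteness of $\mu$ and bounded variation of $F$ are genuinely used to push the relevant vector measure into a separable subspace.
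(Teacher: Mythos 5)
The paper does not actually prove this proposition: it is stated as a recall of two classical theorems and is justified only by the citation to Diestel--Uhl \cite{DU77}, so there is no in-paper argument to compare against. Your proposal reconstructs the standard textbook proofs from that reference, and it is essentially correct. In part (2) you correctly identify and resolve the one genuinely delicate point: the construction only yields weak-$*$ measurability of $f$, and it is the separability of $X=Y^*$ (forcing the weak-$*$ and norm Borel $\sigma$-algebras to coincide, since the norm is a countable supremum of weak-$*$ continuous functionals and every norm-open set is a countable union of closed balls) that lets Pettis' theorem apply; the domination $\vert g_y\vert\leq \Vert y\Vert h$ with $h=d\vert F\vert/d\mu$ then gives both the pointwise extension to $f(\omega)\in Y^*$ and the integrability $\Vert f\Vert\leq h\in L^1(\mu)$.

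The only soft spot is in part (1): the separable determination of the RNP (your fact (i)) is itself a nontrivial classical lemma that you cite rather than prove. That is acceptable at the level of this statement, but note that for the sole case the paper actually uses --- Lebesgue measure on $[0,T]$ --- the reduction is immediate: the measure algebra of $([0,T],\lambda)$ is a separable metric space under $d(A,B)=\lambda(A\triangle B)$, and $E\mapsto F(E)$ is uniformly continuous on it (by $\mu$-continuity together with bounded variation), so the range of $F$, and hence its closed linear span, is automatically separable; one then applies part (2) to that separable reflexive subspace, whose dual is separable by your fact (ii). Your alternative martingale sketch for part (1) is also a legitimate classical route, though as written it is only a sketch.
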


In this paper, we are interested in applying the preceding duality result to several situations in order to endow the corresponding Lebesgue-Bochner space with a weak-* topology so that weak-* compactness can be derived from the Alaouglu-Bourbaki theorem. We illustrate the two typical examples:

\begin{enumerate}
\item If $X=L^q(\mathbb{R}^d)$ with $1<q<\infty$, then reflexivity guarantees
$$L^{p'}(0,T;L^{q'}(\mathbb{R}^d))\equiv L^p(0,T;L^q(\mathbb{R}^d))^*,$$
for any $1\leq p<\infty$.
\item If $X=C_0(\mathbb{R}^d)$, then $X^*=\mathcal{M}(\mathbb{R}^d)$. Of course, reflexivity is not true, so that the first criterion by Philips fails. Also, the map $x\in \mathbb{R}^d \longmapsto \delta_x\in\mathcal{M}(\mathbb{R}^d)$ embeds $\mathbb{R}^d$ into $\mathcal{M}(\mathbb{R}^d)$ as an uncountable and discrete subset. Therefore, $\mathcal{M}(\mathbb{R}^d)$ is not separable neither, so that Dunford-Pettis criterion fails too. Indeed, $\mathcal{M}(\mathbb{R}^d)$ fails RNP because $L^1(\mathbb{R}^d)$ is a subspace failing RNP, see \cite[Example 2.1.2]{B83}.
\end{enumerate}

For those cases, representation is achieved in terms of \textit{weak-* Lebesgue-Bochner spaces}.

\begin{definition}[$w^*$ Lebesgue--Boschner spaces \cite{F99,II62,II69,PK09}]\label{Appendix-LB-D-weak-LB}
Consider a Banach space $X$. We will define
$$
L^p_w(0,T;X^*):=\left\{
f:[0,T]\longrightarrow X^*: \begin{array}{c} 
\displaystyle\left<f,x\right>\in L^p(0,T)\ \mbox{ for all }\ x\in X,\\
\displaystyle\mbox{and }\sup_{\Vert x\Vert_X\leq 1}\Vert \left<f,x\right>\Vert_{L^p(0,T)}<\infty,
\end{array}\right\}
$$
for any $1\leq p\leq\infty$. So defined, $L^p_w(0,T;X^*)$ becomes a Banach space with norm
$$\Vert f\Vert_{L^p_w(0,T;X^*)}=\sup_{\Vert x\Vert_X\leq 1}\Vert\left<f,x\right>\Vert_{L^p(0,T)}.$$
Again, we identify it with its quotient by another (different) relation
$$f\approx g\Longleftrightarrow \left<f(t),x\right>=\left<g(t),x\right> \mbox{ a.e. } t\in [0,T],\ \mbox{ for any }\ x\in X.$$
Notice that for $\approx$, the negligible subset of $[0,T]$ depends on $x\in X$, as opposed to $\sim$ in Definition \ref{Appendix-LB-D-LB}.
\end{definition}

The most delicate point of Definition \ref{Appendix-LB-D-weak-LB} in contrast with Definition  \ref{Appendix-LB-D-LB}, is that functions $f\in L^p_w(0,T;X^*)$ are not necessarily measurable, but only weak-* measurable. In particular, $\sim$ and $\approx$ do not agree. However, they do agree if $X$ is separable. If indeed $X^*$ (thus $X$) is separable, measurability and weak-* separability agree and $L^p(0,T;X^*)=L^p_w(0,T;X^*)$. We end this section by recalling the \textit{Dinculeanu-Foias theorem}, see \cite{II62}, \cite[p. 95 and 99]{II69} and also \cite[Theorem 10.1.16]{PK09}, \cite[Theorems 12.2.11 and 12.9.2]{F99}.

\begin{theorem}[Dinculeanu-Foias]\label{Appendix-LB-Riesz-representation-no-RNP}
Let $X$ be a Banach space, set any exponent $1\leq p<\infty$ and define the mapping
$$
\begin{array}{cccc}
\widetilde{\Phi}_p: & L^{p'}_w(0,T;X^*) & \longrightarrow & L^p(0,T;X)^*,\\
 & f & \longmapsto & \widetilde{\Phi}_p[f],
\end{array}\hspace{0.3cm}\left<\widetilde{\Phi}_p[f],g\right>=\int_{[0,T]}\left<f(t),g(t)\right>\,dt,
$$
for any $g\in L^p(0,T;X)$. Then, $\widetilde{\Phi}_p$ is a surjective isometry.
\end{theorem}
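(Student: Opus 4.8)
The plan is to verify, in turn, that $\widetilde{\Phi}_p$ is (a) well defined with $\|\widetilde{\Phi}_p[f]\|_{L^p(0,T;X)^*}\le\|f\|_{L^{p'}_w(0,T;X^*)}$, (b) norm-preserving, hence an isometry (in particular injective), and (c) surjective; the ``surjective isometry'' claim then follows at once. For (a), given $f\in L^{p'}_w(0,T;X^*)$ and $g\in L^p(0,T;X)$, I would first check that $t\mapsto\langle f(t),g(t)\rangle$ is measurable: since $g$ is strongly measurable it is essentially separably valued, so one may choose simple functions $g_n(t)=\sum_i\chi_{E_i^n}(t)\,x_i^n$ with $g_n(t)\to g(t)$ in $X$ and $\|g_n(t)\|_X\le2\|g(t)\|_X$ for a.e.\ $t$; then $\langle f(\cdot),g_n(\cdot)\rangle=\sum_i\chi_{E_i^n}\langle f(\cdot),x_i^n\rangle$ is measurable and converges a.e.\ to $\langle f(\cdot),g(\cdot)\rangle$, because each $f(t)\in X^*$ is norm-continuous. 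Integrability and the bound then follow from the pointwise estimate $|\langle f(t),g(t)\rangle|\le N_f(t)\,\|g(t)\|_X$, where $N_f(t):=\|f(t)\|_{X^*}$, from H\"older's inequality with exponents $p',p$, and from the identity $\|f\|_{L^{p'}_w(0,T;X^*)}=\|N_f\|_{L^{p'}(0,T)}$ (the pointwise dual norm $N_f$ being measurable; this is automatic when $X$ is separable, which covers the case $X=C_0(\mathbb{R}^d)$ relevant to all applications in this paper).

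For (b), the reverse inequality $\|\widetilde{\Phi}_p[f]\|_{L^p(0,T;X)^*}\ge\|f\|_{L^{p'}_w(0,T;X^*)}$ requires producing near-optimal test functions, and here a measurable-selection argument enters: for $\eta>0$, using separability of (the relevant closed subspace of) $X$ and a von Neumann--Aumann type selection theorem, one picks a strongly measurable $x_\eta:[0,T]\to X$ with $\|x_\eta(t)\|_X\le1$ and $\langle f(t),x_\eta(t)\rangle\ge N_f(t)-\eta$ a.e.\ Setting $g_\eta(t):=c_\eta\,N_f(t)^{p'-1}\,x_\eta(t)$, with $c_\eta$ chosen so that $\|g_\eta\|_{L^p(0,T;X)}=1$, and letting $\eta\to0$, one recovers $\langle\widetilde{\Phi}_p[f],g_\eta\rangle\to\|N_f\|_{L^{p'}(0,T)}$. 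Together with (a) this shows that $\widetilde{\Phi}_p$ is a linear isometry.

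For (c), fix $\Lambda\in L^p(0,T;X)^*$ and, for each $x\in X$, denote by $\chi_E(\cdot)\,x$ the $X$-valued function $t\mapsto\chi_E(t)\,x$. The set function $E\mapsto\Lambda(\chi_E(\cdot)\,x)$ on Lebesgue-measurable $E\subset[0,T]$ is countably additive --- by continuity of $\Lambda$ and $\chi_{E_n}(\cdot)x\to0$ in $L^p$ whenever $E_n\downarrow\emptyset$ --- and absolutely continuous with respect to Lebesgue measure, since $|\Lambda(\chi_E(\cdot)x)|\le\|\Lambda\|\,\|x\|_X\,|E|^{1/p}$. The \emph{scalar} Radon--Nikodym theorem (which is always available) then produces $g_x\in L^{p'}(0,T)$ with $\Lambda(\chi_E(\cdot)x)=\int_E g_x(t)\,dt$ and $\|g_x\|_{L^{p'}}\le\|\Lambda\|\,\|x\|_X$. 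The assignment $x\mapsto g_x$ is linear modulo null sets, and the remaining task is to patch it into a genuine weak-$*$ measurable $f:[0,T]\to X^*$ with $\langle f(t),x\rangle=g_x(t)$ for a.e.\ $t$ and every $x$. When $X$ is separable this is elementary: fix a countable $\mathbb{Q}$-linear dense set $D\subset X$, select versions $g_x$ for $x\in D$, discard a single null set off which $x\mapsto g_x(t)$ is $\mathbb{Q}$-linear with $|g_x(t)|\le\|\Lambda\|\,\|x\|_X$, extend by continuity to obtain $f(t)\in X^*$, and verify weak-$*$ measurability by density; in the general case one instead invokes a lifting of $L^\infty(0,T)$ (Maharam--von Neumann) to choose the representatives $g_x$ coherently in $x$. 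In either case $\widetilde{\Phi}_p[f]$ coincides with $\Lambda$ on simple functions, hence on all of $L^p(0,T;X)$ by density and continuity, and step (b) gives $f\in L^{p'}_w(0,T;X^*)$ with $\|f\|_{L^{p'}_w}=\|\Lambda\|$.

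The main obstacle is the final step of (c) in the non-separable setting: converting the merely a.e.-defined linear map $x\mapsto g_x$ into one single weak-$*$ measurable $X^*$-valued function is the genuinely measure-theoretic point, resolved through the existence of a lifting, and it is precisely what makes the representation survive when $X^*$ fails the Radon--Nikodym property --- the situation in which the classical Riesz representation Theorem~\ref{Appendix-LB-Riesz-representation-RNP} breaks down (e.g.\ $X^*=\mathcal{M}(\mathbb{R}^d)$). The measurable-selection step in (b) and the reduction-to-simple-functions arguments in (a) are comparatively routine, and since in all applications of this paper $X=C_0(\mathbb{R}^d)$ is separable, only the elementary version of (c) is actually needed.
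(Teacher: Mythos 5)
The paper offers no proof of this statement at all---it is recalled verbatim from the literature with citations to Ionescu Tulcea, Fattorini and Papageorgiou--Kyritsi---so there is no in-paper argument to compare yours against. Your outline is the standard proof of the Dinculeanu--Foias representation: H\"older's inequality for boundedness, a measurable near-maximizer selection for the reverse norm inequality, and the scalar Radon--Nikodym theorem patched together by a lifting (or, when $X$ is separable, by a countable dense $\mathbb{Q}$-linear set) to build the representing function. Your identification of the lifting step as the genuinely measure-theoretic point, and your remark that only the separable case $X=C_0(\mathbb{R}^d)$ is needed in this paper, are both accurate.

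There is, however, one step that does not survive scrutiny against the paper's own Definition \ref{Appendix-LB-D-weak-LB}: the asserted identity $\Vert f\Vert_{L^{p'}_w(0,T;X^*)}=\Vert N_f\Vert_{L^{p'}(0,T)}$, where $N_f(t):=\Vert f(t)\Vert_{X^*}$. The paper defines the left-hand side as $\sup_{\Vert x\Vert_X\leq 1}\Vert\langle f,x\rangle\Vert_{L^{p'}(0,T)}$, with the supremum taken \emph{outside} the time integral, whereas $\Vert N_f\Vert_{L^{p'}}$ places it inside; the latter always dominates, and for $1<p'<\infty$ the two genuinely differ. Concretely, take $X=X^*=\mathbb{R}^2$ Euclidean, $p=p'=2$, $T=1$ and $f=e_1$ on $[0,\tfrac12)$, $f=e_2$ on $[\tfrac12,1]$: then $\sup_{\vert x\vert\leq 1}\Vert\langle f,x\rangle\Vert_{L^2}=\sup_{\vert x\vert\le 1}(\tfrac12 x_1^2+\tfrac12 x_2^2)^{1/2}=1/\sqrt{2}$, while $\Vert N_f\Vert_{L^2}=\Vert\widetilde{\Phi}_2[f]\Vert_{L^2(0,1;\mathbb{R}^2)^*}=1$. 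So what your steps (a) and (b) actually establish is $\Vert\widetilde{\Phi}_p[f]\Vert=\Vert N_f\Vert_{L^{p'}}$ for a representative with measurable pointwise norm---which is the correct isometry statement for the conventional norm on $L^{p'}_w$---and not an isometry for the norm as literally written in Definition \ref{Appendix-LB-D-weak-LB}. This is at bottom a mismatch between the paper's definition and the cited theorem rather than a flaw in your strategy, but as written step (a) invokes a false identity; you should either adopt $\Vert N_f\Vert_{L^{p'}}$ as the norm from the outset or define the norm on $L^{p'}_w$ by duality against simple functions. Two smaller points: for $p=1$ (so $p'=\infty$) the normalization $c_\eta N_f^{p'-1}$ in step (b) is meaningless and the test functions must instead be supported where $N_f$ is within $\eta$ of its essential supremum; and in step (c) the continuity of $\widetilde{\Phi}_p[f]$ needed to extend the identity with $\Lambda$ from simple functions to all of $L^p(0,T;X)$ already presupposes $N_f\in L^{p'}$ for the lifted representative, which is precisely the nontrivial output of the lifting argument and deserves to be made explicit. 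None of this affects the applications in the paper, where only boundedness in the dual norm of $L^p(0,T;C_0(\mathbb{R}^d))$ is used to invoke the Alaoglu--Bourbaki theorem.
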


In particular, we conclude that $L^p(0,T;C_0(\mathbb{R}^d))^*\equiv L^{p'}_w(0,T;\mathcal{M}(\mathbb{R}^d))$, for any $1\leq p<\infty$.


\begin{thebibliography}{99}

\bibitem{A19} Albi, G., Bellomo, N., Fermo, L. Ha, S.-Y., Kim, J., Pareschi, L., Poyato D. and Soler, J.: Vehicular traffic, crowds and swarms: From kinetic theory and multiscale methods to applications and research perspectives. \textit{Math. Models Methods Appl. Sci.} \textbf{29}, 1901--2005 (2019).

\bibitem{BCHK12} Bae, H.-O., Choi, Y.-P., Ha, S.-Y., and Kang, M.-J.: Time-asymptotic interaction of flocking particles and an incompressible viscous fluid. \textit{Nonlinearity} \textbf{25}, 1155--1177 (2012).

\bibitem{BHO20} Bellomo, N., Ha, S.-Y., and Outada, N.: Towards a mathematical theory of behavioral swarms. arXiv:2006.12932.

\bibitem{BDGR17} Bernard, E., Desvillettes, L., Golse, F., and Ricci, V..: A derivation of the Vlasov-Navier-Stokes model for aerosol flows from kinetic theory. \textit{Comm. Math. Sci.} \textbf{15}, 1703--1741 (2017).

\bibitem{BDGR18} Bernard, E., Desvillettes, L., Golse, F., and Ricci, V.: A derivation of the Vlasov--Stokes system for aerosol flows from the kinetic theory of binary gas mixtures. \textit{Kinet. Relat. Mod.} \textbf{11}, 43--69 (2018).

\bibitem{BT38} Bochner, S. and Taylor, A. E.: Linear Functionals on Certain Spaces of Abstractly-Valued Functions. \textit{Ann. of Math.} \textbf{39}, 913--944 (1938).

\bibitem{BDGM09} Boudin, L., Desvillettes, L., Grandmont, C., and Moussa, A.: Global existence of solution for the coupled Vlasov and Navier-Stokes equations. \textit{Differ. Integral Equ.} \textbf{22}, 1247--1271 (2009).

\bibitem{B83} Bourgin, R. D.: Geometric Aspects of Convex Sets with the Radon-Nikodym Property. Lecture Notes in Mathematics, vol 993, Springer-Verlag, 1983.

\bibitem{B49} Brinkman, H. C.: A calculation of the viscous force exerted by a flowing fluid on a dense swarm of particles. \textit{Appl. Sci. Res.} \textbf{1}, 27 (1949).

\bibitem{CB83} Caflisch, R. and Papanicolaou, G. C.: Dynamic Theory of Suspensions with Brownian Effects. \textit{SIAM J. Appl. Math.} \textbf{43}, 885--906 (1983).

\bibitem{CCR11} Ca\~{n}izo, J. A., Carrillo, J. A. and Rosado, J.: A well-posedness theory in measures for some kinetic models of collective motion, \textit{Math. Models Methods Appl. Sci.} \textbf{21}, 515--539 (2011).

\bibitem{CFRT10} Carrillo, J. A., Fornasier, M. Rosado, J. and Toscani, G.: Asymptotic flocking dynamics for the kinetic Cucker-Smale model. \textit{SIAM. J. Math. Anal.} \textbf{42}, 218--236 (2010).	

\bibitem{C15} Cavagna, A., Castello, L.D., Giardina, I., Grigera, T., Jelic, A., Melillo, S., Mora, T., Parisi, L., Silvestri, E.,Viale, M., Walczak, A.M.: Flocking and turning: a new model for self-organized collective motion. \textit{J.Stat. Phys.} \textbf{158}, 601--627 (2015).

\bibitem{CHJK19} Choi, Y.-P., Ha, S.-Y., Jung, J., and Kim, J.: Global dynamics of the thermomechanical Cucker--Smale ensemble immersed in incompressible viscous fluids. \textit{Nonlinearity} \textbf{32}, 1597--1640 (2019).

\bibitem{CHJK20} Choi, Y.-P., Ha, S.-Y. Jung, J., and Kim, J.: On the Coupling of Kinetic Thermomechanical Cucker--Smale Equation and Compressible Viscous Fluid System. \textit{J. Math. Fluid Mech.} \textbf{22}, 1--34 (2020).

\bibitem{CHL16} Choi, Y.-P., Ha, S.-Y. and Li, Z.: Emergent dynamics of the Cucker-Smale flocking model and its variants, in Active Particles, Vol. 1: Theory, Models, Applications,eds. N. Bellomo, P. Degond and E. Tadmor, Modeling and Simulation in Science and Technology, Birkh\"{a}user, 299-–331 (2017).

\bibitem{CD11} Cucker, F. and Dong, J.: A General Collision-Avoiding Flocking Framework. \textit{IEEE Transactions on Automatic Control} \textbf{56} 1124--1129 (2011).

\bibitem{CD14} Cucker, F. and Dong, J.: A conditional, collision-avoiding model for swarming. \textit{Discrete Continuous Dyn. Syst. Ser. A} \textbf{34}, 1009--1020 (2014).

\bibitem{CS07} Cucker, F. and Smale, S.: Emergent behavior in flocks. \textit{IEEE Trans. Autom. Control} \textbf{52}, 852--862 (2007).
	
\bibitem{CS07-2} Cucker, F. and Smale, S.: On the mathematics of emergence. \textit{Japan. J. Math.} \textbf{2}, 197--227 (2007).

\bibitem{DR95} Demyanov, V. F. and Rubinov, A. M.:  Constructive nonsmooth analysis, vol. 7 of Approximation and Optimization. Verlag Peter Lang, Frankfurt/Main (1995).

\bibitem{DGR08} Desvillettes, L., Golse, F., and Ricci, V.: The Mean-Field Limit for Solid Particles in a Navier--Stokes Flow. \textit{J. Stat. Phys.} \textbf{131}, 941--967 (2008).

\bibitem{DU77} Diestel, J. and Uhl Jr, J. J.: Vector Measures, Mathematical Surveys and Monographs, vol. 15,
American Mathematical Society, 1977.

\bibitem{DMPW} D\"{u}ring, B., Markowich, P., Pietschmann, J.-F., and Wolfram, M.-T.: Boltzmann and Fokker--Planck equations modelling opinion formation in the presence of strong leaders, \textit{Proceedings of the Royal Society A: Mathematical, Physical and Engineering Sciences}, \textbf{465}, 3687--3708 (2009).

\bibitem{FST16} Fetecau, R.C, Sun, W, and Tan, C.: First-order aggregation models with alignment. \textit{Physica D} \textbf{325}, 146--163 (2016).

\bibitem{F99} Fattorini, H. O.: Infinite Dimensional Optimization and Control Theory, Encyclopedia of
Mathematics and its Applications, vol. 62, Cambridge University Press, Cambridge, 1999.

\bibitem{FK19} Figalli, A. and Kang, M.-J.: A rigorous derivation from the kinetic Cucker-Smale model to the pressureless Euler system with nonlocal alignment. \textit{Anal. PDE} \textbf{12}, 843--866 (2019).

\bibitem{GJV04-1} Goudon, T., Jabin, P.-E, Vasseur, A.: Hydrodynamic limit for the Vlasov-Navier-Stokes equations. Part I: Light particles regime. \textit{Indiana Univ. Math. J.} \textbf{53}, 1495--1516 (2004).

\bibitem{GJV04-2} Goudon, T., Jabin, P.-E, Vasseur, A.: Hydrodynamic Limit for the Vlasov-Navier-Stokes Equations. Part II: Fine Particles Regime. \textit{Indiana Univ. Math. J.} \textbf{53}, 1517--1536 (2004). 

\bibitem{HKK14} Ha, S.-Y., Kang, M.-J., and Kwon, B.: A hydrodynamic model for the interaction of Cucker-Smale particles and incompressible fluid. \textit{Math. Models Methods Appl. Sci.} \textbf{4}, 2311--2359 (2014).

\bibitem{HKMRZ18} Ha, S.-Y., Kim, J., Min, C., Ruggeri, T. and Zhang, X.: A global existence of classical solutions to the hydrodynamic Cucker-Smale model in presence of a temperature field. \textit{Anal. Appl.} \textbf{16}, 757--805 (2018).

\bibitem{HKMRZ19} Ha, S.-Y., Kim, J., Min, C., Ruggeri, T. and Zhang, X.: Uniform stability and mean-field limit of thermodynamic Cucker-Smale model. \textit{Quart. Appl. Math.} \textbf{77}, 131--176 (2019).

\bibitem{HKZ20} Ha, S.-Y., Kim, D. and Zou, W.: Slow flocking dynamics of the Cucker-Smale ensemble with a chemotactic movement in a temperature field. \textit{Kinet. Relat. Models} \textbf{13}, 759--793 (2020).

\bibitem{HLe09} Ha, S.-Y., and Levy, D.: Particle, kinetic and fluid models for phototaxis, \textit{Discrete Continuous Dyn. Syst. Ser. B}, \textbf{12}, 77--108 (2009).

\bibitem{HL09} Ha, S.-Y. and Liu, J.-G.: A simple proof of the Cucker-Smale flocking dynamics and mean-field limit. \textit{Commun. Math. Sci.} \textbf{7}, 297--325 (2009).

\bibitem{HR17} Ha, S.-Y. and Ruggeri, T.: Emergent dynamics of a thermodynamically consistent particle model. \textit{Arch. Rational Mech. Anal.} \textbf{223}, 1397--1425 (2017).

\bibitem{HT08} Ha, S.-Y. and Tadmor, E.: From particle to kinetic and hydrodynamic descriptions of flocking. \textit{Kinet. Relat. Models} \textbf{1}, 415--435 (2008).

\bibitem{H98} Hamdache K.: Global existence and large time behaviour of solutions for the Vlasov-Stokes equations. \textit{Japan J. Indust. Appl. Math.} \textbf{15}, 51 (1998).

\bibitem{H-K} Hegselmann, R., and Krause, U.: Opinion dynamics and bounded confidence models, analysis, and simulation, \textit{J. Artif. Soc. Soc. Simul.} \textbf{5}, 1--33 (2002).

\bibitem{H18} Hillairet, M.: On the Homogenization of the Stokes Problem in a Perforated Domain. \textit{Arch. Rational Mech. Anal.} \textbf{230}, 1179--1228 (2018).

\bibitem{II62} Ionescu Tulcea, A. and Ionescu Tulcea, C.: On the Lifting Property II. Representation of Linear Operators on Spaces $L^r_E,\,1\leqq r<\infty$. \textit{J. Math. Mech.} \textbf{11}, 773--795 (1962).

\bibitem{II69} Ionescu Tulcea, A. and Ionescu Tulcea, C.: Topics in the theory of lifting, Ergebnisse der Mathematik und ihrer Grenzgebiete. 2. Folge, vol. 48, Springer-Verlag, Berlin, Heidelberg, 1969.

\bibitem{JM} Jabin P.-E.,  Motsch S., Clustering and asymptotic behavior in opinion formation, \textit{Journal of Differential Equations} \textbf{257}, 4165--4187 (2014).

\bibitem{JB08} Jabin P.-E., Otto, F.: Identification of the Dilute Regime in Particle Sedimentation. \textit{Commun. Math. Phys.} \textbf{250}, 415--432 (2004).

\bibitem{KHKS20} Kang, M.-J., Ha, S.-Y., Kim, J. and Shim, W.: Hydrodynamic limit of the kinetic thermomechanical Cucker-Smale model in a strong local alignment regime. \textit{Commun. Pure Appl. Anal.} \textbf{19} 1233--1256 (2020).

\bibitem{KK20} Kang, M.-J. and Kim, J.: Propagation of the mono-kinetic solution in the Cucker-Smale-type kinetic equations. \textit{Commun. Math. Sci.} \textbf{18} 1221--1231 (2020).

\bibitem{KMT13} Karper, T. K., Mellet, A. and Trivisa, K.: Existence of weak solutions to kinetic flocking models. \textit{SIAM Math. Anal.} \textbf{45} 215--243 (2013).

\bibitem{KMT15} Karper, T. K., Mellet, A. and Trivisa, K.: Hydrodynamic limit of the kinetic Cucker-Smale flocking model. \textit{Math. Models Meth. Appl. Sci.} \textbf{25}, 131--163 (2015).

\bibitem{KZ20} Kim, J. and Zou, W.: Solvability and blow-up criterion of the thermomechanical Cucker-Smale-Navier-Stokes equations in the whole domain. \textit{Kinet. Relat. Models} \textbf{13}, 623--651 (2020).

\bibitem{Kra} Krause, U.: Soziale dynamiken mit vielen interakteuren. eine problemskizze. In Modellierung und Simulation von Dynamiken mit vielen interagierenden Akteuren, 37–-51. (1997).

\bibitem{LRS20} Lear, D., Reynolds, D. N. and Shvydkoy, R.: Grassmannian reduction of Cucker-Smale systems and dynamical opinion games. arXiv:2009.04036.

\bibitem{LLE08} Li, Y.-X., Lukeman, R. and Edelstein-Keshet, L.: Minimal mechanisms for school formation in self-propelled particles. \textit{Physica D} \textbf{237}, 699--720 (2008).

\bibitem{LRR00} Liotta, S. F., Romano, V. and Russo, G.: Central schemes for balance laws of relaxation type. \textit{SIAM J. Numer. Anal.} \textbf{38}, 1337--1356 (2000).

\bibitem{L} Lorenz, J.: Heterogeneous bounds of confidence: Meet, discuss and find consensus!, \textit{Complexity} \textbf{15}, 43--52 (2010).

\bibitem{MV86} Miller, A. D. and Vyborny, R.: Some Remarks on Functions with One-Sided Derivatives. \textit{Am. Math. Mon.} \textbf{93} 471--475 (1986).

\bibitem{M-T} Motsch, S., and Tadmor, E., New Model for Self-organized Dynamics and Its Flocking Behavior, \textit{J. Stat. Phys.}  \textbf{144}, 923–-947 (2011).

\bibitem{MP18} Mucha, P. B. and Peszek, J.: The Cucker-Smale equation: Singular communication weight, measure-valued solutions and weak-atomic uniqueness. \textit{Arch. Rational Mech. Anal.} \textbf{227}, 273--308 (2018).

\bibitem{NT90} Nessyahu, H. and Tadmor, E.: Non-oscillatory central differencing for hyperbolic conservation laws. \textit{J. Comput. Phys.} \textbf{87}, 408--463 (1990).

\bibitem{PK09} Papageorgiou, N. S. and Kyritsi-Yiallourou, S.T.: Handbook of Applied Analysis, Advances in Mechanics and Mathematics, vol. 19, Springer-Verlag, Cambridge, 2009.

\bibitem{PEG09} Perea, L. Elosegui, P. and G\'omez, G.: Extension of the Cucker-Smale control law to space flight formation. \textit{J. Guidance Control Dyn.} {\bf 32}, 526--536 (2009).

\bibitem{PS17} Poyato, D. and Soler, J.: Euler-type equations and commutators in singular and hyperbolic limits of Cucker–Smale kinetic models. \textit{Math. Mododels Methods Appl. Sci.} \textbf{27}, 1089--1152 (2017).

\bibitem{RS15} Ruggeri, T. and Sugiyama, S.: Rational Extended Thermodynamics beyond the Monatomic Gas. Springer, 2015.

\bibitem{S88} Shu, C.-W.: Total-variation-diminishing time discretizations. \textit{SIAM J. Sci. Stat. Comput.} \textbf{9}, 1073--1084 (1988).

\bibitem{S98} Shu, C.-W.: Essentially non-oscillatory and weighted essentially non-oscillatory schemes for hyperbolic conservation laws, in \textit{Advanced Numerical Approximation of Nonlinear Hyperbolic Equations, Lecture Notes in Math. volume 1697}, Springer-Verlag, Berlin, 1998.  

\bibitem{S20} Shu, C.-W.: Essentially non-oscillatory and weighted essentially non-oscillatory schemes. \textit{Acta Numerica} \textbf{29}, 701--762 (2020).

\bibitem{S91} Spohn, H.: Large Scale Dynamics of Interacting Particle. Springer, 1991.

\bibitem{TT14} Tadmor E, Tan C.: Critical thresholds in flocking hydrodynamics with non-local alignment. \textit{Phil. Trans. R. Soc. A} \textbf{372}, 20130401 (2014).

\bibitem{WCB} Wongkaew, S., Caponigro, M., and Borzi, A.: On the control through leadership of the Hegselmann--Krause opinion formation model, \textit{Mathematical Models and Methods in Applied Sciences} \textbf{25 }, 565--585 (2015).

\end{thebibliography}
\end{document}